\definecolor{cobalt}{rgb}{0.0, 0.28, 0.67}
\definecolor{darkblue}{rgb}{0.0, 0.0, 0.55}
\newcommand{\tr}{\mbox{tr}}
\newcommand{\St}{\mathbf{St}}
\newcommand{\Gr}{\mathbf{Gr}}
\newtheorem{assumption}{Assumption}
\definecolor{darkred}{rgb}{0.7,0,0}
\definecolor{darkgreen}{rgb}{0,0.46,0}
\definecolor{purple}{rgb}{0.6,0,0.5}
\definecolor{cholocate}{HTML}{d2691e}
\definecolor{slateblue}{HTML}{6a5acd}
\definecolor{dwtgreen}{HTML}{005f5f}
\definecolor{darkblue}{rgb}{0.0, 0.0, 0.55}
\newcommand{\tenss}[1]{\boldsymbol{\mathcal{#1}}}
\newcommand{\matr}[1]{\boldsymbol{#1}}
\newcommand{\vect}[1]{\boldsymbol{#1}}
\newcommand{\T}{{\sf T}}        % transposition
\newcommand{\rank}[1]{\mathop{\operator@font rank}(#1)}
\newcommand{\colrank}[1]{\mathop{\operator@font colrank}\{#1\}}
\newcommand{\krank}[1]{\mathop{\operator@font krank}\{#1\}}
\newcommand{\trace}[1]{\mathop{\operator@font tr}\left(#1\right)}
\newcommand{\symmm}[1]{\mathop{\operator@font \bf symm}\left(#1\right)}
\newcommand{\symmo}[1]{\mathop{\operator@font sym}\left(#1\right)}
\newcommand{\skeww}[1]{\mathop{\operator@font skew}\left(#1\right)}
\newcommand{\skewww}[1]{\mathop{\operator@font \bf skew}\left(#1\right)}
\newcommand{\Diag}[1]{\mathop{\operator@font\bf Diag}\{#1\}}    % a diagonal matrix
\newcommand{\diag}[1]{\textbf{\operator@font\bf diag}\left(#1\right)}    % a vector
\newcommand{\diagg}[2]{\mathop{\operator@font diag}_{#2}\{#1\}}    % a vector
\newcommand{\Span}[1]{\mathop{\operator@font Span}\{#1\}}    % a space
\newcommand{\argmin}{\mathop{\operator@font argmin}}
\newcommand{\offdiag}[1]{\mathop{\operator@font offdiag}\{#1\}}    % a vector
\newcommand{\Proj}[2]{\mathop{\operator@font Proj_{#1}}{#2}}
\newcommand{\ProjGrad}[2]{\mathop{{\operator@font grad} }#1(#2)}
\newcommand{\expp}[1]{\mathop{\operator@font exp}\left(#1\right)}
\newcommand{\TangBundle}[1]{\mathbf{T}{#1}}
\newcommand{\eqdef}{\stackrel{\sf def}{=}}
\newcommand{\RR}{\mathbb{R}}
\newcommand{\NN}{\mathbb{N}}
\newcommand{\ON}[1]{\mathbf{O}_{#1}}
\newcommand{\contr}[1]{\mathop{\bullet_{#1}}}   % Contraction
\DeclareMathOperator*{\argmax}{\arg\!\max}
\newcommand{\grad}{\operatorname{grad}} % Riemannian gradient
\newcommand{\TangSt}[1]{\mathbf{T}_{#1}{\St(r,n)}} % tangent space of Stiefel manifold
\newcommand{\TangM}[1]{\mathbf{T}_{#1}{\mathcal{M}}} % tangent space of general manifold
\newcommand{\TangMM}[1]{\mathbf{T}_{#1}{\mathcal{M}'}} % tangent space of Stiefel manifold
\newcommand{\NormalSt}[1]{\mathbf{N}_{#1}{\St(r,n)}} % normal space of Stiefel manifold
\newcommand{\TangGr}[1]{\mathbf{T}_{#1}{\Gr(p,n)}} % tangent space of Grassmann manifold
\newcommand{\NormalGr}[1]{\mathbf{N}_{#1}{\Gr(p,n)}} % normal space of Grassmann manifold
\newcommand{\mm}{\mathcal{M}} % general manifold
\newcommand{\NormalM}[1]{\mathbf{N}_{#1}{\mm}} % normal space of M
\newcommand{\NormalMM}[1]{\mathbf{N}_{#1}{\mm'}} % normal space of M
\newcommand{\NormalBB}{\mathbf{N}{\mm'}} % normal bundle
\newcommand{\retr}{\operatorname{R}} % retraction
\newcommand{\dist}{d} % distance
\newcommand{\openball}[2]{\mathcal{B}(#1;#2)} % ball
\newcommand{\closedball}[2]{\bar{\mathcal{B}}(#1;#2)} % ball
\newcommand{\DD}{\mathrm{D}} % differential
\newcommand{\ad}{\operatorname{ad}}
\newcolumntype{P}[1]{>{\centering\arraybackslash}p{#1}}
\begin{document}

\title
{Convergence analysis of the transformed gradient projection algorithms on compact matrix manifolds %\footnote{We claim that this manuscript has not been published and is not currently under submission elsewhere.}%\thanks{}
}

%\subtitle{Do you have a subtitle?\\ If so, write it here}
\titlerunning{Transformed gradient projection algorithms}% if too long for running head
\author{Wentao Ding \and Jianze Li \and Shuzhong Zhang}
% \authorrunning{Wentao Ding \and Jianze Li \and Shuzhong Zhang}% if too long for running head
\authorrunning{W. Ding \and J. Li \and S. Zhang}% if too long for running head

\institute{Wentao Ding \at
  Department of Systems Engineering and Engineering Management, The Chinese University of Hong Kong, Shatin,
N.T., Hong Kong  \\
%Tel.: 0086-0755-23517513\\
%Fax: +123-45-678910\\
\email{wentaoding@cuhk.edu.hk}
%             \emph{Present address:} of F. Author  %  if needed
\and
Jianze Li %\ \bigLetter 
\at
School of Science, Sun Yat-sen University, Shenzhen, Guangdong, China\\
\email{lijianze@mail.sysu.edu.cn}
\and
Shuzhong Zhang \at
Department of Industrial and Systems Engineering, University of Minnesota, Minneapolis, MN 55455, USA \\
\email{zhangs@umn.edu}
}

\date{Received: date / Accepted: date}

\maketitle

\begin{abstract}
In this paper, we study the optimization problem on a compact matrix manifold. 
While existing feasible algorithms can be broadly categorized into retraction-based and projection-based methods, compared to the more comprehensive and in-depth algorithmic and convergence research framework for retraction-based line-search (RetrLS) algorithms using only tangent vectors, the theoretical understanding and algorithmic design of projection-based line-search (ProjLS) algorithms remain limited, especially when general search directions and stepsizes are involved. To bridge this gap, we propose a unified algorithmic framework called the Transformed Gradient Projection (TGP) algorithm. The key idea is to construct the search direction as a transformed Riemannian (or Euclidean) gradient augmented by an additional normal component, allowing the framework to encompass and generalize numerous existing algorithms, such as the classical Euclidean gradient projection (EGP) algorithm, the shifted power method (Shifted PM),  and certain retraction-based algorithms, while also introducing several new special cases.
Then, we conduct a thorough exploration of the convergence properties of the TGP algorithms under various stepsizes, including the Armijo, Zhang-Hager type nonmonotone Armijo, and fixed stepsizes. 
To achieve this, we extensively analyze the geometric properties of the projection onto compact matrix manifolds, which may be of independent interest, allowing us to extend classical inequalities related to retractions from the literature. Building upon these insights, we establish the weak convergence, iteration complexity, and global convergence of TGP algorithms under three distinct stepsizes. 
In particular, we establish the global convergence of the Zhang-Hager type nonmonotone Armijo stepsize scheme, which, to our knowledge, has not been addressed  before in the nonconvex setting. 
In cases where the compact matrix manifold is the Stiefel or Grassmann manifold, our convergence results either encompass or surpass those found in the literature. 
Finally, through a series of numerical experiments and theoretical analysis, we observe that different choices of scaling matrices and normal components in the search direction of TGP algorithms can lead to significantly different performance in practice. 
As a result, the increased flexibility in choosing these components allows TGP algorithms to outperform classical EGP and retraction-based line-search algorithms in several scenarios. 

\keywords{optimization on manifold, transformed gradient projection algorithm, retraction-based line-search algorithm, Euclidean gradient projection algorithm, Armijo stepsize, Zhang-Hager type nonmonotone Armijo stepsize, convergence analysis}
% \subclass[2020]{15A23, 49M37, 65K05, 90C26, 90C30}
%\subclass{15A23, 49M37, 65K05, 90C26, 90C30}
\vspace{0.8em}

\noindent\textbf{Mathematics Subject Classification (2020)}\ 15A23, 49M37, 65K05, 90C26, 90C30
\end{abstract}

\section{Introduction}

\subsection{Problem formulation}

Since the 1990s, optimization problems constrained on matrix manifolds have increasingly attracted attention in optimization theory and applied mathematics, due to their wide range of applications in various fields, including \emph{signal processing} \cite{absil2009optimization,Como10:book,li2019polar}, \emph{machine learning} \cite{journee2010generalized,bansal2018can,wang2020orthogonal}, \emph{numerical linear algebra} \cite{qi2017tensor,qi2009z} and \emph{data analysis}
\cite{Anan14:latent,Cichocki15:review,kolda2009tensor,sidiropoulos2017tensor}.
To facilitate the subsequent discussions, we first introduce the general form of the optimization problem considered in this paper. 
Let $ \mm \subseteq \RR^{n\times r}$ be a compact matrix submanifold of class \(C^3\) with $1\leq r\leq n$.
We mainly consider the following optimization problem:
\begin{align}\label{eq:objec_func_g}
\min_{\matr{X}\in\mm} f(\matr{X}),
\end{align}
where $f: \RR^{n \times r} \to \RR$ has Lipschitz continuous gradient in the convex hull of $\mm$. 

% short introduction of algorithms
A diverse range of algorithms have been developed to address problem \eqref{eq:objec_func_g} in the literature, including both \emph{infeasible} and \emph{feasible} approaches. Infeasible methods encompass techniques such as the splitting methods \cite{Lai2014ASM}, and penalty methods \cite{Xiao2020ACO,xiao2021ExactPenaltyFunction,Wen2016TracePenaltyMF}.
Feasible methods mainly include two classes.
The first class stems from exploiting the geometric structure of $\mm$, allowing for the direct implementation of various Riemannian optimization algorithms by making use of differential-geometric principles like \emph{geodesic} and \emph{retraction}; see e.g. \cite{absil2009optimization,boumal2023intromanifolds,hu2019brief}.
The second class\footnote{In this paper, we will focus on the second class of feasible methods.} of feasible methods ensures that iterations consistently remain within the manifold by using the \emph{projection} onto compact matrix manifolds.

\subsection{Retraction-based line-search algorithms}\label{subsec:retrac_Riema_optim}
To solve optimization problems constrained on matrix manifolds, such as problem (1) above, early research efforts often attempted to adopt \emph{geodesics} \cite{luenberger1984linear} as the iteration path, which is a fundamental concept in the theory of differential manifolds. 
It, in essence, embodies a locally shortest path on the manifold, offering a generalization of straight lines in Euclidean space \cite{do2016differential,gabay1982minimizing}.
Note that,
in classical unconstrained optimization, a thoroughly examined class of line-search algorithms explores along straight lines in each iteration \cite{luenberger1984linear}.
Therefore, it is natural to extend these classical line-search methods to manifolds through the utilization of geodesics. These types of algorithms have been extensively studied in early works; see
% \cite{luenberger1972gradient,luenberger1984linear,gabay1982minimizing,smith1994optimization,yang2007globally}
\cite{luenberger1984linear,gabay1982minimizing,smith1994optimization,yang2007globally}
and the references therein. It is worth noting that the algorithms proposed therein usually presume the explicit calculation of geodesics along a given direction.

While closed-form expressions of geodesics are available only for certain manifolds \cite{edelman1998geometry,smith1993geometric}, the computation of geodesics can be computationally expensive or even impractical in general, as shown in \cite{botsaris1981constrained,absil2012projection}. To address this challenge, it has been suggested to approximate exact geodesics using computationally efficient alternatives \cite{botsaris1981constrained,gao2022optimizationon}. For example, in the context of the Stiefel manifold, various specially designed curves along search directions have been constructed with low computational cost, and curvilinear search algorithms have subsequently been developed based on these curves \cite{wen2013feasible,wen2013adaptive,jiang2015framework}.

Note that geodesics can often be computed through an exponential map \cite{edelman1998geometry}. To approximate these geodesics effectively,  it suffices to find an approximation of the exponential map, which gives rise to the concept of \emph{retraction} \cite{absil2009optimization,shub1986some}. Let $\mm'\subseteq\RR^{m}$ be a submanifold. A smooth map $ \retr $ from the tangent bundle $ \TangBundle{\mm'} $ to $ \mm' $ is said to be a \emph{retraction} on $ \mm' $ if it satisfies the following properties:
\begin{itemize}
\item[(i)] $ \retr(\vect{x}, \vect{0}_{\vect{x}}) = \vect{x} $ for all $ \vect{x} \in \mm' $, where $\vect{0}_{\vect{x}} $ denotes the zero element in the \emph{tangent space}\footnote{See \cref{Concepts_RM} for a definition.} $ \TangMM{\vect{x}} $;
\item[(ii)] The differential of $ \retr_{\vect{x}} $ at $ \vect{0}_{\vect{x}} $ is the identity map on $ \TangMM{\vect{x}} $. Here $ \retr_{\vect{x}}: \TangMM{\vect{x}} \to \mm' $ denotes the restriction of $ \retr $ to $ \TangMM{\vect{x}} $, \emph{i.e.}, $ \retr_{\vect{x}}(\cdot) \eqdef \retr(\vect{x}, \cdot) $.
\end{itemize}
Define a curve $ c(t) $ on $ \mm' $ passing through $ \vect{x} $ for some $ \vect{x} \in \mm' $ and $ \vect{v} \in \TangMM{\vect{x}} $ by $ c(t) \eqdef \retr_{\vect{x}}(t\vect{v}) $.
The above definition of retraction implies that $ c'(0) = \vect{v} $, and thus this curve $ c(t) $ serves as a first-order approximation of the geodesic passing through $ \vect{x} $ along the direction $ \vect{v} $ \cite{absil2012projection}.

Over recent decades, numerous retractions have been developed for commonly used manifolds, many of which can be computed efficiently or have closed-form solutions; see \cite{absil2009optimization,boumal2023intromanifolds,hu2019brief}.
The derivation of retractions enables the adoption of classical algorithms in unconstrained optimization to general Riemannian manifolds.
Up to now, various retraction-based Riemannian optimization algorithms have been developed, including
% \emph{Riemannian gradient descent} \cite{absil2009optimization,boumal2019GlobalRatesConvergence,zhang2021riemannian},
\emph{Riemannian gradient descent} (RGD) \cite{absil2009optimization,boumal2019GlobalRatesConvergence,liu2019QuadraticOptimizationOrthogonality,sheng2022riemannian},
% \emph{Newton-type} \cite{adler2002newton,hu2018adaptive,huang2015broyden,huang2018riemannian,zhang2018cubic,zhang2023riemannian,tang2023riemannian},
\emph{Newton-type} \cite{hu2018adaptive,huang2015broyden,huang2018riemannian,zhang2018cubic,zhang2023riemannian} and \emph{trust region} \cite{absil2007trust,boumal2019GlobalRatesConvergence}.
% \emph{conjugate gradient} \cite{sato2022riemannian,zhu2017riemannian},
% \emph{interior point} \cite{lai2022riemannian},
% \emph{proximal gradient} \cite{huang2022riemannian, wang2022riemannian,huang2023inexact,chen2020proximal,huang2021robust},
% \emph{subgradient} \cite{li2021weakly,liu2023resync,wang2023decentralized}
% and
% \emph{primal-dual type} methods  \cite{li2022riemannian,zhang2020primal}.
In particular, in the context of addressing problem \eqref{eq:objec_func_g}, the update scheme of \emph{retraction-based line-search} (RetrLS) algorithms can be represented as:
\begin{equation}\label{eq:iter_retr}
\matr{X}_{k+1} = \retr_{\matr{X}_k} (\tau_k\matr{V}_k).
\end{equation}
Here, $ \matr{V}_k \in \TangM{\matr{X}_k} $ is a \emph{descent direction} such as $ -\grad f(\matr{X}_k) $, where $\grad f(\matr{X}_k) $ is the \emph{Riemannian gradient} \cite{absil2009optimization} of $f$ at $\matr{X}_k$, \(\tau_k>0\) is the \emph{stepsize} selected by certain rules and $ \retr $ is a retraction on $\mm$. 
As a special case, the classical \emph{Riemannian gradient descent} (RGD) algorithm \cite{absil2009optimization,boumal2019GlobalRatesConvergence,liu2019QuadraticOptimizationOrthogonality,sheng2022riemannian} is 
\begin{equation}\label{eq:iter_retr_RGD} 
\matr{X}_{k+1} = \retr_{\matr{X}_k} (-\tau_k\grad f(\matr{X}_k)).
\end{equation}

In recent years, there has been a growing interest in the convergence analysis of the RetrLS update scheme \eqref{eq:iter_retr}
% \cite{zhang2016riemannian,zhang2016first,sun2018geometric,sun2016complete,boumal2019GlobalRatesConvergence}.
\cite{boumal2019GlobalRatesConvergence,zhang2016riemannian,zhang2016first}.
Notably, the \emph{weak convergence}\footnote{Every accumulation point of the iterates is a stationary point,  \emph{i.e.}, the Riemannian gradient of the cost function at this point is $\vect{0}$.} of general first-order line-search algorithms on a general manifold has been established in \cite{absil2009optimization} under a \emph{gradient-related} assumption.
The research conducted in \cite{liu2019QuadraticOptimizationOrthogonality,sheng2022riemannian} demonstrates the \emph{global convergence}\footnote{For any starting point, the iterates converge as a whole sequence.} of these algorithms on the Stiefel manifold.
It has also been shown in \cite{liu2019QuadraticOptimizationOrthogonality} that the sequence generated by these algorithms exhibits linear convergence for quadratic optimization on the Stiefel manifold.
The work presented in \cite{bento2017iteration,boumal2019GlobalRatesConvergence} derived the \emph{iteration complexity}\footnote{In \cite{boumal2019GlobalRatesConvergence}, the term \emph{convergence rate} was used to denote what we refer to as \emph{iteration complexity}, \emph{i.e.}, the number of iterations required to reach a certain solution accuracy.} of the gradient descent type algorithm under specific conditions.
In particular, the RGD algorithm \eqref{eq:iter_retr_RGD} attains a first-order \(\epsilon\)-stationary point within $\mathcal{O}(\epsilon^{-2})$
iterations on a general compact submanifold of Euclidean space.
Although not as extensive as the convergence studies on the Stiefel manifold, for the Grassmann manifold in the form of the set of projection matrices, the weak convergence of the RGD algorithm \eqref{eq:iter_retr_RGD} was also established \cite{sato2014optimization}.

\subsection{Gradient projection method}\label{subsec:grad_proj_meth}

In addition to the RetrLS algorithms \eqref{eq:iter_retr} discussed in \cref{subsec:retrac_Riema_optim}, the other feasible approach to addressing problem \eqref{eq:objec_func_g} is through the classical \emph{Euclidean gradient projection} (EGP) algorithm, which selects the next iterate by
\begin{equation}\label{eq:iter_prjec}
\matr{X}_{k+1} = \mathcal{P}_{\mathcal{M}} (\matr{X}_{k}-\tau_k\nabla f(\matr{X}_{k})),
\end{equation}
where $\mathcal{P}_{\mathcal{M}}:\RR^{n\times r}\rightarrow \mm$ denotes the \emph{projection} mapping onto $\mm$ computing the best approximation, and $\tau_{k}>0$ is the stepsize.
Since \( \nabla f(\matr{X}_{k}) \) does not necessarily lie in the tangent space, standard retraction techniques are no longer applicable for ensuring feasibility. 
Therefore, we need to use a projection operator $\mathcal{P}_{\mathcal{M}}$ to ensure feasibility on the compact manifold. 

In recent years, there has been extensive research on the convergence of the EGP algorithm \eqref{eq:iter_prjec} for addressing \emph{phase synchronization}  \cite{liu2017estimation,ling2022improved,zhu2021orthogonal} and \emph{tensor approximation} problems \cite{chen2009tensor,yang2019epsilon,hu2019LROAT,li2019polar}, where the feasible set can be the Stiefel manifold, the product of Stiefel manifolds, or the Grassmann manifold.
In addition, various variants of the EGP algorithm \eqref{eq:iter_prjec} have also been studied, as well as their convergence properties. For example, an algorithm combining \eqref{eq:iter_prjec} with a correction step was proposed in \cite{gao2018new}.
In the long line of work presented in \cite{oviedo2019scaled,oviedo2021two,oviedo2019non}, the term $ \matr{X}_k - \tau_k \nabla f(\matr{X}_k) $ in \eqref{eq:iter_prjec} was substituted with various forms incorporating different stepsizes, and the weak convergence of them was established\footnote{See \cref{subsec:special_literature} for more details.}.
In this paper, these algorithms are collectively referred to as \emph{projection-based line-search} (ProjLS) algorithms, and the update scheme of them can be summarized as \begin{equation}\label{eq:iter_prjec-h}
\matr{X}_{k+1} = \mathcal{P}_{\mathcal{M}} (\matr{X}_{k}-\tau_k\matr{H}_{k}),
\end{equation}
where $\matr{H}_{k}\in\RR^{n\times r}$ is the \emph{search direction}. 
Here, to facilitate the discussion that follows, we divide the search direction $\matr{H}_{k}$ into two parts as follows:
\begin{equation}\label{eq:iter_prjec-h-sum}
\matr{H}_k = \tilde{\matr{H}}_k + \hat{\matr{H}}_k, 
\end{equation}
where \( \tilde{\matr{H}}_k\eqdef\mathcal{P}_{\TangM{\matr{X}_k}}(\matr{H}_k)\in \TangM{\matr{X}_k} \) is a \emph{tangent component}, and \( \hat{\matr{H}}_k \eqdef\mathcal{P}_{\NormalM{\matr{X}_k}}(\matr{H}_k) \in \NormalM{\matr{X}_k} \) is a \emph{normal component}\footnote{See \cref{Concepts_RM} for a definition.}. 

Although the update schemes \eqref{eq:iter_retr} and \eqref{eq:iter_prjec-h} both keep the iterates in the feasible region $ \mm $ and, for tangent vectors  $\matr{V}\in\TangM{\matr{X}}$, the map $ \operatorname{R}_{\matr{X}}:\matr{V}\mapsto \mathcal{P}_{\mathcal{M}}(\matr{X} + \matr{V}) $ forms a retraction
%when $\mathcal{M}=\St(r, n)$ or $\mathcal{M}=\Gr(p, n)$
\cite{absil2012projection}, there still exist fundamental differences between them.
For example, the search direction $\matr{H}_{k}$ in \eqref{eq:iter_prjec-h} is not necessarily tangent to $ \matr{X} $ in general, and there also exist other choices of retraction in \eqref{eq:iter_retr} besides the projection. 
Therefore, the existing analysis of RetrLS algorithms \eqref{eq:iter_retr} cannot be directly applied to the projection-based ones in \eqref{eq:iter_prjec-h}.

Regarding the practical performance, compared to the RetrLS algorithms \eqref{eq:iter_retr}, the above ProjLS algorithms \eqref{eq:iter_prjec-h} may achieve better experimental results in several scenarios, as their search direction can also include various normal components, which increases the likelihood of finding a global optimal solution or accelerating the algorithm in certain cases.
In fact, the superior empirical performance of projection-based algorithms compared to retraction-based ones has also been reported in the literature; see, for example, \cite{dalmau2018projected,gao2018new}.
In \cref{sec:numer_exper}, we will further illustrate this phenomenon through several numerical results, as well as theoretical analysis in some concrete examples.

\subsection{Limitations in the study of projection-based line-search methods}\label{subsec:limitations_problem}

Although several research works in the literature have explored ProjLS algorithms \eqref{eq:iter_prjec-h} on compact manifolds, as discussed in \cref{subsec:grad_proj_meth}, most of these works focus on specific cases, such as a specific manifold or a specific search direction.
In contrast to the more comprehensive and in-depth algorithmic and convergence research framework for  RetrLS algorithms \eqref{eq:iter_retr}  using only tangent vectors \cite{absil2009optimization,boumal2023intromanifolds},
% [3, 19]
 ProjLS algorithms \eqref{eq:iter_prjec-h} still lack a unified framework and convergence analysis for general cases.
This limitation can be more precisely illustrated through the following three aspects: (i) To our knowledge, the convergence of ProjLS algorithms \eqref{eq:iter_prjec-h} is mostly concerned with \emph{weak convergence}
\cite{chen2009tensor,gao2018new,oviedo2022global,oviedo2023worst,oviedo2019scaled,oviedo2021two,oviedo2019non},
% [21, 35, 65, 66, 67, 68, 69]
and the limited \emph{global convergence} results are all for fixed step sizes \cite{hu2019LROAT,li2019polar,yang2019epsilon},
% [42, 54, 93]
typically derived using the Taylor expansion.
(ii) Little attention has been paid to the impact of the normal component in the search direction \eqref{eq:iter_prjec-h-sum} on the performance of ProjLS algorithms \eqref{eq:iter_prjec-h}. In fact, this impact can be implicitly observed, such as in the iteration complexity of the \emph{shifted power method} (Shifted PM), which is a special case\footnote{See the discussion around \eqref{eq:iter_power_method} for details.} of the ProjLS algorithms \eqref{eq:iter_prjec-h}. However, no existing algorithmic framework currently allows for the adaptive adjustment of the normal direction.
(iii) In many ProjLS algorithms \eqref{eq:iter_prjec-h}, the tangent component in the search direction \eqref{eq:iter_prjec-h-sum} is often chosen to be the Riemannian gradient. It remains unclear whether an algorithm will still converge if the tangent component of the search direction is not the Riemannian gradient. 
A more systematic investigation is needed to understand the implications of this deviation.

In summary, the above discussions and limitations can be summarized as the following key question:

\begin{itemize}
\item[] \emph{Can one develop a unified framework for ProjLS algorithms that both generalizes existing algorithms and enables stronger convergence guarantees?}
\end{itemize}

\subsection{Contributions}\label{subsec:contri_TGP}

In this paper, we aim to address the above question.
To be more specific, to ensure that the tangent component of the search direction \eqref{eq:iter_prjec-h-sum} encompasses a broad class of existing projection-based algorithms summarized in \cref{subsec:special_literature}, we construct it in the form of a \emph{transformed Riemannian gradient} as follows:
\begin{align}\label{eq:c_tilde_hat_transf}
% \tilde{\matr{H}}_k \eqdef \matr{L}(\matr{X}_k) \grad f(\matr{X}_k) \matr{R}(\matr{X}_k),
\matr{L}(\matr{X}_k) \grad f(\matr{X}_k) \matr{R}(\matr{X}_k), 
\end{align}
where $\matr{L}(\matr{X}_k)\in \symmm{\RR^{n\times n}}$ and $\matr{R}(\matr{X}_k)\in\symmm{\RR^{r\times r}}$ are \emph{left} and \emph{right scaling matrices}, respectively.
In \cref{subsec:TGP_framework}, we will impose \cref{asp:H-tilde-grad-equiv} on the left and right scaling matrices to ensure that the expression in \eqref{eq:c_tilde_hat_transf} remains in the tangent space and thus still represents the tangent component $\tilde{\matr{H}}_k$. 
At the same time, this assumption guarantees that the expression in \eqref{eq:c_tilde_hat_transf} is \emph{scale-equivalent} to and \emph{directionally aligned} with the Riemannian gradient.\footnote{See \cref{lem:proper_c_k}(ii) for a detailed explanation.}
In this sense, it will be seen that the search direction also admits an alternative decomposition that is more closely related to the classical EGP algorithm in \eqref{eq:iter_prjec}, as shown below: 
\begin{align}
\matr{H}_k &\eqdef \matr{L}(\matr{X}_{k})\grad f(\matr{X}_k)\matr{R}(\matr{X}_k)+\matr{N}(\matr{X}_k)\label{eq:Trans_RGP}, \\
&=\matr{L}(\matr{X}_{k})\nabla f(\matr{X}_{k})\matr{R}(\matr{X}_k)+\matr{N}'(\matr{X}_k),\label{eq:Trans_EGP}
\end{align}
where  $\matr{N}'(\matr{X}_k)=\matr{L}(\matr{X}_{k})(\nabla f(\matr{X}_{k})-\grad f(\matr{X}_k))\matr{R}(\matr{X}_k)+\matr{N}(\matr{X}_k)$ is also a normal vector; as will be shown in \cref{lem:proper_c_k}(i).
In this paper, we refer to the ProjLS algorithms that use the search direction \eqref{eq:Trans_RGP} (or equivalently, \eqref{eq:Trans_EGP}) as the \emph{Transformed Gradient Projection} (TGP) algorithm (see \cref{alg:TGP}).
This can be interpreted as a \emph{transformed} variant of both RGD in \eqref{eq:iter_retr_RGD} and EGP in \eqref{eq:iter_prjec},  augmented by an additional normal matrix component.
Throughout the paper, we will make use of both equivalent formulations as needed.
The first form \eqref{eq:Trans_RGP} is more convenient for convergence analysis, as it clearly separates the tangent and normal components. 
In contrast, the second form \eqref{eq:Trans_EGP} more clearly reveals the connection between our framework and existing projection-based algorithmic updates, as it explicitly includes the Euclidean gradient term. 

The main contributions of this paper can be summarized as follows:
\begin{itemize}
\item \emph{Generality}: Our TGP algorithmic framework in \cref{alg:TGP} is quite general, encompassing the classical EGP algorithm \eqref{eq:iter_prjec} and Shifted PM method \eqref{eq:iter_power_method} as special cases, and intersecting the RetrLS algorithms \eqref{eq:iter_retr}.
It is a subclass of the ProjLS algorithms \eqref{eq:iter_prjec-h}.
%We would also like to emphasize that, although the classical power method on the unit sphere is not classified as a EGP algorithm  \eqref{eq:iter_prjec}, it is still included as a special case within our TGP framework; see  \eqref{eq:iter_power_method} for more details.  
An illustration of the relationships among these algorithms can be found in \cref{fig:compare-retra-proj-alg}. 
\item \emph{Important special cases}: For problem \eqref{eq:objec_func_g} and the proposed TGP algorithmic framework, our specific emphasis lies on the Stiefel or Grassmann manifold. It is evident that many important algorithms in the literature can be viewed as special cases of \cref{alg:TGP}, as detailed in \cref{subsec:special_literature}.
It also introduces several new special cases that have not been studied in the literature; see \cref{exa:LR_St,exa:LR_Gr}. In particular, by specifying the scaling matrices and normal vector, we obtain new algorithms TGP-$\ast$-E/R \eqref{eq:H_TGP_R_E}, TGP-$\ast$-DE/DF in \eqref{eq:TGP-DEF} and TGP-A-Eigen \eqref{eq:H-TGP-eigen}, which may outperform the classical algorithms in certain scenarios, as shown in \cref{sec:numer_exper}. 
\item \emph{Geometric properties of the projection}: In the convergence analysis of RetrLS algorithms \eqref{eq:iter_retr}, certain geometric inequalities related to the retraction play a crucial role \cite{boumal2019GlobalRatesConvergence,li2021weakly,liu2019QuadraticOptimizationOrthogonality}. However, similar inequalities for projection operators were previously lacking in the literature. Therefore, in this paper, to analyze the convergence of the TGP algorithms, we need to develop several new inequalities related to the projection onto a compact matrix manifold $\mm$, which characterize the variations in distance and function values under projection.
These results are crucial for the subsequent investigation of the convergence properties of TGP algorithms, and extend certain inequalities found in the literature concerning retractions \cite{boumal2019GlobalRatesConvergence,li2021weakly,liu2019QuadraticOptimizationOrthogonality}, when the retraction is constructed using the projection. A summary of our key geometric results is provided in Table~\ref{tab:comparison_proximally_smooth}. See \cref{sec:Prop-proj} for more details. 
\item \emph{Convergence properties}: We conduct a systematic exploration of the convergence properties of TGP algorithms across various stepsizes, encompassing the Armijo, Zhang-Hager type nonmonotone Armijo and fixed stepsizes, and establish their weak convergence, iteration complexity bound and global convergence under \cref{asp:H-tilde-grad-equiv} and \cref{asp:H-boundedness} (see \cref{sec:TGP_alg_frame,sec:TGP-A}).
The Armijo stepsize is a very natural option, and has been widely adopted in various types of algorithms.
%, including both retraction-based and projection-based ones. 
In order to better understand the behavior of the proposed TGP algorithm and improve its practical performance, we also consider the Zhang-Hager type nonmonotone stepsize and a fixed stepsize as alternative strategies. 
When the compact matrix manifold is the Stiefel or Grassmann manifold, the convergence results we obtain in these specific cases either encompass or surpass the existing results in the literature.
In particular, we prove the global convergence of \cref{alg:TGP} under Zhang-Hager type nonmonotone Armijo stepsize. To our knowledge, this is the first time that the global convergence is established for the Zhang-Hager type nonmonotone Armijo stepsize in the nonconvex setting. 
\cref{lem:nonmonotone-global-convergence}, which we prove, will also contribute to establishing the global convergence of other analogous nonmonotone algorithms. 
%It is also worth noting that although fixed stepsize has been employed in the listed literature \cite{chen2009tensor,gao2018new,yang2019epsilon,hu2019LROAT,li2019polar}, our derivation differs from those in the existing studies, and thus the conditions required for convergence are also different. 
\item \emph{Experimental efficiency}:  Through a series of numerical experiments and theoretical analysis, we show that different choices of scaling matrices and normal components in the search direction of \cref{alg:TGP} can lead to significantly different performance in practice.
As a result, the increased flexibility in choosing these components allows TGP algorithms to achieve superior experimental results compared to RetrLS algorithms \eqref{eq:iter_retr} and the EGP algorithm \eqref{eq:iter_prjec} in several scenarios.
\end{itemize}  

In this paper, as two significant examples of the problem \eqref{eq:objec_func_g}, we mainly focus on the \emph{Stiefel manifold} \cite{edelman1998geometry,wen2013feasible} and the \emph{Grassmann manifold} \cite{bendokat2020grassmann}.
In fact, it is worth noting that the proposed TGP algorithms and convergence results of this paper also apply to other compact matrix manifolds as well, \emph{e.g.}, the \emph{oblique manifold} \cite{trendafilov2002multimode} and the \emph{product of Stiefel manifolds} \cite{hu2018adaptive,li2019polar}, although we do not dive into the details in this paper.

\subsection{Organization}
The paper is organized as follows.
In \cref{sec:prelimi}, we recall several concepts for the Riemannian manifold, as well as the \L{}ojasiewicz gradient inequality.
In \cref{sec:TGP_alg_frame}, we introduce the TGP algorithmic framework, propose the assumptions concerning the scaling matrices $\matr{L}(\matr{X}_{k})$ and $\matr{R}(\matr{X}_{k})$, review related existing algorithms from the literature, and summarize the convergence results, which we will obtain, in \cref{table-example-3-0-9}.
% In \cref{sec:TGP_alg_frame},
% we propose the assumptions concerning the scaling matrices $\matr{L}(\matr{X}_{k})$ and $\matr{R}(\matr{X}_{k})$, and explore the search directions within the TGP algorithmic framework.
In \cref{sec:Prop-proj},
we study the geometric properties of the projection $\mathcal{P}_{\mathcal{M}}$, which will play a crucial role in the subsequent convergence analysis of TGP algorithms.
In \cref{sec:TGP-A,sec:nonmonotone_TGP,sec:fixed_TGP},
we focus on the investigation of TGP algorithms employing Armijo, Zhang-Hager type nonmonotone Armijo and fixed stepsizes, respectively.
We establish their weak convergence, iteration complexity and global convergence using the geometric properties of the projection.
In \cref{sec:numer_exper}, we conduct several numerical experiments to verify the efficiency of TGP algorithms.
In \cref{sec:conclu}, we provide a summary of the paper and discuss potential directions for future research. 
%and present several questions that may deserve further research.

\section{Preliminaries}\label{sec:prelimi}

\subsection{Notation}
In this paper, we endow the Euclidean space $\RR^{n \times r}$ with the standard Euclidean inner product defined as $\langle \matr{X}, \matr{Y} \rangle \eqdef \tr(\matr{X}^{\T}\matr{Y})$ for $\matr{X}, \matr{Y} \in \RR^{n \times r}$.
For a matrix $\matr{X}\in \RR^{n\times r}$, we denote its Frobenius norm by $\|\matr{X}\| \eqdef \sqrt{\langle \matr{X}, \matr{X} \rangle}$ and its Schatten $p$-norm by $\| \matr{X} \|_p$. In particular, $ \| \matr{X} \|_{\infty} $ represents its spectral norm. The smallest and largest singular values of $\matr{X}$ are denoted by $\sigma_{\min}(\matr{X})$ and $\sigma_{\max}(\matr{X})$, respectively.
Let \( \symmm{\RR^{m \times m}} \) and $\skewww{\RR^{m \times m}}$ denote the sets of \emph{symmetric} and \emph{skew-symmetric} matrices in $\RR^{m \times m}$, respectively.
For $ \matr{X}\in\symmm{\RR^{m\times m}}$, $\lambda_{\min}(\matr{X})$ and $\lambda_{\max}(\matr{X})$ refer to its smallest and largest eigenvalues, respectively.
For two matrices $ \matr{X}, \matr{Y}\in \symmm{\RR^{m\times m}}$, $ \matr{X} \succeq \matr{Y}$ indicates that $ \matr{X} - \matr{Y} $ is positive semi-definite.
For a matrix $\matr{X}\in\RR^{m\times m}$, we denote
$\symmo{\matr{X}}\eqdef\frac{1}{2}\left(\matr{X}+\matr{X}^{\T}\right)$ and $\skeww{\matr{X}}\eqdef\frac{1}{2}\left(\matr{X}-\matr{X}^{\T}\right)$.
We denote by \( \matr{I}_m \in \RR^{m \times m} \) the identity matrix and by \( \matr{0}_{m \times m'} \in \RR^{m \times m'} \) the zero matrix. If the dimension is clear from the context, we will abbreviate it as \( \matr{0} \).
Given multiple square matrices \( \matr{X}_1, \matr{X}_2 , \ldots , \matr{X}_{L} \in \RR^{m \times m} \), we denote by \( \Diag{\matr{X}_1 , \matr{X}_2, \ldots , \matr{X}_L} \in \RR^{Lm \times Lm} \) the square block diagonal matrix consisting of the given matrices.

For a point $ \vect{x}\in\RR^{m} $ and a subset $\mathcal{S}\subseteq\RR^{m}$, we denote by $ \dist(\vect{x}, \mathcal{S}) \eqdef \inf_{\vect{y} \in \mathcal{S}} \| \vect{x} - \vect{y} \| $ the distance between them. The projection of \( \vect{x} \) onto \( \mathcal{S} \) is denoted by \( \mathcal{P}_{\mathcal{S}}(\vect{x}) \). When the projection is not unique, \( \mathcal{P}_{\mathcal{S}}(\vect{x}) \) denotes an arbitrary projection of \( \vect{x} \). We define $ \openball{\vect{x}}{\rho}\eqdef \{ \vect{y}: \| \vect{y}-\vect{x} \| < \rho \} $ as the open ball centered at $ \vect{x} $ with radius $ \rho>0$, and similarly define \( \openball{\mathcal{S}}{\rho} \eqdef \{ \vect{y}: \dist(\matr{y}, \mathcal{S}) < \rho \} \).
Given a differentiable function $ F: \mathcal{E} \to \mathcal{E}'$ between two linear spaces $ \mathcal{E} $ and $\mathcal{E}'$, its differential at $\vect{x} \in \mathcal{E}$ is the linear mapping $\DD F(\vect{x})$ from $\mathcal{E}$ to $\mathcal{E}'$ defined as:
\[ \DD F(\vect{x})[\vect{v}] \eqdef \lim_{t \to 0} \frac{F(\vect{x}+t \vect{v}) - F(\vect{x})}{t}, \text{ for all } \vect{v} \in \mathcal{E}. \]
For the cost function $f$ in \eqref{eq:objec_func_g},
we denote $ f^{*} \eqdef  \min_{\matr{X} \in \mm} f(\matr{X}) $.

\subsection{Basic concepts for Riemannian manifold}\label{Concepts_RM}
In this paper, we consider $\mm$ as a submanifold of the ambient space $\RR^{n \times r}$, and endow $\mm$ with the Riemannian metric induced from the Euclidean metric on $\RR^{n \times r}$. To be more specific, the inner product on the \emph{tangent space} to $\mm$ at \( \matr{X} \in \mm \), denoted by $\TangM{\matr{X}}$, is defined as:
\[ \langle \matr{V}, \matr{V}' \rangle_{\matr{X}} \eqdef \langle \matr{V}, \matr{V}' \rangle = \tr(\matr{V}^{\T}\matr{V}'), \text{ for all } \matr{V}, \matr{V}' \in \TangM{\matr{X}}.  \]
We use $\NormalM{\matr{X}}$ to represent the \emph{normal space} to $\mm$ at $\matr{X}$, which is the orthogonal complement of the tangent space $\TangM{\matr{X}}$.
For the cost function $f$ in \eqref{eq:objec_func_g}, its \emph{Riemannian gradient} at $\matr{X} \in \mm$ is defined to be the unique tangent vector $\grad f(\matr{X}) \in \TangM{\matr{X}}$ satisfying
\[ \DD f(\matr{X})[\matr{V}] = \langle \grad f(\matr{X}), \matr{V} \rangle_{\matr{X}}, \text{ for all } \matr{V} \in \TangM{\matr{X}}. \]
In our setting where $\mm$ is a Riemannian submanifold of $\RR^{n \times r}$, the Riemannian gradient is equal to the projection of the classical Euclidean gradient \( \nabla f(\matr{X}) \) onto the tangent space to $\mm$ at $\matr{X}$ \cite[Eq. 3.37]{absil2009optimization}, \emph{i.e.},
\begin{equation}\label{eq:Riemannian-gradient-of-submanifold}
\grad f(\matr{X}) = \mathcal{P}_{\TangM{\matr{X}}}(\nabla f(\matr{X})).
\end{equation}
\begin{example}[Stiefel manifold]
The \emph{Stiefel manifold} is defined as $\St(r,n) \eqdef \{\matr{X}\in\RR^{n\times r}: \matr{X}^{\T}\matr{X}=\matr{I}_r\}$ \cite{stiefel1935richtungsfelder}.
If $r=1$, it is the unit sphere $\mathbb{S}^{n-1} \subseteq \RR^{n}$, and when $r=n$, it becomes the $n$-dimensional orthogonal group $\ON{n}\subseteq\RR^{n\times n}$. 
As demonstrated in \cite[Ex. 3.5.2]{absil2009optimization}, the tangent space to $\St(r, n)$ at $\matr{X} \in \St(r, n)$ can be expressed as
\begin{align}
\TangSt{\matr{X}} & =\{ \matr{V} \in \RR^{n \times r}: \matr{X}^{\T} \matr{V} + \matr{V}^{\T}\matr{X} = \matr{0}\}\label{def-St-tangent-space-product}\\
& = \{ \matr{X}\matr{A} + \matr{X}_{\perp}\matr{B}: \matr{A} \in \skewww{\RR^{r \times r}}, \matr{B} \in \RR^{(n-r) \times r} \}\label{def-St-tangent-space-decomp},
\end{align}
where \( \matr{X}_{\perp} \in \St(n-r, n) \) satisfies $[\matr{X},\matr{X}_{\perp}]\in\ON{n}$.
The normal space to \(\St(r, n)\)  at \( \matr{X} \in \St(r, n) \) satisfies
\begin{equation}\label{def-St-normal-space}
\NormalSt{\matr{X}} = \{ \matr{X}\matr{S}: \matr{S} \in \symmm{\RR^{r \times r}} \}.
\end{equation}
Moreover, the orthogonal projection of an arbitrary point $\matr{Y}\in\RR^{n\times r}$ onto the tangent space $\TangSt{\matr{X}}$ can be computed by
\begin{equation}\label{eq:proj_tangent_St}
\mathcal{P}_{\TangSt{\matr{X}}}(\matr{Y}) = (\matr{I}_{n}-\matr{X}\matr{X}^{\T})\matr{Y} + \matr{X}\skeww{\matr{X}^{\T}\matr{Y}}
=\matr{Y} - \matr{X}\symmo{\matr{X}^{\T}\matr{Y}}.
\end{equation}
Let $f$ be the cost function in \eqref{eq:objec_func_g}.
It follows from \eqref{eq:Riemannian-gradient-of-submanifold} and \eqref{eq:proj_tangent_St} that
\begin{equation}\label{eq:St_grad}
\ProjGrad{f}{\matr{X}} = \mathcal{P}_{\TangSt{\matr{X}}}( \nabla f(\matr{X}))
= \nabla f(\matr{X}) -\matr{X}\symmo{\matr{X}^{\T}\nabla f(\matr{X})}.
\end{equation}
For the Stiefel manifold $\St(r,n)$, the retraction can be chosen as the exponential map, QR decomposition, polar decomposition or Cayley transform; see \cite{hu2019brief} and the references therein.
It is well-known that the projection $\mathcal{P}_{\mathcal{M}}$ can be computed via the polar decomposition when $\mm = \St(r, n) $; see \cite[Lem. 5]{li2019polar} and the references therein.
\end{example}

\begin{example}[Grassmann manifold]
The \emph{Grassmann manifold} is defined as $\Gr(p,n) \eqdef \{\matr{X}\in\RR^{n\times n}: \matr{X}^{\T}=\matr{X}, \matr{X}^2=\matr{X}, \rank{\matr{X}}=p\}$  \cite{batzies2015geometric}, which is a set of projection matrices satisfying $\rank{\matr{X}}=p$.
It is also isomorphic\footnote{In this paper, for the sake of convenience in presentation, we will interchangeably use these two equivalent forms.} to $\St(p,n)/\ON{p}$, the quotient manifold of $\St(p,n)$ by the orthogonal group $\ON{p}$ \cite{bendokat2020grassmann}. 
Note that the ambient space of the Grassmann manifold $\Gr(p, n)$ is $\RR^{n \times n}$. It follows from the results obtained in \cite{bendokat2020grassmann,batzies2015geometric,sato2014optimization} that the tangent space to $\Gr(p, n)$ at $\matr{X} \in \Gr(p, n)$ can be represented by
\begin{align}
\TangGr{\matr{X}} & = \{ \matr{V} \in \symmm{\RR^{n \times n}} : \matr{V} = \matr{V} \matr{X} + \matr{X}\matr{V}\} \\
& = \{ \matr{\Omega} \matr{X} - \matr{X} \matr{\Omega}: \matr{\Omega} \in \skewww{\RR^{n \times n}} \}\\
& = \left\{ \matr{Q} \begin{bmatrix}\matr{0} & \matr{J}^{\T}\\ \matr{J} & \matr{0} \end{bmatrix} \matr{Q}^{\T} : \matr{X} = \matr{Q} \Diag{\matr{I}_p, \matr{0}}\matr{Q}^{\T} , \matr{J} \in \RR^{(n-p) \times p}, \matr{Q} \in \ON{n} \right\}.\label{eq:def-Gr-tangent-decomp}
\end{align}
The normal space to $\Gr(p, n)$ at $\matr{X}$ satisfies
\[ \NormalGr{\matr{X}} =  \{ \matr{S} - \ad_{\matr{X}}^2(\matr{S}): \matr{S} \in \symmm{\RR^{n \times n}}  \}, \]
where $ \ad_{\matr{X}}(\matr{S}) \eqdef \matr{X}\matr{S} - \matr{S} \matr{X}$.
As shown in \cite[Prop. 2.2]{sato2014optimization}, the orthogonal projection of $ \matr{Y} \in \RR^{n \times n} $ onto the tangent space to $\Gr(p, n)$ at $\matr{X}$ is
\begin{equation}\label{eq:proj_tangent_Gr}
\mathcal{P}_{\TangGr{\matr{X}}}(\matr{Y}) = 2\symmo{\matr{X}\symmo{\matr{Y}}(\matr{I}_n - \matr{X})}.
\end{equation}
Let $f$ be the cost function in \eqref{eq:objec_func_g}.
By equations \eqref{eq:Riemannian-gradient-of-submanifold} and \eqref{eq:proj_tangent_Gr}, we have
\begin{equation}\label{eq:Gr_grad}
\ProjGrad{f}{\matr{X}} = \mathcal{P}_{\TangGr{\matr{X}}}( \nabla f(\matr{X}))
= 2\symmo{\matr{X}\symmo{\nabla f(\matr{X})}(\matr{I}_n - \matr{X})}.
\end{equation}
For the Grassmann manifold $\Gr(p,n)$ in the form of the quotient manifold \( \St(p, n) / \ON{p} \), each retraction on the Stiefel manifold induces a corresponding retraction on $\Gr(p,n)$ \cite[Prop. 4.1.3]{absil2009optimization}. When the Grassmann manifold is represented as the set of projection matrices satisfying $\rank{\matr{X}}=p$, available retraction options involve utilizing QR decomposition \cite{sato2014optimization} and the exponential map \cite{bendokat2020grassmann}.
We will demonstrate later in \cref{lem:proj-Gr} that, when $\mm = \Gr(p, n)$,  the projection $\mathcal{P}_{\mathcal{M}}$ can be obtained from the eigenvalue decomposition.
\end{example}

\subsection{\L{}ojasiewicz gradient inequality}
In this subsection, we present some results about the \L{}ojasiewicz gradient inequality \cite{AbsMA05:sjo,loja1965ensembles,lojasiewicz1993geometrie,SU15:pro,Usch15:pjo}, which has been used in \cite{LUC2017globally,li2019polar,liu2019QuadraticOptimizationOrthogonality,ULC2019} and will also help us to establish the global convergence of TGP algorithms in this paper.

\begin{definition} [{\cite[Def. 2.1]{SU15:pro}}]\label{def:Lojasiewicz}
Let $\mm' \subseteq \RR^m$ be a Riemannian submanifold,
and $g: \mm' \to \RR$ be a differentiable function.
The function $g$ is said to satisfy a \emph{\L{}ojasiewicz gradient inequality} at $\vect{x} \in \mm'$, if there exist
$\varsigma>0$, $\theta\in [\frac{1}{2}, 1)$ and a neighborhood $\mathcal{U}$ in $\mm'$ of $\vect{x}$ such that for all $\vect{y}\in\mathcal{U}$, it follows that
\begin{equation}\label{eq:Lojasiewicz}
|{g}(\vect{y})-{g}(\vect{x})|^{\theta}\leq \varsigma\|\ProjGrad{g}{\vect{y}}\|.
\end{equation}
\end{definition}

\begin{lemma}[{\cite[Prop. 2.2]{SU15:pro}}]\label{lemma-SU15}
Let $\mm'\subseteq\RR^m$ be an analytic submanifold\footnote{See {\cite[Def. 2.7.1]{krantz2002primer}} or \cite[Def. 5.1]{LUC2018} for a definition of an analytic submanifold.} and $g: \mm' \to \RR$ be a real analytic function.
Then for any $\vect{x}\in \mm'$, $g$ satisfies a \L{}ojasiewicz gradient inequality \eqref{eq:Lojasiewicz} in the $\varepsilon$-neighborhood of $\vect{x}$, for
some\footnote{The values of $\varepsilon,\varsigma,\theta$ depend on the specific point in question.} $\varepsilon,\varsigma>0$ and $\theta\in [\frac{1}{2}, 1)$.
\end{lemma}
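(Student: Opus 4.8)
The plan is to reduce the statement to the classical Łojasiewicz inequality for real analytic functions on an open subset of Euclidean space, transported through an analytic coordinate chart. First I would fix $\vect{x}\in\mm'$ and invoke the definition of an analytic submanifold to obtain an analytic local parametrization $\varphi:\mathcal{O}\to\mm'$, where $\mathcal{O}\subseteq\RR^{d}$ is open (with $d=\dim\mm'$), $\varphi(\vect{0})=\vect{x}$, and $\varphi$ is an analytic diffeomorphism onto a neighborhood $\mathcal{U}_0$ of $\vect{x}$ in $\mm'$; moreover $\DD\varphi(\vect{u})$ has full rank $d$ for every $\vect{u}\in\mathcal{O}$. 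Then $h\eqdef g\circ\varphi:\mathcal{O}\to\RR$ is real analytic as a composition of real analytic maps. By the classical Łojasiewicz gradient inequality in $\RR^{d}$ (e.g. \cite{loja1965ensembles,lojasiewicz1993geometrie}), there exist $\varepsilon_0>0$, $\tilde\varsigma>0$ and $\theta\in[\tfrac12,1)$ such that $|h(\vect{u})-h(\vect{0})|^{\theta}\le\tilde\varsigma\,\|\nabla h(\vect{u})\|$ for all $\vect{u}$ with $\|\vect{u}\|<\varepsilon_0$ (shrinking $\varepsilon_0$ so that this ball lies in $\mathcal{O}$).

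The remaining work is to convert $\|\nabla h(\vect{u})\|$ back into $\|\grad g(\vect{y})\|$ with $\vect{y}=\varphi(\vect{u})$, and to convert $\|\vect{u}\|$-balls into distance-balls on $\mm'$. For the gradient: by the chain rule, for any tangent direction one has $\DD g(\vect{y})[\DD\varphi(\vect{u})[\vect{w}]]=\langle\nabla h(\vect{u}),\vect{w}\rangle$; since $\DD\varphi(\vect{u})$ maps $\RR^{d}$ isomorphically onto $\TangMM{\vect{y}}$ and $\langle\grad g(\vect{y}),\cdot\rangle_{\vect{y}}=\DD g(\vect{y})[\cdot]$ on $\TangMM{\vect{y}}$, we get $\nabla h(\vect{u})=\DD\varphi(\vect{u})^{*}\big[\grad g(\vect{y})\big]$, the adjoint acting on the (already tangent) vector $\grad g(\vect{y})$. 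Consequently $\|\nabla h(\vect{u})\|\le\|\DD\varphi(\vect{u})\|_{\mathrm{op}}\,\|\grad g(\vect{y})\|$. Because $\DD\varphi$ is continuous, on the compact closed ball $\{\|\vect{u}\|\le\varepsilon_0/2\}$ its operator norm is bounded by some constant $C_1<\infty$; set $\varsigma\eqdef C_1\tilde\varsigma$. This yields $|g(\vect{y})-g(\vect{x})|^{\theta}=|h(\vect{u})-h(\vect{0})|^{\theta}\le\tilde\varsigma\|\nabla h(\vect{u})\|\le\varsigma\|\grad g(\vect{y})\|$.

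Finally I would produce the neighborhood in the stated form. Since $\varphi$ is a homeomorphism onto $\mathcal{U}_0$, the image $\varphi(\{\|\vect{u}\|<\varepsilon_0/2\})$ is an open neighborhood of $\vect{x}$ in $\mm'$; it therefore contains $\openball{\vect{x}}{\varepsilon}\cap\mm'$ for some $\varepsilon>0$ (here $\dist$ is the ambient distance, matching the paper's convention). For every $\vect{y}$ in this intersection we have $\vect{y}=\varphi(\vect{u})$ with $\|\vect{u}\|<\varepsilon_0/2$, so the inequality derived above applies, completing the proof. The only mildly delicate point is the passage from a norm-ball in parameter space to a distance-ball on the manifold, which is exactly where local homeomorphism of the chart is used; everything else is a routine chain-rule and continuity/compactness argument. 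I do not expect any genuine obstacle — the substance is entirely the classical Euclidean Łojasiewicz inequality, which is being quoted, not reproved.
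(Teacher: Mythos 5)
The paper does not prove this lemma at all: it is quoted verbatim from \cite[Prop.\ 2.2]{SU15:pro}, so there is no internal proof to compare against. Your chart-pullback argument is correct and is essentially the standard proof underlying the cited proposition: pull $g$ back through an analytic parametrization, invoke the classical Euclidean \L{}ojasiewicz inequality for the analytic function $h=g\circ\varphi$, and transfer the gradient bound via $\nabla h(\vect{u})=\DD\varphi(\vect{u})^{*}[\grad g(\vect{y})]$, which gives the inequality in the needed direction ($\|\nabla h(\vect{u})\|\leq \|\DD\varphi(\vect{u})\|_{\mathrm{op}}\,\|\grad g(\vect{y})\|$), with the operator norm bounded by compactness on a closed chart ball; the passage from a parameter-space ball to an ambient-distance neighborhood of $\vect{x}$ in $\mm'$ is immediate from the subspace topology. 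The only cosmetic point worth recording is the exponent range: the classical Euclidean statement is sometimes given with $\theta\in(0,1)$, but since $|h(\vect{u})-h(\vect{0})|$ can be taken smaller than $1$ on a possibly smaller ball, the exponent may always be raised so that $\theta\in[\frac{1}{2},1)$, matching the statement.
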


\begin{theorem}[{\cite[Thm.  2.3]{SU15:pro}}]\label{theorem-SU15}
Let $\mm'\subseteq\RR^m$ be an analytic submanifold
and
$\{\vect{x}_k\}_{k\geq 0}\subseteq\mm'$.
Suppose that $g: \mm' \to \RR$ is real analytic and, for large enough $k$,\\
(i) there exists $\phi>0$ such that
\begin{equation*}\label{eq:sufficient_descent}
{g}(\vect{x}_k)-{g}(\vect{x}_{k+1})\geq \phi\|\ProjGrad{g}{\vect{x}_k}\|\|\vect{x}_{k+1}-\vect{x}_{k}\|;
\end{equation*}
(ii) $\ProjGrad{g}{\vect{x}_k}=\vect{0}$ implies that $\vect{x}_{k+1}=\vect{x}_{k}$.\\
Then any accumulation point $\vect{x}^*$ of $\{\vect{x}_k\}_{k\geq 0}$ must be the only limit point. Furthermore, if\\
(iii) there exists \(\zeta > 0\) such that for large enough $ k $ it holds that $ \| \vect{x}_{k+1} - \vect{x}_k \| \geq \zeta \| \grad g(\vect{x}_k) \|$,\\
then the convergence speed can be estimated by
\begin{equation}\label{eq:KL_convergence_rate}
\left\|\vect{x}_k-\vect{x}^*\right\| \lesssim \begin{cases}e^{-c k} & \text { if } \theta=\frac{1}{2} \ (\text {for some }c>0);  \\ k^{-\frac{1-\theta}{2 \theta-1}} & \text { if } \frac{1}{2}<\theta<1 .\end{cases}
\end{equation}
\end{theorem}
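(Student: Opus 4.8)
The plan is to run the now-standard \L{}ojasiewicz ``desingularization $+$ capture'' argument (in the spirit of \cite{AbsMA05:sjo}), adapted to the submanifold $\mm'$. First note that condition (i) makes $\{g(\vect{x}_k)\}$ nonincreasing; together with the continuity of $g$ and the existence of a subsequence converging to $\vect{x}^{*}$, this forces $g(\vect{x}_k)\downarrow g^{*}\eqdef g(\vect{x}^{*})$. If $g(\vect{x}_k)=g^{*}$ for some large $k$, then $g(\vect{x}_j)=g^{*}$ for all $j\ge k$, so condition (i) gives $\phi\,\|\grad g(\vect{x}_j)\|\,\|\vect{x}_{j+1}-\vect{x}_j\|\le 0$, and combined with condition (ii) this makes $\{\vect{x}_k\}$ eventually constant and all assertions trivial; the same happens if $\grad g(\vect{x}_k)=\vect{0}$ for some $k$. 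So I may assume $g(\vect{x}_k)>g^{*}$ and $\grad g(\vect{x}_k)\ne\vect{0}$ for all large $k$. By \cref{lemma-SU15}, $g$ satisfies a \L{}ojasiewicz gradient inequality at $\vect{x}^{*}\in\mm'$: there are $\varsigma>0$, $\theta\in[\tfrac12,1)$ and $\rho>0$ with $|g(\vect{y})-g^{*}|^{\theta}\le\varsigma\,\|\grad g(\vect{y})\|$ for every $\vect{y}\in\openball{\vect{x}^{*}}{\rho}\cap\mm'$.

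Next, introduce the concave desingularizing function $\varphi(t)\eqdef\tfrac{\varsigma}{1-\theta}\,t^{1-\theta}$ on $[0,\infty)$, so $\varphi(0)=0$ and $\varphi'(t)=\varsigma\,t^{-\theta}$. For any index $k$ with $\vect{x}_k\in\openball{\vect{x}^{*}}{\rho}$, concavity gives $\varphi(g(\vect{x}_k)-g^{*})-\varphi(g(\vect{x}_{k+1})-g^{*})\ge\varphi'(g(\vect{x}_k)-g^{*})\,\bigl(g(\vect{x}_k)-g(\vect{x}_{k+1})\bigr)$; since $\varphi'(g(\vect{x}_k)-g^{*})=\varsigma\,(g(\vect{x}_k)-g^{*})^{-\theta}\ge\|\grad g(\vect{x}_k)\|^{-1}$ by the \L{}ojasiewicz inequality, condition (i) then yields the crucial ``finite-length'' estimate
\[
  \varphi\bigl(g(\vect{x}_k)-g^{*}\bigr)-\varphi\bigl(g(\vect{x}_{k+1})-g^{*}\bigr)\ \ge\ \phi\,\|\vect{x}_{k+1}-\vect{x}_k\|.
\]

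The main obstacle is to guarantee that the whole tail of $\{\vect{x}_k\}$ stays inside $\openball{\vect{x}^{*}}{\rho}$, where this estimate is valid; this is the ``capture'' step in which the conditions interact. Since a subsequence converges to $\vect{x}^{*}$ and $\varphi(g(\vect{x}_k)-g^{*})\to 0$, I pick $K$ large enough that $\|\vect{x}_K-\vect{x}^{*}\|+\tfrac1\phi\,\varphi(g(\vect{x}_K)-g^{*})<\rho$. An induction then shows that if $\vect{x}_K,\dots,\vect{x}_k\in\openball{\vect{x}^{*}}{\rho}$, summing the length estimate and using the triangle inequality gives $\|\vect{x}_{k+1}-\vect{x}^{*}\|\le\|\vect{x}_K-\vect{x}^{*}\|+\tfrac1\phi\,\varphi(g(\vect{x}_K)-g^{*})<\rho$, so $\vect{x}_{k+1}\in\openball{\vect{x}^{*}}{\rho}$ as well. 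Hence $\sum_{k\ge K}\|\vect{x}_{k+1}-\vect{x}_k\|\le\tfrac1\phi\,\varphi(g(\vect{x}_K)-g^{*})<\infty$, so $\{\vect{x}_k\}$ is Cauchy and converges; as $\vect{x}^{*}$ is an accumulation point, $\vect{x}_k\to\vect{x}^{*}$, which is therefore the unique limit point.

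Finally, under condition (iii) I estimate the rate through the tail lengths $r_k\eqdef\sum_{j\ge k}\|\vect{x}_{j+1}-\vect{x}_j\|\ge\|\vect{x}_k-\vect{x}^{*}\|$. Telescoping the length estimate gives $r_k\le\tfrac1\phi\,\varphi(g(\vect{x}_k)-g^{*})=\tfrac{\varsigma}{\phi(1-\theta)}\,(g(\vect{x}_k)-g^{*})^{1-\theta}$, while condition (iii) together with the \L{}ojasiewicz inequality gives $r_k-r_{k+1}=\|\vect{x}_{k+1}-\vect{x}_k\|\ge\zeta\,\|\grad g(\vect{x}_k)\|\ge\tfrac{\zeta}{\varsigma}\,(g(\vect{x}_k)-g^{*})^{\theta}$. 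Eliminating $g(\vect{x}_k)-g^{*}$ between these two bounds produces a scalar recursion $r_{k+1}\le r_k-c_0\,r_k^{\theta/(1-\theta)}$ for some $c_0>0$. When $\theta=\tfrac12$ the exponent equals $1$, giving geometric decay $r_k\lesssim e^{-ck}$ for some $c>0$; when $\tfrac12<\theta<1$ the exponent $\theta/(1-\theta)$ exceeds $1$, and the standard lemma on recursions of this type yields $r_k\lesssim k^{-(1-\theta)/(2\theta-1)}$. Since $\|\vect{x}_k-\vect{x}^{*}\|\le r_k$, this is precisely \eqref{eq:KL_convergence_rate}, which completes the plan.
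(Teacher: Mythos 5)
This theorem is quoted from the literature (\cite[Thm.~2.3]{SU15:pro}); the paper itself gives no proof, so there is nothing internal to compare against. Your argument is correct and is essentially the standard proof of the cited result: monotone decrease plus the \L{}ojasiewicz inequality and the concave desingularizer $\varphi(t)=\tfrac{\varsigma}{1-\theta}t^{1-\theta}$ give the per-step length bound, the capture/induction step keeps the tail in the \L{}ojasiewicz neighborhood and yields finite length and hence a unique limit, and condition (iii) converts the tail-length recursion $r_{k+1}\le r_k-c_0 r_k^{\theta/(1-\theta)}$ into the stated exponential or sublinear rate. The only cosmetic point is that the degenerate cases ($g(\vect{x}_k)=g^{*}$ or $\grad g(\vect{x}_k)=\vect{0}$) should be invoked only for indices $k$ large enough that (i)–(ii) are in force, which is how you in fact use them.
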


\section{TGP algorithmic framework and a summary of the convergence results}\label{sec:TGP_alg_frame}
In this section, we first present the TGP algorithmic framework, and then recall several related algorithms from the literature, demonstrating how they can be regarded as special instances of our algorithmic framework. Following this, in \cref{table-example-3-0-9}, we summarize the convergence results established in this paper, whose detailed proofs will be presented in the subsequent sections.

\subsection{TGP algorithmic framework}\label{subsec:TGP_framework}

The general TGP algorithms proposed in \cref{subsec:contri_TGP} can be summarized as in \cref{alg:TGP}. 

\begin{algorithm}
\caption{TGP algorithm}\label{alg:TGP}
\begin{algorithmic}[1]
\STATE{{\bf Input:} starting point $\matr{X}_{0}$.}
\STATE{{\bf Output:} the iterates $\matr{X}_{k}$, $k\geq 1$.}
\FOR{$k=0,1,2,\cdots$, until a stopping criterion is satisfied}
\STATE Compute
\begin{equation}\label{eq:updat_schem_TGP}
\matr{Y}_k(\tau) = \matr{X}_{k}-\tau\matr{H}_{k},
\end{equation}
where $\matr{H}_{k}$ is selected as in \eqref{eq:Trans_RGP} (or equivalently, \eqref{eq:Trans_EGP}). 
\STATE Choose the stepsize $\tau_{k}>0$ along the curve defined by
\begin{equation}
\matr{Z}_k(\tau)=\mathcal{P}_{\mathcal{M}}(\matr{Y}_k(\tau)).
\end{equation}
\STATE Update
\begin{equation}\label{eq:upd_pi_ytau}
\matr{X}_{k+1}=\matr{Z}_k(\tau_{k}).
\end{equation}
\ENDFOR
\end{algorithmic}
\end{algorithm}

We now provide some explanation and motivation for the components of the search direction \( \matr{H}_k \) in \eqref{eq:Trans_RGP} (or equivalently, \eqref{eq:Trans_EGP}), including the scaling matrices $\matr{L}(\matr{X}_{k})$, $\matr{R}(\matr{X}_{k})$, and the additional normal vectors \( \matr{N}(\matr{X}_k) \) in \eqref{eq:Trans_RGP} and \( \matr{N}'(\matr{X}_k) \) in \eqref{eq:Trans_EGP}.

\begin{itemize}
\item Scaling matrices: In \eqref{eq:Trans_EGP}, the Euclidean gradient \(\nabla f(\matr{X}_k)\) is multiplied by \( \matr{L}(\matr{X}_k) \) and \( \matr{R}(\matr{X}_k) \) on both sides, which is similar to the use of the scaling matrices in the \emph{scaled gradient projection} method for optimization over a closed convex set \cite{bonettini2015new,bonettini2008scaled,crisci2022convergence}. Therefore, we refer to the matrices \( \matr{L}(\matr{X}_k) \) and \( \matr{R}(\matr{X}_k) \) as the \emph{scaling matrices} in this paper. 
Certainly, from the perspective of \eqref{eq:Trans_RGP}, the scaling matrices can also be viewed as a transformation applied to the Riemannian gradient \( \grad f(\matr{X}_k) \). 
In the scaled gradient projection method over a closed convex set \cite{bonettini2015new,bonettini2008scaled,crisci2022convergence}, a common assumption is that the scaling matrix is positive definite and possesses eigenvalues that are uniformly bounded, ensuring that the iterates move towards a proper search direction. Inspired by these considerations, we introduce the following assumptions regarding the scaling matrices in \cref{alg:TGP}.
\begin{assumption}\label{asp:H-tilde-grad-equiv}
\mbox{} % TODO: add label for (A1) and (A2)
\begin{itemize}[leftmargin=2\parindent]
\item[{\rm ({\bf A1})}] $\matr{L}(\matr{X}_k)\matr{V}\matr{R}(\matr{X}_k)\in\TangM{\matr{X}_k}$ for all $ \matr{V} \in \TangM{\matr{X}_k}$ and $k \in \NN$.
\item[{\rm ({\bf A2})}] there exist positive constants \(\upsilon, \varpi > 0\) such that for all $k \in \NN$, it holds that
\begin{align}
\upsilon  \| \grad f(\matr{X}_k) \| & \leq  \| \matr{L}(\matr{X}_k)\grad f(\matr{X}_k) \matr{R}(\matr{X}_k)\| \leq \varpi \| \grad f(\matr{X}_k) \|,\\
\upsilon \| \grad f(\matr{X}_k) \|^2 & \leq \langle \grad f(\matr{X}_k), \matr{L}(\matr{X}_k)\grad f(\matr{X}_k) \matr{R}(\matr{X}_k) \rangle \leq \varpi \| \grad f(\matr{X}_k) \|^2.
\end{align}
\end{itemize}
\end{assumption}
% We will discuss the assumption in detail later in \cref{sec:update_scheme_in_algorithm}.

As will be shown in \cref{lem:proper_c_k}, these assumptions ensure that the expression in \eqref{eq:c_tilde_hat_transf} remains in the tangent space, and is scale-equivalent to and directionally aligned with the Riemannian gradient. 
We present a broad class of choices satisfying \cref{asp:H-tilde-grad-equiv} for \( \matr{L}(\matr{X}_k) \) and \( \matr{R}(\matr{X}_k) \) on the Stiefel and Grassmann manifolds, as detailed in \cref{exa:LR_St} and \cref{exa:LR_Gr}. 
These choices include several existing algorithms from the literature (as detailed in \cref{subsec:special_literature}) as special cases. Moreover, we will introduce a new algorithm, TGP-A-Eigen \eqref{eq:H-TGP-eigen}, by specifying the scaling matrices later in \cref{subsec:eigenvalue-problem}. 
\item Additional normal vector: The introduction of the additional normal vector is essential for enhancing the generality of the TGP framework, enabling it to encompass the RGD and EGP algorithms on the Stiefel manifold, as well as the Shifted PM method \eqref{eq:iter_power_method} on the unit sphere. 
Moreover, we will see that this additional normal vector can also lead to new algorithms, such as  TGP-$\ast$-E/R \eqref{eq:H_TGP_R_E}, which perform better in certain cases, as shown in \cref{sec:numer_exper}.
\end{itemize}

In \cref{alg:TGP}, with various specific choices of the aforementioned components, it includes several existing algorithms from the literature as special cases; see \cref{subsec:special_literature} for more details.
It can be seen that there exists a partial overlap between the TGP algorithms and the RetrLS algorithms \eqref{eq:iter_retr}.
If \( \matr{H}_k \) is chosen as a tangent vector to \( \mm \) at \( \matr{X}_k \), then the TGP algorithms reduce to a special class of RetrLS algorithms \eqref{eq:iter_retr} using the projection as a retraction.
We further observe that the classical EGP algorithm \eqref{eq:iter_prjec} and the Shifted PM method \eqref{eq:iter_power_method} are both special cases of the TGP algorithms, and TGP algorithms fall within the class of ProjLS algorithms \eqref{eq:iter_prjec-h}. 
There also exists an overlap between EGP and the Shifted PM. For example, by setting the shift $s=1$ in \eqref{eq:iter_power_method}, the scheme reduces to EGP on the unit sphere with a fixed stepsize of 1.
See \cref{fig:compare-retra-proj-alg} for more details.
\begin{figure}[htbp]
\centering
\begin{tikzpicture}

\node [draw,
ellipse,
minimum width =7cm,
minimum height =6cm,
text width = 3.5cm,
align = left
] at (-2.5,0) {};
% \node[align=center] (lab) at (-4.15, 0.1) {RetrLS \\algorithms \eqref{eq:iter_retr}};
\node[align=center] (lab) at (-4.15, 0.85) {RetrLS \\algorithms \eqref{eq:iter_retr}};

\node [draw,
ellipse,
minimum width = 9.5cm,
minimum height =5.6cm,
] at (2.25,0){};
\node[align=center] (lab) at (2.5, 2) {TGP algorithms \eqref{eq:Trans_RGP} or \eqref{eq:Trans_EGP}: \\  $ \matr{H}_k = \matr{L}(\matr{X}_k)\grad f(\matr{X}_k)\matr{R}(\matr{X}_k) $ \\ $ + \matr{N}(\matr{X}_k)$};

\node [draw,
ellipse,
minimum width = 12cm,
minimum height =7cm,
] at (3.5,0){};
\node[align=center] (lab) at (8.2, 0.1) {ProjLS\\ algorithms \eqref{eq:iter_prjec-h}};

\node [draw,
ellipse,
minimum width = 6cm,
minimum height =2.5cm,
] at (-2.5, -1){};
\node[align=center] (lab) at (-3.8, -1.0) {RGD algorithms \eqref{eq:iter_retr_RGD}: \\ $\matr{H}_k = \grad f(\matr{X}_k)$};

\node [draw,
ellipse,
minimum width = 3.3cm,
minimum height =2.5cm,
] at (2.35, -1.5){};
\node[align=center] (lab) at (2.3, -1.5) {EGP algorithm \eqref{eq:iter_prjec}: \\ $\matr{H}_k = \nabla f(\matr{X}_k)$};

\node [draw,
ellipse,
minimum width = 4.2cm,
minimum height =2.5cm,
] at (4.3, 0.2){};
\node[align=center] (lab) at (4.3, 0.2) { Shifted PM \eqref{eq:iter_power_method}: \\ $\matr{H}_k = \nabla f(\matr{X}_k) + (1-s)\matr{X}_k,$\\ $ \mm  = \mathbb{S}^{n-1} = \St(1, n)$};

\node[align=center] (lab) at (-0.73, 0.9) {$ \matr{H}_k  \in \TangM{\matr{X}_k},$ \\ $ \retr_{\matr{X}_{k}}(\cdot) = \mathcal{P}_{\mathcal{M}}(\matr{X}_{k} + \cdot)$};
% \node[align=center] (lab) at (-0.73, 0.2) {$ \matr{H}_k  \in \TangM{\matr{X}_k},$ \\ $ \retr_{\matr{X}_{k}}(\cdot) = \mathcal{P}_{\mathcal{M}}(\matr{X}_{k} + \cdot)$};
\end{tikzpicture}
\caption{Relationships among TGP algorithms \eqref{eq:Trans_RGP} (or equivalently, \eqref{eq:Trans_EGP}), RetrLS algorithms \eqref{eq:iter_retr} and ProjLS algorithms \eqref{eq:iter_prjec-h} on $\mm$}
\label{fig:compare-retra-proj-alg}
\end{figure}

As will be discussed in \cref{rem:new-metric}, under \cref{asp:H-tilde-grad-equiv}, if the normal component is omitted (i.e., \( \matr{H}_k = \tilde{\matr{H}}_k \)), \( \matr{H}_k \) coincides with the Riemannian gradient under a new Riemannian metric. In this sense, TGP without the normal component can be viewed as a special instance of a \emph{Riemannian preconditioned method} \cite{gao2025optimization,dong2022new,gao2024riemannian} or, equivalently, of an \emph{inexact Riemannian gradient method} \cite{absil2009optimization}.
Moreover, our framework provides a unified way to construct a broader class of Riemannian metrics, extending those previously studied on the Stiefel and Grassmann manifolds; see \cref{exa:LR_St,exa:LR_Gr} for more details. 
More importantly, the TGP framework admits much more general search directions than purely tangential or simply scaled gradients. For instance, we may choose  
$\matr{H}_k = p_k(\grad f(\matr{X}_k)) + \matr{N}(\matr{X}_k)$, where \( p_k: \TangM{\matr{X}_k} \to \TangM{\matr{X}_k} \) is a linear or nonlinear mapping, such as $p_k(\cdot) = \operatorname{Hess}^{-1}(\matr{X}_k)(\cdot)$, where $\operatorname{Hess}(\cdot)$ represents the Riemannian Hessian \cite{absil2009optimization}. 
Assumption~({\bf A2}) still holds when \( \matr{X}_k \) 
lies in a region where \( f \) is locally geodesically strongly convex, and our convergence analysis can be correspondingly extended to such cases. 

At the end of this subsection, we would like to remark that, as mentioned earlier, several variants of the EGP algorithm \eqref{eq:iter_prjec} have also been extensively studied for the optimization problem $\min_{\vect{x}\in\mathcal{X}} f(\vect{x})$, where the feasible region $\mathcal{X}\subseteq\RR^{m}$ is a closed convex subset, and the convergence properties were established utilizing the properties of the projection onto $\mathcal{X}$  \cite{nesterov2018LecturesConvexOptimization,nocedal2006NumericalOptimization,beck2017FirstOrderMethodsOptimization}. For example, the \emph{scaled gradient projection} method over a closed convex set was developed in \cite{bonettini2015new,bonettini2008scaled,crisci2022convergence}, where the update scheme is given by $\vect{x}_{k+1} = \mathcal{P}_{\mathcal{X}}(\vect{x}_k - \tau_k \matr{D}_k \nabla f(\vect{x}_k))$ and $ \matr{D}_k $ is the \emph{scaling matrix}, which is usually assumed to be positive definite. If the feasible set $ \mm $ in problem \eqref{eq:objec_func_g} is non-convex, as is the case with the Stiefel manifold and Grassmann manifold we focus on in this paper, the existing convergence analysis for the closed convex constraint optimization cannot be directly applied to the ProjLS algorithms \eqref{eq:iter_prjec-h}.

\subsection{Constructing $\matr{L}(\matr{X}_k)$ and $\matr{R}(\matr{X_k})$ satisfying \cref{asp:H-tilde-grad-equiv}}\label{subsec:assumption-L-R}

The following lemma helps us derive the conditions under which our examples meet the assumption ({\bf A2}) based on the eigenvalues of the scaling matrices.

\begin{lemma}\label{lem:multiplied-by-symm}
Let \( \matr{L} \in \symmm{\RR^{n \times n}} \) and \( \matr{R} \in \symmm{\RR^{r \times r}} \) be positive semi-definite matrices. Then for all \( \matr{T} \in \RR^{n \times r} \), we have
\begin{align*}
\lambda_{\min}(\matr{L}) \lambda_{\min}(\matr{R}) \| \matr{T} \| &\leq \| \matr{L}\matr{T}\matr{R} \| \leq \lambda_{\max}(\matr{L})\lambda_{\max}(\matr{R}) \| \matr{T} \|,\\
\lambda_{\min}(\matr{L}) \lambda_{\min}(\matr{R}) \| \matr{T} \|^2 &\leq \langle \matr{T}, \matr{L}\matr{T}\matr{R} \rangle \leq \lambda_{\max}(\matr{L})\lambda_{\max}(\matr{R}) \| \matr{T} \|^2.
\end{align*}
\end{lemma}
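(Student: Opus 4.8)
The plan is to reduce everything to the spectral/eigenvalue decompositions of $\matr{L}$ and $\matr{R}$ and then use the invariance of the Frobenius norm and inner product under orthogonal transformations. First I would write $\matr{L} = \matr{U}\matr{\Lambda}\matr{U}^{\T}$ and $\matr{R} = \matr{W}\matr{\Sigma}\matr{W}^{\T}$ with $\matr{U}\in\ON{n}$, $\matr{W}\in\ON{r}$, and $\matr{\Lambda} = \Diag{\lambda_1,\ldots,\lambda_n}$, $\matr{\Sigma} = \Diag{\sigma_1,\ldots,\sigma_r}$ the (nonnegative) eigenvalues. Setting $\matr{S} \eqdef \matr{U}^{\T}\matr{T}\matr{W}$, the orthogonal invariance of $\|\cdot\|$ gives $\|\matr{L}\matr{T}\matr{R}\| = \|\matr{U}\matr{\Lambda}\matr{U}^{\T}\matr{T}\matr{W}\matr{\Sigma}\matr{W}^{\T}\| = \|\matr{\Lambda}\matr{S}\matr{\Sigma}\|$ and similarly $\|\matr{T}\| = \|\matr{S}\|$, so the first claimed inequality becomes the diagonal statement $\lambda_{\min}(\matr{L})\lambda_{\min}(\matr{R})\|\matr{S}\| \le \|\matr{\Lambda}\matr{S}\matr{\Sigma}\| \le \lambda_{\max}(\matr{L})\lambda_{\max}(\matr{R})\|\matr{S}\|$.

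Next I would verify this diagonal statement entrywise. The $(i,j)$ entry of $\matr{\Lambda}\matr{S}\matr{\Sigma}$ is $\lambda_i \sigma_j S_{ij}$, so $\|\matr{\Lambda}\matr{S}\matr{\Sigma}\|^2 = \sum_{i,j}\lambda_i^2\sigma_j^2 S_{ij}^2$. Since all $\lambda_i\in[\lambda_{\min}(\matr{L}),\lambda_{\max}(\matr{L})]$ and all $\sigma_j\in[\lambda_{\min}(\matr{R}),\lambda_{\max}(\matr{R})]$ (using positive semi-definiteness, so these bounds are meaningful and nonnegative), we have $\lambda_{\min}(\matr{L})^2\lambda_{\min}(\matr{R})^2 S_{ij}^2 \le \lambda_i^2\sigma_j^2 S_{ij}^2 \le \lambda_{\max}(\matr{L})^2\lambda_{\max}(\matr{R})^2 S_{ij}^2$ for each term; summing over $i,j$ and taking square roots yields the first pair of inequalities.

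For the second pair, orthogonal invariance of the Euclidean inner product gives $\langle \matr{T}, \matr{L}\matr{T}\matr{R}\rangle = \langle \matr{U}^{\T}\matr{T}\matr{W}, \matr{U}^{\T}\matr{L}\matr{T}\matr{R}\matr{W}\rangle = \langle \matr{S}, \matr{\Lambda}\matr{S}\matr{\Sigma}\rangle = \sum_{i,j}\lambda_i\sigma_j S_{ij}^2$. Because each $\lambda_i\sigma_j\ge 0$ lies between $\lambda_{\min}(\matr{L})\lambda_{\min}(\matr{R})$ and $\lambda_{\max}(\matr{L})\lambda_{\max}(\matr{R})$, the same termwise bounding as above — now applied to $\lambda_i\sigma_j S_{ij}^2$ against $\lambda_{\min}(\matr{L})\lambda_{\min}(\matr{R}) S_{ij}^2$ and $\lambda_{\max}(\matr{L})\lambda_{\max}(\matr{R}) S_{ij}^2$ — together with $\sum_{i,j}S_{ij}^2 = \|\matr{S}\|^2 = \|\matr{T}\|^2$ gives the claim.

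I do not anticipate a genuine obstacle here; this is essentially a bookkeeping exercise once one passes to the diagonalizing bases. The only point requiring a little care is making sure positive semi-definiteness is actually used — it guarantees the eigenvalues are nonnegative, so that squaring preserves the ordering in the first pair of inequalities and so that the products $\lambda_i\sigma_j$ in the second pair are nonnegative and genuinely sandwiched between the stated extremes (if $\matr{L}$ or $\matr{R}$ had a negative eigenvalue, $\lambda_i\sigma_j$ could fall outside $[\lambda_{\min}\lambda_{\min},\,\lambda_{\max}\lambda_{\max}]$ and the lower bound in particular would fail). Writing out the short computation with the entrywise estimates is all that remains.
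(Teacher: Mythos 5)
Your proof is correct and follows essentially the same route as the paper: diagonalize $\matr{L}$ and $\matr{R}$, use orthogonal invariance of the Frobenius norm and inner product to reduce to the diagonal case, and bound $\sum_{i,j}\lambda_i^2\sigma_j^2 S_{ij}^2$ and $\sum_{i,j}\lambda_i\sigma_j S_{ij}^2$ termwise. The only difference is cosmetic — the paper proves the lower bounds and notes the upper bounds follow similarly, while you write both out explicitly.
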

\begin{proof}
It suffices to prove the left sides of the above two inequalities, as the proofs for the right sides can be demonstrated in a similar manner.
Let $ \matr{L} = \matr{Q}_{\matr{L}}^{\T}\matr{\Lambda}_{\matr{L}}\matr{Q}_{\matr{L}} $ and $ \matr{R} = \matr{Q}_{\matr{R}}^{\T}\matr{\Lambda}_{\matr{R}}\matr{Q}_{\matr{R}} $  be the spectral decompositions, with \( \matr{\Lambda}_{\matr{L}} = \Diag{\lambda_{1}, \lambda_{2} , \ldots , \lambda_{n}} \) and \( \matr{\Lambda}_{\matr{R}} = \Diag{\lambda'_{1}, \lambda'_{2} , \ldots , \lambda'_{r}} \).
Let \( \matr{T}' = \matr{Q}_{\matr{L}}\matr{T}\matr{Q}_{\matr{R}}^{\T} \). Then \( \| \matr{T}'\| = \|\matr{T} \| \) and
\( \| \matr{L} \matr{T}\matr{R} \| = \| \matr{Q}_{\matr{L}}^{\T} \matr{\Lambda}_{\matr{L}}\matr{T}'\matr{\Lambda}_{\matr{R}} \matr{Q}_{\matr{R}}\| = \| \matr{\Lambda}_{\matr{L}}\matr{T}'\matr{\Lambda}_{\matr{R}} \| \). Note that \(\lambda_{i}, \lambda'_{j} \geq 0\) for all \( 1 \leq i \leq n, 1 \leq j \leq r \). It follows that
\begin{equation*}
\| \matr{L}\matr{T}\matr{R} \| = \| \matr{\Lambda}_{\matr{L}}\matr{T}'\matr{\Lambda}_{\matr{R}} \| \geq \lambda_{\min}(\matr{L})\lambda_{\min}(\matr{R}) \| \matr{T}' \| = \lambda_{\min}(\matr{L})\lambda_{\min}(\matr{R}) \| \matr{T} \|.
\end{equation*}
Similarly, we have that
\begin{align*}
\langle \matr{T}, \matr{L} \matr{T}\matr{R} \rangle &= \langle \matr{T}', \matr{\Lambda}_{\matr{L}}\matr{T}'\matr{\Lambda}_{\matr{R}} \rangle
= \sum_{i,j} \lambda_{i} \lambda'_{j} (\matr{T}'_{ij})^2 \\
&\geq \lambda_{\min}(\matr{L})\lambda_{\min}(\matr{R}) \| \matr{T}' \|^2 =  \lambda_{\min}(\matr{L})\lambda_{\min}(\matr{R}) \| \matr{T} \|^2.
\end{align*}
The proof is complete.
\end{proof}

Now we construct two examples on $\St(r, n)$ and $\Gr(p, n)$, respectively.
It will be seen that the classes of scaling matrices \( \matr{L}(\matr{X}_k) \) and \( \matr{R}(\matr{X}_k) \) in these two examples satisfy the assumptions ({\bf A1}) and ({\bf A2}).

\begin{example}[A class of $\matr{L}(\matr{X})$ and $\matr{R}(\matr{X})$ on $\St(r, n)$]\label{exa:LR_St}
Let $\matr{E} \in \symmm{\RR^{r \times r}} $, $\matr{F} \in \symmm{\RR^{(n-r) \times (n-r)}}$, \(\mu \in \RR\). Define 
\begin{equation}\label{eq:LR_St}
\matr{L}(\matr{X}) = \matr{I}_n + \mu \matr{X}\matr{E}\matr{X}^{\T} + \matr{X}_{\perp}\matr{F}\matr{X}_{\perp}^{\T}, \quad \matr{R}(\matr{X}) = \matr{E}.
\end{equation}
Then, for $\matr{V} \in \TangSt{\matr{X}}$ in the form \eqref{def-St-tangent-space-decomp}, we always have
\[ \matr{L}(\matr{X})\matr{V}\matr{R}(\matr{X}) = \matr{X} (\matr{A} + \mu\matr{E}^{\T}\matr{A}\matr{E}) + \matr{X}_{\perp}(\matr{B} + \matr{F}\matr{B}\matr{E}).  \]
Since $\matr{E}^{\T}\matr{A}\matr{E} \in \skewww{\RR^{r \times r}}$ for $\matr{A} \in \skewww{\RR^{r \times r}}$, we have $\matr{L}(\matr{X})\matr{V}\matr{R}(\matr{X}) \in \TangSt{\matr{X}}$ for all $\matr{V} \in \TangSt{\matr{X}}$. Therefore, the assumption ({\bf A1}) is always satisfied.
Noting that  $[\matr{X}, \matr{X}_{\perp}] \in \ON{n}$ and $\matr{L}(\matr{X}) = \matr{I}_n + [\matr{X}, \matr{X}_{\perp}] \Diag{ \mu\matr{E}, \matr{F}} [\matr{X}, \matr{X}_{\perp}]^{\T}$, we have
$$\lambda_{\min}(\matr{L}(\matr{X})) = 1 + \min \{  \lambda_{\min}(\mu\matr{E}), \lambda_{\min}(\matr{F}) \},\ \ \lambda_{\max}(\matr{L}(\matr{X})) = 1 + \max \{  \lambda_{\max}(\mu\matr{E}), \lambda_{\max}(\matr{F}) \}.$$
It follows from \cref{lem:multiplied-by-symm} that the assumption ({\bf A2}) is satisfied if
\[ \left(1+\min\{ \lambda_{\min}(\mu\matr{E}), \lambda_{\min}(\matr{F})\}\right)\lambda_{\min}(\matr{E}) \geq \upsilon,\ \ \left(1+\max\{ \lambda_{\max}(\mu\matr{E}), \lambda_{\max}(\matr{F})\}\right)\lambda_{\max}(\matr{E}) \leq \varpi. \]
\end{example}

\begin{remark}\label{rem:LR_St}
Let $f$ be the cost function in \eqref{eq:objec_func_g} and $\matr{X}\in\St(r,n)$.
As a generalization of the Riemannian gradient $\grad f(\matr{X})$, a tangent vector \( \matr{D}_{\rho}(\matr{X}) \in \TangSt{\matr{X}}\) was introduced in \cite{jiang2015framework,liu2019QuadraticOptimizationOrthogonality} as follows:
\begin{align}
\matr{D}_{\rho}(\matr{X}) &\eqdef 2\rho \left(\nabla f(\matr{X}) - \matr{X} \nabla f(\matr{X})^{\T}\matr{X}\right) + (1-2\rho) \left(\nabla f(\matr{X}) - \matr{X} \matr{X}^{\T}\nabla f(\matr{X})\right)\notag\\
& = \nabla f(\matr{X}) - \matr{X}\left(2\rho \nabla f(\matr{X})^{\T}\matr{X}+(1-2\rho)\matr{X}^{\T}\nabla f(\matr{X})\right), \label{eq:d_rho_X}
\end{align}
for $\rho\in\RR$.
In this paper, we always assume that $\rho>0$.
It is clear that $\matr{D}_{1/4}(\matr{X})=\grad f(\matr{X})$.
By direct calculations, it follows that the above $\matr{D}_{\rho}(\matr{X})$, satisfies
\begin{align*}
\matr{D}_{\rho}(\matr{X}) & = \left(\matr{I}_n+ (4\rho - 1)\matr{X}\matr{X}^{\T}\right) \grad f(\matr{X}) \\
& = \left(\matr{I}_n+ (4\rho - 1)\matr{X}\matr{X}^{\T}\right) \nabla f(\matr{X}) - 4\rho\matr{X}\symmo{\matr{X}^{\T}\nabla f(\matr{X})}.
\end{align*}
Therefore, when \( \matr{E} =  \matr{I}_r \), \( \matr{F} = \matr{0}_{(n-r) \times (n-r)} \) and \(\mu = 4\rho-1\), \cref{exa:LR_St} includes $\matr{D}_{\rho}(\matr{X})$ as a special case.

In addition to the above $\matr{D}_{\rho}(\matr{X})$, \cref{exa:LR_St} further introduces a variety of new search directions due to the flexibility in choosing parameters \( \matr{E} \), \( \matr{F} \), and \(\mu\), including cases where \( \matr{F} \) is non-zero. In \cref{subsec:eigenvalue-problem,subsec:tgp_diffe_scale}, we will specify parameter choices that yield a new search direction, which outperforms classical search directions in certain cases.
\end{remark}

\begin{example}[A class of $\matr{L}(\matr{X})$ and $\matr{R}(\matr{X})$ on $\Gr(p, n)$]\label{exa:LR_Gr}
Let
\[ \matr{L}(\matr{X}) =  \matr{R}(\matr{X}) = \matr{Q} \Diag{\matr{G}_1, \matr{G}_2}\matr{Q}^{\T},   \]
where $\matr{G}_1 \in \symmm{\RR^{p \times p}}$, $\matr{G}_2 \in \symmm{\RR^{(n-p) \times (n-p)}}$ and the orthogonal matrix $ \matr{Q} \in \ON{n} $ satisfies that $\matr{X} = \matr{Q}\Diag{\matr{I}_{p}, \matr{0}_{(n-p) \times (n-p)}}\matr{Q}^{\T}$.
Then, for all $\matr{V} \in \TangGr{\matr{X}}$ in the form of \eqref{eq:def-Gr-tangent-decomp}, we have
\[ \matr{L}(\matr{X})\matr{V}\matr{R}(\matr{X}) = \matr{Q} \begin{bmatrix}\matr{0}_{p \times p}&\matr{G}_1\matr{J}^{\T}\matr{G}_2\\\matr{G}_2\matr{J}\matr{G}_1&\matr{0}_{(n-p) \times (n-p)}\end{bmatrix} \matr{Q}^{\T}\in \TangGr{\matr{X}}. \]
Therefore, the assumption ({\bf A1}) is satisfied. Note that the eigenvalues of \( \matr{L}(\matr{X}) \) are the union of the eigenvalues of \( \matr{G}_1 \) and \( \matr{G}_2 \).
It follows from \cref{lem:multiplied-by-symm} that the assumption ({\bf A2}) is satisfied if \(\min\{ \lambda_{\min}(\matr{G}_1), \lambda_{\min}(\matr{G}_2)\} \geq \sqrt{\upsilon} \) and \(\max\{ \lambda_{\max}(\matr{G}_1), \lambda_{\max}(\matr{G}_2)\} \leq \sqrt{\varpi} \).
\end{example}

\subsection{Orthogonal projection of $\matr{H}_k$}\label{subsec:orthgonal-projection-H}
In the TGP algorithmic framework, we denote
\begin{align*}
\matr{L}_k&\eqdef \matr{L}(\matr{X}_{k}),\ \  \matr{R}_{k}\eqdef\matr{R}(\matr{X}_{k}),\ \  \matr{N}_k\eqdef\matr{N}(\matr{X}_k), \\
\tilde{\matr{H}}_k&\eqdef\mathcal{P}_{\TangM{\matr{X}_k}}(\matr{H}_k),\ \  \hat{\matr{H}}_k \eqdef\mathcal{P}_{\NormalM{\matr{X}_k}}(\matr{H}_k)
\end{align*}
for simplicity.
We now show the following relationship between $ \tilde{\matr{H}}_k $ and $ \grad f(\matr{X}_{k}) $ under \cref{asp:H-tilde-grad-equiv}.

\begin{lemma}\label{lem:proper_c_k}
(i) If $\matr{L}_k$ and $\matr{R}_k$ satisfy the assumption ({\bf A1}), then
\begin{equation}\label{eq:H-projection}
\tilde{\matr{H}}_k = \matr{L}_k \grad f(\matr{X}_k)\matr{R}_k.
\end{equation}
(ii) If $\matr{L}_k$ and $\matr{R}_k$ satisfy \cref{asp:H-tilde-grad-equiv}, then
\begin{align}
\upsilon \| \grad f(\matr{X}_k) \| &\leq \| \tilde{\matr{H}}_k \| \leq \varpi \| \grad f(\matr{X}_k) \|, \label{eq:H-grad-norm-equiv}\\
\upsilon \| \grad f(\matr{X}_k) \|^2 &\leq \langle \matr{\tilde{H}}_k, \grad f(\matr{X}_k) \rangle = \langle \matr{H}_k, \grad f(\matr{X}_k) \rangle \leq \varpi \| \grad f(\matr{X}_k) \|^2. \label{eq:H-grad-product}
\end{align}
\end{lemma}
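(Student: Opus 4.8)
The plan is to prove the two parts of \cref{lem:proper_c_k} in order, using the orthogonal decomposition $\matr{H}_k = \tilde{\matr{H}}_k + \hat{\matr{H}}_k$ from \eqref{eq:iter_prjec-h-sum} together with the definitions $\matr{H}_k = \matr{L}_k\grad f(\matr{X}_k)\matr{R}_k + \matr{N}_k$ from \eqref{eq:Trans_RGP}, where $\matr{N}_k \in \NormalM{\matr{X}_k}$.

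For part (i): I would start from \eqref{eq:Trans_RGP} and apply the tangential projection $\mathcal{P}_{\TangM{\matr{X}_k}}$ to both sides. By linearity of the projection, $\tilde{\matr{H}}_k = \mathcal{P}_{\TangM{\matr{X}_k}}(\matr{L}_k\grad f(\matr{X}_k)\matr{R}_k) + \mathcal{P}_{\TangM{\matr{X}_k}}(\matr{N}_k)$. Since $\matr{N}_k$ lies in the normal space $\NormalM{\matr{X}_k}$, which is the orthogonal complement of $\TangM{\matr{X}_k}$, the second term vanishes. For the first term, since $\grad f(\matr{X}_k) \in \TangM{\matr{X}_k}$, the assumption ({\bf A1}) guarantees that $\matr{L}_k\grad f(\matr{X}_k)\matr{R}_k \in \TangM{\matr{X}_k}$, so the projection acts as the identity on it, giving $\tilde{\matr{H}}_k = \matr{L}_k\grad f(\matr{X}_k)\matr{R}_k$. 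This also incidentally confirms the claim made in the text right after \eqref{eq:Trans_EGP} that $\matr{N}'(\matr{X}_k)$ is a normal vector: one rewrites $\matr{H}_k = \matr{L}_k\nabla f(\matr{X}_k)\matr{R}_k + \matr{N}'_k$ and observes $\matr{N}'_k = \matr{L}_k(\nabla f(\matr{X}_k) - \grad f(\matr{X}_k))\matr{R}_k + \matr{N}_k$, where the difference $\nabla f(\matr{X}_k) - \grad f(\matr{X}_k) = \mathcal{P}_{\NormalM{\matr{X}_k}}(\nabla f(\matr{X}_k))$ is normal by \eqref{eq:Riemannian-gradient-of-submanifold}, and ({\bf A1}) applied in its contrapositive/complementary form (or a short direct argument) keeps it normal after left- and right-multiplication.

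For part (ii): Given part (i), the inequalities \eqref{eq:H-grad-norm-equiv} are exactly the first pair of inequalities in ({\bf A2}) rewritten via $\tilde{\matr{H}}_k = \matr{L}_k\grad f(\matr{X}_k)\matr{R}_k$, so there is nothing more to do there. For \eqref{eq:H-grad-product}, I would first note that $\langle \matr{H}_k, \grad f(\matr{X}_k)\rangle = \langle \tilde{\matr{H}}_k + \hat{\matr{H}}_k, \grad f(\matr{X}_k)\rangle = \langle \tilde{\matr{H}}_k, \grad f(\matr{X}_k)\rangle$, since $\hat{\matr{H}}_k$ is normal and $\grad f(\matr{X}_k)$ is tangent, so the two inner products coincide. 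Then substituting $\tilde{\matr{H}}_k = \matr{L}_k\grad f(\matr{X}_k)\matr{R}_k$ turns the remaining inequalities into precisely the second pair of inequalities in ({\bf A2}), completing the proof.

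This lemma is essentially a bookkeeping consequence of the definitions and the two assumptions, so there is no serious obstacle; the only point requiring a little care is the orthogonality argument that makes the normal components drop out of the relevant projections and inner products, and — if one wants to be thorough — verifying that $\matr{N}'_k$ as defined after \eqref{eq:Trans_EGP} is indeed normal, which uses ({\bf A1}) in the form that $\matr{L}_k(\cdot)\matr{R}_k$ maps the normal space into itself (equivalently, that the adjoint of $\matr{V}\mapsto\matr{L}_k\matr{V}\matr{R}_k$, which is again $\matr{V}\mapsto\matr{L}_k\matr{V}\matr{R}_k$ by symmetry of $\matr{L}_k,\matr{R}_k$, preserves $\TangM{\matr{X}_k}$, hence the map preserves $\NormalM{\matr{X}_k}$).
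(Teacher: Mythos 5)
Your proof is correct and follows essentially the same route as the paper's: orthogonality of the tangent/normal decomposition, linearity of the tangential projection, and assumption ({\bf A1}) to keep $\matr{L}_k\grad f(\matr{X}_k)\matr{R}_k$ in the tangent space, with (ii) obtained from ({\bf A2}) via (i) together with the vanishing of $\langle \hat{\matr{H}}_k,\grad f(\matr{X}_k)\rangle$. The only (harmless) difference is that you project the formulation \eqref{eq:Trans_RGP} directly, whereas the paper starts from the Euclidean-gradient form \eqref{eq:Trans_EGP} and therefore needs, in its main argument, the fact you relegate to an aside — that by symmetry of $\matr{L}_k,\matr{R}_k$ and ({\bf A1}) the map $\matr{W}\mapsto\matr{L}_k\matr{W}\matr{R}_k$ preserves the normal space, so that $\matr{L}_k(\nabla f(\matr{X}_k)-\grad f(\matr{X}_k))\matr{R}_k$ is normal.
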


\begin{proof}
(i) For any normal vector $ \matr{W} \in \NormalM{\matr{X}_k} $ and tangent vector $ \matr{V} \in \TangM{\matr{X}_k}  $, we have that
\[ \langle \matr{L}_k\matr{W}\matr{R}_k, \matr{V} \rangle = \langle \matr{W}, \matr{L}_k^{\T}\matr{V}\matr{R}_k^{\T} \rangle =  \langle \matr{W}, \matr{L}_k\matr{V}\matr{R}_k \rangle = 0,\]
where the assumption ({\bf A1}) is used in the last equality. It follows that $ \matr{L}_k\matr{W}\matr{R}_k \in \NormalM{\matr{X}_k} $ for all $ \matr{W} \in \NormalM{\matr{X}_k} $. Combining this property with the fact that \( \nabla f(\matr{X}_k) - \grad f(\matr{X}_k) \in \NormalM{\matr{X}_k} \), we have \( \matr{L}_k(\nabla f(\matr{X}_k) - \grad f(\matr{X}_k))\matr{R}_k \in \NormalM{\matr{X}_k} \).
Then, it follows from the fact that the projection onto the linear subspace $ \TangM{\matr{X}_k} $ is a linear mapping that
\begin{align*}
\tilde{\matr{H}}_k & =  \mathcal{P}_{\TangM{\matr{X}_k}}(\matr{L}_k \nabla f(\matr{X}_k)\matr{R}_k + \matr{N}_k)\\
& = \mathcal{P}_{\TangM{\matr{X}_k}}(\matr{L}_k \grad f(\matr{X}_k)\matr{R}_k) +  \mathcal{P}_{\TangM{\matr{X}_k}}(\matr{L}_k (\nabla f(\matr{X}_k) - \grad f(\matr{X}_k)) \matr{R}_k + \matr{N}_k)\\
& \stackrel{(a)}{=} \mathcal{P}_{\TangM{\matr{X}_k}}(\matr{L}_k \grad f(\matr{X}_k)\matr{R}_k)
\stackrel{(b)}{=}\matr{L}_k \grad f(\matr{X}_k)\matr{R}_k,
\end{align*}
where \((a)\) is due to $ \matr{L}_k(\nabla f(\matr{X}_k) - \grad f(\matr{X}_k))\matr{R}_k$, $ \matr{N}_k \in \NormalM{\matr{X}_k} $ and $ (b) $ follows from the assumption ({\bf A1}).
(ii) can be easily obtained from the assumption ({\bf A2}) and \eqref{eq:H-projection}.
The proof is complete.
\end{proof}

\begin{remark}
Upon satisfying the assumption ({\bf A1}), based on  \cref{fig:compare-retra-proj-alg}, we now more explicitly show the connections between the TGP algorithms and the RetrLS algorithms \eqref{eq:iter_retr} (see \cref{fig:exte_rima_opt} as an example on $\St(1,2)$). In fact, it follows from \cref{lem:proper_c_k}(i) that
\begin{align}\label{eq:c_tilde_hat_sum}
\matr{H}_k = \tilde{\matr{H}}_k + \hat{\matr{H}}_k = \matr{L}_k \grad f(\matr{X}_k)\matr{R}_k + \hat{\matr{H}}_k,
\end{align}
where $\tilde{\matr{H}}_k\in\TangM{\matr{X}_k}$ and $\hat{\matr{H}}_k\in\NormalM{\matr{X}_k}$.
Note that $ \retr_{\matr{X}}(\matr{V}) \eqdef \mathcal{P}_{\mm}(\matr{X} + \matr{V}) $ for $ \matr{V} \in \TangM{\matr{X}} $ forms a retraction on $ \mm $ \cite{absil2012projection}.
If $\hat{\matr{H}}_k=\vect{0}$ (equivalently, $\matr{H}_k\in\TangM{\matr{X}_k}$), the update scheme \eqref{eq:updat_schem_TGP} in \cref{alg:TGP} reduces to the RetrLS algorithms in \cite{absil2009optimization}.
In this sense, TGP algorithms can also be viewed as an extension of the RetrLS algorithms on $ \mm $ (employing the projection as a form of retraction).
\end{remark}

\begin{figure}
\centering
\includegraphics[width=0.7\linewidth]{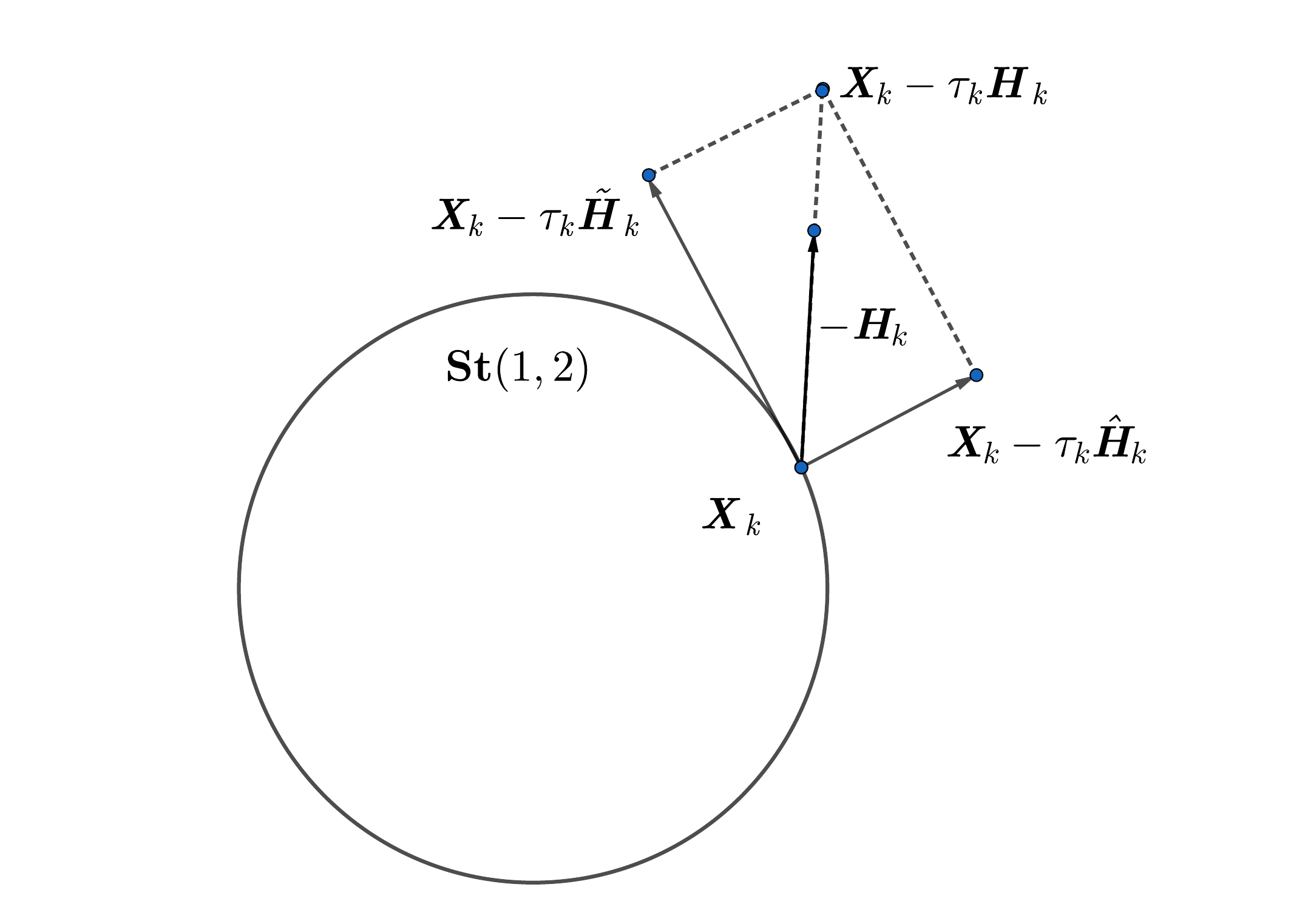}
\caption{An illustration of the update scheme \eqref{eq:updat_schem_TGP} in \cref{alg:TGP}}\label{fig:exte_rima_opt}
\end{figure}

\begin{remark}\label{rem:new-metric}
It was demonstrated in \cite[Sec. 4.1]{wen2013feasible} that the tangent vector \( \matr{D}_{1/2}(\matr{X}) \in \TangSt{\matr{X}}\), as defined in \eqref{eq:d_rho_X}, can be interpreted as the Riemannian gradient of $ f(\matr{X}) $ when considering a distinct Riemannian metric, namely the \emph{canonical} metric. In \cref{alg:TGP}, if the scaling matrices $ \matr{L}(\matr{X}) $ and \( \matr{R}(\matr{X}) \) satisfy \cref{asp:H-tilde-grad-equiv} and vary smoothly on $ \mm $, we can define a new Riemannian metric by
\[ \langle \matr{V}, \matr{V}' \rangle_{\matr{X}} \eqdef \langle \matr{V}, \matr{L}(\matr{X})^{-1}\matr{V}' \matr{R}(\matr{X})^{-1} \rangle \text{ for all } \matr{V}, \matr{V}' \in \TangM{\matr{X}}.  \]
Then the Riemannian gradient under this new metric is \(  \matr{L}(\matr{X}) \grad f(\matr{X}) \matr{R}(\matr{X}) \) by definition.
In this scenario, according to \cref{lem:proper_c_k}, the tangent component \(\tilde{\matr{H}}_k\) can also be interpreted as a Riemannian gradient of \( f(\matr{X}) \) at \( \matr{X}_k \) under the above new Riemannian metric.
\end{remark}

\subsection{Related algorithms in the literature}\label{subsec:special_literature}
As stated in \cref{subsec:TGP_framework}, \cref{alg:TGP} comprises several existing algorithms from the literature as special cases.
To begin, let us first recall these algorithms within the context of the Stiefel manifold, an extensively examined scenario.
% Let $f$ be the cost function in \eqref{eq:objec_func_g} and $\matr{X}\in\St(r,n)$.
% As a generalization of the Riemannian gradient $\grad f(\matr{X})$, a tangent vector \( \matr{D}_{\rho}(\matr{X}) \in \TangSt{\matr{X}}\) was introduced in \cite{jiang2015framework,liu2019QuadraticOptimizationOrthogonality} as follows:
% \begin{align}
% \matr{D}_{\rho}(\matr{X}) &\eqdef 2\rho \left(\nabla f(\matr{X}) - \matr{X} \nabla f(\matr{X})^{\T}\matr{X}\right) + (1-2\rho) \left(\nabla f(\matr{X}) - \matr{X} \matr{X}^{\T}\nabla f(\matr{X})\right)\notag\\
% & = \nabla f(\matr{X}) - \matr{X}\left(2\rho \nabla f(\matr{X})^{\T}\matr{X}+(1-2\rho)\matr{X}^{\T}\nabla f(\matr{X})\right), \label{eq:d_rho_X}
% \end{align}
% for $\rho\in\RR$.
% In this paper, we always assume that $\rho>0$.
% It is clear that $\matr{D}_{1/4}(\matr{X})=\grad f(\matr{X})$.
The quantity \( \matr{D}_{\rho}(\matr{X})\) defined in \eqref{eq:d_rho_X} possesses the following properties.
\begin{lemma}[{\cite[Prop. 2, Eq. (28)]{liu2019QuadraticOptimizationOrthogonality}}]\label{lem-D-rho-prop}
(i) The tangent vector \( \matr{D}_{\rho}(\matr{X})\) is equivalent to the Riemannian gradient $ \grad f(\matr{X})$:
\[ \frac{1}{2} \min \left\{1, \frac{1}{2\rho}\right\} \cdot\left\|\matr{D}_\rho(\matr{X})\right\| \leq\|\operatorname{grad} f(\matr{X})\| \leq \max \left\{1, \frac{1}{2\rho}\right\} \cdot\left\|\matr{D}_\rho(\matr{X})\right\| .  \]\\
(ii) The tangent vector \( \matr{D}_{\rho}(\matr{X})\) satisfies that
\[ \left\langle\nabla g\left(\matr{X}\right), \matr{D}_\rho\left(\matr{X}\right)\right\rangle = \left\langle\grad g\left(\matr{X}\right), \matr{D}_\rho\left(\matr{X}\right)\right\rangle  \geq \min\left\{ \rho, \frac{1}{4\rho}, \frac{1}{4\rho^{2}} \right\} \cdot\left\|\matr{D}_\rho\left(\matr{X}\right)\right\|^2.  \]
\end{lemma}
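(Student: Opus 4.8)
The plan is to start from the explicit formula for $\matr{D}_\rho(\matr{X})$ and reduce both claims to a computation in an orthonormal frame adapted to $\matr{X}$. First I would recall from \cref{rem:LR_St} the key identity
\[
\matr{D}_\rho(\matr{X}) = \left(\matr{I}_n + (4\rho-1)\matr{X}\matr{X}^{\T}\right)\grad f(\matr{X}),
\]
which exhibits $\matr{D}_\rho(\matr{X})$ as the image of the Riemannian gradient under the symmetric matrix $\matr{M}_\rho \eqdef \matr{I}_n + (4\rho-1)\matr{X}\matr{X}^{\T}$. Since $[\matr{X},\matr{X}_\perp]\in\ON{n}$, the matrix $\matr{M}_\rho$ has eigenvalue $4\rho$ on $\mathrm{range}(\matr{X})$ and eigenvalue $1$ on $\mathrm{range}(\matr{X}_\perp)$; hence $\lambda_{\min}(\matr{M}_\rho) = \min\{1,4\rho\}$ and $\lambda_{\max}(\matr{M}_\rho) = \max\{1,4\rho\}$.

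For part (i), the bound $\|\grad f(\matr{X})\| \le \max\{1,\tfrac{1}{2\rho}\}\|\matr{D}_\rho(\matr{X})\|$ would follow by writing $\grad f(\matr{X}) = \matr{M}_\rho^{-1}\matr{D}_\rho(\matr{X})$ and noting $\|\matr{M}_\rho^{-1}\|_\infty = 1/\lambda_{\min}(\matr{M}_\rho) = \max\{1,\tfrac{1}{4\rho}\}$; one then checks $\max\{1,\tfrac{1}{4\rho}\}\le\max\{1,\tfrac{1}{2\rho}\}$. For the reverse bound, I would write $\matr{D}_\rho(\matr{X})=\matr{M}_\rho\grad f(\matr{X})$ so that $\|\matr{D}_\rho(\matr{X})\|\le\lambda_{\max}(\matr{M}_\rho)\|\grad f(\matr{X})\| = \max\{1,4\rho\}\|\grad f(\matr{X})\|$, which rearranges to $\tfrac{1}{\max\{1,4\rho\}}\|\matr{D}_\rho(\matr{X})\|\le\|\grad f(\matr{X})\|$; since $\tfrac12\min\{1,\tfrac{1}{2\rho}\} = \tfrac{1}{\max\{2,4\rho\}}\le\tfrac{1}{\max\{1,4\rho\}}$, the stated (slightly looser) constant follows. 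One could equivalently invoke \cref{lem:multiplied-by-symm} with $\matr{L}=\matr{M}_\rho$, $\matr{R}=\matr{I}_r$ for the right-hand inequalities after rescaling.

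For part (ii), the first equality $\langle\nabla f(\matr{X}),\matr{D}_\rho(\matr{X})\rangle = \langle\grad f(\matr{X}),\matr{D}_\rho(\matr{X})\rangle$ holds because $\matr{D}_\rho(\matr{X})\in\TangSt{\matr{X}}$ (shown in \cref{rem:LR_St}) while $\nabla f(\matr{X}) - \grad f(\matr{X})\in\NormalSt{\matr{X}}$, so the difference pairs to zero. For the inequality, I would expand $\grad f(\matr{X})$ in the decomposition \eqref{def-St-tangent-space-decomp} as $\grad f(\matr{X}) = \matr{X}\matr{A} + \matr{X}_\perp\matr{B}$ with $\matr{A}\in\skewww{\RR^{r\times r}}$; then from \eqref{eq:d_rho_X} one computes $\matr{D}_\rho(\matr{X}) = 2\rho\,\matr{X}\matr{A} + \matr{X}_\perp\matr{B}$ (the $\matr{X}\matr{A}$ component is rescaled by $4\rho$ under $\matr{M}_\rho$, but one must track the factor-of-$2$ conventions in \eqref{eq:d_rho_X} carefully — this bookkeeping is the one genuinely fiddly point). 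Using orthonormality of $[\matr{X},\matr{X}_\perp]$,
\[
\langle\grad f(\matr{X}),\matr{D}_\rho(\matr{X})\rangle = 2\rho\|\matr{A}\|^2 + \|\matr{B}\|^2, \qquad \|\matr{D}_\rho(\matr{X})\|^2 = (2\rho)^2\|\matr{A}\|^2 + \|\matr{B}\|^2.
\]
Comparing coefficients of $\|\matr{A}\|^2$ and $\|\matr{B}\|^2$ shows $\langle\grad f,\matr{D}_\rho\rangle \ge c\|\matr{D}_\rho\|^2$ with $c = \min\{\tfrac{2\rho}{4\rho^2},1\} = \min\{\tfrac{1}{2\rho},1\}$, and one then checks that $\min\{\rho,\tfrac{1}{4\rho},\tfrac{1}{4\rho^2}\}$ is a lower bound for this constant in all regimes of $\rho>0$, giving the stated (again slightly looser) bound. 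The main obstacle is purely notational: reconciling the factor-of-$2$ in the definition \eqref{eq:d_rho_X} with the eigenvalue-$4\rho$ picture from $\matr{M}_\rho$, so that the component of $\matr{D}_\rho$ along $\matr{X}\matr{A}$ is correctly identified; once that is pinned down, both parts are immediate from the spectral structure of $\matr{M}_\rho$.
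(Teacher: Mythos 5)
The paper itself offers no proof of this lemma: it is imported verbatim from \cite[Prop.~2, Eq.~(28)]{liu2019QuadraticOptimizationOrthogonality}, so the only internal material to compare against is the identity $\matr{D}_\rho(\matr{X})=(\matr{I}_n+(4\rho-1)\matr{X}\matr{X}^{\T})\grad f(\matr{X})$ stated without proof in \cref{rem:LR_St}. Your route through that identity and the frame $[\matr{X},\matr{X}_\perp]$ is the natural standalone argument, and your part (i) is correct as written: $\matr{M}_\rho\eqdef\matr{I}_n+(4\rho-1)\matr{X}\matr{X}^{\T}$ has eigenvalues $4\rho$ and $1$, and the looser constants in the statement follow from $\max\{1,\tfrac{1}{4\rho}\}\le\max\{1,\tfrac{1}{2\rho}\}$ and $\tfrac{1}{\max\{2,4\rho\}}\le\tfrac{1}{\max\{1,4\rho\}}$.

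In part (ii), however, you resolved the factor-of-two bookkeeping the wrong way. Writing $\grad f(\matr{X})=\matr{X}\matr{A}+\matr{X}_\perp\matr{B}$ with $\matr{A}=\skeww{\matr{X}^{\T}\nabla f(\matr{X})}$, the matrix $\matr{A}$ is already the full $\matr{X}$-coefficient of $\grad f(\matr{X})$, so applying $\matr{M}_\rho$ (eigenvalue $4\rho$ on $\mathrm{range}(\matr{X})$) gives $\matr{D}_\rho(\matr{X})=4\rho\,\matr{X}\matr{A}+\matr{X}_\perp\matr{B}$, not $2\rho\,\matr{X}\matr{A}+\matr{X}_\perp\matr{B}$; one can also see this directly from \eqref{eq:d_rho_X}, since the $\matr{X}$-component there is $2\rho(\matr{C}-\matr{C}^{\T})=4\rho\matr{A}$ with $\matr{C}=\matr{X}^{\T}\nabla f(\matr{X})$. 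Your $2\rho$ formula is thus inconsistent with your own spectral description of $\matr{M}_\rho$. The corrected computation yields $\langle\grad f(\matr{X}),\matr{D}_\rho(\matr{X})\rangle=4\rho\|\matr{A}\|^2+\|\matr{B}\|^2$ and $\|\matr{D}_\rho(\matr{X})\|^2=16\rho^2\|\matr{A}\|^2+\|\matr{B}\|^2$, so the tight constant is $\min\{1,\tfrac{1}{4\rho}\}$ rather than your $\min\{1,\tfrac{1}{2\rho}\}$. Since $\min\{\rho,\tfrac{1}{4\rho},\tfrac{1}{4\rho^2}\}\le\min\{1,\tfrac{1}{4\rho}\}$ for every $\rho>0$, the stated inequality still follows, and the first equality in (ii) via $\matr{D}_\rho(\matr{X})\in\TangSt{\matr{X}}$ and $\nabla f(\matr{X})-\grad f(\matr{X})\in\NormalSt{\matr{X}}$ is fine; so this is a fixable slip rather than a fatal gap, but the $2\rho$ identity must be corrected for the proof to stand.
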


Note that \( \matr{D}_{\rho}(\matr{X}) = \left(\matr{I}_n+ (4\rho - 1)\matr{X}\matr{X}^{\T}\right) \nabla f(\matr{X}) - 4\rho\matr{X}\symmo{\matr{X}^{\T}\nabla f(\matr{X})} \).  When the scaling matrices \( \matr{L}(\matr{X}) = \matr{I}_n + (4 \rho - 1) \matr{X} \matr{X}^\top \), \( \matr{R}(\matr{X}) = \matr{I}_r \), and the normal vector \( \matr{N}(\matr{X}) =  - 4\rho\matr{X}\symmo{\matr{X}^{\T}\nabla f(\matr{X})} \) in \eqref{eq:Trans_EGP}, the search direction \( \matr{H}_k \) simplifies to \( \matr{D}_{\rho}(\matr{X}_k) \).
Thus, the projection-based algorithms using \( \matr{D}_{\rho}(\matr{X}) \) as the search direction can also be subsumed under the TGP algorithmic framework on the Stiefel manifold.
We now provide a summary of the algorithms specifically designed for the Stiefel manifold from the literature, along with their convergence analysis results.
\begin{itemize}
\item In \cite{liu2019QuadraticOptimizationOrthogonality}, when $\mm=\St(r, n)$ and $\matr{H}_{k} = \matr{D}_{\rho}(\matr{X}_{k})$, the weak convergence and global convergence of \cref{alg:TGP} were established\footnote{The algorithm in \cite{liu2019QuadraticOptimizationOrthogonality} is based on retraction, unlike \cref{alg:TGP}, which only considers projection.} using the Armijo-type stepsize.
If $\rho=\frac{1}{4}$, we have $\matr{H}_{k}=\matr{D}_{1/4}(\matr{X}_{k}) = \grad f(\matr{X}_{k})$, and it is then reduced to the RGD algorithm\footnote{It was originally called Riemannian gradient ascent in \cite{sheng2022riemannian}, and was used to solve a maximization problem.} studied in \cite{sheng2022riemannian}.
\item In \cite{oviedo2019non}, when $\mm=\St(r, n)$, the weak convergence of \cref{alg:TGP} was established with
\begin{align}
\matr{H}_{k}&=(\alpha+\beta) \matr{D}_{\alpha/2(\alpha+\beta)}(\matr{X}_{k})\notag \\&=
\alpha \left(\nabla f(\matr{X}) - \matr{X} \nabla f(\matr{X})^{\T}\matr{X}\right) + \beta \left(\nabla f(\matr{X}) - \matr{X} \matr{X}^{\T}\nabla f(\matr{X})\right)
\end{align}
where $\alpha > 0, \beta \geq 0$. If $\alpha=1$ and $\beta=0$, we have\footnote{In the general case, \cref{alg:TGP} does not encompass the algorithm in \cite{oviedo2019scaled,oviedo2021two} as a special case since $\matr{D}(\mu,\tau)$ in \cite[Eq. (13)] {oviedo2019scaled} and \cite[Eq. (8)]{oviedo2021two} depends on the stepsize $\tau$. However, if $\mu=0$, then $\matr{D}(\mu,\tau)=\matr{I}_{n}$, and this algorithm falls within the framework of \cref{alg:TGP}.} $\matr{H}_{k}=\matr{D}_{1/2}(\matr{X}_{k})$ \cite{oviedo2019scaled,oviedo2021two}.
As in \cite{wen2013feasible}, the Armijo-type stepsize and nonmonotone search with Barzilai–Borwein stepsize were both used in \cite{oviedo2019scaled,oviedo2021two,oviedo2019non}.
\item In \cite{chen2009tensor,gao2018new,yang2019epsilon,hu2019LROAT,li2019polar}, when $\mm=\St(r, n)$ and $\matr{H}_{k}=\nabla f(\matr{X}_{k})$, \cref{alg:TGP} was applied to the tensor approximations and electronic structure calculations, and the convergence properties were established as well.
This is the simplest form of the EGP algorithm with $\nabla f(\matr{X}_{k})$ as the search direction (update scheme \eqref{eq:iter_prjec}).
A fixed stepsize was used in \cite{gao2018new,yang2019epsilon,li2019polar}, while an adaptive one was used in \cite{hu2019LROAT}.
\item The \emph{shifted power method} (Shifted PM) is a classical algorithm for computing the eigenvalue of largest modulus and its associated eigenvector of a matrix \cite[Sec. 4.1.2]{saad2011numerical}. 
Tensor extensions developed in \cite{kolda2011shifted,kolda2014adaptive,li2019polar} are proved to exhibit weak convergence, while global convergence can be established under suitable conditions. 
%It has been extended to the smooth optimization problem over the unit sphere in the recent years. These extensions have been particularly useful in applications such as tensor approximation and tensor eigenvalue problems \cite{kolda2011shifted,kolda2014adaptive}, where both weak convergence and global convergence of the method have been studied.
For our problem \eqref{eq:objec_func_g} with $\mm = \mathbb{S}^{n-1} = \St(1, n) $, the update scheme of the Shifted PM can be written as: 
\begin{equation}\label{eq:iter_power_method}
\matr{X}_{k+1} = \mathcal{P}_{\St(1, n)}(- \nabla f(\matr{X}_k) + s \matr{X}_k) = \mathcal{P}_{\St(1, n)}(\matr{X}_k - (\nabla f(\matr{X}_k) + (1-s)\matr{X}_k)),
\end{equation}
where \( s \in \RR \) represents the \emph{shift}. 
From \eqref{eq:iter_power_method}, we can see that the Shifted PM method can be viewed as a special case of the TGP algorithm, where
$\mm = \mathbb{S}^{n-1} = \St(1, n) $, and the search direction is given by \( \matr{H}_k = \nabla f(\matr{X}_k) + (1-s)\matr{X}_k\), with a fixed stepsize \(\tau=1\).
\end{itemize}

For problem \eqref{eq:objec_func_g} on $\Gr(p,n)$, there are not as many algorithms as in the above $\St(r, n)$ scenario.
Several existing methods treat it as a quotient manifold \cite{boumal2015low,usevich2014optimization}. To our knowledge, for the Grassmann manifold in the form of \(\Gr(p,n) \eqdef \{\matr{X}\in\RR^{n\times n}: \matr{X}^{\T}=\matr{X}, \matr{X}^2=\matr{X}, \rank{\matr{X}}=p\}\), there is currently no projection-based algorithm in the literature. Instead, the RetrLS algorithm, relying on QR decomposition \cite{sato2014optimization}, has been introduced, demonstrating the weak convergence of the RGD algorithm.

For problem \eqref{eq:objec_func_g} on a general compact matrix manifold $ \mm \subseteq \RR^{n\times r}$,
it is worth mentioning that the weak convergence of RetrLS algorithms \eqref{eq:iter_retr} has been established, specifically when employing the Armijo stepsize and a gradient-related direction \cite[Thm 4.3.1]{absil2009optimization}.
The work presented in \cite{boumal2019GlobalRatesConvergence} derived the iteration complexity of the RGD algorithm \eqref{eq:iter_retr_RGD} with both fixed stepsize and Armijo stepsize under the \emph{Lipschitz-type regularity assumption}\footnote{See \cref{rem:Lipschitz-type-regualrity-asusmption} for more details about it.}. Furthermore, under a similar assumption, the iteration complexity of the RGD algorithm \eqref{eq:iter_retr_RGD} with Zhang-Hager type nonmonotone Armijo stepsize was also obtained in \cite{oviedo2023worst}.

\subsection{A summary of the convergence results}

In this paper, for the stepsize $\tau_k$ in \cref{alg:TGP}, we will choose three different types: the \emph{Armijo} stepsize presented in \cref{sec:TGP-A}, the \emph{Zhang-Hager type nonmonotone Armijo} stepsize presented in \cref{sec:nonmonotone_TGP}, and the \emph{fixed} stepsize discussed in \cref{sec:fixed_TGP}.
Using these three different types of stepsizes,
we mainly establish the weak convergence, iteration complexity and global convergence of \cref{alg:TGP} in the general sense, and our convergence results are summarized in \cref{table-example-3-0-9}. It will be seen that these convergence results subsume the results found in the literature designed for those special cases listed in \cref{subsec:special_literature}.
{\renewcommand{\arraystretch}{1.2}

\begin{table}[h!]
\centering
\caption{A summary of the convergence results for TGP algorithms (\cref{alg:TGP})
\tablefootnote{In the last column of \cref{table-example-3-0-9}, we use ``WeakC'', ``IterC'' and ``GlobC'' to represent weak convergence, iteration complexity and global convergence for simplicity. 
We use \( (\St) \), \( (\mm) \) and \( (L \mm) \) to denote the feasible regions for which convergence has been established in the literature.  
Here, \( (L \mm) \) represents a general submanifold with additional Lipschitz-type regularity assumption; See \cref{rem:Lipschitz-type-regualrity-asusmption}.
By \emph{gradient-equivalent}, we mean a tangent direction with similar properties as \( \matr{D}_{\rho}(\matr{X})\) in \cref{lem-D-rho-prop}.
See \cite[Def. 4.2.1]{absil2009optimization} for the definition of \emph{gradient-related} sequence.}}
\label{table-example-3-0-9}
\scalebox{0.8}{
\begin{tabular}{| P{2.3cm} | P{2.3cm} | P{2.3cm} | P{2.3cm} | P{1.8cm} | P{3.0cm} | P{2.5cm} |}
\toprule
\multirow{2}{*}{\textrm{\makecell[c]{Stepsizes}}} & \multirow{2}{*}{\textrm{\makecell[c]{Weak \\ convergence}}}& \multirow{2}{*}{\textrm{\makecell[c]{\multicolumn{1}{c}{Iteration} \\ complexity}}}  &
\multirow{2}{*}{\textrm{\makecell[c]{Global\\ convergence}}} & \multicolumn{3}{c|}{Special cases}    \\
\cline{5-7}
&&&& \multicolumn{1}{c|}{References} & \multicolumn{1}{c|}{Form of $\matr{H}_{k}$} & \multicolumn{1}{c|}{Convergence} \\
\hline
\multirow{6}{*}{\textrm{\makecell[c]{Armijo stepsize}}} & \multirow{6}{*}{\textrm{\makecell[c]{\cref{coro:weak_converg}}}} & \multirow{6}{*}{\textrm{\makecell[c]{\cref{thm:Armijo-stepsize-lower-bound-convergence-rate}}}(iii)}  & \multirow{6}{*}{\textrm{\makecell[c]{\cref{thm:Armijo-global-convergence}}}} &  \textrm{\makecell[c]{\cite{oviedo2019scaled,oviedo2021two}}} & \textrm{\makecell[c]{$\matr{D}_{1/2}(\matr{X}_{k})$ }}& \textrm{\makecell[c]{WeakC \( (\St) \)}}\\
\cline{5-7}&
& & &\textrm{\makecell[c]{\cite{oviedo2019non}}} & \makecell[c]{$\matr{D}_{\alpha/2(\alpha+\beta)}(\matr{X}_{k})$} & \textrm{\makecell[c]{WeakC \( (\St) \)}} \\
\cline{5-7}&
& & & \textrm{\makecell[c]{\cite{liu2019QuadraticOptimizationOrthogonality}}} & \textrm{\makecell[c]{$\matr{D}_{\rho}(\matr{X}_{k})$}} & \textrm{\makecell[c]{GlobC \( (\St) \)}}\\
\cline{5-7}&
& & & \textrm{\makecell[c]{\cite{sheng2022riemannian}}} & \textrm{\makecell[c]{$\matr{D}_{1/4}(\matr{X}_{k})$}} & \textrm{\makecell[c]{GlobC \( (\St) \)}}\\
\cline{5-7} & & & & \textrm{\makecell[c]{\cite{absil2009optimization}}} & \textrm{\makecell[c]{\small gradient-related}} & \textrm{\makecell[c]{WeakC \( (\mm) \)}}\\
\cline{5-7} & & & & \textrm{\makecell[c]{\cite{boumal2019GlobalRatesConvergence}}} & \textrm{\makecell[c]{\small gradient-equivalent}} & \textrm{\makecell[c]{IterC \( (L\mm) \)}}\\
\cline{1-7}
\multirow{3}{*}{\textrm{\makecell[c]{Zhang-Hager type \\ Nonmonotone \\ Armijo stepsize}}} &
\multirow{3}{*}{\textrm{\makecell[c]{\cref{thm:weak_converg-na}}}}  &
\multirow{3}{*}{\textrm{\makecell[c]{\cref{thm:nonmonotone-Armijo-stepsize-lower-bound-convergence-rate}(iii)}}} &
\multirow{3}{*}{\textrm{\makecell[c]{\cref{thm:nonmonotone-global-convergence}}}} &
\textrm{\makecell[c]{\cite{oviedo2019non}}}&
\textrm{\makecell[c]{$\matr{D}_{\alpha/2(\alpha+\beta)}(\matr{X}_{k})$}}&
\textrm{\makecell[c]{WeakC \( (\St) \)}} \\
\cline{5-7} & & & & \textrm{\makecell[c]{\cite{oviedo2022global}}} & \textrm{\makecell[c]{\small gradient-equivalent}} & \textrm{\makecell[c]{WeakC \( (L\mm) \)}}\\
\cline{5-7} & & & & \textrm{\makecell[c]{\cite{oviedo2023worst}}} & \textrm{\makecell[c]{\small gradient-equivalent}} & \textrm{\makecell[c]{IterC \( (L\mm) \)}}\\
\cline{1-7}
\multirow{5}{*}{\textrm{\makecell[c]{Fixed stepsize\tablefootnote{We would like to emphasize that although fixed stepsizes are used in both the listed literature and our paper, our derivation is different from the listed literature, and thus the conditions required for convergence are also different; see more details in \cref{remark:tgp-f-conv}.}\\ \mbox{}}}} &  \multirow{5}{*}{\textrm{\makecell[c]{\cref{thm:TGP-F-weak-convergence-convergence-rate}(i)\\ \mbox{}}}}  & \multirow{5}{*}{\textrm{\makecell[c]{\cref{thm:TGP-F-weak-convergence-convergence-rate}(ii)\\ \mbox{}}}} &\multirow{5}{*}{\textrm{\makecell[c]{\cref{thm:TGP-F-global-convergnce}\\ \mbox{} }}}&   \textrm{\makecell[c]{\cite{gao2018new,yang2019epsilon}\\ \cite{hu2019LROAT,li2019polar} \\}}&\textrm{\makecell[c]{$\nabla f(\matr{X}_k)$}}& \textrm{\makecell[c]{WeakC \&\\ GlobC \( (\St) \)}} \\
\cline{5-7} & & & & \textrm{\makecell[c]{\cite{kolda2011shifted,kolda2014adaptive}\\ \cite{saad2011numerical,li2019polar}}} & \textrm{\makecell[c]{$\nabla f(\matr{X}_k) + (1-s)\matr{X}_k$}} & \textrm{\makecell[c]{  WeakC \&\\ GlobC \((\St(1, n))\)}}\\
\cline{5-7} & & & & \textrm{\makecell[c]{\cite{boumal2019GlobalRatesConvergence}}} & \textrm{\makecell[c]{$\grad f(\matr{X}_k)$}} & \textrm{\makecell[c]{  IterC \( (L\mm) \)}}\\
\bottomrule
\end{tabular}
}
\end{table}

% \section{The search directions in TGP algorithmic framework}\label{sec:update_scheme_in_algorithm}
% In this section, we discuss the conditions under which \cref{asp:H-tilde-grad-equiv} is satisfied,  along with the search directions in TGP algorithmic framework.

\section{Properties of the projection onto a general compact manifold}\label{sec:Prop-proj}

In this section, we explore the geometric properties of the projection onto a general compact manifold, which will play a crucial role in the convergence analysis of our TGP algorithmic framework in the later sections.

\subsection{Uniqueness and smoothness of the projection}
In the context of RetrLS algorithms \eqref{eq:iter_retr}, a retraction is required to be smooth, at least in a local sense.
In this subsection, we discuss the \emph{smoothness} of the projection onto a general compact submanifold, as well as its \emph{uniqueness}.
We now begin with the following classical result, which demonstrates the well-defined nature of the projection onto a general compact manifold and derives its differential, indicating that this projection forms a retraction.

\begin{lemma}[{\cite[Lem. 4]{absil2012projection}}]\label{lem:differential-projection}
Let $ \mm' \subseteq \RR^{m}$ be a submanifold of class $C^k$ with \( k \geq 2 \) and $\mathcal{P}_{\mm'}$ be the projection onto $\mm'$. Given any $\bar{\vect{x}} \in \mm'$, there exists \(\varrho_{\bar{\vect{x}}} > 0\) such that $\mathcal{P}_{\mm'}(\vect{y})$ uniquely exists for all $ \vect{y} \in \openball{\bar{\vect{x}}}{\varrho_{\bar{\vect{x}}}}$. Moreover, $ \mathcal{P}_{\mm'}(\vect{y}) $ is of class \(C^{k-1}\) for $ \vect{y} \in \openball{\bar{\vect{x}}}{\varrho_{\bar{\vect{x}}}} $, and $\DD\mathcal{P}_{\mm'}(\bar{\vect{x}})=\mathcal{P}_{\TangM{\bar{\vect{x}}}}$.
\end{lemma}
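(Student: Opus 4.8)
The plan is to prove \cref{lem:differential-projection} by combining the implicit function theorem with the normal bundle description of a tubular neighborhood. First I would fix $\bar{\vect{x}} \in \mm'$ and work in a local coordinate chart. Since $\mm'$ is a $C^k$ submanifold, near $\bar{\vect{x}}$ there is a $C^k$ local parametrization $\varphi: U \subseteq \RR^d \to \mm'$ with $\varphi(u_0) = \bar{\vect{x}}$, where $d = \dim \mm'$. Together with a $C^{k-1}$ local frame for the normal bundle one obtains a $C^{k-1}$ map $\Phi: U \times V \to \RR^m$, $\Phi(u, w) = \varphi(u) + \sum_j w_j \nu_j(u)$, where $\nu_1(u), \dots, \nu_{m-d}(u)$ is an orthonormal basis of $\NormalMM{\varphi(u)}$ depending $C^{k-1}$-smoothly on $u$ (such a frame exists locally by Gram--Schmidt applied to a $C^{k-1}$ frame, which itself comes from differentiating the $C^k$ parametrization). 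The differential of $\Phi$ at $(u_0, 0)$ sends $(\delta u, \delta w)$ to $\DD\varphi(u_0)[\delta u] + \sum_j \delta w_j \nu_j(u_0)$, and because $\DD\varphi(u_0)$ spans $\TangMM{\bar{\vect{x}}}$ while the $\nu_j(u_0)$ span its orthogonal complement, this differential is a linear isomorphism of $\RR^m$.

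The next step is to invoke the inverse function theorem: $\Phi$ is a $C^{k-1}$ diffeomorphism from a neighborhood of $(u_0, 0)$ onto a neighborhood $W$ of $\bar{\vect{x}}$ in $\RR^m$. Shrinking, one picks $\varrho_{\bar{\vect{x}}} > 0$ with $\openball{\bar{\vect{x}}}{\varrho_{\bar{\vect{x}}}} \subseteq W$. I then need the geometric fact that on a small enough such tube the nearest-point projection is given precisely by the ``$u$-component'' of $\Phi^{-1}$, i.e. $\mathcal{P}_{\mm'}(\vect{y}) = \varphi(u(\vect{y}))$ where $(u(\vect{y}), w(\vect{y})) = \Phi^{-1}(\vect{y})$, and in particular it is unique. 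This requires two things: (a) any global nearest point of $\vect{y}$ to the compact set $\mm'$ that happens to be close to $\bar{\vect{x}}$ must, by the first-order optimality condition, lie on the normal fiber through it, hence be of the form $\varphi(u)$ with $\vect{y} - \varphi(u) \in \NormalMM{\varphi(u)}$, i.e. a point of the form $\Phi(u, w)$; and (b) by injectivity of $\Phi$ near $(u_0,0)$ such a representation is unique, and one must rule out nearest points that are far from $\bar{\vect{x}}$ — this is handled by continuity/compactness: if $\vect{y} \to \bar{\vect{x}}$ then $\dist(\vect{y}, \mm') \to 0$ and any nearest point converges to $\bar{\vect{x}}$, so for $\varrho_{\bar{\vect{x}}}$ small every nearest point lands in the coordinate patch. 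Since $\Phi^{-1}$ is $C^{k-1}$ and $\varphi$ is $C^k$, the composition $\mathcal{P}_{\mm'} = \varphi \circ (\text{proj onto } u) \circ \Phi^{-1}$ is $C^{k-1}$ on $\openball{\bar{\vect{x}}}{\varrho_{\bar{\vect{x}}}}$.

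Finally, for the differential formula $\DD\mathcal{P}_{\mm'}(\bar{\vect{x}}) = \mathcal{P}_{\TangMM{\bar{\vect{x}}}}$, I would differentiate the identity $\vect{y} = \mathcal{P}_{\mm'}(\vect{y}) + \big(\vect{y} - \mathcal{P}_{\mm'}(\vect{y})\big)$ at $\vect{y} = \bar{\vect{x}}$. Write $\vect{y}(t) = \bar{\vect{x}} + t\vect{v}$; then $\mathcal{P}_{\mm'}(\vect{y}(t))$ is a curve in $\mm'$ through $\bar{\vect{x}}$, so its derivative at $t=0$, call it $A\vect{v} := \DD\mathcal{P}_{\mm'}(\bar{\vect{x}})[\vect{v}]$, lies in $\TangMM{\bar{\vect{x}}}$. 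On the other hand, differentiating the orthogonality relation $\langle \vect{y}(t) - \mathcal{P}_{\mm'}(\vect{y}(t)),\, \matr{\xi}(t)\rangle = 0$ for any $C^1$ curve $\matr{\xi}(t) \in \TangMM{\mathcal{P}_{\mm'}(\vect{y}(t))}$ and using that $\vect{y}(0) - \bar{\vect{x}} = \vect{0}$, one gets $\langle \vect{v} - A\vect{v},\, \matr{\xi}(0)\rangle = 0$ for all $\matr{\xi}(0) \in \TangMM{\bar{\vect{x}}}$, so $\vect{v} - A\vect{v} \in \NormalMM{\bar{\vect{x}}}$. Combining $A\vect{v} \in \TangMM{\bar{\vect{x}}}$ and $\vect{v} - A\vect{v} \in \NormalMM{\bar{\vect{x}}}$ shows $A\vect{v}$ is exactly the orthogonal projection of $\vect{v}$ onto $\TangMM{\bar{\vect{x}}}$, which is the claim.

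The main obstacle is the rigorous justification that the local tube description captures the \emph{global} nearest-point projection — i.e. step (b) above, ruling out far-away competitors and establishing genuine uniqueness rather than just uniqueness among candidates in the patch. This is where compactness of $\mm'$ is essential and where one must be careful about how small to take $\varrho_{\bar{\vect{x}}}$; everything else (the inverse function theorem application and the differentiation argument) is comparatively routine. In practice this is a standard tubular-neighborhood argument, and since the statement is quoted from \cite{absil2012projection} the cleanest route may be to cite it directly, presenting the above only as a sketch of why it holds.
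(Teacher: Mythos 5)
The paper does not prove \cref{lem:differential-projection} at all: it is imported verbatim from \cite[Lem.~4]{absil2012projection}, so your closing suggestion to cite it directly is exactly what the paper does. Your sketch is essentially the standard tubular-neighborhood argument behind that cited result (normal-bundle map $\Phi$, inverse function theorem giving a $C^{k-1}$ inverse, identification of the nearest point with the $u$-component via first-order optimality, and the differentiation of the orthogonality relation to get $\DD\mathcal{P}_{\mm'}(\bar{\vect{x}})=\mathcal{P}_{\TangMM{\bar{\vect{x}}}}$), and it is sound. One small correction: compactness of $\mm'$ is neither assumed in this lemma nor needed for your step (b) — local closedness of a submanifold already guarantees that, for $\vect{y}$ sufficiently close to $\bar{\vect{x}}$, minimizing sequences stay near $\bar{\vect{x}}$ and their limits lie in $\mm'$, so far-away competitors are excluded by shrinking $\varrho_{\bar{\vect{x}}}$; compactness only enters later, in \cref{cor:general-manifold-uniform-projection-radius}, to make the radius uniform over the manifold.
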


Based on the above \cref{lem:differential-projection}, we now present a new result that reveals a stability property of the projection when the normal vector is relatively small. The proof of this result highly depends on that of \cite[Lem. 4]{absil2012projection}.

\begin{lemma}\label{lem:differential-projection-2}
Let $ \mm' \subseteq \RR^{m}$ be a submanifold of class $C^k$ with \( k \geq 2 \) and $\mathcal{P}_{\mm'}$ be the projection onto $\mm'$.
Let $\bar{\vect{x}} \in \mm'$ and \(\varrho_{\bar{\vect{x}}} > 0\) be as given in \cref{lem:differential-projection}.
Then, for all $\vect{x} \in \mm' \cap \openball{\bar{\vect{x}}}{\varrho_{\bar{\vect{x}}}} $ and $ \vect{w} \in \NormalMM{\vect{x}} $ satisfying $\vect{x} + \vect{w} \in \openball{\bar{\vect{x}}}{\varrho_{\bar{\vect{x}}}} $, we have \(\mathcal{P}_{\mm'}(\vect{x}+ \vect{w}) = \vect{x}\).
\end{lemma}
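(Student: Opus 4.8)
The plan is to show that $\vect{x}$ is the unique nearest point of $\mm'$ to $\vect{x}+\vect{w}$ by leveraging the local structure established in \cref{lem:differential-projection}, rather than reproving everything from scratch. First I would note that by \cref{lem:differential-projection}, since $\vect{x}+\vect{w} \in \openball{\bar{\vect{x}}}{\varrho_{\bar{\vect{x}}}}$, the point $\mathcal{P}_{\mm'}(\vect{x}+\vect{w})$ exists and is unique; call it $\vect{y}$. It therefore suffices to prove $\vect{y} = \vect{x}$. The key geometric fact is that the displacement $\vect{w} = (\vect{x}+\vect{w}) - \vect{x}$ is purely normal, $\vect{w} \in \NormalMM{\vect{x}}$, so $\vect{x}$ is a \emph{critical point} of the function $\vect{z} \mapsto \|\vect{x}+\vect{w}-\vect{z}\|^2$ restricted to $\mm'$: for any tangent vector $\vect{v} \in \TangMM{\vect{x}}$, the directional derivative is $-2\langle \vect{w}, \vect{v}\rangle = 0$. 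So $\vect{x}$ is a stationary point of the distance-squared function, and I need to upgrade ``stationary'' to ``global minimizer on $\mm'$'' within the relevant neighborhood.

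The main step is a uniqueness argument that mirrors the proof of \cite[Lem. 4]{absil2012projection}. There, the shrinking radius $\varrho_{\bar{\vect{x}}}$ is chosen precisely so that on $\openball{\bar{\vect{x}}}{\varrho_{\bar{\vect{x}}}}$ the nearest-point map is single-valued and smooth; implicitly, this is arranged by controlling the reach/second-fundamental-form of $\mm'$ near $\bar{\vect{x}}$, so that the normal exponential map is a diffeomorphism onto a tubular neighborhood containing $\openball{\bar{\vect{x}}}{\varrho_{\bar{\vect{x}}}}$. I would invoke exactly that construction: both $\vect{x}$ and $\vect{y}$ lie in $\mm' \cap \openball{\bar{\vect{x}}}{\varrho_{\bar{\vect{x}}}}$, and $\vect{x}+\vect{w}$ lies on the normal fiber over $\vect{x}$ inside the tubular neighborhood. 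Since $\vect{y} = \mathcal{P}_{\mm'}(\vect{x}+\vect{w})$, the vector $(\vect{x}+\vect{w}) - \vect{y}$ is normal to $\mm'$ at $\vect{y}$, i.e., $\vect{x}+\vect{w}$ also lies on the normal fiber over $\vect{y}$. By injectivity of the normal exponential map on this tube — the property underpinning \cref{lem:differential-projection} — the two fibers can meet only if $\vect{x} = \vect{y}$. This gives the claim.

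The hard part will be making the tubular-neighborhood/injectivity argument rigorous while staying faithful to the hypotheses actually available: the excerpt only gives us the \emph{conclusions} of \cref{lem:differential-projection} (existence, uniqueness, smoothness, and the differential formula on $\openball{\bar{\vect{x}}}{\varrho_{\bar{\vect{x}}}}$), not the internal construction of $\varrho_{\bar{\vect{x}}}$. So I would either (a) reach into the proof of \cite[Lem. 4]{absil2012projection}, as the statement explicitly permits (``the proof of this result highly depends on that of \cite[Lem. 4]{absil2012projection}''), extracting that $\varrho_{\bar{\vect{x}}}$ is chosen small enough that the closest-point projection restricted to the tube is a well-defined retraction with the stated properties and that any point in the ball has a \emph{unique} normal representative on $\mm'$; or (b) argue by contradiction using a continuity/degree argument along the segment $t \mapsto \vect{x} + t\vect{w}$, $t\in[0,1]$: at $t=0$ the projection is $\vect{x}$, the projection is continuous in $t$ (by the smoothness in \cref{lem:differential-projection}), and it must remain on the normal fiber over $\vect{x}$ because $\mathcal{P}_{\mm'}(\vect{x}+t\vect{w}) = \vect{x}$ for small $t$ (local reach) and the set of such $t$ is both open and closed in $[0,1]$ by the uniqueness of the projection on the ball. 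I expect route (a) is cleanest: fix $\varrho_{\bar{\vect{x}}}$ from \cref{lem:differential-projection}, possibly shrink it once more so that $\openball{\bar{\vect{x}}}{\varrho_{\bar{\vect{x}}}}$ sits inside the tubular neighborhood where each point has a unique $\mm'$-nearest point \emph{and} the correspondence point $\leftrightarrow$ (foot, normal vector) is bijective; then $\vect{x}+\vect{w}$, written both as $\vect{x} + \vect{w}$ with $\vect{w}\in\NormalMM{\vect{x}}$ and as $\vect{y} + ((\vect{x}+\vect{w})-\vect{y})$ with the latter in $\NormalMM{\vect{y}}$, forces $\vect{x}=\vect{y}$ and hence $\mathcal{P}_{\mm'}(\vect{x}+\vect{w}) = \vect{x}$.
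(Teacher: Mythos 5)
Your route (a) is essentially the paper's own argument: write $\vect{x}+\vect{w}$ in two ways as a point of the normal bundle, namely over $\vect{x}$ with normal vector $\vect{w}$ and over $\vect{y}=\mathcal{P}_{\mm'}(\vect{x}+\vect{w})$ with normal vector $\vect{w}'=(\vect{x}+\vect{w})-\vect{y}$ (the first-order optimality condition), and conclude $\vect{x}=\vect{y}$ from injectivity of the map $F(\vect{x},\vect{w})=\vect{x}+\vect{w}$ extracted from the proof of \cite[Lem.~4]{absil2012projection}. One caution: you claim $\vect{y}\in\mm'\cap\openball{\bar{\vect{x}}}{\varrho_{\bar{\vect{x}}}}$ and suggest ``possibly shrinking'' $\varrho_{\bar{\vect{x}}}$, but shrinking would weaken the stated lemma (which is for the radius from \cref{lem:differential-projection}), and in fact $\vect{y}$ is only guaranteed to satisfy $\|\vect{y}-\bar{\vect{x}}\|<2\varrho_{\bar{\vect{x}}}$, with $\|\vect{w}\|<2\varrho_{\bar{\vect{x}}}$ and $\|\vect{w}'\|<3\varrho_{\bar{\vect{x}}}$; the paper resolves this not by shrinking but by noting that the Absil--Malick construction gives injectivity of $F$ on the larger region $\{(\vect{x},\vect{w}):\vect{x}\in\mm'\cap\openball{\bar{\vect{x}}}{2\varrho_{\bar{\vect{x}}}},\ \vect{w}\in\NormalMM{\vect{x}},\ \|\vect{w}\|<3\varrho_{\bar{\vect{x}}}\}$, and verifying both pairs lie there.
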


\begin{proof}
Consider the mapping $ F: \NormalBB \to \RR^m, (\vect{x}, \vect{w}) \mapsto \vect{x} + \vect{w} $, where \( \NormalBB\eqdef \{ (\vect{x}, \vect{w}) \in \RR^m \times \RR^m: \vect{x} \in \mm', \vect{w} \in \NormalMM{\vect{x}} \} \) denotes the normal bundle of \( \mm' \). It follows from the construction of \(\varrho_{\bar{\vect{x}}}\) in the proof of \cite[Lem. 4]{absil2012projection} that $ F $ is injective  in $\mathcal{I} \eqdef \{ (\vect{x}, \vect{w}) : \vect{x} \in \mm' \cap \openball{\bar{\vect{x}}}{2\varrho_{\bar{\vect{x}}}}, \vect{w} \in \NormalMM{\vect{x}},\|\vect{w}\| < 3\varrho_{\bar{\vect{x}}}\}$. For all $ \vect{x} \in \mm' \cap\openball{\bar{\vect{x}}}{\varrho_{\bar{\vect{x}}}} $ and $ \vect{w} \in \NormalMM{\vect{x}} $ satisfying $ \vect{x} + \vect{w} \in  \openball{\bar{\vect{x}}}{\varrho_{\bar{\vect{x}}}}$, we know from \cref{lem:differential-projection} that $ \mathcal{P}_{\mm'}(\vect{x} + \vect{w}) $ uniquely exists and is in $ \mm' \cap \openball{\bar{\vect{x}}}{2\varrho_{\bar{\vect{x}}}} $. By the definition of projection, \(\mathcal{P}_{\mm'}(\vect{x} + \vect{w})\) is the solution of the optimization problem $ \min_{\vect{z} \in \mm'} \| \vect{x}+ \vect{w} - \vect{z} \|^2 $. Thus, \(\mathcal{P}_{\mm'}(\vect{x}+\vect{w})\) satisfies the first-order necessary condition: $ \vect{x} + \vect{w} - \mathcal{P}_{\mm'}(\vect{x} + \vect{w}) \in \NormalMM{\mathcal{P}_{\mm'}(\vect{x}+ \vect{w})}  $.
Therefore, there exists $ \vect{w}' \in \NormalMM{\mathcal{P}_{\mm'}(\vect{x}+ \vect{w})}$ such that $ \vect{x}+ \vect{w} = \mathcal{P}_{\mm'}(\vect{x} + \vect{w}) + \vect{w}' $. It follows from $\vect{x}, \vect{x} + \vect{w} \in \openball{\bar{\vect{x}}}{\varrho_{\bar{\vect{x}}}}$ that  $ \| \vect{w} \| < 2\varrho_{\bar{\vect{x}}} $. Since $ \mathcal{P}_{\mm'}(\vect{x}+ \vect{w}) \in \openball{\bar{\vect{x}}}{2\varrho_{\bar{\vect{x}}}}$ and $\vect{x} \in \openball{\bar{\vect{x}}}{\varrho_{\bar{\vect{x}}}}$, we have $ \| \vect{w}' \| < 3 \varrho_{\bar{\vect{x}}} $. Thus, $ (\vect{x}, \vect{w}) $ and $( \mathcal{P}_{\mm'}(\vect{x}+ \vect{w}), \vect{w}') $ belong to the region $\mathcal{I}$. It follows from $F(\vect{x}, \vect{w}) = \vect{x} + \vect{w} = \mathcal{P}_{\mm'}(\vect{x} + \vect{w}) + \vect{w}' = F(\mathcal{P}_{\mm'}(\vect{x} + \vect{w}), \vect{w}') $ that $ \vect{x} = \mathcal{P}_{\mm'}(\vect{x} + \vect{w}) $.
The proof is complete.
\end{proof}

Note that the above \cref{lem:differential-projection,lem:differential-projection-2} are both local results, \emph{i.e.}, the radius \(\varrho_{\bar{\vect{x}}}\) depends on \( \bar{\vect{x}} \). In this paper, by the compactness of \( \mm' \), we can achieve the following stronger result where the radius is independent of the specific choice of \( \bar{\vect{x}} \), denoted by \(\varrho\).

\begin{lemma}\label{cor:general-manifold-uniform-projection-radius}
Let $ \mm' \subseteq \RR^{m}$ be a compact submanifold of class $C^k$ with \( k \geq 2 \).  Then there exists a positive constant \(\varrho>0\) such that, for all $ \vect{y}\in \mathcal{B}(\mm';\varrho), \mathcal{P}_{\mm'}(\vect{y})$ uniquely exists and is of class \(C^{k-1}\). Moreover, for all $ \vect{x}\in\mm'$ and $ \vect{w} \in \NormalMM{\vect{x}} $ satisfying $ \| \vect{w} \| < \varrho $, we have
\begin{equation}\label{eq:first-order-boundedness-normal-general}
\mathcal{P}_{\mm'}(\vect{x} + \vect{w}) = \vect{x}.
\end{equation}
\end{lemma}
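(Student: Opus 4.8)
The plan is to upgrade the two local results \cref{lem:differential-projection} and \cref{lem:differential-projection-2} to a uniform statement by a standard finite-subcover argument, exploiting the compactness of $\mm'$ to replace the point-dependent radii $\varrho_{\bar{\vect{x}}}$ with a single $\varrho$. First I would apply \cref{lem:differential-projection} at every $\bar{\vect{x}} \in \mm'$ to obtain radii $\varrho_{\bar{\vect{x}}} > 0$, and consider the open cover $\{\openball{\bar{\vect{x}}}{\varrho_{\bar{\vect{x}}}/3}\}_{\bar{\vect{x}} \in \mm'}$ of $\mm'$. Extracting a finite subcover with centers $\bar{\vect{x}}_1, \dots, \bar{\vect{x}}_N$ and writing $r_i \eqdef \varrho_{\bar{\vect{x}}_i}$, I would set $\varrho \eqdef \tfrac{1}{3}\min_{1 \le i \le N} r_i > 0$. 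The shrinking factor $1/3$ (any factor below $1/2$ works) is precisely what makes the triangle-inequality estimates below close.

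Next I would verify the uniqueness and regularity claim. Given $\vect{y} \in \mathcal{B}(\mm';\varrho)$, compactness of $\mm'$ ensures that $\dist(\vect{y}, \mm')$ is attained at some $\vect{x}^* \in \mm'$, so $\|\vect{y} - \vect{x}^*\| < \varrho$. Choosing $i$ with $\vect{x}^* \in \openball{\bar{\vect{x}}_i}{r_i/3}$ and applying the triangle inequality gives $\|\vect{y} - \bar{\vect{x}}_i\| < \varrho + r_i/3 \le r_i/3 + r_i/3 < r_i$, so $\vect{y} \in \openball{\bar{\vect{x}}_i}{r_i}$. By \cref{lem:differential-projection}, $\mathcal{P}_{\mm'}(\vect{y})$ uniquely exists and $\mathcal{P}_{\mm'}$ is of class $C^{k-1}$ on a neighborhood of $\vect{y}$. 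Since $\vect{y}$ was arbitrary and the projection is unique wherever defined, the finitely many local $C^{k-1}$ representations (on the balls $\openball{\bar{\vect{x}}_i}{r_i}$, which together cover $\mathcal{B}(\mm';\varrho)$) agree on overlaps, so $\mathcal{P}_{\mm'}$ is a well-defined $C^{k-1}$ map on all of $\mathcal{B}(\mm';\varrho)$.

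Finally I would establish \eqref{eq:first-order-boundedness-normal-general}. Let $\vect{x} \in \mm'$ and $\vect{w} \in \NormalMM{\vect{x}}$ with $\|\vect{w}\| < \varrho$, and pick $i$ with $\vect{x} \in \openball{\bar{\vect{x}}_i}{r_i/3}$. Then $\|\vect{x} + \vect{w} - \bar{\vect{x}}_i\| \le \|\vect{w}\| + \|\vect{x} - \bar{\vect{x}}_i\| < \varrho + r_i/3 < r_i$, so both $\vect{x}$ and $\vect{x} + \vect{w}$ lie in $\openball{\bar{\vect{x}}_i}{r_i}$; in particular $\vect{x} \in \mm' \cap \openball{\bar{\vect{x}}_i}{r_i}$. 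Applying \cref{lem:differential-projection-2} with $\bar{\vect{x}} = \bar{\vect{x}}_i$ (whose associated radius is exactly $r_i = \varrho_{\bar{\vect{x}}_i}$) yields $\mathcal{P}_{\mm'}(\vect{x} + \vect{w}) = \vect{x}$, which completes the proof.

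I do not expect a genuine obstacle here: once the local lemmas are granted, the argument is essentially routine. The only points needing a little care are the choice of the shrinking factor in the cover so the triangle inequalities close, the second use of compactness to know that the nearest point on $\mm'$ actually exists, and the passage from local $C^{k-1}$-regularity to global regularity on $\mathcal{B}(\mm';\varrho)$, which is justified by the uniqueness of the projection.
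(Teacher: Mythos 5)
Your proposal is correct, and it reaches the conclusion by a slightly different compactness mechanism than the paper. You shrink each local ball from \cref{lem:differential-projection} by a factor of $1/3$, extract a finite subcover, set $\varrho = \tfrac{1}{3}\min_i \varrho_{\bar{\vect{x}}_i}$, and then close everything with triangle inequalities: a point $\vect{y}$ with $\dist(\vect{y},\mm')<\varrho$ (or a point $\vect{x}+\vect{w}$ with $\|\vect{w}\|<\varrho$) is forced, together with its nearest point (resp.\ base point $\vect{x}$), into one \emph{full} ball $\openball{\bar{\vect{x}}_i}{\varrho_{\bar{\vect{x}}_i}}$, after which \cref{lem:differential-projection} and \cref{lem:differential-projection-2} apply verbatim; local $C^{k-1}$ regularity globalizes because the projection is single-valued wherever defined. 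The paper instead first proves, by a sequential-compactness contradiction, that a closed tube $\closedball{\mm'}{\varrho_1}$ is contained in the union of the original balls, then invokes the Lebesgue number lemma to obtain a $\varrho_2$ so that any set of diameter $2\varrho\le\varrho_2$ fits inside a single ball, and takes $\varrho=\min\{\varrho_1,\varrho_2/2\}$. The two routes buy essentially the same thing; yours is a bit more elementary and self-contained (no Lebesgue number lemma, explicit constant $\tfrac{1}{3}\min_i \varrho_{\bar{\vect{x}}_i}$), while the paper's argument packages the "two nearby points lie in a common chart ball" step abstractly, which it then reuses in the proofs of \cref{thm:anti-first-order-boudnedness} and \cref{lem:project-x-y-normal}. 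No gaps: your second use of compactness (existence of a nearest point so that $\|\vect{y}-\vect{x}^*\|<\varrho$) and the sub-$1/2$ shrinking factor are exactly the points that need care, and you handle both.
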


\begin{proof}
It follows from \cref{lem:differential-projection} that for all $ \bar{\vect{x}} \in \mm' $, there exists \(\varrho_{\bar{\vect{x}}} > 0\) such that \(\mathcal{P}_{\mm'}(\vect{y})\) is of class \(C^{k-1}\),  $\mathcal{P}_{\mm'}(\vect{y})$ is unique for $ \vect{y}\in \openball{\bar{\vect{x}}}{\varrho_{\bar{\vect{x}}}} $ and $ \mathcal{P}_{\mm'}(\vect{x}+\vect{w}) = \vect{x} $ for $ \vect{x} \in \mm' \cap\openball{\bar{\vect{x}}}{\varrho_{\bar{\vect{x}}}} $ and $ \vect{w} \in \NormalMM{\vect{x}} $ satisfying $ \vect{x} + \vect{w} \in \openball{\bar{\vect{x}}}{\varrho_{\bar{\vect{x}}}} $.
We now first prove that there exists $ \varrho_1 > 0 $ such that $ \closedball{\mm'}{\varrho_{1}} \subseteq \cup_{\bar{\vect{x}} \in \mm'} \openball{\bar{\vect{x}}}{\varrho_{\bar{\vect{x}}}} $ by contradiction.
If not, then for all $ k \geq 1 $, there exists $ \vect{y}_k \in \closedball{\mm'}{1/k} $, such that $ \vect{y}_k \not\in \cup_{\bar{\vect{x}}\in \mm'}\openball{\bar{\vect{x}} }{\varrho_{\bar{\vect{x}}}} $. Since $ \mm' $ is bounded, the sequence $\{ \vect{y}_k \}_{k \geq 1}$ is contained in a compact set, and thus it has an accumulation point, namely, $ \vect{y}_{*} $. Noting that $ \dist(\vect{y}_k, \mm') < \frac{1}{k} $, we have $ \dist(\vect{y}_{*}, \mm') = 0 $, which implies $ \vect{y}_{*} \in \mm' $ due to the compactness of $ \mm' $. On the other hand, since $\cup_{\bar{\vect{x}} \in \mm'}\openball{\bar{\vect{x}} }{\varrho_{\bar{\vect{x}}}}$ is an open set and $ \vect{y}_k \not\in \cup_{\bar{\vect{x}}\in \mm'}\openball{\bar{\vect{x}}}{\varrho_{\bar{\vect{x}}}} $ for all $ k \geq 1 $, the accumulation point $ \vect{y}_{*} \not\in \cup_{\bar{\vect{x}} \in \mm'}\openball{\bar{\vect{x}}}{\varrho_{\bar{\vect{x}}}}$, which contradicts the fact that $ \vect{y}_{*} \in \mm' $ and $ \mm' \subseteq \cup_{\bar{\vect{x}} \in \mm'}\openball{\bar{\vect{x}} }{\varrho_{\bar{\vect{x}}}} $. Therefore, there exists $\varrho_1 > 0$ such that $ \closedball{\mm'}{\varrho_{1}} \subseteq \cup_{\bar{\vect{x}} \in \mm'} \openball{\bar{\vect{x}}}{\varrho_{\bar{\vect{x}}}} $.

Then, by the Lebesgue number lemma \cite{munkres1974topology}, there exists $ \varrho_2 > 0 $, such that for each subset of $ \closedball{\mm'}{\varrho_{1}} $ having diameter less than $ \varrho_2 $, there exists an element of $ \{ \openball{\bar{\vect{x}}}{\varrho_{\bar{\vect{x}}}}: \bar{\vect{x}} \in \mm' \} $ containing it.
Denote $ \varrho \eqdef \min \{ \varrho_1, \varrho_2/2\} $. Then we have $\openball{\mm'}{\varrho} \subseteq \openball{\mm'}{\varrho_1} \subseteq \cup_{\bar{\vect{x}} \in \mm'}\openball{\bar{\vect{x}}}{\varrho_{\bar{\vect{x}}}}$.  It follows from the definition of \( \varrho_{\bar{\vect{x}}} \) that $ \mathcal{P}_{\mm'}(\vect{y}) $ uniquely exists and is of class \(C^{k-1}\) for $ \vect{y}\in\openball{\mm'}{\varrho} $. Moreover, for all $ \vect{x} \in \mm' $ and $ \vect{w} \in \NormalMM{\vect{x}} $ with $ \|\vect{w}\| < \varrho $, the diameter of $ \openball{\vect{x}}{\varrho} $ is $2 \varrho$, which satisfies $2 \varrho \leq \varrho_2$. By the definition of $ \varrho_2 $, there exists $ \bar{\vect{x}} \in \mm' $ such that $ \openball{\vect{x}}{\varrho} \subseteq \openball{\bar{\vect{x}}}{\varrho_{\bar{\vect{x}}}} $. In particular, we have $ \vect{x}, \vect{x} + \vect{w} \in \openball{\bar{\vect{x}}}{\varrho_{\bar{\vect{x}}}} $ for $ \vect{w} \in \NormalMM{\vect{x}} $ satisfying $ \| \vect{w} \| < \varrho$. By the property of \( \varrho_{\bar{\vect{x}}} \) stated in \cref{lem:differential-projection-2}, we have $ \mathcal{P}_{\mm'}( \vect{x} + \vect{w}) = \vect{x} $.
The proof is complete.
\end{proof}

In this paper, we denote by \(\varrho_{*}\) the maximum value of the above positive constant \(\varrho\) in \cref{cor:general-manifold-uniform-projection-radius}.

\begin{remark}
The $\emph{reach}$ of a subset $\mathcal{S} \subseteq \RR^m$ is the largest \(\epsilon \in [0, \infty]\) such that the projection of any $\vect{y} \in \openball{\mathcal{S}}{\epsilon}$ onto $\mathcal{S}$ uniquely exists \cite[Def. 4.1]{federer1959curvature}. In our paper, the above value $\varrho_{*}$ serves as the reach of the submanifold $\mm'$. \cref{cor:general-manifold-uniform-projection-radius} implies that any compact \(C^2\) submanifold of Euclidean space has a positive reach. In fact, this result has already been established in \cite{foote1984regularity}. In \cref{cor:general-manifold-uniform-projection-radius}, we provide a different proof based on the local result \cref{lem:differential-projection}.
\end{remark}

Based on \cref{cor:general-manifold-uniform-projection-radius}, we now derive the following relationship among the tangent component, normal component and the trajectory of iterates, which will play a crucial role in the iteration complexity analysis of \cref{alg:TGP}.

\begin{lemma}\label{lem:first-second-order-boundedness}
Let $ \mm' \subseteq \RR^{m}$ be a compact submanifold of class \(C^3\) and \(\varrho_{*}>0\) be the positive constant defined after \cref{cor:general-manifold-uniform-projection-radius}. Then for any \(\delta \in (0, \varrho_{*}]\), there exist positive constants \( L^{(\delta)}_0, L^{(\delta)}_1, L^{(\delta)}_2 > 0 \) such that for all $ \vect{x} \in \mm' $, $ \vect{v} \in \TangMM{\vect{x}}$ and $\vect{w} \in \NormalMM{\vect{x}} $ satisfying $ \| \vect{w} \| \leq \varrho_{*} - \delta$, we have
\begin{align}
\| \mathcal{P}_{\mm'}(\vect{x}+\vect{v}+\vect{w}) - \vect{x} \|& \leq L^{(\delta)}_0 \| \vect{v} \|,\label{eq:first-order-boundedness}\\
\| \mathcal{P}_{\mm'}(\vect{x}+\vect{v}+\vect{w}) - \vect{x} - \vect{v} \| & \leq L^{(\delta)}_1 \| \vect{v} \|^2 + L^{(\delta)}_2 \| \vect{v} \| \| \vect{w} \|.\label{eq:second-order-boundedness-general}
\end{align}
\end{lemma}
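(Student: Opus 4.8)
The plan is to reduce both inequalities to a second-order Taylor expansion of the projection map near $\mm'$, after disposing of a trivial regime by compactness. Fix $\delta\in(0,\varrho_*]$ and let $\mathcal{T} \eqdef \{\vect{y}\in\RR^m : \dist(\vect{y},\mm')\le \varrho_*-\delta/2\}$, a compact set contained in the open tube $\openball{\mm'}{\varrho_*}$. By \cref{cor:general-manifold-uniform-projection-radius}, $\mathcal{P}_{\mm'}$ is well-defined and of class $C^{2}$ on $\openball{\mm'}{\varrho_*}$ (recall $k=3$), so $\DD\mathcal{P}_{\mm'}$ and $\DD^2\mathcal{P}_{\mm'}$ are continuous there and hence uniformly bounded on $\mathcal{T}$; I denote these operator-norm bounds by $M_1$ and $M_2$, and write $D$ for the (finite) diameter of $\mm'$. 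These three constants, depending only on $\mm'$ and $\delta$, will produce all the $L^{(\delta)}_i$. Throughout, fix $\vect{x}\in\mm'$, $\vect{v}\in\TangMM{\vect{x}}$, $\vect{w}\in\NormalMM{\vect{x}}$ with $\|\vect{w}\|\le\varrho_*-\delta$, and set $\vect{y}\eqdef\vect{x}+\vect{v}+\vect{w}$.

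I would then split on the size of $\|\vect{v}\|$. When $\|\vect{v}\|\ge\delta/2$, both inequalities follow from the crude bound $\|\mathcal{P}_{\mm'}(\vect{y})-\vect{x}\|\le D$ (valid even when the projection is not single-valued), after dividing by $\|\vect{v}\|$, resp.\ $\|\vect{v}\|^2$, and using $\|\vect{v}\|\ge\delta/2$. When $\|\vect{v}\|<\delta/2$, the bound $\|\vect{w}\|\le\varrho_*-\delta$ guarantees that the segment $t\mapsto\vect{x}+\vect{w}+t\vect{v}$, $t\in[0,1]$, stays inside $\mathcal{T}$, and likewise the segment $s\mapsto\vect{x}+s\vect{w}$. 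I may therefore define the $C^2$ curve $\phi(t)\eqdef\mathcal{P}_{\mm'}(\vect{x}+\vect{w}+t\vect{v})$, and \cref{cor:general-manifold-uniform-projection-radius} gives the crucial base value $\phi(0)=\mathcal{P}_{\mm'}(\vect{x}+\vect{w})=\vect{x}$.

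The core step is Taylor's theorem with integral remainder for $\phi$:
\[
\mathcal{P}_{\mm'}(\vect{y})=\phi(1)=\vect{x}+\DD\mathcal{P}_{\mm'}(\vect{x}+\vect{w})[\vect{v}]+\int_0^1(1-t)\,\DD^2\mathcal{P}_{\mm'}(\vect{x}+\vect{w}+t\vect{v})[\vect{v},\vect{v}]\,\ud t .
\]
The remainder is bounded by $\tfrac12 M_2\|\vect{v}\|^2$. For the linear term I use $\DD\mathcal{P}_{\mm'}(\vect{x})=\mathcal{P}_{\TangMM{\vect{x}}}$ from \cref{lem:differential-projection}, which gives $\DD\mathcal{P}_{\mm'}(\vect{x})[\vect{v}]=\vect{v}$ exactly, since $\vect{v}$ is tangent; hence $\DD\mathcal{P}_{\mm'}(\vect{x}+\vect{w})[\vect{v}]-\vect{v}=(\DD\mathcal{P}_{\mm'}(\vect{x}+\vect{w})-\DD\mathcal{P}_{\mm'}(\vect{x}))[\vect{v}]$, which is bounded by $M_2\|\vect{w}\|\,\|\vect{v}\|$ upon integrating $\DD^2\mathcal{P}_{\mm'}$ along the segment from $\vect{x}$ to $\vect{x}+\vect{w}$. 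Combining the two estimates yields $\|\mathcal{P}_{\mm'}(\vect{y})-\vect{x}-\vect{v}\|\le M_2\|\vect{v}\|\|\vect{w}\|+\tfrac12 M_2\|\vect{v}\|^2$, which is \eqref{eq:second-order-boundedness-general} in this regime; \eqref{eq:first-order-boundedness} follows from the same expansion via $\|\mathcal{P}_{\mm'}(\vect{y})-\vect{x}\|\le M_1\|\vect{v}\|+\tfrac12 M_2\|\vect{v}\|^2$ together with $\|\vect{v}\|<\delta/2$. Taking $L^{(\delta)}_0,L^{(\delta)}_1,L^{(\delta)}_2$ to be the maxima of the constants from the two regimes finishes the proof.

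The only genuine work is bookkeeping: making sure every constant is uniform over $\mm'$ (which is exactly what compactness — already used to produce the uniform reach $\varrho_*$ in \cref{cor:general-manifold-uniform-projection-radius} — supplies), and cleanly separating the two sources of error in the linear term, namely that the differential is evaluated at the displaced point $\vect{x}+\vect{w}$ rather than at $\vect{x}$ (this generates the $\|\vect{v}\|\|\vect{w}\|$ term) and that $\vect{v}$ is tangent (this kills any first-order contribution and leaves only the $O(\|\vect{v}\|^2)$ remainder). I do not expect any deeper difficulty once \cref{cor:general-manifold-uniform-projection-radius} is in hand.
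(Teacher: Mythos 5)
Your proposal is correct and takes essentially the same route as the paper's proof: the same case split on $\|\vect{v}\|$ versus $\delta/2$ (with the diameter bound handling the large-$\vect{v}$ regime), the same use of $\mathcal{P}_{\mm'}(\vect{x}+\vect{w})=\vect{x}$ as the base point, and the same decomposition of the error into a quadratic remainder plus the drift of the differential between $\vect{x}$ and $\vect{x}+\vect{w}$. The only difference is cosmetic: the paper phrases the estimates through Lipschitz constants of $\mathcal{P}_{\mm'}$ and $\DD\mathcal{P}_{\mm'}$ on the compact tube (via the descent lemma), whereas you use Taylor's theorem with integral remainder and a uniform bound on $\DD^2\mathcal{P}_{\mm'}$, which is equivalent.
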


\begin{proof}
Denote $ \mathcal{U} \eqdef \closedball{\mm'}{\varrho_{*} - \delta / 2} $ for simplicity.
By \cref{cor:general-manifold-uniform-projection-radius}, the projection mapping \(\mathcal{P}_{\mm'}\) is of class \(C^2\) on $ \openball{\mm'}{\varrho^{*}} $, and $ \mathcal{P}_{\mm'}(\vect{x} + \vect{w}) = \vect{x} $ for $\vect{w} \in \NormalMM{\vect{x}}$ satisfying $ \| \vect{w} \| \leq \varrho_{*} - \delta / 2$. Since $\mathcal{U} \subseteq \openball{\mm'}{\varrho^{*}}$ is a compact set, both $ \mathcal{P}_{\mm'} $ and its differential $ \DD \mathcal{P}_{\mm'} $ are Lipschitz continuous on $ \mathcal{B} $. Denote the Lipschitz constants of them by \(L_{\mathcal{P}_{\mm'}}\) and \(L_{\DD \mathcal{P}_{\mm'}}\), respectively. For any given $ \vect{x} \in \mm' $, $ \vect{v} \in \TangMM{\vect{x}} $ and $ \vect{w} \in \NormalMM{\vect{x}} $ satisfying $ \| \vect{w} \| \leq \varrho_{*} - \delta $, we consider the following two cases.

\emph{Case I:} $\| \vect{v} \| \leq \delta / 2$. In this case, both $ \vect{x} + \vect{w} $ and $ \vect{x} + \vect{v} + \vect{w}  $ are in $ \mathcal{U} $ since $ \| \vect{v} + \vect{w} \| \leq \| \vect{v} \| + \| \vect{w} \| \leq \varrho_{*} - \delta / 2$. It follows from \eqref{eq:first-order-boundedness-normal-general} and the Lipschitz continuity of $ \mathcal{P}_{\mm'} $ on $ \mathcal{U} $ that
\begin{equation}\label{eq:proof-first-order-boundedness-general-1}
\|\mathcal{P}_{\mm'}(\vect{x}+\vect{v}+\vect{w}) - \vect{x}\| = \| \mathcal{P}_{\mm'}(\vect{x}+\vect{v}+\vect{w}) - \mathcal{P}_{\mm'}(\vect{x}+\vect{w}) \| \leq L_{\mathcal{P}_{\mm'}} \| \vect{v} \|.
\end{equation}
Moreover, by using \eqref{eq:first-order-boundedness-normal-general} and applying the descent lemma to the mapping \(\mathcal{P}\) at $ \vect{x} + \vect{w} $, we have
\begin{align}
&\| \mathcal{P}_{\mm'}(\vect{x} + \vect{v} + \vect{w} ) - \vect{x} - \DD \mathcal{P}_{\mm'}(\vect{x}+\vect{w})[\vect{v}] \|\notag\\
&= \| \mathcal{P}_{\mm'}(\vect{x} + \vect{v} + \vect{w} ) - \mathcal{P}_{\mm'}(\vect{x}+\vect{w}) - \DD \mathcal{P}_{\mm'}(\vect{x}+\vect{w})[\vect{v}] \|\leq \frac{L_{\DD \mathcal{P}_{\mm'}}}{2} \| \vect{v} \|^2.\label{eq:proof-second-order-boundedness-general-1}
\end{align}
Combining $\vect{v} = \mathcal{P}_{\TangMM{\vect{x}}}(\vect{v}) = \DD \mathcal{P}_{\mm'}(\vect{x})[\vect{v}]$ from \cref{lem:differential-projection} and the Lipschitz continuity of $ \DD \mathcal{P}_{\mm'} $, we obtain that
\begin{align}
\| \DD \mathcal{P}_{\mm'}(\vect{x} + \vect{w})[\vect{v}] - \vect{v} \| & =   \| \DD \mathcal{P}_{\mm'}(\vect{x} + \vect{w})[\vect{v}] - \DD \mathcal{P}_{\mm'}(\vect{x})[\vect{v}]  \| \notag \\
& = \| (\DD \mathcal{P}_{\mm'}(\vect{x} + \vect{w}) - \DD \mathcal{P}_{\mm'}(\vect{x}))[\vect{v}] \|  \leq L_{\DD \mathcal{P}_{\mm'}}  \| \vect{v} \| \| \vect{w} \|. \label{eq:proof-second-order-boundedness-general-2}
\end{align}
It follows from \eqref{eq:proof-second-order-boundedness-general-1} and \eqref{eq:proof-second-order-boundedness-general-2} that
\begin{equation}\label{eq:proof-second-order-boundedness-general-3}
\| \mathcal{P}_{\mm'}(\vect{x} + \vect{v} + \vect{w} ) - \vect{x} - \vect{v} \| \leq  \frac{L_{\DD \mathcal{P}_{\mm'}}}{2} \| \vect{v} \|^2 + L_{\DD \mathcal{P}_{\mm'}} \| \vect{v} \| \| \vect{w} \|.
\end{equation}

\emph{Case II:} $ \| \vect{v} \| > \delta / 2$. Let $ d_{\mm'} $ be the diameter of $ \mm' $, \emph{i.e.}, $ d_{\mm'}\eqdef\max\{ \| \vect{x} - \vect{x}' \|: \vect{x}, \vect{x}' \in \mm' \} $.
%Since $ \mm' $ is a compact set, $ d $ exists.
Note that $ \mathcal{P}_{\mm'}(\vect{x} + \vect{v} + \vect{w}), \vect{x} \in \mm' $ and $ \|\vect{v}\| > \delta / 2 $. We have
\begin{align}
\| \mathcal{P}_{\mm'}(\vect{x} + \vect{v} + \vect{w} ) - \vect{x} \| & \leq d_{\mm'} \leq \frac{2 d_{\mm'}}{\delta} \| \vect{v} \|, \label{eq:proof-first-order-boundedness-general-2} \\
\| \mathcal{P}_{\mm'}(\vect{x} + \vect{v} + \vect{w} ) - \vect{x} - \vect{v} \| & \leq \| \mathcal{P}_{\mm'}(\vect{x} + \vect{v} + \vect{w} ) - \vect{x} \| + \| \vect{v} \| \leq \left(\frac{4 d_{\mm'}}{\delta^2} +  \frac{2}{\delta}\right) \| \vect{v} \|^2. \label{eq:proof-second-order-boundedness-general-4}
\end{align}
It follows from \eqref{eq:proof-first-order-boundedness-general-1},  \eqref{eq:proof-second-order-boundedness-general-3}, \eqref{eq:proof-first-order-boundedness-general-2} and \eqref{eq:proof-second-order-boundedness-general-4} that, the proof is complete if we set $ L^{(\delta)}_0 = \max\left\{ L_{\mathcal{P}_{\mm'}}, \frac{2d_{\mm'}}{\delta} \right\} $, $ L^{(\delta)}_1 = \max\left\{ \frac{L_{\DD \mathcal{P}_{\mm'}}}{2}, \frac{4d_{\mm'}}{\delta^{2}}+\frac{2}{\delta} \right\} $ and $ L^{(\delta)}_2 = L_{\DD \mathcal{P}_{\mm'}}$.
\end{proof}

\begin{remark}
In \cref{cor:general-manifold-uniform-projection-radius}, we proved that, for all $ \vect{y} $  satisfying $ \dist(\vect{y}, \mm') < \varrho_{*}$, the projection \(\mathcal{P}_{\mm'}(\vect{y})\) uniquely exists.
It is worth noting that, in \cref{lem:first-second-order-boundedness}, the vector $\vect{x}+\vect{v}+\vect{w}$ may not satisfy this condition, and thus its projection is not necessarily unique.
For example, when $ \| \vect{v} \| > \delta / 2$, it is possible that $ \dist(\vect{x}+\vect{v}+\vect{w}, \mm') \geq \varrho_{*}$.
However, even in this case, the inequalities \eqref{eq:first-order-boundedness} and \eqref{eq:second-order-boundedness-general} still hold, considering $\mathcal{P}_{\mm'}(\vect{x}+\vect{v}+\vect{w})$ as an arbitrary projection of it.
\end{remark}

\begin{remark}
Let $ \mm' \subseteq \RR^{m}$ be a compact submanifold of class \(C^3\)  and \( \retr \) be a retraction on it. It was shown in \cite[Eq. (B.3), (B.4)]{boumal2019GlobalRatesConvergence} that there exist positive constants \(L_0',L_1'>0\), such that for all \( \vect{x} \in \mm' \) and \( \vect{v} \in \TangMM{\vect{x}} \),
\begin{equation}\label{eq:boumal-first-second-boudnedness}
\|\retr_{\vect{x}}(\vect{v}) - \vect{x}\| \leq L_0' \| \vect{v} \|, \
\| \retr_{\vect{x}}(\vect{v}) - \vect{x} - \vect{v} \| \leq L_1' \| \vect{v} \|^2,
\end{equation}
where the second inequality is referred to as \emph{second-order boundedness}.
In \cite{liu2019QuadraticOptimizationOrthogonality}, a value of \(L_1'\) satisfying \eqref{eq:boumal-first-second-boudnedness} is obtained for multiple kinds of retractions on \( \St(r,n ) \).
Note that, if we set \(\delta =\varrho_{*}\) in \cref{lem:first-second-order-boundedness}, then
$ \vect{w} = \vect{0} $ and the inequalities \eqref{eq:first-order-boundedness} and \eqref{eq:second-order-boundedness-general} reduce to
\begin{equation*}
\|\mathcal{P}_{\mm'}(\vect{x} + \vect{v}) - \vect{x}\| \leq L_0^{(\varrho_{*})} \| \vect{v} \|, \
\| \mathcal{P}_{\mm'}(\vect{x}+\vect{v}) - \vect{x} - \vect{v} \| \leq L_1^{(\varrho_{*})} \| \vect{v} \|^2,
\end{equation*}
for all $ \vect{x} \in \mm' $ and $ \vect{v} \in \TangMM{\vect{x}} $.
Hence, constructing a retraction by the projection, \cref{lem:first-second-order-boundedness} can be regarded as an extension of \eqref{eq:boumal-first-second-boudnedness} allowing the appearance of a normal vector $\vect{w} \in \NormalMM{\vect{x}} $.
\end{remark}

\begin{remark}
We would like to emphasize that it is not possible to improve \cref{lem:first-second-order-boundedness} in the following two parts through the examination of specific examples within \( \St(r, n) \).\\
(i) The condition that \( \| \vect{w}  \| \leq \varrho_{*} - \delta \) can not be weaker. Note that \(\varrho_{*}=1\) on \( \St(r, n) \) by \cref{exa:max-rho-St}. We just need to show that the inequality \eqref{eq:first-order-boundedness} may fail when \( \| \vect{w} \| = 1\) on \( \St(r, n) \).
Let \( \vect{x} = (1,0)^{\T} \in \St(1,2) \), \( \vect{w} = (-1, 0)^{\T}\) and \( \vect{v} = (0, \epsilon)^{\T} \) for some \(\epsilon > 0\). Then
\[
\| \mathcal{P}_{\St(1,2)}(\vect{x} + \vect{v} + \vect{w}) -\vect{x} \| = \| \mathcal{P}_{\St(1,2)}((0, \epsilon)^{\T}) - (1, 0)^{\T} \| = \| (0, 1)^{\T} - (1,0)^{\T} \| = \sqrt{2}.
\]
It is clear that there does not exist a positive constant \(L_{0}>0\) such that the inequality \eqref{eq:first-order-boundedness} always holds. \\
(ii) The last term \( \| \vect{v} \| \| \vect{w} \|\) in the inequality \eqref{eq:second-order-boundedness-general} can not be removed unless \(\delta = \varrho_{*}\). For any \(\delta \in (0, 1)\) , let \( \vect{x} = (1,0)^{\T} \in \St(1,2) \), \( \vect{v} = (0, \epsilon)^{\T}\) and \( \vect{w} = (\delta / 2 - 1, 0)^{\T} \) for some \(\epsilon > 0\). Then
\begin{align*}
\left\| \mathcal{P}_{\St(1,2)}(\vect{x} + \vect{v} + \vect{w}) -\vect{x} - \vect{v} \right\|
& = \left\| \mathcal{P}_{\St(1,2)}((\delta / 2, \epsilon)^{\T}) - (1, 0)^{\T} - (0, \epsilon)^{\T} \right\|\\
% & = \left\|\frac{1}{\sqrt{\delta^2 / 4 +\epsilon^{2}}}(\delta / 2, \epsilon)^{\T} - (1,0)^{\T} - (0, \epsilon)^{\T} \right\|\\
& = \left\| \frac{1}{\sqrt{\delta^{2} / 4 + \epsilon^2}} (\delta / 2 - \sqrt{\delta^2 / 4 + \epsilon^2}, \epsilon (1 - \sqrt{\delta^2 / 4 + \epsilon^2}))^{\T} \right\| \\
& \geq \frac{\epsilon (1- \sqrt{\delta^{2} / 4 + \epsilon^2})}{\sqrt{\delta^{2} / 4 + \epsilon^2}}.
\end{align*}
Since \( \| \vect{v} \| = \epsilon\), we have that \( \liminf_{\epsilon \to 0} \| \mathcal{P}_{\St(1,2)}(\vect{x} + \vect{v} + \vect{w}) -\vect{x} - \vect{v} \| / \| \vect{v} \| \geq 2 / \delta - 1\), implying that \( \| \mathcal{P}_{\St(1,2)}(\vect{x} + \vect{v} + \vect{w}) -\vect{x} - \vect{v} \| / \| \vect{v} \|^2 \to \infty  \) as \( \epsilon \to 0 \). Therefore, there does not exist \(L_{1}>0\) such that \( \| \mathcal{P}_{\St(1,2)}(\vect{x} + \vect{v} + \vect{w}) -\vect{x} - \vect{v} \| \leq L_{1} \| \vect{v} \|^2 \) for all \( \vect{v} \in \TangMM{\vect{x}} \).
\end{remark}

While \cref{lem:first-second-order-boundedness} established an upper bound of the distance between  $\mathcal{P}_{\mm'}(\vect{x}+\vect{v}+\vect{w})$ and $\vect{x}$ in terms of the tangent component $\vect{v}$ and the normal component $\vect{w}$, we now present an inequality that builds its lower bound.

\begin{lemma}\label{thm:anti-first-order-boudnedness}
Let \( \mm' \subseteq \RR^m \) be a compact submanifold of class \(C^2\). Then there exists a positive constant \( L_3 > 0 \), such that for all \( \vect{x} \in \mm', \vect{v} \in \TangMM{\vect{x}} \) and \( \vect{w} \in \NormalMM{\vect{x}} \), we have
\begin{equation}\label{eq:anti-first-order-boundedness}
\| \mathcal{P}_{\mm'}(\vect{x}+\vect{v}+\vect{w}) - \vect{x} \| \geq \frac{\| \vect{v} \|}{1+L_3\| \vect{v} + \vect{w} \|}.
\end{equation}
\end{lemma}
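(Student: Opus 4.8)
The plan is to bound $\|\mathcal{P}_{\mm'}(\vect{x}+\vect{v}+\vect{w})-\vect{x}\|$ from below by pairing the displacement $\vect{p}-\vect{x}$ against the tangent direction $\vect{v}$, where $\vect{p}\eqdef\mathcal{P}_{\mm'}(\vect{x}+\vect{v}+\vect{w})$ (an arbitrary projection, if it is not unique) and $\vect{y}\eqdef\vect{x}+\vect{v}+\vect{w}$. Two facts about $\vect{p}$ will be used: since $\vect{p}$ minimizes $\vect{z}\mapsto\|\vect{y}-\vect{z}\|^2$ over $\mm'$, the first-order optimality condition gives $\vect{y}-\vect{p}\in\NormalMM{\vect{p}}$; and since $\vect{x}\in\mm'$, the definition of projection gives $\|\vect{y}-\vect{p}\|\le\|\vect{y}-\vect{x}\|=\|\vect{v}+\vect{w}\|$. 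Writing $\vect{d}\eqdef\vect{p}-\vect{x}$ so that $\vect{y}-\vect{p}=\vect{v}+\vect{w}-\vect{d}$, and using $\langle\vect{v},\vect{w}\rangle=0$ (because $\vect{v}\in\TangMM{\vect{x}}$ and $\vect{w}\in\NormalMM{\vect{x}}$), one gets $\langle\vect{v},\vect{v}+\vect{w}-\vect{d}\rangle=\|\vect{v}\|^2-\langle\vect{v},\vect{d}\rangle$, so it suffices to show that this quantity is small compared with $\|\vect{v}\|\,\|\vect{d}\|$.

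The geometric ingredient is the Lipschitz continuity of the tangent-space projector along $\mm'$: writing $\mm'$ locally as the graph of a $C^2$ map shows that the orthogonal projector $\mathcal{P}_{\TangMM{\vect{z}}}$ depends $C^1$-smoothly on $\vect{z}\in\mm'$, hence (by compactness of $\mm'$) is Lipschitz with some constant $L_3>0$; equivalently one may invoke $\mathcal{P}_{\TangMM{\vect{z}}}=\DD\mathcal{P}_{\mm'}(\vect{z})$ from \cref{lem:differential-projection}. Since $\mathcal{P}_{\TangMM{\vect{x}}}$ is self-adjoint and fixes $\vect{v}$, while $\mathcal{P}_{\TangMM{\vect{p}}}(\vect{v}+\vect{w}-\vect{d})=0$ because $\vect{v}+\vect{w}-\vect{d}=\vect{y}-\vect{p}\in\NormalMM{\vect{p}}$, I obtain $\langle\vect{v},\vect{v}+\vect{w}-\vect{d}\rangle=\big\langle\vect{v},\big(\mathcal{P}_{\TangMM{\vect{x}}}-\mathcal{P}_{\TangMM{\vect{p}}}\big)(\vect{v}+\vect{w}-\vect{d})\big\rangle$, whence $|\langle\vect{v},\vect{v}+\vect{w}-\vect{d}\rangle|\le L_3\,\|\vect{x}-\vect{p}\|\,\|\vect{v}\|\,\|\vect{v}+\vect{w}-\vect{d}\|\le L_3\,\|\vect{v}\|\,\|\vect{d}\|\,\|\vect{v}+\vect{w}\|$, the last step using $\|\vect{v}+\vect{w}-\vect{d}\|=\|\vect{y}-\vect{p}\|\le\|\vect{v}+\vect{w}\|$.

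Combining the two computations, $\|\vect{v}\|^2=\langle\vect{v},\vect{d}\rangle+\langle\vect{v},\vect{v}+\vect{w}-\vect{d}\rangle\le\|\vect{v}\|\,\|\vect{d}\|+L_3\,\|\vect{v}\|\,\|\vect{d}\|\,\|\vect{v}+\vect{w}\|=\|\vect{v}\|\,\|\vect{d}\|\,(1+L_3\|\vect{v}+\vect{w}\|)$; dividing by $\|\vect{v}\|$ (the inequality \eqref{eq:anti-first-order-boundedness} being trivial when $\vect{v}=\vect{0}$) yields $\|\vect{p}-\vect{x}\|\ge\|\vect{v}\|/(1+L_3\|\vect{v}+\vect{w}\|)$, as desired. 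The one point requiring care is the Lipschitz continuity of $\vect{z}\mapsto\mathcal{P}_{\TangMM{\vect{z}}}$ on $\mm'$ — this is where the $C^2$ hypothesis and the compactness of $\mm'$ enter, and it is the only place a uniform (rather than merely local) constant is needed; everything else is elementary.
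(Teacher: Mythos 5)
Your proposal is correct, and it rests on exactly the same key geometric ingredient as the paper's proof — Lipschitz dependence of the tangent-space projector $\vect{z}\mapsto\mathcal{P}_{\TangMM{\vect{z}}}$ along $\mm'$, made uniform by compactness — but the final estimate is arranged differently. The paper bounds $\|\vect{z}-\vect{x}\|$ from below by the norm of the tangential projection $\|\mathcal{P}_{\TangMM{\vect{z}}}(\vect{v}+\vect{w})\|$ (the residual $\vect{w}'$ disappears because it is normal at $\vect{z}$) and then runs an explicit two-case argument: for $\|\vect{z}-\vect{x}\|<\nu$ it invokes the local Lipschitz bound obtained from local defining functions, a finite subcover and the Lebesgue number lemma, and for $\|\vect{z}-\vect{x}\|\ge\nu$ it uses the trivial bound $\|\vect{z}-\vect{x}\|\ge\nu\ge\nu\|\vect{v}\|/\|\vect{v}+\vect{w}\|$, finally taking $L_3=\max\{L_{\mathcal{P}_{\TangBundle{\mm'}}},1/\nu\}$. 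You instead pair the displacement against $\vect{v}$, using self-adjointness of $\mathcal{P}_{\TangMM{\vect{x}}}$, normality of $\vect{y}-\vect{p}$ at $\vect{p}$, and the nearest-point inequality $\|\vect{y}-\vect{p}\|\le\|\vect{v}+\vect{w}\|$ (a fact the paper does not need), which yields the conclusion in one stroke provided a \emph{single global} Lipschitz constant for the projector field is available. That global constant does exist, but justifying it requires essentially the same near/far dichotomy the paper makes explicit: local Lipschitzness on a uniform radius $\nu$ via charts or defining functions, plus, for pairs with $\|\vect{x}-\vect{p}\|\ge\nu$, the fallback $\|\mathcal{P}_{\TangMM{\vect{x}}}-\mathcal{P}_{\TangMM{\vect{p}}}\|\le 2\le(2/\nu)\|\vect{x}-\vect{p}\|$; your one-line appeal to "compactness" glosses over this, though your closing remark shows you are aware it is the crux. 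One further small caveat: the parenthetical alternative of invoking $\DD\mathcal{P}_{\mm'}(\vect{z})=\mathcal{P}_{\TangMM{\vect{z}}}$ from \cref{lem:differential-projection} only gives continuity (not Lipschitzness) of the projector field when $\mm'$ is merely $C^2$, since $\mathcal{P}_{\mm'}$ is then only $C^1$; your primary route via local $C^2$ graphs (or the paper's defining-function formula \eqref{eq:project-tangent-space-by-level-set}) is the one that actually delivers the local Lipschitz bound.
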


\begin{proof}
For any \( \bar{\vect{x}} \in \mm' \), there exist a positive constant \(\nu_{\bar{\vect{x}}} > 0\) and a \(C^2\) local defining function \(\Phi_{\bar{\vect{x}}}: \openball{\bar{\vect{x}}}{\nu_{\bar{\vect{x}}}} \to \RR^{m-d} \) such that \( \Phi_{\bar{\vect{x}}}(\vect{x}) = \vect{0} \) and \( \matr{J}_{\Phi_{\bar{\vect{x}}}}(\vect{x})\) is of full rank for all \( \vect{x} \in \mm' \cap \openball{\bar{\vect{x}}}{\nu_{\bar{\vect{x}}}}  \), where \( d \) is the dimension of \( \mm' \) and  \( \matr{J}_{\Phi_{\bar{\vect{x}}}}(\vect{x}) \in \RR^{(m-d) \times m}\) is the Jacobian matrix of \( \Phi_{\bar{\vect{x}}} \) at \( \vect{x} \). Note that \( \TangMM{\vect{x}} = \operatorname{ker}(\DD \Phi_{\bar{\vect{x}}}(\vect{x})) \) by \cite[Eq. 3.19]{absil2009optimization} and \( \DD \Phi_{\bar{\vect{x}}}(\vect{x})[\vect{u}] = \matr{J}_{\Phi_{\bar{\vect{x}}}}(\vect{x})\vect{u} \).
It follows that, for all \( \vect{x} \in \mm' \cap \openball{\bar{\vect{x}}}{\nu_{\bar{\vect{x}}}} \), we have
\begin{equation}\label{eq:project-tangent-space-by-level-set}
\mathcal{P}_{\TangMM{\vect{x}}}(\vect{u}) = \left(\matr{I}_m - \matr{J}_{\Phi_{\bar{\vect{x}}}}(\vect{x})^{\T}(\matr{J}_{\Phi_{\bar{\vect{x}}}}(\vect{x}) \matr{J}_{\Phi_{\bar{\vect{x}}}}(\vect{x})^{\T})^{-1} \matr{J}_{\Phi_{\bar{\vect{x}}}}(\vect{x})\right)\vect{u},\  \forall \vect{u} \in \RR^m.
\end{equation}
Since \( \mm' \subseteq \cup_{\bar{\vect{x}} \in \mm'} \openball{\bar{\vect{x}}}{\nu_{\bar{\vect{x}}} / 2} \) and \( \mm' \) is compact, there exists a finite subset \( \mathcal{S} \subseteq \mm' \) such that \( \mm' \subseteq \cup_{\bar{\vect{x}} \in \mathcal{S}}\openball{\bar{\vect{x}}}{\nu_{\bar{\vect{x}}} / 2} \).
Since \( \matr{I}_m - \matr{J}_{\Phi_{\bar{\vect{x}}}}(\vect{x})^{\T}(\matr{J}_{\Phi_{\bar{\vect{x}}}}(\vect{x}) \matr{J}_{\Phi_{\bar{\vect{x}}}}(\vect{x})^{\T})^{-1} \matr{J}_{\Phi_{\bar{\vect{x}}}}(\vect{x}) \) is continuously differentiable for all \( \vect{x} \in \closedball{\bar{\vect{x}}}{\nu_{\bar{\vect{x}}} / 2 } \), it is Lipschitz continuous on \(\closedball{\bar{\vect{x}}}{\nu_{\bar{\vect{x}}} / 2 }\).  Let \( L_{\mathcal{P}_{\TangMM{\bar{\vect{x}}}}} \) be its Lipschitz constant and \(L_{\mathcal{P}_{\TangBundle{\mm'}}} \eqdef \max_{\bar{\vect{x}} \in \mathcal{S}} L_{\mathcal{P}_{\TangMM{\bar{\vect{x}}}}}\).
By a similar argument as in the proof of \cref{cor:general-manifold-uniform-projection-radius} and using the Lebesgue number lemma, we know that there exists \(\nu > 0\) such that, for all \( \vect{x}, \vect{z} \in \mm' \) satisfying \( \| \vect{x} - \vect{z} \| < \nu\), there exists \( \bar{\vect{x}} \in \mathcal{S} \) such that \( \vect{x}, \vect{z} \in \openball{\bar{\vect{x}}}{\nu_{\bar{\vect{x}}} / 2} \).
Then for such \( \vect{x} \) and \( \vect{z} \), using \eqref{eq:project-tangent-space-by-level-set} and the definition of \( L_{\mathcal{P}_{\TangMM{\bar{\vect{x}}}}} \), we have
\begin{equation}\label{eq:Lipschitz-projection-tangent}
\|\mathcal{P}_{\TangMM{\vect{x}}}(\vect{u}) - \mathcal{P}_{\TangMM{\vect{z}}}(\vect{u})\| \leq  L_{\mathcal{P}_{\TangMM{\bar{\vect{x}}}}}  \| \vect{u} \| \leq L_{\mathcal{P}_{\TangBundle{\mm'}}}\| \vect{u} \|,\ \forall \vect{u} \in \RR^m.
\end{equation}

Denote \( \vect{z}\eqdef\mathcal{P}_{\mm'}(\vect{x}+ \vect{v} + \vect{w}) \) and \(\vect{w}'\eqdef \vect{z} - (\vect{x} + \vect{v} + \vect{w}) \) for simplicity. Since \( \vect{z} \) is a projection of \( \vect{x} + \vect{v} + \vect{w}  \), we have \(\vect{w}' \in \NormalM{\vect{z}} \).  We consider the following two cases.

\emph{Case I:} \( \| \vect{z} - \vect{x} \| < \nu \).
By \eqref{eq:Lipschitz-projection-tangent}, we have that
\begin{align*}
\| \vect{z} - \vect{x} \| &= \| \vect{v} + \vect{w} + \vect{w}' \| \geq \| \mathcal{P}_{\TangM{\vect{z}}}(\vect{v} + \vect{w} + \vect{w}') \| =  \| \mathcal{P}_{\TangM{\vect{z}}}(\vect{v} + \vect{w}) \|\\
&\geq \| \mathcal{P}_{\TangMM{\vect{x}}}(\vect{v} + \vect{w}) \| - \| \mathcal{P}_{\TangM{\vect{z}}}(\vect{v}+\vect{w}) - \mathcal{P}_{\TangMM{\vect{x}}}(\vect{v}+\vect{w}) \| \\
&\geq \| \vect{v} \| - L_{\mathcal{P}_{\TangBundle{\mm'}}}\| \vect{z} - \vect{x} \| \| \vect{v} + \vect{w} \|.
\end{align*}
It follows that \( \| \vect{z} - \vect{x} \| \geq \| \vect{v} \| / (1 + L_{\mathcal{P}_{\TangBundle{\mm'}}} \| \vect{v} + \vect{w} \|). \)

\emph{Case II:} \( \| \vect{z} - \vect{x} \| \geq \nu\). Noting that \( \| \vect{v}+\vect{w} \| \geq \| \vect{v} \|\), we have
\begin{equation}
\| \vect{z} - \vect{x} \| \geq \nu \geq \nu\| \vect{v} \| / \| \vect{v} + \vect{w} \| \geq \| \vect{v} \| / (1 + \| \vect{v} + \vect{w} \| / \nu).
\end{equation}
Then, the proof is complete by setting \( L_3 = \max\{ L_{\mathcal{P}_{\TangBundle{\mm'}}}, 1 / \nu \} \).
\end{proof}

We conclude this subsection with an inequality providing a bound on $\mathcal{P}_{\NormalMM{\vect{x}}}(\vect{x} - \vect{y} )$, which will be used later.
\begin{lemma}\label{lem:project-x-y-normal}
Let \( \mm' \subseteq \RR^m \) be a compact submanifold of class \(C^2\). Then there exists a constant \( L_4 > 0 \), such that, for all \( \vect{x}, \vect{y} \in \mm'\),
\begin{equation}\label{eq:project-x-y-normal}
 \| \mathcal{P}_{\NormalMM{\vect{x}}}(\vect{x} - \vect{y} ) \| \leq L_4 \| \vect{x} - \vect{y} \|^2.
\end{equation}
\end{lemma}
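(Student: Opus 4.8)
The plan is to prove the estimate first \emph{locally} --- on neighbourhoods where $\mm'$ admits a $C^2$ defining function --- and then to make the constant uniform over all of $\mm'$ by the compactness-plus-Lebesgue-number device already used in the proofs of \cref{cor:general-manifold-uniform-projection-radius} and \cref{thm:anti-first-order-boudnedness}. Since here $\mm'$ is only assumed to be of class $C^2$ (in contrast to \cref{lem:first-second-order-boundedness}, where $C^3$ is used), the projection $\mathcal{P}_{\mm'}$ need not be twice differentiable; instead of expanding the projection we will therefore expand the defining function to first order with a quadratic remainder, exploiting that both $\vect{x}$ and $\vect{y}$ are zeros of it.

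For the local argument, fix $\bar{\vect{x}} \in \mm'$ and let $\Phi_{\bar{\vect{x}}}: \openball{\bar{\vect{x}}}{\nu_{\bar{\vect{x}}}} \to \RR^{m-d}$ be a $C^2$ local defining function of $\mm'$ near $\bar{\vect{x}}$ with $\matr{J}_{\Phi_{\bar{\vect{x}}}}(\vect{z})$ of full rank for all $\vect{z} \in \mm' \cap \openball{\bar{\vect{x}}}{\nu_{\bar{\vect{x}}}}$, exactly as in the proof of \cref{thm:anti-first-order-boudnedness}; write $\matr{J}(\vect{z}) \eqdef \matr{J}_{\Phi_{\bar{\vect{x}}}}(\vect{z})$ for brevity. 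Given any $\vect{x}, \vect{y} \in \mm' \cap \openball{\bar{\vect{x}}}{\nu_{\bar{\vect{x}}}/2}$, we have $\Phi_{\bar{\vect{x}}}(\vect{x}) = \Phi_{\bar{\vect{x}}}(\vect{y}) = \vect{0}$, so Taylor's theorem applied to the $C^2$ map $\Phi_{\bar{\vect{x}}}$ at $\vect{x}$ gives
\[
\bigl\| \matr{J}(\vect{x})(\vect{x}-\vect{y}) \bigr\| = \bigl\| \Phi_{\bar{\vect{x}}}(\vect{y}) - \Phi_{\bar{\vect{x}}}(\vect{x}) - \matr{J}(\vect{x})(\vect{y}-\vect{x}) \bigr\| \leq \tfrac{1}{2} M_{\bar{\vect{x}}} \| \vect{x} - \vect{y} \|^2 ,
\]
where $M_{\bar{\vect{x}}}$ bounds the second-order derivatives of $\Phi_{\bar{\vect{x}}}$ on the compact set $\closedball{\bar{\vect{x}}}{\nu_{\bar{\vect{x}}}/2}$. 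Since $\TangMM{\vect{x}} = \ker \matr{J}(\vect{x})$, the normal space $\NormalMM{\vect{x}}$ is the row space of $\matr{J}(\vect{x})$, so (taking the complement of the projection formula \eqref{eq:project-tangent-space-by-level-set}) $\mathcal{P}_{\NormalMM{\vect{x}}}(\vect{u}) = \matr{J}(\vect{x})^{\T}\bigl(\matr{J}(\vect{x})\matr{J}(\vect{x})^{\T}\bigr)^{-1}\matr{J}(\vect{x})\vect{u}$ for all $\vect{u} \in \RR^m$. Applying this with $\vect{u} = \vect{x}-\vect{y}$ together with the previous display yields $\| \mathcal{P}_{\NormalMM{\vect{x}}}(\vect{x}-\vect{y}) \| \leq c_{\bar{\vect{x}}} \| \vect{x}-\vect{y} \|^2$, where $c_{\bar{\vect{x}}}$ also absorbs $\sup_{\vect{z}} \| \matr{J}(\vect{z})^{\T}(\matr{J}(\vect{z})\matr{J}(\vect{z})^{\T})^{-1} \|_{\infty}$, which is finite because this map is continuous on the compact set $\closedball{\bar{\vect{x}}}{\nu_{\bar{\vect{x}}}/2} \cap \mm'$ and $\matr{J}(\vect{z})$ has full rank there.

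To conclude, cover $\mm'$ by finitely many balls $\openball{\bar{\vect{x}}_i}{\nu_{\bar{\vect{x}}_i}/2}$, $i = 1, \dots, N$, and set $c_{*} \eqdef \max_i c_{\bar{\vect{x}}_i}$. By the Lebesgue number lemma, arguing exactly as in the proof of \cref{thm:anti-first-order-boudnedness}, there is $\nu > 0$ such that any $\vect{x}, \vect{y} \in \mm'$ with $\| \vect{x}-\vect{y} \| < \nu$ lie in a common ball of this cover, and hence $\| \mathcal{P}_{\NormalMM{\vect{x}}}(\vect{x}-\vect{y}) \| \leq c_{*} \| \vect{x}-\vect{y} \|^2$; while if $\| \vect{x}-\vect{y} \| \geq \nu$ then trivially $\| \mathcal{P}_{\NormalMM{\vect{x}}}(\vect{x}-\vect{y}) \| \leq \| \vect{x}-\vect{y} \| \leq \nu^{-1}\| \vect{x}-\vect{y} \|^2$. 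Taking $L_4 \eqdef \max\{ c_{*}, \nu^{-1} \}$ finishes the proof. The only real subtlety is making all these local constants uniform over $\mm'$ --- handled by the compactness/Lebesgue-number machinery already set up earlier in this section --- since the underlying estimate is just a first-order Taylor expansion (with quadratic remainder) of a $C^2$ defining function around a point of $\mm'$.
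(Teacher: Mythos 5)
Your proposal is correct and follows essentially the same route as the paper's proof: both exploit the $C^2$ local defining function with $\Phi_{\bar{\vect{x}}}(\vect{x})=\Phi_{\bar{\vect{x}}}(\vect{y})=\vect{0}$ to get the quadratic bound on $\|\matr{J}_{\Phi_{\bar{\vect{x}}}}(\vect{x})(\vect{x}-\vect{y})\|$, express $\mathcal{P}_{\NormalMM{\vect{x}}}$ via $\matr{J}^{\T}(\matr{J}\matr{J}^{\T})^{-1}\matr{J}$, and then make the constant uniform by a finite cover plus the trivial bound $\|\vect{x}-\vect{y}\|\le \nu^{-1}\|\vect{x}-\vect{y}\|^2$ when $\|\vect{x}-\vect{y}\|\ge\nu$. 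The only cosmetic difference is that the paper invokes the descent lemma (Lipschitz continuity of $\DD\Phi_{\bar{\vect{x}}}$) where you use the second-order Taylor remainder, which is equivalent here.
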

\begin{proof}
Let \( \nu \), \( \nu_{\bar{\vect{x}}} \) and \( \mathcal{S} \) be as defined in the proof of \cref{thm:anti-first-order-boudnedness}.
When \( \| \vect{x} - \vect{y} \| < \nu \), there exists \( \bar{\vect{x}}\in \mm' \) such that \( \vect{x}, \vect{y} \in \openball{\bar{\vect{x}}}{\nu_{\bar{\vect{x}}} / 2} \).
Let \( L_{\DD \Phi_{\bar{\vect{x}}}} \) denote the Lipschitz constant of \( \DD \Phi_{\bar{\vect{x}}} \) on the compact set \( \closedball{\bar{\vect{x}}}{\nu_{\bar{\vect{x}} }/ 2} \), and define \( L_{\DD \Phi} \eqdef \max_{\bar{\vect{x}} \in \mathcal{S}}L_{\DD \Phi_{\bar{\vect{x}}}} \).
By the descent lemma \cite{beck2023nonlinear},
\begin{equation*}
\|\matr{J}_{\Phi_{\bar{\vect{x}}}} (\vect{x}) (\vect{y} - \vect{x})  \| = \| \Phi_{\bar{\vect{x}}}(\vect{y}) - \Phi_{\bar{\vect{x}}}(\vect{x}) - \matr{J}_{\Phi_{\bar{\vect{x}}}} (\vect{x}) (\vect{y} - \vect{x})  \| \leq \frac{L_{\DD \Phi}}{2} \| \vect{y} - \vect{x} \|^2,
\end{equation*}
where we used \( \Phi_{\bar{\vect{x}}}(\vect{x}) = \Phi_{\bar{\vect{x}}}(\vect{y}) = \vect{0} \) for \( \vect{x}, \vect{y} \in \mm'\cap \closedball{\bar{\vect{x}}}{\nu_{\bar{\vect{x}}} / 2} \).
Let \[ \Delta_{\matr{J}_{\bar{\vect{x}}}} \eqdef \max_{\vect{x} \in \closedball{\bar{\vect{x}}}{\nu_{\bar{\vect{x}}}/ 2} } \| \matr{J}_{\Phi_{\bar{\vect{x}}}}(\vect{x})^{\T}(\matr{J}_{\Phi_{\bar{\vect{x}}}}(\vect{x}) \matr{J}_{\Phi_{\bar{\vect{x}}}}(\vect{x})^{\T})^{-1} \|, \  \Delta_{\matr{J}} \eqdef \max_{\bar{\vect{x}} \in \mathcal{S}}\Delta_{\matr{J}_{\bar{\vect{x}}}}. \]
By \eqref{eq:project-tangent-space-by-level-set}, we have that 
\[   \| \mathcal{P}_{\NormalMM{\vect{x}}}(\vect{x} - \vect{y} ) \| = \left\|\left(\matr{J}_{\Phi_{\bar{\vect{x}}}}(\vect{x})^{\T}(\matr{J}_{\Phi_{\bar{\vect{x}}}}(\vect{x}) \matr{J}_{\Phi_{\bar{\vect{x}}}}(\vect{x})^{\T})^{-1}\right)\left(\matr{J}_{\Phi_{\bar{\vect{x}}}} (\vect{x}) (\vect{y} - \vect{x})\right) \right\| \leq \frac{\Delta_{\matr{J}}L_{\DD \Phi}}{2} \| \vect{x} - \vect{y} \|^2. \]
In the other case when \( \|\vect{x} - \vect{y}\| \geq \nu \), we have \[ \| \mathcal{P}_{\NormalMM{\vect{x}}}(\vect{x} - \vect{y} ) \| \leq \| \vect{x} - \vect{y} \| \leq \frac{1}{\nu} \| \vect{x} - \vect{y} \|^2.\] Thus, by setting \( L_4 \eqdef \max \left\{\frac{1}{\nu}, \frac{\Delta_{\matr{J}}L_{\DD \Phi}}{2}\right\} \),
we obtain the desired inequality for all \( \vect{x}, \vect{y} \in \mm' \). The proof is complete.
\end{proof}

\begin{remark}
A set \( \mathcal{S} \subseteq \RR^m \) is said to be \emph{proximally smooth} with radius \( \vartheta > 0 \) if the distance function \( \dist(\vect{y}, \mathcal{S}) \) is continuously differentiable for all \( \vect{y} \in \openball{\mathcal{S}}{\vartheta} \) \cite{clarke1995proximal}, which is equivalent to the uniqueness of the projection \( \mathcal{P}_{\mathcal{S}}(\vect{x}) \) when \( \mathcal{S} \) is weakly closed \cite[Thm.~4.11]{clarke1995proximal}.
By \cref{cor:general-manifold-uniform-projection-radius}, any compact \(C^2\) submanifold is proximally smooth, which is already known in the literature \cite{clarke1995proximal}.
Our results further reveal several enhanced properties of smooth submanifolds that do not hold for general proximally smooth sets, as summarized in \cref{tab:comparison_proximally_smooth}:
\begin{itemize}
\item
While proximal smoothness guarantees only local uniqueness of the projection, \cref{cor:general-manifold-uniform-projection-radius} shows that the projection onto a \( C^k \) submanifold is additionally \( C^{k-1} \) smooth.
\item The additional smoothness of the projection map onto a smooth manifold further allows us to establish the bounds for projection onto smooth submanifolds in \cref{lem:first-second-order-boundedness}. Also, our results \cref{thm:anti-first-order-boudnedness,lem:project-x-y-normal} rely on the manifold structure, which do not hold for general proximally smooth sets.
\item For a proximally smooth set \( \mathcal{S} \) with radius \( \vartheta \), the \emph{uniform normal inequality} \cite{clarke1995proximal,davis2025stochastic}:
\(\langle \vect{n}, \vect{y} - \vect{x} \rangle \leq \frac{1}{2 \vartheta} \| \vect{n} \| \| \vect{y} - \vect{x} \|^2\)
holds for any \( \vect{x} \in \mathcal{S} \) and \( \vect{n} \in \mathbf{N}_{\vect{x}} \mathcal{S} \).
In contrast, when \( \mathcal{S} \) is additionally a \( C^2 \) submanifold \( \mm \), our result \cref{lem:project-x-y-normal} implies
\[| \langle \vect{n}, \vect{y} - \vect{x} \rangle | = | \langle \vect{n}, \mathcal{P}_{\NormalMM{\vect{x}}}(\vect{y} - \vect{x}) \rangle | \leq L_4 \| \vect{n} \| \| \vect{y} - \vect{x} \|^2,
\]
where the first equality follows from the fact that $\vect{n} \in \NormalMM{\vect{x}}$.
From this perspective, our inequality \eqref{eq:project-x-y-normal} is stronger for \(C^2\) submanifold, since it implies the uniform normal inequality, even though the constant \( L_4 \) is implicit.
\end{itemize}

\begin{table}[htbp]
  \centering
  \footnotesize
  \resizebox{\textwidth}{!}{%
  \begin{tabular}{@{}l@{\quad}l@{\quad}l@{}}
    \toprule
    & Proximally smooth set & \( C^k \) submanifold \\
    \midrule
    Projection in neighborhood & Unique & Unique and \( C^{k-1} \)-smooth (\cref{cor:general-manifold-uniform-projection-radius}) \\
    Upper bound on \( \| \mathcal{P}_{\mm'}(\vect{x} + \vect{v} + \vect{w}) - \vect{x} \| \) & --- & \( L^{(\delta)}_0 \| \vect{v} \| \) (\cref{lem:first-second-order-boundedness}) \\
    Upper bound on \( \| \mathcal{P}_{\mm'}(\vect{x} + \vect{v} + \vect{w}) - \vect{x} - \vect{v} \| \) & --- & \( L^{(\delta)}_1 \| \vect{v} \|^2 + L^{(\delta)}_2 \| \vect{v} \| \| \vect{w} \| \) (\cref{lem:first-second-order-boundedness}) \\
    Lower bound on \( \| \mathcal{P}_{\mm'}(\vect{x} + \vect{v} + \vect{w}) - \vect{x} \| \) & --- & \( \| \vect{v} \| / (1 + L_3 \| \vect{v} + \vect{w} \|) \) (\cref{thm:anti-first-order-boudnedness}) \\
    Uniform normal inequality & \( \langle \vect{n}, \vect{y} - \vect{x} \rangle \leq \frac{1}{2 \vartheta} \| \vect{n} \| \| \vect{y} - \vect{x} \|^2 \) & \( \| \mathcal{P}_{\NormalMM{\vect{x}}}(\vect{x} - \vect{y}) \| \leq L_4 \| \vect{n} \| \| \vect{x} - \vect{y} \|^2 \) (\cref{lem:project-x-y-normal}) \\
    \bottomrule
  \end{tabular}
  }
  \caption{Comparison of geometric properties of proximally smooth sets with radius \( \vartheta \) and \( C^k \) submanifolds with \( k \geq 3 \)}\label{tab:comparison_proximally_smooth}
\end{table}
\end{remark}

\subsection{Explicit parameters for $\St(r, n)$ and $\Gr(p, n)$}
Our convergence analysis will show that the iteration complexity bounds for both the Armijo stepsize and the Zhang–Hager–type nonmonotone Armijo stepsize do not require explicit knowledge of the exact parameter values. However, establishing global convergence does require an upper bound on $\varrho_{*}$. In contrast, the fixed stepsize rule depends on the explicit values of these parameters. 
In this section, for the cases of \( \St(r, n) \) and \( \Gr(p, n) \), we estimate \(\varrho_{*}\) and \( L_0^{(\delta)} \).
Before that, we first present two lemmas concerning the projection onto \( \St(r, n) \).

\begin{lemma}[{\cite[Cor. 7.3.5]{horn2012matrix}}]\label{lem:Hoffman-Wielandt-thm}
Let $\matr{Y}, \tilde{\matr{Y}}\in\RR^{n\times r}$ be two matrices with $\sigma_1(\matr{Y}) \geq \cdots \geq \sigma_r(\matr{Y})$ and $\sigma_1(\tilde{\matr{Y}}) \geq \cdots \geq \sigma_r(\tilde{\matr{Y}})$ as the non-increasingly ordered singular values, respectively. Then \\
(i) $\left|\sigma_i(\matr{Y})-\sigma_i(\tilde{\matr{Y}})\right| \leq\|\matr{Y}-\tilde{\matr{Y}}\|_{\infty}$ for $1\leq i\leq r$;\\
(ii) $\sum_{i=1}^r\left(\sigma_i(\matr{Y})-\sigma_i(\tilde{\matr{Y}})\right)^2 \leq\|\matr{Y}-\tilde{\matr{Y}}\|^2$.
\end{lemma}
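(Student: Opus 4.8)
The plan is to reduce both parts to classical eigenvalue perturbation facts for symmetric matrices via the Jordan--Wielandt dilation. First I would dispose of part (i): using the Courant--Fischer characterization $\sigma_i(\matr{Y}) = \max_{\dim \mathcal{S} = i}\min_{\vect{u}\in\mathcal{S},\,\|\vect{u}\|=1}\|\matr{Y}\vect{u}\|$ together with the elementary bound $\big|\|\matr{Y}\vect{u}\|-\|\tilde{\matr{Y}}\vect{u}\|\big|\le\|(\matr{Y}-\tilde{\matr{Y}})\vect{u}\|\le\|\matr{Y}-\tilde{\matr{Y}}\|_{\infty}$ valid for every unit vector $\vect{u}$, one passes to the inner minimum and then to the outer maximum for the two matrices and obtains $|\sigma_i(\matr{Y})-\sigma_i(\tilde{\matr{Y}})|\le\|\matr{Y}-\tilde{\matr{Y}}\|_{\infty}$ for each $i$; this is just Weyl's inequality in singular-value form, so part (i) should be routine.

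For part (ii), I would assume without loss of generality that $n\ge r$ and introduce the symmetric dilations $\hat{\matr{Y}}=\begin{bmatrix}\matr{0}&\matr{Y}\\\matr{Y}^{\T}&\matr{0}\end{bmatrix}$ and $\hat{\tilde{\matr{Y}}}=\begin{bmatrix}\matr{0}&\tilde{\matr{Y}}\\\tilde{\matr{Y}}^{\T}&\matr{0}\end{bmatrix}$, both in $\symmm{\RR^{(n+r)\times(n+r)}}$. A standard computation shows that the non-increasingly ordered eigenvalues of $\hat{\matr{Y}}$ are $\sigma_1(\matr{Y})\ge\cdots\ge\sigma_r(\matr{Y})\ge 0=\cdots=0\ge-\sigma_r(\matr{Y})\ge\cdots\ge-\sigma_1(\matr{Y})$, with $n-r$ zeros in the middle, and likewise for $\hat{\tilde{\matr{Y}}}$; moreover $\|\hat{\matr{Y}}-\hat{\tilde{\matr{Y}}}\|^2=2\|\matr{Y}-\tilde{\matr{Y}}\|^2$. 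The crux is then the symmetric Hoffman--Wielandt inequality: for $\matr{A},\matr{B}\in\symmm{\RR^{m\times m}}$ with eigenvalues listed in non-increasing order, $\sum_{i=1}^m(\lambda_i(\matr{A})-\lambda_i(\matr{B}))^2\le\|\matr{A}-\matr{B}\|^2$. Applying it to $\matr{A}=\hat{\matr{Y}}$ and $\matr{B}=\hat{\tilde{\matr{Y}}}$, the left-hand side equals $2\sum_{i=1}^r(\sigma_i(\matr{Y})-\sigma_i(\tilde{\matr{Y}}))^2$ and the right-hand side equals $2\|\matr{Y}-\tilde{\matr{Y}}\|^2$; dividing by $2$ gives the claim.

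The hard part is establishing the symmetric Hoffman--Wielandt inequality itself. I would take spectral decompositions $\matr{A}=\matr{U}\matr{\Lambda}_{\matr{A}}\matr{U}^{\T}$ and $\matr{B}=\matr{V}\matr{\Lambda}_{\matr{B}}\matr{V}^{\T}$ with $\matr{\Lambda}_{\matr{A}},\matr{\Lambda}_{\matr{B}}$ carrying the sorted eigenvalues, set $\matr{W}=\matr{U}^{\T}\matr{V}\in\ON{m}$, and use orthogonal invariance of the Frobenius norm to write $\|\matr{A}-\matr{B}\|^2=\|\matr{\Lambda}_{\matr{A}}\|^2+\|\matr{\Lambda}_{\matr{B}}\|^2-2\sum_{i,j}\lambda_i(\matr{A})\lambda_j(\matr{B})w_{ij}^2$. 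Since the matrix with entries $w_{ij}^2$ is doubly stochastic, Birkhoff's theorem reduces the maximization of that bilinear form to permutation matrices, and the rearrangement inequality then shows the maximum is attained at the identity permutation because both eigenvalue lists are sorted in the same order; hence $\sum_{i,j}\lambda_i(\matr{A})\lambda_j(\matr{B})w_{ij}^2\le\sum_i\lambda_i(\matr{A})\lambda_i(\matr{B})$. Substituting this back and regrouping yields $\sum_i(\lambda_i(\matr{A})-\lambda_i(\matr{B}))^2\le\|\matr{A}-\matr{B}\|^2$. Alternatively, since the statement is precisely \cite[Cor.~7.3.5]{horn2012matrix}, one may simply invoke it; the sketch above merely records the argument for completeness.
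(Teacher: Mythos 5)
Your proof is correct: part (i) follows as you say from the Courant--Fischer characterization together with $\bigl|\|\matr{Y}\vect{u}\|-\|\tilde{\matr{Y}}\vect{u}\|\bigr|\le\|\matr{Y}-\tilde{\matr{Y}}\|_{\infty}$, and for part (ii) the Jordan--Wielandt dilation bookkeeping (spectra $\pm\sigma_i$ plus $n-r$ zeros, both sides picking up a factor of $2$) and the Birkhoff/rearrangement proof of the symmetric Hoffman--Wielandt inequality are all sound, including the use of the rearrangement inequality for possibly negative eigenvalues. The paper gives no proof of this lemma---it is quoted verbatim from \cite[Cor.~7.3.5]{horn2012matrix}---and your argument is essentially the standard one underlying that corollary, so there is nothing to reconcile with the paper beyond the citation.
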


\begin{lemma}[{\cite[Thm.  9.4.1]{GoluV96:jhu}, \cite[Thm.  8.1]{higham2008functions}, \cite[Thm.  7.3.1]{horn2012matrix}}]\label{lemma-polar-decom}
Let $\matr{Y}\in\RR^{n\times r}$ with $1\leq r\leq n$.
There exist $\matr{X}\in\St(r,n)$ and a unique positive semi-definite matrix $\matr{P}\in \RR^{r\times r}$ such that $\matr{Y}$ has the \emph{polar decomposition} $\matr{Y}=\matr{X}\matr{P}$.
We say that $\matr{X}$ is the \emph{orthogonal polar factor} and $\matr{P}$ is the \emph{positive semi-definite polar factor}.
Moreover,\\
(i) for any $\matr{X}'\in\St(r,n)$, we have \cite[pp. 217]{higham2008functions}
\begin{equation*}
\langle\matr{X}, \matr{Y}\rangle\geq\langle\matr{X}', \matr{Y}\rangle;
\end{equation*}
(ii) $\matr{X}$ is the best orthogonal approximation \cite[Thm.  8.4]{higham2008functions} to $\matr{Y}$,
that is, for any $\matr{X}'\in\St(r,n)$, we have
\begin{equation*}
\|\matr{Y}-\matr{X}\| \leq \|\matr{Y}-\matr{X}'\|;
\end{equation*}
(iii) if $\rank{\matr{Y}}=r$, then $\matr{P}$ is positive definite and $\matr{X}$ is unique \cite[Thm.  8.1]{higham2008functions}. Moreover, we have $ \matr{X} = \matr{Y}(\matr{Y}^{\T}\matr{Y})^{-1 / 2} $ in this case.
\end{lemma}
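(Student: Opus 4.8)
The plan is to reduce every claim to the singular value decomposition (SVD) of $\matr{Y}$. First I would take the economy SVD $\matr{Y} = \matr{U}\matr{\Sigma}\matr{V}^{\T}$ with $\matr{U} \in \St(r, n)$, $\matr{V} \in \ON{r}$ and $\matr{\Sigma} = \Diag{\sigma_1(\matr{Y}), \sigma_2(\matr{Y}), \ldots, \sigma_r(\matr{Y})} \succeq \matr{0}$, and then set $\matr{X} \eqdef \matr{U}\matr{V}^{\T}$ and $\matr{P} \eqdef \matr{V}\matr{\Sigma}\matr{V}^{\T}$. One checks directly that $\matr{X}^{\T}\matr{X} = \matr{V}\matr{U}^{\T}\matr{U}\matr{V}^{\T} = \matr{I}_r$, so $\matr{X} \in \St(r, n)$; that $\matr{P}$ is symmetric positive semi-definite since $\matr{\Sigma} \succeq \matr{0}$; and that $\matr{X}\matr{P} = \matr{U}\matr{V}^{\T}\matr{V}\matr{\Sigma}\matr{V}^{\T} = \matr{U}\matr{\Sigma}\matr{V}^{\T} = \matr{Y}$. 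This establishes existence. For the uniqueness of $\matr{P}$, I would observe that any factorization $\matr{Y} = \matr{X}\matr{P}$ with $\matr{X} \in \St(r, n)$ forces $\matr{Y}^{\T}\matr{Y} = \matr{P}^{\T}\matr{X}^{\T}\matr{X}\matr{P} = \matr{P}^2$, and since $\matr{P}$ is positive semi-definite it must coincide with the unique positive semi-definite square root $(\matr{Y}^{\T}\matr{Y})^{1/2}$; hence $\matr{P}$ is determined by $\matr{Y}$ irrespective of rank.

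For part (iii), I would note that $\rank{\matr{Y}} = r$ is equivalent to $\matr{Y}^{\T}\matr{Y}$ being positive definite, so $\matr{P} = (\matr{Y}^{\T}\matr{Y})^{1/2}$ is positive definite and invertible; then $\matr{X} = \matr{Y}\matr{P}^{-1} = \matr{Y}(\matr{Y}^{\T}\matr{Y})^{-1/2}$ is uniquely determined by $\matr{Y}$.

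For part (i), the crux is a von Neumann--type trace inequality. For an arbitrary $\matr{X}' \in \St(r, n)$, I would write $\langle \matr{X}', \matr{Y}\rangle = \trace{(\matr{X}')^{\T}\matr{X}\matr{P}} = \trace{\matr{V}^{\T}(\matr{X}')^{\T}\matr{X}\matr{V}\matr{\Sigma}}$ using $\matr{P} = \matr{V}\matr{\Sigma}\matr{V}^{\T}$ and the cyclic property of the trace. The matrix $\matr{W} \eqdef \matr{V}^{\T}(\matr{X}')^{\T}\matr{X}\matr{V}$ satisfies $\|\matr{W}\|_{\infty} \leq \|\matr{V}\|_{\infty}\|\matr{X}'\|_{\infty}\|\matr{X}\|_{\infty}\|\matr{V}\|_{\infty} = 1$, so each diagonal entry obeys $|\matr{W}_{ii}| \leq 1$; since $\matr{\Sigma}$ is diagonal with $\sigma_i(\matr{Y}) \geq 0$, this yields $\trace{\matr{W}\matr{\Sigma}} = \sum_{i} \matr{W}_{ii}\,\sigma_i(\matr{Y}) \leq \sum_i \sigma_i(\matr{Y})$. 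Taking $\matr{X}' = \matr{X}$ gives $\matr{W} = \matr{V}^{\T}\matr{V} = \matr{I}_r$ and hence $\langle \matr{X}, \matr{Y}\rangle = \sum_i \sigma_i(\matr{Y})$, which is exactly the claimed bound. Finally, for part (ii), I would expand $\|\matr{Y} - \matr{X}'\|^2 = \|\matr{Y}\|^2 - 2\langle \matr{X}', \matr{Y}\rangle + \|\matr{X}'\|^2$ and use that $\|\matr{X}'\|^2 = \trace{(\matr{X}')^{\T}\matr{X}'} = \trace{\matr{I}_r} = r$ for every $\matr{X}' \in \St(r, n)$; therefore minimizing $\|\matr{Y} - \matr{X}'\|$ over $\St(r, n)$ is equivalent to maximizing $\langle \matr{X}', \matr{Y}\rangle$, so part (ii) follows from part (i). The only point requiring care is the trace inequality in part (i): one must justify that $\|\matr{W}\|_{\infty} \leq 1$ and that this, combined with $\matr{\Sigma}$ being diagonal and nonnegative, already gives the estimate on $\trace{\matr{W}\matr{\Sigma}}$ without invoking von Neumann's theorem in full generality.
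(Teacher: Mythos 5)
Your proof is correct: the paper states this lemma by citation only (Golub--Van Loan, Higham, Horn--Johnson) and gives no proof of its own, and your SVD-based argument — existence via $\matr{X}=\matr{U}\matr{V}^{\T}$, $\matr{P}=\matr{V}\matr{\Sigma}\matr{V}^{\T}$, uniqueness of $\matr{P}$ as the PSD square root of $\matr{Y}^{\T}\matr{Y}$, the trace bound $|\matr{W}_{ii}|\leq\|\matr{W}\|_{\infty}\leq 1$ for (i), and the expansion of $\|\matr{Y}-\matr{X}'\|^2$ with $\|\matr{X}'\|^2=r$ for (ii) — is essentially the standard proof underlying those references. The step you flagged needs no further care: the elementary estimate on $\trace{\matr{W}\matr{\Sigma}}$ via the diagonal entries indeed suffices, with no appeal to von Neumann's inequality in full generality.
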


\begin{example}[Calculation of $\varrho_{*}$ on \( \St(r, n) \)]\label{exa:max-rho-St}
The positive constant \(\varrho_{*} = 1\) on \( \St(r, n) \).
We first demonstrate that \(\varrho_{*} \geq 1\).
For any \( \matr{Y} \in\RR^{n\times r}\) satisfying $\dist(\matr{Y},\St(r, n))<1$, let \( \matr{X} \in \St(r, n) \) be a projection of \( \matr{Y} \). Then \( \| \matr{Y}-\matr{X} \| < 1\). It follows that
\[ \sigma_{\min}(\matr{Y}) = \sigma_{\min}(\matr{X}+(\matr{Y}-\matr{X})) \geq \sigma_{\min}(\matr{X}) - \sigma_{\max}(\matr{Y}-\matr{X}) \geq 1 - \| \matr{Y}-\matr{X} \| > 0, \]
where the first inequality follows from \cref{lem:Hoffman-Wielandt-thm}.
Therefore, \(\matr{Y}\) is nonsingular, implying that \( \mathcal{P}_{\St(r, n)}(\matr{Y}) \) is unique and \(\mathcal{P}_{\St(r, n)}(\matr{Y}) = \matr{Y}(\matr{Y}^{\T}\matr{Y})^{-1 / 2}\) by \cref{lemma-polar-decom}. It follows from the explicit expression of \(\mathcal{P}_{\St(r, n)}(\matr{Y})\) that it is of class \(C^{\infty}\) for all \( \matr{Y} \in\RR^{n\times r} \in \openball{\St(r, n)}{1}\). For any \( \matr{W} \in \NormalSt{\matr{X}} \), by the representation of normal space \eqref{def-St-normal-space}, there exists \( \matr{S} \in \symmm{\RR^{r \times r}} \) such that \( \matr{W} = \matr{X}\matr{S} \). If \( \| \matr{W} \| < 1\), we have \( \lambda_{\min}(\matr{S}) > -1 \). Thus \( \matr{X} + \matr{W} = \matr{X}(\matr{I}_r + \matr{S}) \) is the polar decomposition of \( \matr{X}+ \matr{W}\) and \( \mathcal{P}_{\St(r, n)}(\matr{X} + \matr{W}) = \matr{X} \).

On the other hand, we show that \(\varrho_{*}\leq 1\). For any \( \matr{X} \in \St(r, n) \), let \( \matr{Y} = [\matr{X}_{1:r-1}, \matr{0}_{n}] \in \RR^{n \times r} \), where \( \matr{X}_{1:r-1} \) denotes the first \( r-1 \) columns of \( \matr{X} \). Then \(\dist(\matr{Y}, \St(r, n)) = 1\) and the projection of \( \matr{Y} \) is of the form \(  [\matr{X}_{1:r-1}, \vect{y}]\), where \( \vect{y} \) is any unit vector orthogonal to \( \matr{X}_{1:r-1} \).
Therefore, the projection \( \mathcal{P}_{\St(r, n)}(\matr{Y}) \) is not unique.
\end{example}

\begin{lemma}[{\cite[Thm. 1, Thm. 2]{li1995new}}]\label{lema:pi_lipschitz}
Let $\matr{Y}, \bar{\matr{Y}}\in\RR^{n\times r}$ be two matrices of
full column rank, having the polar decompositions $\matr{Y}=\matr{X}\matr{P}$ and $\bar{\matr{Y}}=\bar{\matr{X}}\bar{\matr{P}}$, respectively.
Then we have
\begin{equation}\label{eq:pi_lips}
\|\matr{X}-\bar{\matr{X}}\| \leq \left(\frac{2}{\sigma_{\rm min}(\matr{Y})+\sigma_{\rm min}(\bar{\matr{Y}})}+\frac{1}{\max (\sigma_{\rm min}(\matr{Y}),\sigma_{\rm min}(\bar{\matr{Y}}))}\right)\|\matr{Y}-\bar{\matr{Y}}\|.
\end{equation}
\end{lemma}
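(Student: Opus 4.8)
The plan is to avoid manipulating the explicit formula $\matr{X} = \matr{Y}(\matr{Y}^{\T}\matr{Y})^{-1/2}$ and instead exploit the variational characterization of the orthogonal polar factor recorded in \cref{lemma-polar-decom}(i). Write $\Delta\matr{X} \eqdef \matr{X} - \bar{\matr{X}}$ and $\Delta\matr{Y} \eqdef \matr{Y} - \bar{\matr{Y}}$, and let $\matr{P}, \bar{\matr{P}}$ be the polar factors of $\matr{Y}, \bar{\matr{Y}}$, which are positive definite by full column rank. Since $\matr{X}$ is the orthogonal polar factor of $\matr{Y}$ and $\bar{\matr{X}} \in \St(r,n)$, \cref{lemma-polar-decom}(i) gives $\langle \matr{X}, \matr{Y}\rangle \geq \langle \bar{\matr{X}}, \matr{Y}\rangle$; the symmetric statement gives $\langle \bar{\matr{X}}, \bar{\matr{Y}}\rangle \geq \langle \matr{X}, \bar{\matr{Y}}\rangle$. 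Adding these yields $\langle \Delta\matr{X}, \Delta\matr{Y}\rangle \geq 0$, and the remaining task is to upgrade this into a quantitative lower bound on $\langle \Delta\matr{X}, \Delta\matr{Y}\rangle$ in terms of $\|\Delta\matr{X}\|^{2}$.

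The core step is an exact evaluation of $\langle \matr{X} - \bar{\matr{X}}, \matr{Y}\rangle$. Using $\matr{Y} = \matr{X}\matr{P}$ and $\matr{X}^{\T}\matr{X} = \bar{\matr{X}}^{\T}\bar{\matr{X}} = \matr{I}_{r}$, I would rewrite $\langle \matr{X} - \bar{\matr{X}}, \matr{Y}\rangle = \tr\!\big((\matr{I}_{r} - \bar{\matr{X}}^{\T}\matr{X})\matr{P}\big)$. The symmetric part of $\matr{I}_{r} - \bar{\matr{X}}^{\T}\matr{X}$ equals $\tfrac12 \Delta\matr{X}^{\T}\Delta\matr{X}$, because $\Delta\matr{X}^{\T}\Delta\matr{X} = 2\matr{I}_{r} - \matr{X}^{\T}\bar{\matr{X}} - \bar{\matr{X}}^{\T}\matr{X}$, while its skew-symmetric part is annihilated against the symmetric matrix $\matr{P}$ under the trace; hence $\langle \matr{X} - \bar{\matr{X}}, \matr{Y}\rangle = \tfrac12 \tr\!\big(\Delta\matr{X}\matr{P}\Delta\matr{X}^{\T}\big)$. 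Since $\matr{P} \succeq \sigma_{\min}(\matr{Y})\matr{I}_{r}$ (the eigenvalues of $\matr{P}$ being exactly the singular values of $\matr{Y}$), this yields $\langle \matr{X} - \bar{\matr{X}}, \matr{Y}\rangle \geq \tfrac{\sigma_{\min}(\matr{Y})}{2}\|\Delta\matr{X}\|^{2}$, and by the same computation with the roles of the two matrices exchanged, $\langle \bar{\matr{X}} - \matr{X}, \bar{\matr{Y}}\rangle \geq \tfrac{\sigma_{\min}(\bar{\matr{Y}})}{2}\|\Delta\matr{X}\|^{2}$. Summing these and applying Cauchy--Schwarz to $\langle \Delta\matr{X}, \Delta\matr{Y}\rangle = \langle \matr{X} - \bar{\matr{X}}, \matr{Y}\rangle + \langle \bar{\matr{X}} - \matr{X}, \bar{\matr{Y}}\rangle$ gives
\[
\tfrac{\sigma_{\min}(\matr{Y}) + \sigma_{\min}(\bar{\matr{Y}})}{2}\,\|\matr{X} - \bar{\matr{X}}\|^{2} \ \leq\ \|\matr{X} - \bar{\matr{X}}\|\,\|\matr{Y} - \bar{\matr{Y}}\|,
\]
hence $\|\matr{X} - \bar{\matr{X}}\| \leq \tfrac{2}{\sigma_{\min}(\matr{Y}) + \sigma_{\min}(\bar{\matr{Y}})}\|\matr{Y} - \bar{\matr{Y}}\|$, which is sharper than \eqref{eq:pi_lips} and thus implies it. (A more classical route, matching the precise form of the constant in \eqref{eq:pi_lips}, would split $\matr{X} - \bar{\matr{X}} = (\matr{Y} - \bar{\matr{Y}})\bar{\matr{P}}^{-1} - \matr{X}(\matr{P} - \bar{\matr{P}})\bar{\matr{P}}^{-1}$ — the first term producing the $1/\max(\sigma_{\min}(\matr{Y}), \sigma_{\min}(\bar{\matr{Y}}))$ contribution via $\|\bar{\matr{P}}^{-1}\|_{\infty}$ — and then bound $\|\matr{P} - \bar{\matr{P}}\|$ through the Lyapunov equation $\matr{P}(\matr{P} - \bar{\matr{P}}) + (\matr{P} - \bar{\matr{P}})\bar{\matr{P}} = \matr{Y}^{\T}\matr{Y} - \bar{\matr{Y}}^{\T}\bar{\matr{Y}}$; I find the variational argument cleaner and strictly sufficient.)

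I expect the only delicate point to be the identity $\langle \matr{X} - \bar{\matr{X}}, \matr{Y}\rangle = \tfrac12 \tr(\Delta\matr{X}\matr{P}\Delta\matr{X}^{\T})$, specifically the algebraic observation that the symmetric part of $\matr{I}_{r} - \bar{\matr{X}}^{\T}\matr{X}$ is precisely the second-order quantity $\tfrac12 \Delta\matr{X}^{\T}\Delta\matr{X}$, together with the vanishing of $\tr(\matr{P}\,\skeww{\bar{\matr{X}}^{\T}\matr{X}})$. Once this is in hand, the remaining ingredients — positive definiteness of $\matr{P}$ and $\bar{\matr{P}}$, and Cauchy--Schwarz — are entirely routine, and, importantly, the argument invokes none of the reach or smoothness results of \cref{sec:Prop-proj}: it rests only on the variational property of the polar factor and elementary trace manipulations, so it is valid unconditionally, whether $\matr{Y}$ and $\bar{\matr{Y}}$ are close together or far apart.
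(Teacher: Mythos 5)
Your proof is correct, and it takes a different route from the paper: the paper does not prove \cref{lema:pi_lipschitz} at all, but imports it from the cited reference, whose two-term constant reflects the classical decomposition you sketch in your parenthetical remark (splitting $\matr{X}-\bar{\matr{X}}=(\matr{Y}-\bar{\matr{Y}})\bar{\matr{P}}^{-1}-\matr{X}(\matr{P}-\bar{\matr{P}})\bar{\matr{P}}^{-1}$ and bounding $\|\matr{P}-\bar{\matr{P}}\|$ via a Lyapunov/Sylvester equation). Your main argument is self-contained and in fact sharper: the identity $\langle \matr{X}-\bar{\matr{X}},\matr{Y}-\bar{\matr{Y}}\rangle=\tfrac12\tr\bigl(\Delta\matr{X}^{\T}\Delta\matr{X}(\matr{P}+\bar{\matr{P}})\bigr)$ is verified exactly as you say (the symmetric part of $\matr{I}_r-\bar{\matr{X}}^{\T}\matr{X}$ is $\tfrac12\Delta\matr{X}^{\T}\Delta\matr{X}$ since $\matr{X}^{\T}\matr{X}=\bar{\matr{X}}^{\T}\bar{\matr{X}}=\matr{I}_r$, and the skew part dies against the symmetric polar factors under the trace), the eigenvalues of $\matr{P}=(\matr{Y}^{\T}\matr{Y})^{1/2}$ are the singular values of $\matr{Y}$, and Cauchy--Schwarz then gives $\|\matr{X}-\bar{\matr{X}}\|\leq \tfrac{2}{\sigma_{\min}(\matr{Y})+\sigma_{\min}(\bar{\matr{Y}})}\|\matr{Y}-\bar{\matr{Y}}\|$, which indeed dominates \eqref{eq:pi_lips}; this stronger Frobenius-norm bound for full-column-rank rectangular matrices is a known later refinement of the cited 1995 result, so its validity is not in doubt. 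Two minor remarks: the opening appeal to \cref{lemma-polar-decom}(i), giving only $\langle\Delta\matr{X},\Delta\matr{Y}\rangle\geq 0$, is redundant once you have the exact trace identity; and what each approach buys is clear --- the paper's citation keeps the exposition short and matches the constant it actually uses, whereas your argument is elementary, avoids the literature dependence, and would even improve the constant obtained in \cref{exa:L0-St} from $2/\delta$ to $1/\delta$.
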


\begin{lemma}\label{lem:singular-value-dominated-by-normal-vector}
Let \( \matr{X} \in \St(r,n)\) and $\matr{S} \in \symmm{\RR^{r \times r}}$. If \( \lambda_{\min}(\matr{S}) \geq \delta - 1 \) for some \(\delta > 0\), then for all $ \matr{V} \in \TangSt{\matr{X}} $, we have $ \sigma_{\min}(\matr{X} + \matr{V} + \matr{X}\matr{S}) \geq \delta$.
\end{lemma}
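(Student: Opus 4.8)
The plan is to peel off the part of $\matr{Y}\eqdef\matr{X}+\matr{V}+\matr{X}\matr{S}$ that lives in the column space of $\matr{X}$, and then reduce the smallest‑singular‑value bound to an eigenvalue bound on the symmetric part of a single $r\times r$ matrix. First I would invoke the decomposition \eqref{def-St-tangent-space-decomp}: pick $\matr{X}_{\perp}\in\St(n-r,n)$ with $[\matr{X},\matr{X}_{\perp}]\in\ON{n}$ and write $\matr{V}=\matr{X}\matr{A}+\matr{X}_{\perp}\matr{B}$ with $\matr{A}\in\skewww{\RR^{r\times r}}$ and $\matr{B}\in\RR^{(n-r)\times r}$. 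Then
\[
\matr{Y}=\matr{X}\big(\matr{I}_r+\matr{A}+\matr{S}\big)+\matr{X}_{\perp}\matr{B}
=[\matr{X}\ \ \matr{X}_{\perp}]\begin{bmatrix}\matr{I}_r+\matr{A}+\matr{S}\\[2pt]\matr{B}\end{bmatrix}.
\]

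Next I would use the orthonormality relations $\matr{X}^{\T}\matr{X}=\matr{I}_r$, $\matr{X}_{\perp}^{\T}\matr{X}_{\perp}=\matr{I}_{n-r}$, $\matr{X}^{\T}\matr{X}_{\perp}=\matr{0}$. Expanding $\matr{Y}^{\T}\matr{Y}$ the cross terms cancel and one gets, with $\matr{M}\eqdef\matr{I}_r+\matr{A}+\matr{S}$,
\[
\matr{Y}^{\T}\matr{Y}=\matr{M}^{\T}\matr{M}+\matr{B}^{\T}\matr{B}\succeq\matr{M}^{\T}\matr{M},
\]
hence $\sigma_{\min}(\matr{Y})^2=\lambda_{\min}(\matr{Y}^{\T}\matr{Y})\geq\lambda_{\min}(\matr{M}^{\T}\matr{M})=\sigma_{\min}(\matr{M})^2$, so it remains to show $\sigma_{\min}(\matr{M})\geq\delta$.

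For this last step I would use a Rayleigh‑quotient argument: for any unit vector $\vect{u}\in\RR^r$, since $\matr{A}$ is skew‑symmetric it contributes nothing to $\langle\vect{u},\matr{M}\vect{u}\rangle$, so
\[
\langle\vect{u},\matr{M}\vect{u}\rangle=\langle\vect{u},(\matr{I}_r+\matr{S})\vect{u}\rangle\geq\lambda_{\min}(\matr{I}_r+\matr{S})=1+\lambda_{\min}(\matr{S})\geq\delta>0,
\]
using the hypothesis $\lambda_{\min}(\matr{S})\geq\delta-1$. Combining with Cauchy–Schwarz, $\|\matr{M}\vect{u}\|\geq|\langle\vect{u},\matr{M}\vect{u}\rangle|\geq\delta$ for every unit $\vect{u}$, whence $\sigma_{\min}(\matr{M})\geq\delta$. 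Substituting back gives $\sigma_{\min}(\matr{X}+\matr{V}+\matr{X}\matr{S})=\sigma_{\min}(\matr{Y})\geq\delta$, as claimed.

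The argument is essentially self‑contained and poses no real difficulty; the only point that deserves a line of care is the reduction from $\matr{M}$ to its symmetric part $\matr{I}_r+\matr{S}$ (valid because $\matr{A}$ is skew), together with the observation that the assumption $\delta>0$ ensures $\langle\vect{u},\matr{M}\vect{u}\rangle\geq 0$ so that no absolute value is lost when applying Cauchy–Schwarz.
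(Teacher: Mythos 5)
Your proof is correct and follows essentially the same route as the paper: both decompose $\matr{V}=\matr{X}\matr{A}+\matr{X}_{\perp}\matr{B}$ via \eqref{def-St-tangent-space-decomp}, use $\matr{X}^{\T}\matr{X}_{\perp}=\matr{0}$ to get $\matr{Y}^{\T}\matr{Y}\succeq\matr{M}^{\T}\matr{M}$ with $\matr{M}=\matr{I}_r+\matr{A}+\matr{S}$, and then show $\sigma_{\min}(\matr{M})\geq\delta$. The only (minor) difference is the final step: the paper writes $\matr{M}=\delta\matr{I}_r+(\matr{A}+\matr{S}_{\delta})$ with $\matr{S}_{\delta}=(1-\delta)\matr{I}_r+\matr{S}\succeq\matr{0}$ and expands $\matr{M}^{\T}\matr{M}\succeq\delta^2\matr{I}_r$ directly, whereas you bound the Rayleigh quotient of $\matr{M}$ (the skew part dropping out) and finish with Cauchy--Schwarz; both are valid one-line conclusions.
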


\begin{proof}
Denote \( \matr{S}_{\delta} \eqdef (1-\delta) \matr{I}_{r}+\matr{S} \). It is clear that \( \matr{S}_\delta \succeq \matr{0} \).
By equation \eqref{def-St-tangent-space-decomp}, there exist \(\matr{A}_{\matr{V}}\in\skewww{\RR^{r \times r}}\) and \(\matr{B}_{\matr{V}}\in \RR^{(n-r) \times r}\) such that \( \matr{V} = \matr{X}\matr{A}_{\matr{V}} + \matr{X}_{\perp}\matr{B}_{\matr{V}} \). Then we have
\small
\begin{align*}
(\matr{X}+\matr{V}+\matr{X}\matr{S})^{\T}(\matr{X}+\matr{V}+\matr{X}\matr{S}) & =  (\matr{X}+\matr{X}\matr{A}_{\matr{V}}+\matr{X}_{\perp}\matr{B}_{\matr{V}}+\matr{X}\matr{S})^{\T}(\matr{X}+\matr{X}\matr{A}_{\matr{V}}+\matr{X}_{\perp}\matr{B}_{\matr{V}}+\matr{X}\matr{S}) \\
& = (\matr{X}(\matr{I}+\matr{S}+\matr{A}_{\matr{V}}) + \matr{X}_{\perp}\matr{B}_{\matr{V}})^{\T}(\matr{X}(\matr{I}+\matr{S}+\matr{A}_{\matr{V}}) + \matr{X}_{\perp}\matr{B}_{\matr{V}})\\
& \stackrel{(a)}{=}(\matr{I}+\matr{S}+\matr{A}_{\matr{V}})^{\T}(\matr{I}+\matr{S}+\matr{A}_{\matr{V}}) + \matr{B}_{\matr{V}}^{\T}\matr{X}_{\perp}^{\T}\matr{X}_{\perp}\matr{B}_{\matr{V}}\\
& \succeq (\matr{I}+\matr{S}+\matr{A}_{\matr{V}})^{\T}(\matr{I}+\matr{S}+\matr{A}_{\matr{V}})
 = (\delta \matr{I} + (\matr{A}_{\matr{V}} + \matr{S}_{\delta}))^{\T}(\delta \matr{I} + (\matr{A}_{\matr{V}} + \matr{S}_{\delta}))\\
& = \delta^2\matr{I}  + \delta (\matr{A}_{\matr{V}}^{\T} + \matr{S}_{\delta}^{\T} + \matr{A}_{\matr{V}} + \matr{S}_{\delta}) + (\matr{A}_{\matr{V}}+\matr{S}_{\delta})^{\T}(\matr{A}_{\matr{V}}+\matr{S}_{\delta})
\stackrel{(b)}{\succeq} \delta^2\matr{I},
\end{align*}
\normalsize
where the equality \( (a) \) follows from that \( \matr{X}^{\T}\matr{X}_{\perp} = \matr{0} \), and the inequality \( (b) \) follows from that \( \matr{A}_{\matr{V}}^{\T} + \matr{A}_{\matr{V}} = \matr{0}\) and \(\matr{S}_{\delta}^{\T} = \matr{S}_{\delta} \succeq \matr{0} \).
Therefore, we have \(\sigma_{\min}(\matr{X}+\matr{V}+\matr{X}\matr{S}) \geq \delta\).
The proof is complete.
\end{proof}

\begin{example}[\( L_0^{(\delta)} \) on \( \St(r, n) \)]\label{exa:L0-St}
Let \( \matr{X} \in \St(r, n) \), \( \matr{V} \in \TangSt{\matr{X}} \) and \( \matr{W} \in \NormalSt{\matr{X}} \). It follows from equation \eqref{def-St-normal-space} that there exists \( \matr{S} \in \symmm{\RR^{r \times r}} \) such that \( \matr{W} = \matr{X}\matr{S} \). When \(\lambda_{\min}(\matr{S}) \geq \delta - 1\) from some \(\delta > 0\), we have \( \sigma_{\min}(\matr{X} + \matr{W} + \matr{V}) \geq \delta \) by \cref{lem:singular-value-dominated-by-normal-vector}. Since \( \matr{X} + \matr{W} = \matr{X}(\matr{I}_r + \matr{S}) \), we have \( \sigma_{\min}(\matr{X} + \matr{W}) \geq \delta \) and \( \mathcal{P}_{\St(r, n)}(\matr{X} + \matr{W}) = \matr{X} \). It follows from \cref{lema:pi_lipschitz} that
\[ \| \mathcal{P}_{\St(r, n)}(\matr{X} + \matr{V} + \matr{W}) - \matr{X} \| = \| \mathcal{P}_{\St(r, n)}(\matr{X} + \matr{V} + \matr{W}) - \mathcal{P}_{\St(r, n)}(\matr{X} + \matr{W}) \| \leq \frac{2}{\delta} \| \matr{V} \|.\]
In particular, for \( \matr{W} = \matr{X}\matr{S} \) satisfying \( \| \matr{W} \| \leq 1 - \delta\), we have \(\lambda_{\min}(\matr{S}) \geq \delta - 1\), implying that the above inequality holds. Thus, $2/\delta$ is a choice of \(L_0^{(\delta)}\) in \eqref{eq:first-order-boundedness}.
\end{example}

Within the context of the Stiefel manifold, we present the following proposition, which offers a more specific result for \cref{thm:anti-first-order-boudnedness}.
\begin{lemma}\label{lem:first-order-boundness-lower}
For all \( \matr{X} \in \St(r,n), \matr{V} \in \TangSt{\matr{X}}\) and \(\matr{W} \in \NormalSt{\matr{X}} \) satisfying $ \|\matr{X} + \matr{V} + \matr{W}\| \neq 0 $, we have that
\[
\| \mathcal{P}_{\St(r, n)} (\matr{X} + \matr{V} + \matr{W}) - \matr{X} \| \geq \frac{\| \matr{V} \|}{(r+1) \| \matr{X} + \matr{V} + \matr{W} \|}.
\]
\end{lemma}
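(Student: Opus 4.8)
The plan is to write $\matr{Y}\eqdef\matr{X}+\matr{V}+\matr{W}$ and $\matr{Z}\eqdef\mathcal{P}_{\St(r,n)}(\matr{Y})$, and to prove the equivalent estimate $\|\matr{V}\|\le(r+1)\,\|\matr{Y}\|\,\|\matr{Z}-\matr{X}\|$; since $\|\matr{Y}\|\ne\matr{0}$, dividing by $(r+1)\|\matr{Y}\|$ then yields the claim. Two preliminary facts will be needed. First, a projection onto $\St(r,n)$ is a best orthogonal approximation of $\matr{Y}$, hence an orthogonal polar factor: $\matr{Y}=\matr{Z}\matr{P}$ with $\matr{P}\eqdef\matr{Z}^{\T}\matr{Y}=\matr{Y}^{\T}\matr{Z}$ symmetric positive semi-definite, so that $\|\matr{P}\|=\|\matr{Y}\|$ and $\matr{Z}^{\T}\matr{Y}$ is symmetric. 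This follows from the first-order optimality condition $\matr{Y}-\matr{Z}\in\NormalSt{\matr{Z}}$ (which by \eqref{def-St-normal-space} gives $\matr{Y}=\matr{Z}(\matr{I}_r+\matr{S}'')$ with $\matr{S}''=\matr{Z}^{\T}(\matr{Y}-\matr{Z})$ symmetric, whence $\matr{Z}^{\T}\matr{Y}=\matr{I}_r+\matr{S}''$), together with a short sign-flip argument: if $\matr{Z}^{\T}\matr{Y}=\matr{Q}\matr{\Lambda}\matr{Q}^{\T}$ had a negative eigenvalue, replacing $\matr{Z}$ by $\matr{Z}\matr{Q}\matr{E}\matr{Q}^{\T}\in\St(r,n)$ for the sign matrix $\matr{E}=\Diag{\operatorname{sgn}\lambda_1,\dots,\operatorname{sgn}\lambda_r}$ would strictly increase $\langle\matr{Y},\cdot\rangle$, contradicting optimality; one may also invoke \cref{lemma-polar-decom}. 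Second, since $\matr{V}\in\TangSt{\matr{X}}$ and $\matr{W}\in\NormalSt{\matr{X}}$ (and $\matr{X}\in\NormalSt{\matr{X}}$), applying the tangent-projection formula \eqref{eq:proj_tangent_St} to $\matr{Y}=\matr{X}+\matr{V}+\matr{W}$ gives $\matr{V}=\mathcal{P}_{\TangSt{\matr{X}}}(\matr{Y})=\matr{Y}-\matr{X}\symmo{\matr{X}^{\T}\matr{Y}}$.

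The core of the argument is then purely algebraic. Substituting $\matr{Y}=\matr{Z}\matr{P}$ and adding and subtracting $\matr{X}\matr{P}$,
\[
\matr{V}=\matr{Z}\matr{P}-\matr{X}\symmo{\matr{X}^{\T}\matr{Y}}=(\matr{Z}-\matr{X})\matr{P}+\matr{X}\bigl(\matr{P}-\symmo{\matr{X}^{\T}\matr{Y}}\bigr).
\]
Because $\matr{Z}^{\T}\matr{Y}=\matr{P}$ is symmetric, $\matr{P}-\symmo{\matr{X}^{\T}\matr{Y}}=\symmo{\matr{Z}^{\T}\matr{Y}}-\symmo{\matr{X}^{\T}\matr{Y}}=\symmo{(\matr{Z}-\matr{X})^{\T}\matr{Y}}$, so
\[
\matr{V}=(\matr{Z}-\matr{X})\matr{P}+\matr{X}\,\symmo{(\matr{Z}-\matr{X})^{\T}\matr{Y}}.
\]
Taking Frobenius norms and using $\|\matr{X}\matr{A}\|=\|\matr{A}\|$ for $\matr{X}\in\St(r,n)$, $\|\symmo{\matr{B}}\|\le\|\matr{B}\|$, submultiplicativity $\|\matr{A}\matr{B}\|\le\|\matr{A}\|\,\|\matr{B}\|$, $\|(\matr{Z}-\matr{X})^{\T}\|=\|\matr{Z}-\matr{X}\|$, and $\|\matr{P}\|=\|\matr{Y}\|$, one obtains $\|\matr{V}\|\le\|\matr{Z}-\matr{X}\|\,\|\matr{Y}\|+\|\matr{Z}-\matr{X}\|\,\|\matr{Y}\|=2\,\|\matr{Z}-\matr{X}\|\,\|\matr{Y}\|$, and since $2\le r+1$ the desired bound follows.

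The one genuinely delicate point is the first preliminary fact — that an \emph{arbitrary} (possibly non-unique, when $\matr{Y}$ is rank-deficient) projection of $\matr{Y}$ still admits $\matr{Y}=\matr{Z}\matr{P}$ with $\matr{P}\succeq\matr{0}$ — which is exactly what the sign-flip argument above secures; everything else is routine norm bookkeeping. (This route actually delivers the constant $2$, so the stated constant $r+1$ is obtained a fortiori; alternatively, one could instead specialize \cref{thm:anti-first-order-boudnedness} by computing an explicit Lipschitz constant for $\matr{U}\mapsto\mathcal{P}_{\TangSt{\cdot}}(\matr{U})$ on $\St(r,n)$, but that yields an estimate in terms of $\|\matr{V}+\matr{W}\|$ rather than $\|\matr{Y}\|$.)
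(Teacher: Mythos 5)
Your proposal is correct and follows essentially the same route as the paper's proof: both write $\matr{X}+\matr{V}+\matr{W}=\matr{Z}\matr{P}$ with $\matr{Z}=\mathcal{P}_{\St(r,n)}(\matr{X}+\matr{V}+\matr{W})$ the orthogonal polar factor and $\matr{P}\succeq\matr{0}$ (the paper simply cites \cref{lemma-polar-decom}, where you make the ``arbitrary projection is a polar factor'' point explicit via the first-order condition and sign-flip), identify $\matr{V}=\mathcal{P}_{\TangSt{\matr{X}}}(\matr{X}+\matr{V}+\matr{W})$ via \eqref{eq:proj_tangent_St}, subtract $\matr{X}\matr{P}$, and finish with Frobenius-norm estimates. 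The only difference is bookkeeping: by using $\|\matr{X}\matr{A}\|=\|\matr{A}\|$ and $\|\symmo{\matr{B}}\|\leq\|\matr{B}\|$ you obtain the sharper constant $2$ in place of the paper's cruder bounds $\|\matr{X}\matr{X}^{\T}\|\leq\sqrt{r}$, $\|\matr{X}\|^2=r$ that yield $r+1$, so the stated inequality follows a fortiori.
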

\begin{proof}
Denote \(\matr{Z} \eqdef \mathcal{P}_{\St(r, n)}(\matr{X} + \matr{V} + \matr{W}) \) for simplicity. By \cref{lemma-polar-decom}, there exists a positive semi-definite matrix \( \matr{P} \in \symmm{\RR^{r \times r}} \) such that \( \matr{X} + \matr{V} + \matr{W} = \matr{Z}\matr{P} \). Since $ \matr{V} \in \TangSt{\matr{X}} $, we have \( \matr{X}^{\T}\matr{V}  + \matr{V}^{\T}\matr{X} = \matr{0} \) by \eqref{def-St-tangent-space-product}. For \( \matr{W} \in \NormalSt{\matr{X}} \), there exists \( \matr{S} \in \symmm{\RR^{r \times r}} \) such that \( \matr{W} = \matr{X}\matr{S} \) by \eqref{def-St-normal-space}, implying that \( \matr{X}\matr{X}^{\T}\matr{W} = \matr{W} = \matr{X}\matr{W}^{\T}\matr{X}\). Then we have
\begin{align*}
\|\matr{V}\| & =\frac{1}{2} \left\| \left(\matr{X} + \matr{V}+\matr{W} - \matr{X}\matr{X}^{\T}(\matr{X} + \matr{V}+\matr{W})\right) + \left(\matr{X} + \matr{V}+\matr{W} - \matr{X}(\matr{X}+\matr{V}+\matr{W})^{\T}\matr{X}\right) \right\| \\
& = \frac{1}{2} \| (\matr{Z}\matr{P} - \matr{X}\matr{X}^{\T}\matr{Z}\matr{P}) +(\matr{Z}\matr{P} - \matr{X}\matr{P}\matr{Z} ^{\T}\matr{X})\| \\
& =\frac{1}{2} \left\| \left((\matr{Z}-\matr{X})\matr{P} - \matr{X}\matr{X}^{\T}(\matr{Z}-\matr{X})\matr{P}\right) + \left((\matr{Z}-\matr{X})\matr{P} -\matr{X}\matr{P}(\matr{Z}-\matr{X})^{\T}\matr{X}\right)\right\| \\
& \leq \frac{1}{2} \left(\| (\matr{Z}-\matr{X})\matr{P}\| + \| \matr{X}\matr{X}^{\T}\| \|(\matr{Z}-\matr{X})\matr{P}\| + \|(\matr{Z}-\matr{X})\matr{P}\| +  \|  \matr{X}\| \|\matr{P}(\matr{Z}-\matr{X})^{\T}\| \|\matr{X}\|\right) \\
& \leq (r+1)\|\matr{Z}-\matr{X}\| \|\matr{X} + \matr{V} + \matr{W}\|,
\end{align*}
where the last inequality follows from $ \| \matr{P} \|  = \| \matr{X} + \matr{V} + \matr{W} \|$ and $ \| \matr{X} \| = \sqrt{r}$. The proof is complete.
\end{proof}

In the following lemma, we derive the properties of the projection onto $\Gr(p, n)$ and demonstrate that the computation of $\mathcal{P}_{\Gr(p, n)}$ can be achieved through eigenvalue decomposition.

\begin{lemma}\label{lem:proj-Gr}
Let $ \matr{Y} \in \RR^{n \times n} $ and $\symmo{\matr{Y}} = \matr{Q}\matr{\Lambda} \matr{Q}^{\T}$ be the eigenvalue decomposition, where $ \matr{Q} \in \ON{n} $ and $\matr{\Lambda} = \Diag{\lambda_1 , \lambda_2, \ldots , \lambda_n}$ with \(\lambda_1 \geq \lambda_2 \geq  \ldots \geq  \lambda_n\). Then we have that\\
(i) $\mathcal{P}_{\Gr(p, n)}(\matr{Y}) = \mathcal{P}_{\Gr(p, n)}(\symmo{\matr{Y}}) = \mathcal{P}_{\Gr(p, n)}(\matr{Y}^{\T})$;\\
(ii) \( \dist(\matr{Y}, \Gr(p, n)) \geq \dist(\symmo{\matr{Y}}, \Gr(p, n)) \);\\
(iii) $\matr{Q} \Diag{\matr{I}_p, \matr{0}}\matr{Q}^{\T} $ is a projection of $\matr{Y}$. This projection is unique if and only if \(\lambda_p > \lambda_{p+1}\).
\end {lemma}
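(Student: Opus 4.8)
The plan is to exploit that every element of $\Gr(p,n)$ is a symmetric idempotent matrix of trace exactly $p$, which reduces the projection problem to a trace maximization over rank-$p$ orthogonal projectors. For parts (i) and (ii), I would first record that for any $\matr{Z}\in\Gr(p,n)$ the matrix $\matr{Z}$ is symmetric, so $\langle\matr{Z},\skeww{\matr{Y}}\rangle=\tr(\matr{Z}\,\skeww{\matr{Y}})=0$ because the trace of a symmetric matrix times a skew-symmetric matrix vanishes, and likewise $\langle\symmo{\matr{Y}},\skeww{\matr{Y}}\rangle=0$. Expanding $\|\matr{Y}-\matr{Z}\|^2$ with $\matr{Y}=\symmo{\matr{Y}}+\skeww{\matr{Y}}$ then gives, for every $\matr{Z}\in\Gr(p,n)$,
\begin{equation*}
\|\matr{Y}-\matr{Z}\|^2=\|\symmo{\matr{Y}}-\matr{Z}\|^2+\|\skeww{\matr{Y}}\|^2 .
\end{equation*}
Since the last term is independent of $\matr{Z}$, the minimizers in $\matr{Z}$ of $\|\matr{Y}-\matr{Z}\|$ and of $\|\symmo{\matr{Y}}-\matr{Z}\|$ over $\Gr(p,n)$ coincide, that is, $\mathcal{P}_{\Gr(p,n)}(\matr{Y})=\mathcal{P}_{\Gr(p,n)}(\symmo{\matr{Y}})$; minimizing both sides over $\Gr(p,n)$ gives $\dist(\matr{Y},\Gr(p,n))^2=\dist(\symmo{\matr{Y}},\Gr(p,n))^2+\|\skeww{\matr{Y}}\|^2$, which is (ii). Since $\symmo{\matr{Y}^{\T}}=\symmo{\matr{Y}}$, applying the same identity to $\matr{Y}^{\T}$ yields $\mathcal{P}_{\Gr(p,n)}(\matr{Y}^{\T})=\mathcal{P}_{\Gr(p,n)}(\symmo{\matr{Y}})$ as well, completing (i).

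For the existence part of (iii), by (i) it suffices to minimize $\|\symmo{\matr{Y}}-\matr{Z}\|^2$ over $\matr{Z}\in\Gr(p,n)$. From $\matr{Z}^2=\matr{Z}$ and $\rank{\matr{Z}}=p$ we obtain $\|\matr{Z}\|^2=\tr(\matr{Z})=p$, so minimizing $\|\symmo{\matr{Y}}-\matr{Z}\|^2=\|\symmo{\matr{Y}}\|^2-2\langle\symmo{\matr{Y}},\matr{Z}\rangle+p$ is equivalent to maximizing $\langle\symmo{\matr{Y}},\matr{Z}\rangle=\tr(\matr{\Lambda}\matr{W})$, where $\matr{W}\eqdef\matr{Q}^{\T}\matr{Z}\matr{Q}$ runs over all rank-$p$ orthogonal projectors as $\matr{Z}$ does, since conjugation by $\matr{Q}\in\ON{n}$ is a bijection of $\Gr(p,n)$ onto itself. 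Writing $\tr(\matr{\Lambda}\matr{W})=\sum_{i=1}^n\lambda_iW_{ii}$, the identity $W_{ii}=\sum_jW_{ij}^2$ (from symmetry and idempotence of $\matr{W}$) gives $0\le W_{ii}\le1$, while $\sum_iW_{ii}=\tr(\matr{W})=p$; since $\lambda_1\ge\cdots\ge\lambda_n$, a short rearrangement estimate (of the same type as in the next paragraph) shows $\sum_{i=1}^n\lambda_iW_{ii}\le\sum_{i=1}^p\lambda_i$, with equality when $\matr{W}=\Diag{\matr{I}_p,\matr{0}}$, that is, $\matr{Z}=\matr{Q}\Diag{\matr{I}_p,\matr{0}}\matr{Q}^{\T}$. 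Hence this matrix is a projection of $\matr{Y}$.

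For the uniqueness statement, suppose $\matr{Z}$ attains the maximum and set $t\eqdef\sum_{i=1}^p(1-W_{ii})=\sum_{i>p}W_{ii}\ge0$. Rewriting $\sum_i\lambda_iW_{ii}=\sum_{i=1}^p\lambda_i$ as $\sum_{i=1}^p\lambda_i(1-W_{ii})=\sum_{i>p}\lambda_iW_{ii}$ and bounding the left side below by $\lambda_p t$ and the right side above by $\lambda_{p+1}t$ gives $(\lambda_p-\lambda_{p+1})t\le0$; together with $\lambda_p\ge\lambda_{p+1}$ and $t\ge0$ this forces $t=0$ whenever $\lambda_p>\lambda_{p+1}$, hence $W_{ii}=1$ for $i\le p$ and $W_{ii}=0$ for $i>p$, and then $W_{ii}=\sum_jW_{ij}^2$ forces every off-diagonal entry of $\matr{W}$ to vanish, so $\matr{W}=\Diag{\matr{I}_p,\matr{0}}$ and the projection is unique. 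Conversely, if $\lambda_p=\lambda_{p+1}$, the rank-$p$ orthogonal projector $\matr{W}'$ obtained from $\Diag{\matr{I}_p,\matr{0}}$ by interchanging its $p$-th and $(p{+}1)$-th diagonal entries also attains the value $\sum_{i=1}^p\lambda_i$, so $\matr{Q}\matr{W}'\matr{Q}^{\T}$ is a projection of $\matr{Y}$ distinct from $\matr{Q}\Diag{\matr{I}_p,\matr{0}}\matr{Q}^{\T}$ and uniqueness fails. I expect the uniqueness part of (iii) to be the main obstacle, in particular the step upgrading ``$\matr{W}$ has diagonal $(1,\dots,1,0,\dots,0)$'' to ``$\matr{W}=\Diag{\matr{I}_p,\matr{0}}$'' via idempotence, and the clean construction of the competing projection in the degenerate case $\lambda_p=\lambda_{p+1}$.
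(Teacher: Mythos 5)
Your proof is correct, and it differs from the paper's mainly in part (iii). For (i)--(ii) both arguments hinge on the same observation that $\langle \matr{Y},\matr{Z}\rangle=\langle\symmo{\matr{Y}},\matr{Z}\rangle$ for symmetric $\matr{Z}\in\Gr(p,n)$; the paper expands $\|\matr{Y}-\matr{Z}\|^2$ and reduces the projection to maximizing $\langle\symmo{\matr{Y}},\matr{Z}\rangle$, then gets (ii) from $\|\matr{Y}\|\geq\|\symmo{\matr{Y}}\|$, whereas your Pythagorean identity $\|\matr{Y}-\matr{Z}\|^2=\|\symmo{\matr{Y}}-\matr{Z}\|^2+\|\skeww{\matr{Y}}\|^2$ yields the slightly sharper exact relation $\dist^2(\matr{Y},\Gr(p,n))=\dist^2(\symmo{\matr{Y}},\Gr(p,n))+\|\skeww{\matr{Y}}\|^2$, of which (ii) is an immediate consequence. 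The real divergence is in (iii): the paper does not prove the trace-maximization step at all, but cites the proof of Prop.~3.4 in Sato--Iwai (2014) for the fact that $\matr{Q}\Diag{\matr{I}_p,\matr{0}}\matr{Q}^{\T}$ maximizes $\langle\symmo{\matr{Y}},\matr{X}\rangle$ over $\Gr(p,n)$, and the remark following that proposition for the equivalence of uniqueness with $\lambda_p>\lambda_{p+1}$. You instead give a self-contained elementary argument: conjugating by $\matr{Q}$, using $0\le W_{ii}\le 1$ and $\sum_i W_{ii}=p$ for a symmetric idempotent $\matr{W}$, the bound via $t=\sum_{i\le p}(1-W_{ii})=\sum_{i>p}W_{ii}$, and the rigidity step in which $W_{ii}=\sum_j W_{ij}^2$ upgrades the diagonal pattern to $\matr{W}=\Diag{\matr{I}_p,\matr{0}}$; your construction of a second maximizer when $\lambda_p=\lambda_{p+1}$ mirrors the perturbation the paper itself later uses in its computation of $\varrho_*$ on $\Gr(p,n)$. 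The trade-off is the usual one: the paper's route is shorter by outsourcing the spectral argument, while yours makes the lemma independent of the external reference at the cost of about a page of (entirely standard and correct) rearrangement and idempotence bookkeeping.
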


\begin{proof}
(i) For any matrix $\matr{Y} \in \RR^{n \times n}$ and $\matr{X} \in \Gr(p, n)$, we have \( \| \matr{Y} - \matr{X} \|^2 = \| \matr{Y} \|^2 + \| \matr{X} \|^2 - 2 \langle \matr{Y}, \matr{X} \rangle = \| \matr{Y} \|^2 + p - 2 \langle \matr{Y}, \matr{X} \rangle\), where we use the fact that $ \| \matr{X} \|^2 = p $ for all $\matr{X} \in \Gr(p, n)$. Thus, $\mathcal{P}_{\Gr(p, n)}(\matr{Y}) = \argmax_{\matr{X} \in \Gr(p, n)} \langle \matr{Y}, \matr{X} \rangle$.
Note that $\matr{X} \in \Gr(p, n)$ is symmetric.  We have $ \langle \matr{Y}, \matr{X} \rangle = \langle \matr{Y}^{\T}, \matr{X} \rangle = \langle \symmo{\matr{Y}}, \matr{X} \rangle$. It follows that $\mathcal{P}_{\Gr(p, n)}(\matr{Y}) = \argmax_{\matr{X} \in \Gr(p, n)} \langle \matr{Y}, \matr{X} \rangle = \argmax_{\matr{X} \in \Gr(p, n)} \langle \matr{Y}^{\T}, \matr{X} \rangle = \mathcal{P}_{\Gr(p, n)}(\matr{Y}^{\T})$. Similarly, we also have $\mathcal{P}_{\Gr(p, n)}(\matr{Y}) = \mathcal{P}_{\Gr(p, n)}(\symmo{\matr{Y}})$. \\
(ii) Let \( \matr{X} \) be an arbitrary projection of \( \matr{Y} \). It follows from (i) that \( \matr{X} \) is also the projection of \( \symmo{\matr{Y}} \). Then we have
\begin{align*}
d^2(\matr{Y}, \Gr(p, n)) & = \| \matr{Y} - \matr{X} \|^2 = \| \matr{Y} \|^2 - 2 \langle \matr{Y}, \matr{X} \rangle + \| \matr{X} \|^2 = \| \matr{Y} \|^2 - 2 \langle \symmo{\matr{Y}}, \matr{X} \rangle + \| \matr{X} \|^2\\
& \geq \| \symmo{\matr{Y}} \|^2 - 2 \langle \symmo{\matr{Y}}, \matr{X} \rangle + \| \matr{X} \|^2 = d^2(\symmo{\matr{Y}}, \Gr(p, n)).
\end{align*}

(iii) It follows from the proof of (i) that
\[\mathcal{P}_{\Gr(p, n)}(\matr{Y}) = \mathcal{P}_{\Gr(p, n)}(\symmo{\matr{Y}}) =  \argmax_{\matr{X} \in \Gr(p, n)} \langle \symmo{\matr{Y}}, \matr{X} \rangle.\]
By the proof of \cite[Prop. 3.4]{sato2014optimization}, we have that \(\matr{Q}^{\T} \Diag{\matr{I}_p, \matr{0}}\matr{Q} \in \argmax_{\matr{X} \in \Gr(p, n)} \langle \symmo{\matr{Y}}, \matr{X} \rangle \). So \( \matr{Q}^{\T} \Diag{\matr{I}_p, \matr{0}}\matr{Q} \) is a projection of \( \matr{Y} \). Moreover, based on the remark below \cite[Prop. 3.4]{sato2014optimization}, the projection is unique if and only if \(\lambda_p > \lambda_{p+1}\).
The proof is complete.

% (iii) For any $\matr{X} \in \Gr(p, n)$, there exists $\matr{U} \in \St(p, n)$ such that $\matr{X} = \matr{U}\matr{U}^{\T}$. It follows from the proof of (i) that
% \begin{align*}
% \mathcal{P}_{\Gr(p, n)}(\matr{Y}) &= \mathcal{P}_{\Gr(p, n)}(\symmo{\matr{Y}}) =  \argmax_{\matr{X} \in \Gr(p, n)} \langle \symmo{\matr{Y}}, \matr{X} \rangle \\
% &= \left\{ \matr{U}\matr{U}^{\T}: \matr{U} \in \argmax_{\matr{U}\in \St(p, n)} \langle \symmo{\matr{Y}}, \matr{U}\matr{U}^{\T} \rangle\right\}.
% \end{align*}
% Note that \( \argmax_{\matr{U}\in \St(p, n)} \langle \symmo{\matr{Y}}, \matr{U}\matr{U}^{\T} \rangle = \argmax_{\matr{U}\in \St(p, n)} \langle \symmo{\matr{Y}}\matr{U}, \matr{U} \rangle\) is exactly the set of the eigenvectors of \( \symmo{\matr{Y}} \) corresponding to the top \( p \) eigenvalues. It follows that \(\matr{Q}^{\T} \Diag{\matr{I}_p, \matr{0}}\matr{Q} \) is a projection of \( \matr{Y} \).
% Then, the proof is complete by using the fact that the solution of the top \( p \) eigenvalue problem is unique (up to permutation) if and only if \( \lambda_p > \lambda_{p+1} \).
\end{proof}

We now estimate $\varrho_{*}$ and \(L_0\) for $\Gr(p, n)$.

\begin{example}[Calculation of $\varrho_{*}$ on \( \Gr(p, n) \)]\label{exa:rstart-Gr}
Note that \( \Gr(n, n ) = \{\matr{I}_n\} \) and \( \Gr(0, n) = \{ \matr{0}_{n\times n} \} \), and the projection always results in a single point in these two trivial cases. We assume that \( 1 \leq p \leq n-1 \). We now show that \( \varrho_{*} = 1 / \sqrt{2} \). For any \( \matr{X} \in \Gr(p, n) \), there exists \( \matr{Q} \in \ON{n} \) such that \( \matr{X} = \matr{Q}\Diag{\matr{I}_{p}, \matr{0}}\matr{Q}^{\T} \) by definition. Let \( \matr{Y} = \matr{Q}\Diag{\matr{I}_{p-1}, 1 / 2, 1 / 2, \matr{0}}\matr{Q}^{\T}\). Then \( \| \matr{X} - \matr{Y} \| = 1 / \sqrt{2}\) and, according to \cref{lem:proj-Gr} (iii), the projection of \( \matr{Y} \) is not unique, since its \( p \)th eigenvalue is equal to the \( (p+1) \)th one. Therefore, $\varrho_{*} \leq 1 / \sqrt{2}$. Now we deduce that $\varrho_{*} = 1 / \sqrt{2}$. For any $\matr{Y} \in \openball{\Gr(p, n)}{1 / \sqrt{2}}$, let \( \matr{X}\) be a projection of \( \matr{Y}\). Then $\| \symmo{\matr{Y}} - \matr{X} \| \leq \| \matr{Y} - \matr{X} \| < 1/ \sqrt{2}$ by \cref{lem:proj-Gr}. Let \(\lambda_p\) and \(\lambda_{p+1}\) be the $p$th and $(p+1)$th largest eigenvalue of $\symmo{\matr{Y}}$, respectively. Note that the $p$th and $(p+1)$th largest eigenvalue of $\matr{X}$ is 1 and 0, respectively. It follows from \cite[Cor. 6.3.8]{horn2012matrix} that \((\lambda_p - 1)^2 + \lambda_{p+1}^2 \leq \| \symmo{\matr{Y}} - \matr{X} \|^2 < 1 / 2\). If \(\lambda_p = \lambda_{p+1}\), then $2\lambda_p^2 - 2\lambda_p + 1 / 2 < 0$, which is a contradiction. Therefore, \(\lambda_p > \lambda_{p+1}\) and the projection of $\matr{Y} \in \openball{\Gr(p, n)}{1 / \sqrt{2}}$ is unique by \cref{lem:proj-Gr}(iii). Using the fact that $\Gr(p, n)$ is a \(C^{\infty}\) manifold \cite{lee2012smooth}, we know that \(1 / \sqrt{2}\) satisfies the requirement of \(\varrho\) in \cref{cor:general-manifold-uniform-projection-radius} by \cite[Thm. 4.1]{dudek1994nonlinear} and \cite[Thm. 3.13]{dudek1994nonlinear}. In summary, $\varrho_{*} = 1 / \sqrt{2}$.
\end{example}

% \subsection{Geometric inequalities of the projection}

\begin{example}[\( L_0^{(\delta)} \) on \( \Gr(p, n) \)]
  Note that \( \Gr(p, n) \) is proximally smooth with radius \( 1 / \sqrt{2} \)
  as shown in \cite{balashov2020gradient}. We see that \(\mathcal{P}_{\Gr(p, n)}\) is Lipschitz continuous with constant \( L_{\mathcal{P}_{\Gr(p, n)}} = \sqrt{2} / \delta \) on \( \openball{\Gr(p, n)}{\varrho_{*} - \delta / 2} \) by the property of proximally smooth sets \cite[Thm. 4.8]{clarke1995proximal}. It follows from the construction of \( L_0^{(\delta)} \) in the proof of \cref{lem:first-second-order-boundedness} that \( \max \left\{ \sqrt{2} / \delta, 4 \sqrt{p} / \delta \right\} = 4 \sqrt{p} / \delta \) is a choice of \( L_0^{(\delta)} \), where we used the fact that the diameter of \( \Gr(p, n) \) is less than \( 2 \sqrt{p} \).
\end{example}

\subsection{Decrease in function value after projection}
We conclude this section with an inequality similar to the descent lemma, which helps estimate the decrease in function value in each iteration.
Before that, let us first recall the following \emph{Riemannian subgradient inequality} for weakly convex functions over compact manifolds.
\begin{lemma}[{\cite[Cor. 1]{li2021weakly}}]\label{lem:riemannian-subgradient-inequality}
Let \( \mm'\subseteq\RR^m \) be a compact submanifold given by \(\mm' = \{ \vect{x} \in \RR^m: F(\vect{x}) = \vect{0} \}\), where \( F:\RR^m \to \RR^{m'} \) is a smooth mapping whose derivative \( \DD F(\vect{x}) \) at \( \vect{x} \) has full row rank for all \( \vect{x} \in \mm' \). Then, for any weakly convex function \( g: \RR^m \to \RR \), there exists a constant \( c > 0 \) such that
\begin{equation*}
g(\vect{y}) - g(\vect{x})-\left\langle \tilde{\nabla}_R g(\vect{x}), \vect{y}-\vect{x}\right\rangle \geq - c\|\vect{y}-\vect{x}\|^2
\end{equation*}
for all $\vect{x}, \vect{y} \in \mm'$ and \( \tilde{\nabla}_R g(\vect{x}) \in \partial_R g(\vect{x}) \), where \( \partial_R g(\vect{x}) \) denotes the projection of the subdifferential \( \partial g(\vect{x}) \) onto the tangent space \( \TangMM{\vect{x}} \).
\end{lemma}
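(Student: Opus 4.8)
The plan is to reduce the Riemannian inequality to the ordinary Euclidean subgradient inequality furnished by weak convexity, and to absorb the resulting discrepancy using the quadratic normal-displacement bound of \cref{lem:project-x-y-normal}. By definition, weak convexity of $g$ means there is $\rho \geq 0$ with $h \eqdef g + \tfrac{\rho}{2}\|\cdot\|^2$ convex on $\RR^m$; hence $\partial g(\vect{x}) = \partial h(\vect{x}) - \rho \vect{x}$, and for every $\vect{v} \in \partial g(\vect{x})$ the convex subgradient inequality applied to $h$ and rearranged yields
\begin{equation*}
g(\vect{y}) - g(\vect{x}) - \langle \vect{v}, \vect{y} - \vect{x} \rangle \;\geq\; -\tfrac{\rho}{2}\|\vect{y} - \vect{x}\|^2 \qquad \text{for all } \vect{y} \in \RR^m.
\end{equation*}

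Next I would fix $\vect{x}, \vect{y} \in \mm'$ and $\tilde{\nabla}_R g(\vect{x}) \in \partial_R g(\vect{x})$; by definition of $\partial_R g$ there is $\vect{v} \in \partial g(\vect{x})$ with $\tilde{\nabla}_R g(\vect{x}) = \mathcal{P}_{\TangMM{\vect{x}}}(\vect{v})$, and the orthogonal splitting $\RR^m = \TangMM{\vect{x}} \oplus \NormalMM{\vect{x}}$ gives $\vect{n} \eqdef \vect{v} - \tilde{\nabla}_R g(\vect{x}) = \mathcal{P}_{\NormalMM{\vect{x}}}(\vect{v}) \in \NormalMM{\vect{x}}$. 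Decomposing the quantity of interest,
\begin{align*}
g(\vect{y}) - g(\vect{x}) - \langle \tilde{\nabla}_R g(\vect{x}), \vect{y} - \vect{x} \rangle
&= \bigl( g(\vect{y}) - g(\vect{x}) - \langle \vect{v}, \vect{y} - \vect{x} \rangle \bigr) + \langle \vect{n}, \vect{y} - \vect{x} \rangle \\
&\geq -\tfrac{\rho}{2}\|\vect{y} - \vect{x}\|^2 + \langle \vect{n}, \vect{y} - \vect{x} \rangle ,
\end{align*}
using the displayed bound. Since $\vect{n} \in \NormalMM{\vect{x}}$, one has $\langle \vect{n}, \vect{y} - \vect{x}\rangle = \langle \vect{n}, \mathcal{P}_{\NormalMM{\vect{x}}}(\vect{y} - \vect{x})\rangle$, so Cauchy--Schwarz together with \cref{lem:project-x-y-normal} gives $|\langle \vect{n}, \vect{y} - \vect{x}\rangle| \leq L_4 \|\vect{n}\|\,\|\vect{y} - \vect{x}\|^2$, with $L_4$ the constant of that lemma.

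It remains to bound $\|\vect{n}\| \leq \|\vect{v}\|$ uniformly over $\vect{x} \in \mm'$. Here I would use that a finite-valued convex function on $\RR^m$ is locally Lipschitz, so $h$ — and therefore $g$ — is Lipschitz on a compact neighborhood of the compact set $\mm'$ with some constant $M$; consequently $\|\vect{v}\| \leq M$ for every $\vect{x} \in \mm'$ and $\vect{v} \in \partial g(\vect{x})$. Combining the three estimates gives $g(\vect{y}) - g(\vect{x}) - \langle \tilde{\nabla}_R g(\vect{x}), \vect{y} - \vect{x}\rangle \geq -\left(\tfrac{\rho}{2} + M L_4\right)\|\vect{y} - \vect{x}\|^2$, so the claim holds with $c \eqdef \tfrac{\rho}{2} + M L_4$, which depends on neither $\vect{x}, \vect{y}$ nor the chosen subgradient. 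The main obstacle is not any single algebraic step but the uniform control of the subdifferential on $\mm'$: one must combine nonemptiness and local boundedness of $\partial g$ (from convexity of $h$ on all of $\RR^m$) with compactness of $\mm'$. Once that is in place, the argument is a direct concatenation of the weak-convexity inequality with the normal bound \cref{lem:project-x-y-normal}, which is the one genuinely manifold-geometric ingredient.
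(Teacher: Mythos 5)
Your argument is correct, but note that the paper does not prove this lemma at all: it is imported verbatim from \cite[Cor.~1]{li2021weakly}, and the paper's only related reasoning is the remark that inserting \cref{lem:project-x-y-normal} into the proof of that corollary yields the extension \cref{lem:riemannian-subgradient-inequality-new}. Your proof is a valid standalone reconstruction and follows exactly that strategy: the Moreau-type inequality $g(\vect{y})-g(\vect{x})-\langle\vect{v},\vect{y}-\vect{x}\rangle\geq-\tfrac{\rho}{2}\|\vect{y}-\vect{x}\|^2$ from weak convexity, the orthogonal split $\vect{v}=\tilde{\nabla}_R g(\vect{x})+\vect{n}$ with $\vect{n}\in\NormalMM{\vect{x}}$, the identity $\langle\vect{n},\vect{y}-\vect{x}\rangle=\langle\vect{n},\mathcal{P}_{\NormalMM{\vect{x}}}(\vect{y}-\vect{x})\rangle$ bounded via \cref{lem:project-x-y-normal}, and the uniform subgradient bound from local Lipschitzness of the (finite-valued, hence locally Lipschitz) weakly convex $g$ together with compactness of $\mm'$; there is no circularity since \cref{lem:project-x-y-normal} is established independently and earlier. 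In fact, because you never use the defining-function hypothesis $\mm'=\{F=\vect{0}\}$ except implicitly through \cref{lem:project-x-y-normal}, your argument proves the statement for any compact $C^2$ submanifold, i.e.\ it directly delivers the generalization the paper records as \cref{lem:riemannian-subgradient-inequality-new} (for weakly convex $g$), which is slightly more than what the cited corollary asserts.
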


By checking the proof of \cite[Cor. 1]{li2021weakly}, it can be verified that, using the inequality in \cref{lem:project-x-y-normal}, we can extend the above lemma to the following result directly.

\begin{lemma}\label{lem:riemannian-subgradient-inequality-new}
Let \( \mm'\subseteq\RR^m \) be a compact submanifold of class \(C^2\). Then, for any function \( g: \RR^m \to \RR \) which has \(L_g\)-Lipschitz continuous gradient in the convex hull of $\mm'$, there exists a constant \(\Gamma_0 > 0\)  such that, for any $\vect{x}, \vect{y} \in \mm'$,
\begin{equation}\label{eq:general-Riemannian-descent-lemma-point-smooth}
\left|g(\vect{y}) - g(\vect{x})-\left\langle \grad g(\vect{x}), \vect{y}-\vect{x}\right\rangle \right| \leq \Gamma_0\|\vect{y}-\vect{x}\|^2.
\end{equation}
\end{lemma}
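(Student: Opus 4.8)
The plan is to reduce the claimed inequality \eqref{eq:general-Riemannian-descent-lemma-point-smooth} to the Euclidean descent lemma together with \cref{lem:project-x-y-normal}. First I would fix $\vect{x}, \vect{y} \in \mm'$ and write $\vect{y} - \vect{x} = \mathcal{P}_{\TangMM{\vect{x}}}(\vect{y}-\vect{x}) + \mathcal{P}_{\NormalMM{\vect{x}}}(\vect{y}-\vect{x})$. Since $g$ has $L_g$-Lipschitz gradient on the convex hull of $\mm'$ and the segment $[\vect{x},\vect{y}]$ lies in this convex hull, the Euclidean descent lemma gives
\[
\left| g(\vect{y}) - g(\vect{x}) - \langle \nabla g(\vect{x}), \vect{y}-\vect{x} \rangle \right| \leq \frac{L_g}{2} \| \vect{y}-\vect{x} \|^2.
\]
It then remains to control the difference $\langle \nabla g(\vect{x}), \vect{y}-\vect{x}\rangle - \langle \grad g(\vect{x}), \vect{y}-\vect{x}\rangle$. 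Using \eqref{eq:Riemannian-gradient-of-submanifold}, namely $\grad g(\vect{x}) = \mathcal{P}_{\TangMM{\vect{x}}}(\nabla g(\vect{x}))$, the vector $\nabla g(\vect{x}) - \grad g(\vect{x})$ lies in $\NormalMM{\vect{x}}$, so
\[
\langle \nabla g(\vect{x}) - \grad g(\vect{x}), \vect{y}-\vect{x} \rangle = \langle \nabla g(\vect{x}) - \grad g(\vect{x}), \mathcal{P}_{\NormalMM{\vect{x}}}(\vect{y}-\vect{x}) \rangle,
\]
and by Cauchy--Schwarz this is bounded in absolute value by $\| \nabla g(\vect{x}) - \grad g(\vect{x}) \| \cdot \| \mathcal{P}_{\NormalMM{\vect{x}}}(\vect{y}-\vect{x}) \| = \| \mathcal{P}_{\NormalMM{\vect{x}}}(\vect{x}-\vect{y}) \| \cdot \| \nabla g(\vect{x}) - \grad g(\vect{x}) \|$.

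Now I would invoke \cref{lem:project-x-y-normal}, which supplies a constant $L_4 > 0$ with $\| \mathcal{P}_{\NormalMM{\vect{x}}}(\vect{x}-\vect{y}) \| \leq L_4 \| \vect{x}-\vect{y} \|^2$ for all $\vect{x}, \vect{y} \in \mm'$. Combined with the uniform bound $\| \nabla g(\vect{x}) - \grad g(\vect{x}) \| \leq \| \nabla g(\vect{x}) \| \leq M$, where $M \eqdef \max_{\vect{z} \in \mm'} \| \nabla g(\vect{z}) \| < \infty$ by continuity of $\nabla g$ and compactness of $\mm'$, this yields
\[
\left| \langle \nabla g(\vect{x}) - \grad g(\vect{x}), \vect{y}-\vect{x} \rangle \right| \leq M L_4 \| \vect{x}-\vect{y} \|^2.
\]
Adding the two estimates via the triangle inequality gives \eqref{eq:general-Riemannian-descent-lemma-point-smooth} with $\Gamma_0 \eqdef \frac{L_g}{2} + M L_4$, which is independent of $\vect{x}, \vect{y}$.

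I do not expect any genuine obstacle here; the only point requiring a little care is ensuring all constants are chosen uniformly over $\mm'$, which follows from compactness (for the bound on $\| \nabla g \|$) and from \cref{lem:project-x-y-normal} itself (for $L_4$, whose proof already handled the uniformity via the Lebesgue number lemma). The remark preceding the statement indicates the authors intend exactly this route — reusing the proof of \cite[Cor. 1]{li2021weakly} with the normal-component bound \cref{lem:project-x-y-normal} in place of the uniform normal inequality for proximally smooth sets — so the argument above is essentially a streamlined rederivation rather than a verbatim adaptation.
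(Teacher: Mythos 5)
Your proposal is correct, and it follows essentially the route the paper intends: the paper simply refers to adapting the proof of \cite[Cor. 1]{li2021weakly} by substituting \cref{lem:project-x-y-normal}, which amounts to exactly your combination of the Euclidean descent lemma on the convex hull with the bound $\| \mathcal{P}_{\NormalMM{\vect{x}}}(\vect{x}-\vect{y}) \| \leq L_4 \| \vect{x}-\vect{y} \|^2$ and the compactness bound on $\|\nabla g\|$. Your write-up is a clean, explicit version of that argument, with the constant $\Gamma_0 = \tfrac{L_g}{2} + M L_4$ correctly uniform in $\vect{x},\vect{y}$.
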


Based on \cref{lem:riemannian-subgradient-inequality-new} and our previous result \cref{lem:first-second-order-boundedness},  
we are now able to prove the following result. It is worth noticing that the following lemma holds for all projections of \( \vect{x} + \vect{v} + \vect{w}  \) even if the projection is not unique.
\begin{lemma}\label{thm:Riemannian-descent-lemma-proj}
  Let \( \mm' \subseteq \RR^m \) be a compact submanifold of class \(C^3\), \( g:\RR^m \to \RR \)
have Lipschitz continuous gradient in the convex hull of $\mm'$, and \(\delta \in (0, \varrho_{*}]\) be fixed, where \(\varrho_{*}\) is the constant related to \( \mm' \) defined after \cref{cor:general-manifold-uniform-projection-radius}. Then there exist constants \(\Gamma_{1}^{(\delta)}, \Gamma_{2}^{(\delta)} > 0\), such that for all $ \vect{x} \in \mm' $, $ \vect{v} \in \TangMM{\vect{x}}$ and $\vect{w} \in \NormalMM{\vect{x}} $ satisfying $ \| \vect{w} \| \leq \varrho_{*} - \delta$, we have
\begin{equation}\label{eq:Riemannian-gradient-descent-general-smooth}
| g\left(\mathcal{P}_{\mm'}(\vect{x} + \vect{v} + \vect{w} )\right) - g(\vect{x}) - \langle \grad g(\vect{x}), \vect{v} \rangle | \leq \Gamma_{1}^{(\delta)} \| \vect{v} \|^2 + \Gamma_{2}^{(\delta)} \| \grad g(\vect{x}) \| \| \vect{v} \|\| \vect{w} \|.
\end{equation}
\end{lemma}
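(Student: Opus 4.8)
The plan is to combine the two already-established ingredients: the ``Riemannian descent lemma'' at points \cref{lem:riemannian-subgradient-inequality-new}, which controls $|g(\vect{y}) - g(\vect{x}) - \langle \grad g(\vect{x}), \vect{y}-\vect{x}\rangle|$ by $\Gamma_0 \|\vect{y}-\vect{x}\|^2$ for $\vect{x},\vect{y}\in\mm'$, and the trajectory bounds of \cref{lem:first-second-order-boundedness}, which control the displacement $\mathcal{P}_{\mm'}(\vect{x}+\vect{v}+\vect{w})-\vect{x}$ in terms of $\vect{v}$ and $\vect{w}$. First I would set $\vect{y} \eqdef \mathcal{P}_{\mm'}(\vect{x}+\vect{v}+\vect{w})$, which lies in $\mm'$, and apply \cref{lem:riemannian-subgradient-inequality-new} to get $|g(\vect{y}) - g(\vect{x}) - \langle \grad g(\vect{x}), \vect{y}-\vect{x}\rangle| \le \Gamma_0 \|\vect{y}-\vect{x}\|^2$. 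The remaining task is to pass from $\vect{y}-\vect{x}$ to $\vect{v}$: write $\langle \grad g(\vect{x}), \vect{y}-\vect{x}\rangle = \langle \grad g(\vect{x}), \vect{v}\rangle + \langle \grad g(\vect{x}), \vect{y}-\vect{x}-\vect{v}\rangle$, so that by the triangle inequality
\begin{equation*}
| g(\vect{y}) - g(\vect{x}) - \langle \grad g(\vect{x}), \vect{v}\rangle | \le \Gamma_0 \|\vect{y}-\vect{x}\|^2 + \|\grad g(\vect{x})\|\,\|\vect{y}-\vect{x}-\vect{v}\|.
\end{equation*}

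Next I would estimate the two terms on the right using \cref{lem:first-second-order-boundedness}. For the first term, \eqref{eq:first-order-boundedness} gives $\|\vect{y}-\vect{x}\| \le L_0^{(\delta)}\|\vect{v}\|$, hence $\|\vect{y}-\vect{x}\|^2 \le (L_0^{(\delta)})^2 \|\vect{v}\|^2$. For the second term, \eqref{eq:second-order-boundedness-general} gives $\|\vect{y}-\vect{x}-\vect{v}\| \le L_1^{(\delta)}\|\vect{v}\|^2 + L_2^{(\delta)}\|\vect{v}\|\,\|\vect{w}\|$, so
\begin{equation*}
\|\grad g(\vect{x})\|\,\|\vect{y}-\vect{x}-\vect{v}\| \le L_1^{(\delta)}\|\grad g(\vect{x})\|\,\|\vect{v}\|^2 + L_2^{(\delta)}\|\grad g(\vect{x})\|\,\|\vect{v}\|\,\|\vect{w}\|.
\end{equation*}
The second summand here is already of the desired form $\Gamma_2^{(\delta)}\|\grad g(\vect{x})\|\,\|\vect{v}\|\,\|\vect{w}\|$; the first summand, $L_1^{(\delta)}\|\grad g(\vect{x})\|\,\|\vect{v}\|^2$, needs to be absorbed into the $\|\vect{v}\|^2$ term. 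Here I would invoke the boundedness of $\|\grad g(\vect{x})\|$ on the compact manifold $\mm'$: since $g$ has Lipschitz gradient on the convex hull of $\mm'$ and $\mm'$ is compact, $\nabla g$ — and hence $\grad g(\vect{x}) = \mathcal{P}_{\TangMM{\vect{x}}}(\nabla g(\vect{x}))$ — is bounded, say $\|\grad g(\vect{x})\| \le G$ for all $\vect{x}\in\mm'$. Thus $L_1^{(\delta)}\|\grad g(\vect{x})\|\,\|\vect{v}\|^2 \le L_1^{(\delta)} G \|\vect{v}\|^2$.

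Collecting terms, I would set $\Gamma_1^{(\delta)} \eqdef \Gamma_0 (L_0^{(\delta)})^2 + L_1^{(\delta)} G$ and $\Gamma_2^{(\delta)} \eqdef L_2^{(\delta)}$, which yields \eqref{eq:Riemannian-gradient-descent-general-smooth} for all $\vect{x}\in\mm'$, $\vect{v}\in\TangMM{\vect{x}}$ and $\vect{w}\in\NormalMM{\vect{x}}$ with $\|\vect{w}\|\le\varrho_*-\delta$. I do not anticipate a genuine obstacle here: the lemma is essentially a bookkeeping composition of two prior results plus the elementary compactness bound on the gradient. The only point requiring a little care is making sure the hypothesis $\|\vect{w}\| \le \varrho_* - \delta$ is exactly what \cref{lem:first-second-order-boundedness} requires (it is), and that no uniformity is lost — all constants $\Gamma_0$, $L_0^{(\delta)}$, $L_1^{(\delta)}$, $L_2^{(\delta)}$, $G$ are uniform over $\mm'$ by the cited lemmas and compactness, so the resulting $\Gamma_1^{(\delta)},\Gamma_2^{(\delta)}$ are uniform as well.
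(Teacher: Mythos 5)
Your proposal is correct and follows essentially the same route as the paper's own proof: apply \cref{lem:riemannian-subgradient-inequality-new} at $\vect{y}=\mathcal{P}_{\mm'}(\vect{x}+\vect{v}+\vect{w})$, split off $\langle\grad g(\vect{x}),\vect{y}-\vect{x}-\vect{v}\rangle$, bound the displacements via \eqref{eq:first-order-boundedness} and \eqref{eq:second-order-boundedness-general}, and absorb the extra $\|\vect{v}\|^2$ term using the uniform gradient bound on the compact manifold, arriving at the same constants $\Gamma_1^{(\delta)}=\Gamma_0(L_0^{(\delta)})^2+L_1^{(\delta)}G$ and $\Gamma_2^{(\delta)}=L_2^{(\delta)}$.
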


\begin{proof}
It follows from \eqref{eq:general-Riemannian-descent-lemma-point-smooth} and \eqref{eq:first-order-boundedness} that
\begin{align*}
| g(\mathcal{P}_{\mm'}(\vect{x} + \vect{v} + \vect{w} )) - g(\vect{x}) - \langle \grad g(\vect{x}), \mathcal{P}_{\mm'}(\vect{x} + \vect{v} + \vect{w} ) - \vect{x} \rangle | & \leq \Gamma_0 \| \mathcal{P}_{\mm'}(\vect{x} + \vect{v} + \vect{w} ) - \vect{x} \|^2\\
& \leq \Gamma_0 (L_0^{(\delta)})^2 \| \vect{v} \|^{2}.
\end{align*}
Moreover, utilizing \eqref{eq:second-order-boundedness-general}, we have
\begin{align*}
|\langle \grad g(\vect{x}), \mathcal{P}_{\mm'}(\vect{x} + \vect{v} + \vect{w} ) - \vect{x} - \vect{v} \rangle|  & \leq \| \grad g(\vect{x}) \|  \| \mathcal{P}_{\mm'}(\vect{x} + \vect{v} + \vect{w} ) - \vect{x} - \vect{v} \| \\
& \leq \| \grad g(\vect{x}) \| (L_1^{(\delta)}\| \vect{v} \|^2 + L_2^{(\delta)}\| \vect{v} \| \| \vect{w} \|).
\end{align*}
Note that $ \| \grad g(\vect{x}) \| \leq \| \nabla g(\vect{x}) \| \leq \Delta_1 $. The proof is complete by combing the above two inequalities and setting \(\Gamma_{1}^{(\delta)} \eqdef \Gamma_0  (L_0^{(\delta)})^2 + \Delta_1 L_1^{(\delta)}\) and \(\Gamma_{2}^{(\delta)} \eqdef L_2^{(\delta)}\).
\end{proof}
\begin{remark}\label{rem:Lipschitz-type-regualrity-asusmption}
In \cite[Lem. 2.7]{boumal2019GlobalRatesConvergence}, the \emph{Lipschitz-type regularity assumption} is studied, showing that for any compact submanifold \( \mm' \subseteq \RR^m  \) and function \( g:\RR^m \to \RR \) with Lipschitz continuous gradient in the convex hull of \( \mm' \), there exists \( L_g > 0 \) such that for all \( \vect{x} \in \mm' \) and \( \vect{v} \in \TangMM{\vect{x}} \),
\begin{equation}\label{eq:Lipschit-type-regularity-assump}
|g(\retr_{\vect{x}}(\vect{v})) - g(\vect{x}) - \langle \grad g(\vect{x}), \vect{v} \rangle | \leq \frac{L_g}{2} \| \vect{v} \|^2.
\end{equation}
If we set \(\delta = \varrho_{*}\) in \cref{thm:Riemannian-descent-lemma-proj}, then \eqref{eq:Riemannian-gradient-descent-general-smooth} reduces to
\[
| g(\mathcal{P}_{\mm'}(\vect{x} + \vect{v})) - g(\vect{x}) - \langle \grad g(\vect{x}), \vect{v} \rangle | \leq \Gamma_{1}^{(\varrho_{*})} \| \vect{v} \|^2.
\]
Thus, \cref{thm:Riemannian-descent-lemma-proj} can be considered an extension of the Lipschitz-type regularity assumption to the case where the projection is used as the retraction in \eqref{eq:Lipschit-type-regularity-assump}. 
\end{remark}

\section{TGP algorithms using the Armijo stepsize}\label{sec:TGP-A}

\subsection{TGP-A algorithm}
We refer to \cref{alg:TGP} with the \emph{Armijo stepsize} \cite{armijo1966minimization,nocedal2006NumericalOptimization} as the \emph{Transformed Gradient Projection with the Armijo stepsize} (TGP-A) algorithm.
In this algorithm, the stepsize \(\tau_k\) is determined by employing a backtracking procedure to satisfy the Armijo condition:
\begin{equation}\label{eq:Armijo-condition}
f(\matr{X}_{k+1}) - f(\matr{X}_{k}) \leq \gamma \tau_k \langle \nabla f(\matr{X}_k), \matr{Z}'_k(0) \rangle= - \gamma \tau_k  \langle \grad f(\matr{X}_k), \matr{H}_k \rangle,
\end{equation}
where \(\gamma  \in (0, 1)\) is fixed.
Here, we make the assumption that the backtracking procedure is conducted using a parameter $\beta \in (0, 1)$ and a trial stepsize \(\hat{\tau}_k>0\) for each iteration \( k \). Then \(\tau_k\) can be expressed as follows:
\begin{equation}\label{eq:Armijo-stepsize}
\tau_k = \max\{\hat{\tau}_k \beta^{i}: f(\matr{Z}_k(\hat{\tau}_k \beta^{i})) - f(\matr{X}_{k}) \leq \gamma \hat{\tau}_k \beta^{i} \langle \nabla f(\matr{X}_k), \matr{Z}_k'(0) \rangle, i \in \NN \}, \text{ for all } k \in \NN.
\end{equation}
In the classical Armijo stepsize method, the trial stepsize \(\hat{\tau}_k\) is usually assumed to be fixed. However, in our context, we consistently assume that \(\hat{\tau}_k\) is adaptive, maintaining a uniform lower bound \(\hat{\tau}^{(l)} > 0\) and upper bound \(\hat{\tau}^{(u)} > 0\) throughout the paper.
As shown in \cref{cor:general-manifold-uniform-projection-radius}, $ \matr{Z}_k(\tau) $ is continuously differentiable for $\tau \in [0, \varrho_{*}/  \|\matr{H}_k\|) $.
It follows from \cref{lem:differential-projection} that
\begin{equation}
\matr{Z}_k'(0) = \mathcal{P}_{\TangM{\matr{X}_k}} (\matr{Y}_{k}'(0)) = \mathcal{P}_{\TangM{\matr{X}_k}} (- \matr{H}_k) = - \tilde{\matr{H}}_k,
\end{equation}
which implies that
\begin{equation}\label{eq:differential-Z}
\langle \nabla f(\matr{X}_k), \matr{Z}_{k}'(0) \rangle = \langle \grad f(\matr{X}_k), \matr{Z}_{k}'(0) \rangle = \langle \grad f(\matr{X}_k), \matr{Y}_k'(0) \rangle = - \langle \grad f(\matr{X}_k), \matr{H}_{k}  \rangle.
\end{equation}
Therefore, if \cref{asp:H-tilde-grad-equiv} is satisfied, then \( \langle \nabla f(\matr{X}_k), \matr{Z}_k'(0) \rangle < 0\), and thus the Armijo stepsize $ \tau_k $ always exists.

For the convenience of subsequent convergence analysis of \cref{alg:TGP},
we now introduce the following mild assumption about the boundedness of \( \matr{H}_k \).

\begin{assumption}[Boundedness of $\matr{H}_k$]\label{asp:H-boundedness}
$ \{ \| \matr{H}_k \| \}_{k\geq 0} $ is uniformly bounded, \emph{i.e.}, $\Delta_{\matr{H}} < +\infty $, where $ \Delta_{\matr{H}} \eqdef \sup_{k \in \NN}\| \matr{H}_k \|$.
\end{assumption}

Denote \( \Delta_{\hat{\matr{H}}} \eqdef \sup_{k \in \NN} \|\hat{\matr{H}}_k\| \). It easy to see that \( \Delta_{\hat{\matr{H}}} \leq \Delta_{\matr{H}} < +\infty \) under \cref{asp:H-boundedness}.

\begin{remark}
Given the compactness of $\mm$, we observe that \cref{asp:H-boundedness} is satisfied if $\matr{H}_k$ is chosen in a manner that continuously depends on $ \matr{X}_k $. For example, the classical EGP algorithm \eqref{eq:iter_prjec} with  $ \matr{H}_k = \nabla f(\matr{X}_k) $ and the RGD algorithm \eqref{eq:iter_retr_RGD} \cite{liu2019QuadraticOptimizationOrthogonality,sheng2022riemannian} with $\matr{H}_k = \grad f(\matr{X}_k)$ both satisfy this condition.
\end{remark}

\subsection{Weak convergence}\label{subsec:weak_TGP-A}
In this subsection, our objective is to establish the weak convergence of the TGP-A algorithm.
To begin, we first prove the following result, which can be seen as an extension of \cite[Thm 4.3.1]{absil2009optimization} when the retraction is constructed using the projection.

\begin{lemma}\label{lem:Armijo-stepsize-general-weak-convergence}
Let \( \mm \subseteq \RR^{n \times r} \) be a submanifold of class \(C^2\) and the cost function $f$ in \eqref{eq:objec_func_g} be continuously differentiable over $\RR^{n \times r}$.
In TGP-A algorithm, if there exists a fixed \(\upsilon > 0\) such that
\begin{equation}\label{eq:assumA-half}
\langle \grad f(\matr{X}_{k}), \matr{H}_{k} \rangle \geq  \upsilon \| \grad f(\matr{X}_k) \|^2\ \textrm{ for all} \ k \in \NN,
\end{equation}
and \( \matr{H}_k \) satisfies \cref{asp:H-boundedness},
% and the trial stepsize satisfies \(\hat{\tau}_k \| \matr{H}_k \| < \varrho_{*}\) for all \( k \in \NN \),
then every accumulation point of the sequence \( \{ \matr{X}_k \}_{k \geq 0} \) is a stationary point. Moreover, we have that \( \lim_{k\to \infty}  \|\grad f(\matr{X}_k)\|  = 0 \).
\end{lemma}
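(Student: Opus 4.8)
The plan is to follow the classical Zoutendijk-type argument adapted to the projection setting, using the geometric inequalities established in \cref{sec:Prop-proj}. The proof proceeds by contradiction: suppose there is an accumulation point $\matr{X}_{*}$ with $\grad f(\matr{X}_{*}) \neq \vect{0}$, and a subsequence $\{\matr{X}_{k}\}_{k\in\mathcal{K}}$ converging to $\matr{X}_{*}$. By continuity of $\grad f$ on $\mm$, there is a constant $\eta > 0$ such that $\| \grad f(\matr{X}_k) \| \geq \eta$ for all $k \in \mathcal{K}$ large enough. Since $\{f(\matr{X}_k)\}$ is monotonically decreasing (by the Armijo condition \eqref{eq:Armijo-condition} and \eqref{eq:differential-Z}, each step gives $f(\matr{X}_{k+1}) - f(\matr{X}_k) \leq -\gamma \tau_k \langle \grad f(\matr{X}_k), \matr{H}_k \rangle \leq -\gamma \tau_k \upsilon \| \grad f(\matr{X}_k) \|^2 \leq 0$), and $f$ is bounded below on the compact $\mm$, we get $\sum_{k} \tau_k \| \grad f(\matr{X}_k) \|^2 < \infty$, hence $\tau_k \| \grad f(\matr{X}_k) \|^2 \to 0$. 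In particular $\tau_k \to 0$ along $\mathcal{K}$ (using $\| \grad f(\matr{X}_k) \| \geq \eta$ there).

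First I would record the descent estimate carefully and derive $\sum_k \tau_k \langle \grad f(\matr{X}_k), \matr{H}_k \rangle < \infty$, and hence (via \eqref{eq:assumA-half}) $\sum_k \tau_k \| \grad f(\matr{X}_k) \|^2 < \infty$. Next, the key point is to show $\tau_k \to 0$ along $\mathcal{K}$ forces a contradiction. Since $\tau_k \to 0$, for $k \in \mathcal{K}$ large the backtracking must have rejected the trial stepsize $\tau_k / \beta$ (using the uniform lower bound $\hat\tau^{(l)}$ on the trial stepsizes, $\tau_k < \hat\tau^{(l)}$ eventually implies $\tau_k / \beta \leq \hat\tau_k$ was tried and failed); that is,
\begin{equation*}
f(\matr{Z}_k(\tau_k / \beta)) - f(\matr{X}_k) > \gamma \frac{\tau_k}{\beta} \langle \nabla f(\matr{X}_k), \matr{Z}_k'(0) \rangle = - \gamma \frac{\tau_k}{\beta} \langle \grad f(\matr{X}_k), \matr{H}_k \rangle.
\end{equation*}
On the other hand, I would bound the left-hand side from above using \cref{thm:Riemannian-descent-lemma-proj}: writing $s_k \eqdef \tau_k / \beta$, $\vect{v} = -s_k \tilde{\matr{H}}_k \in \TangM{\matr{X}_k}$ and $\vect{w} = -s_k \hat{\matr{H}}_k \in \NormalM{\matr{X}_k}$, and noting $s_k \| \hat{\matr{H}}_k \| \leq s_k \Delta_{\matr{H}} \leq \varrho_* - \delta$ for $k$ large (since $s_k \to 0$), we have $\matr{Z}_k(s_k) = \mathcal{P}_{\mm}(\matr{X}_k + \vect{v} + \vect{w})$ and
\begin{equation*}
f(\matr{Z}_k(s_k)) - f(\matr{X}_k) - \langle \grad f(\matr{X}_k), -s_k \tilde{\matr{H}}_k \rangle \leq \Gamma_1^{(\delta)} s_k^2 \| \tilde{\matr{H}}_k \|^2 + \Gamma_2^{(\delta)} \| \grad f(\matr{X}_k) \| s_k^2 \| \tilde{\matr{H}}_k \| \| \hat{\matr{H}}_k \|.
\end{equation*}
Using $\langle \grad f(\matr{X}_k), \tilde{\matr{H}}_k \rangle = \langle \grad f(\matr{X}_k), \matr{H}_k \rangle$ from \cref{lem:proper_c_k}(i) (or directly from orthogonality), combining the two displays, dividing by $s_k$, and using $\| \tilde{\matr{H}}_k \|, \| \hat{\matr{H}}_k \| \leq \Delta_{\matr{H}}$ yields
\begin{equation*}
(1 - \gamma) \langle \grad f(\matr{X}_k), \matr{H}_k \rangle \leq s_k \left( \Gamma_1^{(\delta)} \Delta_{\matr{H}}^2 + \Gamma_2^{(\delta)} \Delta_{\matr{H}}^2 \| \grad f(\matr{X}_k) \| \right).
\end{equation*}
Since $\| \grad f(\matr{X}_k) \|$ is bounded (by compactness and continuity) and $s_k \to 0$, the right side tends to $0$ along $\mathcal{K}$, forcing $\langle \grad f(\matr{X}_k), \matr{H}_k \rangle \to 0$, which by \eqref{eq:assumA-half} gives $\| \grad f(\matr{X}_k) \| \to 0$ along $\mathcal{K}$, contradicting $\| \grad f(\matr{X}_k) \| \geq \eta$. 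This proves every accumulation point is stationary.

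Finally, to upgrade to $\lim_{k\to\infty} \| \grad f(\matr{X}_k) \| = 0$, I would argue by contradiction once more: if not, there is $\eta > 0$ and an infinite index set with $\| \grad f(\matr{X}_k) \| \geq \eta$; one then runs essentially the same argument (the summability $\sum_k \tau_k \| \grad f(\matr{X}_k) \|^2 < \infty$ forces $\tau_k \to 0$ on that index set, and the backtracking rejection inequality combined with \cref{thm:Riemannian-descent-lemma-proj} again yields $\langle \grad f(\matr{X}_k), \matr{H}_k \rangle \to 0$ on that set, contradicting \eqref{eq:assumA-half}). Alternatively, since $f$ is monotonically decreasing and bounded below, $f(\matr{X}_k)$ converges, and every accumulation point being stationary together with the standard argument that the gap between consecutive iterates shrinks would close it; but the direct contradiction argument is cleaner. \textbf{The main obstacle} I anticipate is the careful bookkeeping in the backtracking-rejection step: one must verify that $\tau_k \to 0$ genuinely implies the trial stepsize $\tau_k/\beta$ was among the rejected candidates (this uses the uniform lower bound $\hat\tau^{(l)} > 0$ on trial stepsizes in an essential way), and one must ensure the smallness condition $s_k \| \hat{\matr{H}}_k \| \leq \varrho_* - \delta$ needed to invoke \cref{thm:Riemannian-descent-lemma-proj} holds for all large $k$ — which is where \cref{asp:H-boundedness} enters. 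Everything else is routine once the projection descent lemma \cref{thm:Riemannian-descent-lemma-proj} is in hand.
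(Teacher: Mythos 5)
There is a genuine gap: your central step invokes \cref{thm:Riemannian-descent-lemma-proj} (the projection descent inequality with constants $\Gamma_1^{(\delta)},\Gamma_2^{(\delta)}$), but that result is only available when $\mm$ is a compact submanifold of class $C^3$ and $f$ has a Lipschitz continuous gradient on the convex hull of $\mm$ — its proof needs $\DD\mathcal{P}_{\mm}$ to be Lipschitz (hence a $C^2$ projection, hence a $C^3$ manifold) and the quadratic bound of \cref{lem:riemannian-subgradient-inequality-new}. The lemma you are asked to prove assumes only a $C^2$ submanifold and a continuously differentiable $f$, so none of these constants exist under its hypotheses. What your argument actually establishes is the weak convergence under the stronger assumptions of \cref{thm:Armijo-stepsize-lower-bound-convergence-rate}, and indeed your Zoutendijk-type bookkeeping (summability of $\tau_k\|\grad f(\matr{X}_k)\|^2$, backtracking rejection of $\tau_k/\beta$, then the quantitative bound $(1-\gamma)\langle \grad f(\matr{X}_k),\matr{H}_k\rangle \lesssim s_k$) is essentially the paper's proof of that theorem (which even yields a uniform lower bound on $\tau_k$), not of this lemma; the paper explicitly remarks that the two results are proved under different smoothness hypotheses.

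The paper's proof of this lemma avoids any quantitative descent estimate. After deducing $\tau_{k_j}\to 0$ along the subsequence and that $\tau_{k_j}/\beta$ was rejected (your bookkeeping here, including the role of $\hat\tau^{(l)}$ and \cref{asp:H-boundedness} in guaranteeing $\tau_{k_j}/\beta < \varrho_*/\Delta_{\matr{H}}$, matches the paper), it divides the rejection inequality by $\tau_{k_j}/\beta$ and applies the mean value theorem to the $C^1$ curve $\tau\mapsto f(\matr{Z}_{k_j}(\tau))$ — smoothness of $\matr{Z}_{k_j}$ on $[0,\tau_{k_j}/\beta]$ coming from \cref{cor:general-manifold-uniform-projection-radius}, which only needs $C^2$. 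Passing to a further subsequence along which $\matr{H}_{k_j}\to\matr{H}^{*}$ and letting $j\to\infty$ gives $\langle \nabla f(\matr{X}^{*}),\mathcal{P}_{\TangM{\matr{X}^{*}}}(\matr{H}^{*})\rangle \leq \gamma\,\langle \nabla f(\matr{X}^{*}),\mathcal{P}_{\TangM{\matr{X}^{*}}}(\matr{H}^{*})\rangle$, which contradicts $\gamma\in(0,1)$ because \eqref{eq:assumA-half} and non-stationarity force this inner product to be positive; only continuity of $\nabla f$ is used. To repair your proposal at the stated level of generality you would need to replace the appeal to \cref{thm:Riemannian-descent-lemma-proj} by such a mean-value/limit argument (or otherwise produce a descent estimate that does not require a Lipschitz gradient). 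Your final upgrade to $\lim_{k\to\infty}\|\grad f(\matr{X}_k)\|=0$ via compactness is fine and coincides with the paper's closing argument.
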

\begin{proof}
We prove this lemma by contradiction. Assume that \( \matr{X}^{*} \) is an accumulation point which is a non-stationary point of \( f \) and  \( \{ \matr{X}_{k_j} \}_{j \geq 0} \) is the corresponding subsequence converging to \(\matr{X}^{*}\). Noting that the Armijo condition \eqref{eq:Armijo-condition} holds for all $ k \in \NN $ and the sequence $\{ f(\matr{X}_k) \}_{k \geq 0}$ is monotonically decreasing, we have that, for all $ j \in \NN $,
\[
f(\matr{X}_{k_{j+1}}) - f(\matr{X}_{k_j}) \leq f(\matr{X}_{k_j+1}) - f(\matr{X}_{k_j}) \leq - \gamma \tau_{k_j} \langle \grad f(\matr{X}_{k_j}), \matr{H}_{k_j} \rangle \leq - \gamma \tau_{k_j} \upsilon \| \grad f(\matr{X}_{k_j}) \|^2.
\]
In the above inequality, we have \( \lim_{j \to \infty}\tau_{k_j} = 0 \) by letting \( j \to \infty \) and using \( \| \grad f(\matr{X}^{*}) \|^2  > 0 \).
Thus, there exists \(j_0 \in \NN\) such that \(\tau_{k_j} / \beta < \min \{ \hat{\tau}^{(l)}, \varrho_{*} / \Delta_{\matr{H}} \}\) for all \( j \geq j_0 \).

Now we consider all \( j \) satisfying \( j \geq j_0 \). It follows from the rule of backtracking that the stepsize \( \tau_{k_j} / \beta \) does not satisfy the Armijo condition, that is,
\[
f(\matr{Z}_{k_j}(\tau_{k_j} / \beta)) - f(\matr{X}_{k_j}) > -\gamma \tau_{k_j}  \langle \grad f(\matr{X}_{k_j}), \matr{H}_{k_j} \rangle/ \beta.
\]
Dividing both sides by \(\tau_{k_j} / \beta\), we obtain that
\[
\frac{f(\matr{Z}_{k_j}(\tau_{k_j} / \beta)) - f(\matr{Z}_{k_j}(0))}{\tau_{k_j} / \beta}  > -\gamma   \langle \grad f(\matr{X}_{k_j}), \matr{H}_{k_j} \rangle = \gamma \langle \nabla f(\matr{X}_{k_j}), \matr{Z}_{k_j}'(0) \rangle.
\]
Moreover, it follows from \(\tau_{k_j}/ \beta < \varrho_{*} / \Delta_{\matr{H}}  \) and \cref{cor:general-manifold-uniform-projection-radius} that \(\matr{Z}_{k_j}(\tau) \) is continuously differentiable for \(\tau \in [0, \tau_{k_j}/\beta]\). By applying the mean value theorem on $ \matr{Z}_{k_j}(\tau) $, we know that there exists \(\bar{\tau}_{k_j} \in [0, \tau_{k_j} / \beta]\) such that
\begin{equation}
\label{eq:weak-convergence-proof-Armijo-condition-not-satisfied}
\langle \nabla f(\matr{X}_{k_j}), \matr{Z}_{k_j}'(\bar{\tau}_{k_j}) \rangle
=  \frac{f(\matr{Z}_{k_j}(\tau_{k_j} / \beta)) - f(\matr{Z}_{k_j}(0))}{\tau_{k_j} / \beta} > \gamma \langle \nabla f(\matr{X}_{k_j}), \matr{Z}_{k_j}'(0) \rangle.
\end{equation}
Since the sequence \(\{\matr{H}_{k_j}\}_{j \geq 0}\) is bounded, it has a convergent subsequence. Without loss of generality, we assume that the whole subsequence \( \{ \matr{H}_{k_j} \}_{j \geq 0}\) is convergent and denote its limit point by $ \matr{H}^{*} $. Let $ j \to\infty $ in \eqref{eq:weak-convergence-proof-Armijo-condition-not-satisfied}. Noting that \( \lim_{j \to \infty} \tau_{k_j} = 0 \) and \( \matr{Z}_{k_j}'(0) = -\mathcal{P}_{\TangM{\matr{X}_{k_j}}}(\matr{H}_{k_j}) \), we have
\begin{equation}\label{eq:Armijo-not-satisfy}
\langle \nabla f(\matr{X}^{*}), \mathcal{P}_{\TangM{\matr{X}^{*}}}(\matr{H}^{*}) \rangle \leq \gamma \langle \nabla f(\matr{X}^*), \mathcal{P}_{\TangM{\matr{X}^{*}}}(\matr{H}^{*}) \rangle.
\end{equation}
On the other hand, using \( \langle \nabla f(\matr{X}_{k_j}), \tilde{\matr{H}}_{k_j} \rangle =  \langle \grad f(\matr{X}_{k_j}), \allowbreak\matr{H}_{k_j} \rangle \) and \( \liminf_{j \to \infty} \langle \grad f(\matr{X}_{k_j}), \matr{H}_{k_j} \rangle > 0 \) from \eqref{eq:assumA-half} and the assumption that $\matr{X}^{*}$ is not a stationary point, we have
\(  \langle \nabla f(\matr{X}^{*}), \mathcal{P}_{\TangM{\matr{X}^{*}}}(\matr{H}^{*}) \rangle > 0\), which contradicts \eqref{eq:Armijo-not-satisfy} since \(\gamma \in (0,1)\). As a result, such accumulation point \( \matr{X}^{*} \) does not exist and every accumulation point of \( \{ \matr{X}_{k} \}_{k \geq 0} \) is a stationary point.

Now we further prove \( \lim_{k \to \infty} \| \grad f(\matr{X}_{k}) \| = 0\) by contradiction.
Assume there exist \(\epsilon > 0\) and a subsequence \( \{ \grad f(\matr{X}_{k_j}) \}_{j \geq 0} \) such that \( \| \grad f(\matr{X}_{k_j}) \| > \epsilon\) for all $j \geq 0$. Since \(\mm\) is compact, \( \{ \matr{X}_{k_j} \}_{j \geq 0} \) has an accumulation point, denoted by \(\bar{\matr{X}}\). Then \( \|\grad f(\bar{\matr{X}})\| \geq \epsilon \), which contradicts the fact that \( \bar{\matr{X}} \) is a stationary point we have proved. The proof is complete.
\end{proof}

It can be deduced from  \cref{lem:proper_c_k} (ii) that, if $ \matr{L}_k $ and $\matr{R}_k$ satisfy \cref{asp:H-tilde-grad-equiv}, then the inequality \eqref{eq:assumA-half} holds. Thus, we have the following result by \cref{lem:Armijo-stepsize-general-weak-convergence}.

\begin{corollary}\label{coro:weak_converg}
Let \( \mm \subseteq \RR^{n \times r} \) be a submanifold of class \(C^2\) and the cost function $f$ in \eqref{eq:objec_func_g} be continuously differentiable over $\RR^{n \times r}$.
In TGP-A algorithm, if $\matr{L}_k$ and $\matr{R}_k$ satisfy \cref{asp:H-tilde-grad-equiv} and $\matr{H}_{k}$ satisfies \cref{asp:H-boundedness}
, then every accumulation point of \( \{ \matr{X}_k \}_{k \geq 0} \) is a stationary point of $f$. Moreover, we have that $ \lim_{k \to \infty} \| \grad f(\matr{X}_k) \| = 0 $.
\end{corollary}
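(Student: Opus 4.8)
The plan is to derive this corollary directly from Lemma~\ref{lem:Armijo-stepsize-general-weak-convergence}, for which it suffices to check that the two hypotheses of that lemma hold under \cref{asp:H-tilde-grad-equiv} and \cref{asp:H-boundedness}. The boundedness requirement on $\matr{H}_k$ is literally \cref{asp:H-boundedness}, so the only substantive thing to verify is the descent-type inequality \eqref{eq:assumA-half}, namely $\langle \grad f(\matr{X}_k), \matr{H}_k \rangle \geq \upsilon \|\grad f(\matr{X}_k)\|^2$ for all $k \in \NN$. But this is exactly the lower bound in \eqref{eq:H-grad-product} of Lemma~\ref{lem:proper_c_k}(ii), which is available here because $\matr{L}_k$ and $\matr{R}_k$ satisfy \cref{asp:H-tilde-grad-equiv}: assumption ({\bf A1}) gives, via Lemma~\ref{lem:proper_c_k}(i), that $\tilde{\matr{H}}_k = \matr{L}_k \grad f(\matr{X}_k)\matr{R}_k$, so the normal component of $\matr{H}_k$ drops out of the inner product, and then assumption ({\bf A2}) supplies the constant $\upsilon$. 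Once \eqref{eq:assumA-half} is in place with this $\upsilon$, Lemma~\ref{lem:Armijo-stepsize-general-weak-convergence} immediately yields that every accumulation point of $\{\matr{X}_k\}_{k\geq0}$ is a stationary point of $f$ and that $\|\grad f(\matr{X}_k)\| \to 0$.

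Concretely, the steps I would carry out are: (1) recall from Lemma~\ref{lem:proper_c_k}(i) that ({\bf A1}) forces $\tilde{\matr{H}}_k = \matr{L}_k\grad f(\matr{X}_k)\matr{R}_k$ and hence $\langle \grad f(\matr{X}_k), \matr{H}_k\rangle = \langle \grad f(\matr{X}_k), \tilde{\matr{H}}_k\rangle$, since $\grad f(\matr{X}_k)$ is tangent; (2) apply the second display of \cref{asp:H-tilde-grad-equiv} (assumption ({\bf A2})) to obtain $\langle \grad f(\matr{X}_k), \matr{H}_k\rangle = \langle \grad f(\matr{X}_k), \matr{L}_k\grad f(\matr{X}_k)\matr{R}_k\rangle \geq \upsilon\|\grad f(\matr{X}_k)\|^2$, which is precisely \eqref{eq:assumA-half}; (3) note that \cref{asp:H-boundedness} provides the remaining hypothesis of Lemma~\ref{lem:Armijo-stepsize-general-weak-convergence}; (4) invoke that lemma to conclude. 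Alternatively, one may simply cite Lemma~\ref{lem:proper_c_k}(ii) for \eqref{eq:assumA-half} directly, as the remark preceding the corollary already suggests.

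At the level of the corollary there is essentially no further obstacle: all of the analytic machinery — the existence of a valid Armijo stepsize along the projection curve $\matr{Z}_k(\tau) = \mathcal{P}_{\mathcal{M}}(\matr{Y}_k(\tau))$, the contradiction argument via the mean value theorem applied to $\matr{Z}_k$, and the use of the uniform projection radius $\varrho_{*}$ from \cref{cor:general-manifold-uniform-projection-radius} to ensure $\matr{Z}_k$ is $C^1$ on a sufficiently small interval $[0,\varrho_{*}/\|\matr{H}_k\|)$ — has already been carried out once and for all in the proof of Lemma~\ref{lem:Armijo-stepsize-general-weak-convergence}. Were one to prove the corollary from scratch without that lemma, the genuinely delicate point would be exactly those facts, in particular controlling $\langle \nabla f(\matr{X}_k), \matr{Z}_k'(\tau)\rangle$ for small $\tau$ when $\matr{H}_k$ need not be tangent to $\mm$ at $\matr{X}_k$; this non-tangency is precisely what distinguishes the projection-based analysis here from the classical retraction-based argument of \cite[Thm.~4.3.1]{absil2009optimization}.
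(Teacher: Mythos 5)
Your proposal is correct and matches the paper's own proof: the paper likewise deduces the descent inequality \eqref{eq:assumA-half} from \cref{lem:proper_c_k}(ii) (which rests on \cref{asp:H-tilde-grad-equiv}) and then invokes \cref{lem:Armijo-stepsize-general-weak-convergence} together with \cref{asp:H-boundedness}. Your additional unpacking of step (i)–(ii) via ({\bf A1}) and ({\bf A2}) is just the content of \cref{lem:proper_c_k} spelled out, so nothing is missing or different in substance.
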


\begin{remark}\label{rem:literature_algorithms_weak_convergence}
(i) It is well-known that the weak convergence of general RetrLS algorithms with the Armijo stepsize has already been established in \cite[Thm. 4.3.1]{absil2009optimization}, covering a special case of \cref{coro:weak_converg} where \( \matr{H}_k \in \TangM{\matr{X}_k} \). In comparison, our result \cref{coro:weak_converg} extends beyond this by allowing for the presence of a normal component in $\matr{H}_k$, enabling applications to more general ProjLS algorithms \eqref{eq:iter_prjec-h} such as the classical EGP algorithm \eqref{eq:iter_prjec} (\( \matr{H}_k = \nabla f(\matr{X}_k) \)). \\
(ii) When $\mm=\St(r, n)$, the weak convergence of some special cases of TGP-A algorithm has been established with $\matr{H}_k =(\alpha+\beta)\matr{D}_{\alpha/2(\alpha+\beta)}(\matr{X}_{k})$ in \cite{oviedo2019non} and  $\matr{H}_k = \matr{D}_{1/2}(\matr{X}_{k})$ in \cite{oviedo2019scaled,oviedo2021two}.
In \cref{coro:weak_converg}, we have proved a more general weak convergence result.
Furthermore, we will establish their iteration complexity in \cref{subsec:conver_rate_tgp_a} and global convergence in \cref{subsec:glo-tgp-a}.
\end{remark}

\subsection{Iteration complexity}\label{subsec:conver_rate_tgp_a}

In this subsection, we mainly prove the following result about the iteration complexity of the TGP-A algorithm.

\begin{theorem}\label{thm:Armijo-stepsize-lower-bound-convergence-rate}
  Let \( \mm \subseteq \RR^{n \times r} \) be a compact submanifold of class \(C^3\) and the cost function $f$ in \eqref{eq:objec_func_g}
have Lipschitz continuous gradient in the convex hull of $\mm$. 
In TGP-A algorithm, if $\matr{L}_k$ and $\matr{R}_k$ satisfy \cref{asp:H-tilde-grad-equiv}, $\matr{H}_{k}$ satisfies \cref{asp:H-boundedness}, then \\
(i) there exists \(\tilde{\tau} > 0\) such that \(\tau_k \geq  \tilde{\tau}\) for all $ k \in \NN$;\\
(ii) we have that \( f(\matr{X}_k) - f(\matr{X}_{k+1}) \geq \gamma\tilde{\tau}\upsilon \| \grad f(\matr{X}_k) \|^2 \) for all \( k \in \NN \); in particular, it holds that \( \lim_{k \to \infty} \| \grad f(\matr{X}_k) \| = 0\);
\\
(iii) for any $ K \in \NN $, we have that
\[ \min_{0 \leq k \leq K}\| \grad f(\matr{X}_k) \| \leq \sqrt{\frac{f(\matr{X}_{0}) - f^{*}}{\gamma\tilde{\tau} \upsilon(K+1)}}. \]
\end{theorem}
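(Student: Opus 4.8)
The plan is to establish the three parts in the stated order, since each relies on the previous one. For part (i), the goal is a uniform lower bound $\tilde\tau>0$ on the Armijo stepsizes. First I would observe that, by the backtracking rule \eqref{eq:Armijo-stepsize}, either $\tau_k = \hat\tau_k \ge \hat\tau^{(l)}$ (the trial stepsize is already accepted), or $\tau_k/\beta$ fails the Armijo condition \eqref{eq:Armijo-condition}. In the latter case I want to show $\tau_k$ cannot be too small. The key tool is \cref{thm:Riemannian-descent-lemma-proj}: writing $\matr{Y}_k(\tau)=\matr{X}_k-\tau\matr{H}_k = \matr{X}_k - \tau\tilde{\matr{H}}_k - \tau\hat{\matr{H}}_k$ with $\tilde{\matr{H}}_k\in\TangM{\matr{X}_k}$ and $\hat{\matr{H}}_k\in\NormalM{\matr{X}_k}$, as long as $\tau\|\hat{\matr{H}}_k\|\le \varrho_*-\delta$ (which holds for a suitable fixed $\delta$ once $\tau$ is below $(\varrho_*-\delta)/\Delta_{\hat{\matr{H}}}$, using \cref{asp:H-boundedness}) we get
\[
f(\matr{Z}_k(\tau)) - f(\matr{X}_k) \le -\tau\langle\grad f(\matr{X}_k),\tilde{\matr{H}}_k\rangle + \Gamma_1^{(\delta)}\tau^2\|\tilde{\matr{H}}_k\|^2 + \Gamma_2^{(\delta)}\|\grad f(\matr{X}_k)\|\,\tau\|\tilde{\matr{H}}_k\|\,\tau\|\hat{\matr{H}}_k\|.
\]
Bounding $\|\tilde{\matr{H}}_k\|\le\varpi\|\grad f(\matr{X}_k)\|$, $\|\hat{\matr{H}}_k\|\le\Delta_{\hat{\matr{H}}}$ (both from \cref{asp:H-tilde-grad-equiv} via \cref{lem:proper_c_k} and \cref{asp:H-boundedness}), and $\langle\grad f(\matr{X}_k),\tilde{\matr{H}}_k\rangle\ge\upsilon\|\grad f(\matr{X}_k)\|^2$, the right-hand side is at most $-\tau\upsilon\|\grad f(\matr{X}_k)\|^2 + C\tau^2\|\grad f(\matr{X}_k)\|^2$ for an explicit constant $C$ depending on $\varpi,\Gamma_1^{(\delta)},\Gamma_2^{(\delta)},\Delta_{\hat{\matr H}}$. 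Hence the Armijo condition with parameter $\gamma$ is satisfied as soon as $-\upsilon + C\tau \le -\gamma\upsilon$, i.e. $\tau\le(1-\gamma)\upsilon/C$. Therefore, if $\tau_k/\beta$ violates Armijo, it must be that $\tau_k/\beta > \min\{(1-\gamma)\upsilon/C,\ (\varrho_*-\delta)/\Delta_{\hat{\matr H}}\}$, giving $\tau_k \ge \beta\min\{(1-\gamma)\upsilon/C,\ (\varrho_*-\delta)/\Delta_{\hat{\matr H}}\}$. Combining both cases yields $\tilde\tau \eqdef \min\{\hat\tau^{(l)},\ \beta(1-\gamma)\upsilon/C,\ \beta(\varrho_*-\delta)/\Delta_{\hat{\matr H}}\}>0$.

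Part (ii) is then immediate: the Armijo condition \eqref{eq:Armijo-condition} together with \eqref{eq:differential-Z} and $\langle\grad f(\matr{X}_k),\matr{H}_k\rangle = \langle\grad f(\matr{X}_k),\tilde{\matr{H}}_k\rangle\ge\upsilon\|\grad f(\matr{X}_k)\|^2$ (from \eqref{eq:H-grad-product} in \cref{lem:proper_c_k}) gives $f(\matr{X}_{k+1})-f(\matr{X}_k)\le -\gamma\tau_k\langle\grad f(\matr{X}_k),\matr{H}_k\rangle \le -\gamma\tilde\tau\upsilon\|\grad f(\matr{X}_k)\|^2$, i.e. $f(\matr{X}_k)-f(\matr{X}_{k+1})\ge\gamma\tilde\tau\upsilon\|\grad f(\matr{X}_k)\|^2$. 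Summing over $k\ge0$ and using $f(\matr{X}_k)\downarrow$ bounded below by $f^*$ gives $\sum_k\|\grad f(\matr{X}_k)\|^2<\infty$, hence $\|\grad f(\matr{X}_k)\|\to0$. (Alternatively this also follows from \cref{coro:weak_converg}.)

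Part (iii) is a standard telescoping argument: for any $K\in\NN$, summing the inequality from part (ii) over $k=0,\dots,K$ gives
\[
(K+1)\min_{0\le k\le K}\|\grad f(\matr{X}_k)\|^2 \le \sum_{k=0}^{K}\|\grad f(\matr{X}_k)\|^2 \le \frac{1}{\gamma\tilde\tau\upsilon}\sum_{k=0}^{K}\bigl(f(\matr{X}_k)-f(\matr{X}_{k+1})\bigr) = \frac{f(\matr{X}_0)-f(\matr{X}_{K+1})}{\gamma\tilde\tau\upsilon} \le \frac{f(\matr{X}_0)-f^*}{\gamma\tilde\tau\upsilon},
\]
and dividing by $K+1$ and taking square roots yields the claim. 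The main obstacle is part (i): one must be careful that the descent estimate \cref{thm:Riemannian-descent-lemma-proj} requires the normal component to be controlled ($\tau\|\hat{\matr H}_k\|\le\varrho_*-\delta$), so the lower bound argument has to be run on the region of small $\tau$ where this holds, and then patched with the trivial case where the trial stepsize is accepted outright; keeping track of which constants ($\upsilon$ vs.\ $\varpi$, $\Gamma_1^{(\delta)}$, $\Gamma_2^{(\delta)}$, $\Delta_{\hat{\matr H}}$) enter where is the only delicate bookkeeping. Everything else is routine once \cref{thm:Riemannian-descent-lemma-proj} and \cref{lem:proper_c_k} are invoked.
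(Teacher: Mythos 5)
Your proposal is correct and follows essentially the same route as the paper: the same split between the case where the trial stepsize is accepted (or $\tau$ is already above a threshold tied to $\varrho_*/\Delta_{\hat{\matr H}}$) and the case of small $\tau$ where $\tau\|\hat{\matr H}_k\|\le\varrho_*-\delta$ lets you invoke \cref{thm:Riemannian-descent-lemma-proj}, the same use of \cref{lem:proper_c_k} to convert everything into $\|\grad f(\matr{X}_k)\|$, the same backtracking contrapositive giving $\tau_k\ge\beta\cdot(\text{threshold})$, and the same telescoping for (ii)--(iii); your final threshold $\beta(1-\gamma)\upsilon/C$ matches the paper's $\beta(1-\gamma)\upsilon/(\Gamma_1^{(\delta_*)}\varpi^2+\Gamma_2^{(\delta_*)}\Delta_{\hat{\matr H}}\varpi)$. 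One small slip in the write-up of (i): the deduction ``Armijo holds as soon as $-\upsilon+C\tau\le-\gamma\upsilon$'' compares your descent bound with $-\gamma\tau\upsilon\|\grad f(\matr{X}_k)\|^2$, whereas the Armijo condition \eqref{eq:Armijo-condition} requires $\le-\gamma\tau\langle\grad f(\matr{X}_k),\matr{H}_k\rangle$, which can be strictly more negative; the correct closing step keeps $\langle\grad f(\matr{X}_k),\tilde{\matr{H}}_k\rangle$ on both sides, reduces to $C\tau\|\grad f(\matr{X}_k)\|^2\le(1-\gamma)\langle\grad f(\matr{X}_k),\tilde{\matr{H}}_k\rangle$, and only then applies the lower bound $\langle\grad f(\matr{X}_k),\tilde{\matr{H}}_k\rangle\ge\upsilon\|\grad f(\matr{X}_k)\|^2$ (exactly as the paper does), which yields the same threshold $\tau\le(1-\gamma)\upsilon/C$, so the argument goes through unchanged after this one-line repair.
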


\begin{proof}
  (i) Let \(\delta_{*} = \rho_{*} / 2\). 
For any \( k \in \NN \),
we consider two cases: (a) \(\tau_k > \rho_{*} / (2 \Delta_{\hat{\matr{H}}})\), and (b) \( \tau_k \leq \rho_{*} / (2 \Delta_{\hat{\matr{H}}}) \). In the first case, the stepsize already admits a uniform lower bound independent of \( k \), and thus we only need to consider the second case. 
Let \(\tau\) be any constant satisfying \( \tau \leq \rho_{*} / (2 \Delta_{\hat{\matr{H}}}) \).
Then
\(\tau \| \hat{\matr{H}}_k \| \leq \rho_{*} - \delta_{*}\). 
It follows from \cref{thm:Riemannian-descent-lemma-proj} that
% for \(\tau \leq \rho_{*} / (2 \Delta_{\hat{\matr{H}}})\), we have
\begin{align*}
f(\matr{Z}_k(\tau)) - f(\matr{X}_k) &\leq -\tau\langle \grad f(\matr{X}_k), \tilde{\matr{H}}_k \rangle + \tau^2\left(\Gamma_1^{(\delta_{*})} \| \tilde{\matr{H}}_k \|^2 + \Gamma_2^{(\delta_{*})} \| \grad f(\matr{X}_k) \| \| \tilde{\matr{H}}_k \| \| \hat{\matr{H}}_k \|\right)\\
&\leq -\tau\langle \grad f(\matr{X}_k), \tilde{\matr{H}}_k \rangle + \tau^2\left(\Gamma_1^{(\delta_{*})} \| \tilde{\matr{H}}_k \|^2 + \Gamma_2^{(\delta_{*})} \Delta_{\hat{\matr{H}}} \| \grad f(\matr{X}_k) \| \| \tilde{\matr{H}}_k \|\right).
\end{align*}
Thus, the Armijo condition \eqref{eq:Armijo-condition} holds for \(\tau\) satisfying
\[ \tau \langle \grad f(\matr{X}_k), \matr{H}_k \rangle - \tau^2 (\Gamma_1^{(\delta_{*})}  \| \tilde{\matr{H}}_k \| ^2 - \Gamma_2^{(\delta_{*})} \Delta_{\hat{\matr{H}}}\| \grad f(\matr{X}_k) \| \| \tilde{\matr{H}}_k \|) \geq \gamma \tau \langle \grad f(\matr{X}_k), \matr{H}_k \rangle,  \]
which is equivalent to that
\[\tau \leq \frac{(1 - \gamma)\langle \grad f(\matr{X}_{k}), \matr{H}_k \rangle}{\Gamma_1^{(\delta_{*})} \| \tilde{\matr{H}}_k \|^2 + \Gamma_2^{(\delta_{*})} \Delta_{\hat{\matr{H}}} \| \grad f(\matr{X}_k) \| \| \tilde{\matr{H}}_k \|}. \]
By \eqref{eq:H-grad-norm-equiv} and \eqref{eq:H-grad-product}, we see that the right-hand side of the above inequality has a uniform lower bound as follows:
\begin{align}
\frac{(1-\gamma) \langle \grad f(\matr{X}_k), \matr{H}_k \rangle }{\Gamma_1^{(\delta_{*})} \| \tilde{\matr{H}}_k \|^2 + \Gamma_2^{(\delta_{*})} \Delta_{\hat{\matr{H}}} \| \grad f(\matr{X}_k) \| \| \tilde{\matr{H}}_k \|} & \geq \frac{(1-\gamma)\upsilon \| \grad f(\matr{X}_k) \|^2}{\Gamma_1^{(\delta_{*})} \| \tilde{\matr{H}}_k \|^2 + \Gamma_2^{(\delta_{*})} \Delta_{\hat{\matr{H}}} \| \grad f(\matr{X}_k) \| \| \tilde{\matr{H}}_k \|}\notag\\
& \geq \frac{(1-\gamma)\upsilon}{\Gamma_1^{(\delta_{*})} \varpi^2 + \Gamma_2^{(\delta_{*})} \Delta_{\hat{\matr{H}}} \varpi}.\label{eq:proof-convergence-rate-2}
\end{align}
It follows that the Armijo condition holds for all \(\tau \leq \min \left( \frac{\rho_{*}}{2 \Delta_{\hat{\matr{H}}}}, \frac{(1-\gamma)\upsilon}{\Gamma_1^{(\delta_{*})} \varpi^2 + \Gamma_2^{(\delta_{*})} \Delta_{\hat{\matr{H}}} \varpi}\right)\). By the rule of backtracking \eqref{eq:Armijo-stepsize}, we have $ \tau_k \geq \min \left(\hat{\tau}_k,\beta \min\left(\frac{\rho_{*}}{2 \Delta_{\hat{\matr{H}}}}, \frac{(1-\gamma)\upsilon}{\Gamma_1^{(\delta_{*})} \varpi^2 + \Gamma_2^{(\delta_{*})} \Delta_{\hat{\matr{H}}} \varpi}\right)\right)$. Noting that \(\hat{\tau}_k\) has a lower bound \(\hat{\tau}^{(l)}\), we have that $\tau_{k}\geq \tilde{\tau} \eqdef \min \left(\hat{\tau}^{(l)}, \frac{\beta\varrho_{*}}{2 \Delta_{\hat{\matr{H}}}}, \frac{\beta(1-\gamma) \upsilon }{\Gamma_1^{(\delta_{*})} \varpi^2 + \Gamma_2^{(\delta_{*})}\Delta_{\hat{\matr{H}}} \varpi}\right)$ for all $k\in\NN$. \\
(ii)\&(iii) Combining \eqref{eq:Armijo-condition}, \eqref{eq:H-grad-product} and the result of (i), we obtain that for all $ k \in \NN $,
\[ f(\matr{X}_k)-f(\matr{X}_{k+1}) \geq \gamma \tau_k \langle \grad f(\matr{X}_k), \matr{H}_k \rangle \geq \gamma\tau_k \upsilon  \| \grad f(\matr{X}_k) \|^2 \geq \gamma\tilde{\tau} \upsilon  \| \grad f(\matr{X}_k) \|^2.  \]
For any $ K \in \NN $, summing the above inequality for $ k $ from 0 to $K$, we have
\begin{equation}\label{eq:proof-convergence-rate-1}
f(\matr{X}_0) - f^{*} \geq f(\matr{X}_0) - f(\matr{X}_{K+1}) \geq \gamma\tilde{\tau} \upsilon \sum_{k=0}^K  \| \grad f(\matr{X}_k) \|^2.
\end{equation}
It follows that \( \sum_{k=0}^{\infty} \| \grad f(\matr{X}_k) \|^2 \leq f(\matr{X}_0) - f^{*} \), implying that \( \lim_{k \to \infty}\| \grad f(\matr{X}_k) \| = 0\).
Moreover, it also follows from \eqref{eq:proof-convergence-rate-1}
\[ \min_{0 \leq k \leq K}\| \grad f(\matr{X}_k) \| \leq \sqrt{\frac{f(\matr{X}_{0}) - f^{*}}{\tilde{\tau} \upsilon \gamma (K+1)}}. \]
The proof is complete.
\end{proof}

\begin{remark}
In this paper, the weak convergence of the TGP-A algorithm is established both in \cref{lem:Armijo-stepsize-general-weak-convergence} and \cref{thm:Armijo-stepsize-lower-bound-convergence-rate}(ii) under different conditions.
We would like to remark that, although  \cref{thm:Armijo-stepsize-lower-bound-convergence-rate} assumes $f$ has Lipschitz continuous gradient in the convex hull of $\mm$, which is a stronger condition than that in \cref{lem:Armijo-stepsize-general-weak-convergence}, we establish an important inequality in \cref{thm:Armijo-stepsize-lower-bound-convergence-rate}(ii), which is crucial for the iteration complexity analysis in \cref{thm:Armijo-stepsize-lower-bound-convergence-rate}(iii).
In contrast, we don't obtain this inequality in the proof of \cref{lem:Armijo-stepsize-general-weak-convergence}.
\end{remark}

\begin{remark}\label{rem:literature_algorithms_convergence_rate}
For a general compact smooth Riemannian manifold $\mm$ and a sufficiently smooth cost function $f:\mm\rightarrow\RR$, if $f$ has Lipschitz continuous gradient in a convex compact set containing $ \mm $, the iteration complexity of the RetrLS algorithm with $\matr{H}_k = \grad f(\matr{X}_k)$ was proved in \cite[Thm. 2.11]{boumal2019GlobalRatesConvergence}.
In this paper, the iteration complexity result for the TGP-A algorithm we obtain in \cref{thm:Armijo-stepsize-lower-bound-convergence-rate}(iii) arrives at the same level as that in \cite[Thm. 2.11]{boumal2019GlobalRatesConvergence}, and our result allows for more choices of the tangent vectors (see \cref{exa:LR_St}), as well as the presence of a normal component in \( \matr{H}_k\),  which makes \( \matr{H}_k \) not necessarily tangent to $\mm$ at \( \matr{X}_k \).
\end{remark}

\subsection{Global convergence}\label{subsec:glo-tgp-a}
In this subsection, we mainly prove the following result about the global convergence of the TGP-A algorithm.

\begin{theorem}\label{thm:Armijo-global-convergence}
Let $ \mm \subseteq \RR^{n\times r}$ be an analytic compact matrix submanifold and $f$ in \eqref{eq:objec_func_g} be an analytic function. In TGP-A algorithm, if for sufficiently large \( k \),  $\matr{L}_k$ and \( \matr{R}_k \) satisfy \cref{asp:H-tilde-grad-equiv}, $\matr{H}_{k}$ satisfies \cref{asp:H-boundedness}, and there exists \(\delta \in (0, \varrho_{*}]\) such that \( \tau_k \| \hat{\matr{H}}_{k}\| \leq \varrho_{*} - \delta \), then the sequence \( \{ \matr{X}_k \}_{k \geq 0} \) converges to a stationary point \( \matr{X}^{*} \) and the estimation of the convergence speed \eqref{eq:KL_convergence_rate} holds.
\end{theorem}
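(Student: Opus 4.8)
The plan is to deduce the conclusion from the {\L}ojasiewicz-type convergence result \cref{theorem-SU15}, applied with $g = f$ and $\vect{x}_k = \matr{X}_k$. Since $\mm$ is an analytic compact submanifold and $f$ is analytic (hence in particular $f$ has Lipschitz continuous gradient on the compact convex hull of $\mm$, so \cref{thm:Armijo-stepsize-lower-bound-convergence-rate} and the geometric lemmas of \cref{sec:Prop-proj} all apply), it suffices to verify conditions (i)--(iii) of \cref{theorem-SU15} for all sufficiently large $k$ --- which is exactly the range on which the hypotheses \cref{asp:H-tilde-grad-equiv}, \cref{asp:H-boundedness} and $\tau_k\|\hat{\matr{H}}_k\| \le \varrho_{*} - \delta$ are assumed. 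Throughout I would write $\matr{X}_{k+1} = \mathcal{P}_{\mm}(\matr{X}_k - \tau_k\tilde{\matr{H}}_k - \tau_k\hat{\matr{H}}_k)$ with $-\tau_k\tilde{\matr{H}}_k \in \TangM{\matr{X}_k}$, $-\tau_k\hat{\matr{H}}_k \in \NormalM{\matr{X}_k}$, and use $\tau_k \ge \tilde{\tau}$ (the uniform lower bound from \cref{thm:Armijo-stepsize-lower-bound-convergence-rate}(i), whose proof goes through verbatim for large $k$) together with $\tau_k \le \hat{\tau}_k \le \hat{\tau}^{(u)}$ from the backtracking rule \eqref{eq:Armijo-stepsize}.

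For the upper bound on the step I would invoke \eqref{eq:first-order-boundedness} of \cref{lem:first-second-order-boundedness} with $\vect{v} = -\tau_k\tilde{\matr{H}}_k$ and $\vect{w} = -\tau_k\hat{\matr{H}}_k$, getting $\|\matr{X}_{k+1}-\matr{X}_k\| \le L_0^{(\delta)}\tau_k\|\tilde{\matr{H}}_k\| \le L_0^{(\delta)}\hat{\tau}^{(u)}\varpi\|\grad f(\matr{X}_k)\|$ via \eqref{eq:H-grad-norm-equiv}; combining with the descent inequality $f(\matr{X}_k)-f(\matr{X}_{k+1}) \ge \gamma\tilde{\tau}\upsilon\|\grad f(\matr{X}_k)\|^2$ from \cref{thm:Armijo-stepsize-lower-bound-convergence-rate}(ii) gives condition (i) with $\phi = \gamma\tilde{\tau}\upsilon/(L_0^{(\delta)}\hat{\tau}^{(u)}\varpi)$. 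For condition (iii) I would apply \cref{thm:anti-first-order-boudnedness} with the same $\vect{v},\vect{w}$, noting $\vect{v}+\vect{w} = -\tau_k\matr{H}_k$: this yields $\|\matr{X}_{k+1}-\matr{X}_k\| \ge \tau_k\|\tilde{\matr{H}}_k\|/(1+L_3\tau_k\|\matr{H}_k\|) \ge \tilde{\tau}\upsilon\|\grad f(\matr{X}_k)\|/(1+L_3\hat{\tau}^{(u)}\Delta_{\matr{H}})$, i.e.\ condition (iii) with $\zeta = \tilde{\tau}\upsilon/(1+L_3\hat{\tau}^{(u)}\Delta_{\matr{H}})$. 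Condition (ii) follows because $\grad f(\matr{X}_k)=\vect{0}$ forces $\tilde{\matr{H}}_k = \vect{0}$ by \eqref{eq:H-grad-norm-equiv}, whence $\matr{H}_k = \hat{\matr{H}}_k \in \NormalM{\matr{X}_k}$ with $\|\tau_k\hat{\matr{H}}_k\| \le \varrho_{*}-\delta < \varrho_{*}$, so \eqref{eq:first-order-boundedness-normal-general} of \cref{cor:general-manifold-uniform-projection-radius} gives $\matr{X}_{k+1} = \mathcal{P}_{\mm}(\matr{X}_k - \tau_k\hat{\matr{H}}_k) = \matr{X}_k$.

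With (i)--(iii) in hand, \cref{theorem-SU15} yields that any accumulation point $\matr{X}^{*}$ of $\{\matr{X}_k\}$ is its unique limit point; compactness of $\mm$ guarantees at least one accumulation point, so $\matr{X}_k \to \matr{X}^{*}$. Since $\lim_{k\to\infty}\|\grad f(\matr{X}_k)\| = 0$ (the summable-decrease argument of \cref{thm:Armijo-stepsize-lower-bound-convergence-rate}(ii) applied from a large index onward) and $\grad f$ is continuous, $\grad f(\matr{X}^{*}) = \vect{0}$, so $\matr{X}^{*}$ is stationary; and the rate estimate \eqref{eq:KL_convergence_rate} is the concluding statement of \cref{theorem-SU15}, available because (iii) holds. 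The only genuinely delicate step is the lower bound in (iii): it is precisely where the new geometric inequality \cref{thm:anti-first-order-boudnedness} is indispensable, and it --- along with the collapse $\matr{X}_{k+1}=\matr{X}_k$ in (ii) --- crucially exploits the normal-size restriction $\tau_k\|\hat{\matr{H}}_k\| \le \varrho_{*}-\delta$; without controlling the normal component neither estimate would be available. The remaining steps are bookkeeping built from results already proven.
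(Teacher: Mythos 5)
Your proposal is correct and follows essentially the same route as the paper: it verifies conditions (i)--(iii) of \cref{theorem-SU15} using the Armijo descent, \eqref{eq:first-order-boundedness}, \eqref{eq:first-order-boundedness-normal-general} and \cref{thm:anti-first-order-boudnedness}, with the restriction $\tau_k\|\hat{\matr{H}}_k\|\le\varrho_{*}-\delta$ playing exactly the role you identify, and then concludes convergence, stationarity and the rate \eqref{eq:KL_convergence_rate} as you do. The only cosmetic difference is in condition (i), where the paper cancels $\tau_k$ directly in the chain $f(\matr{X}_k)-f(\matr{X}_{k+1})\ge\gamma\tau_k\upsilon\|\grad f(\matr{X}_k)\|^2\ge\tfrac{\gamma\upsilon}{\varpi L_0^{(\delta)}}\|\grad f(\matr{X}_k)\|\,\|\matr{X}_{k+1}-\matr{X}_k\|$ rather than invoking the bounds $\tilde{\tau},\hat{\tau}^{(u)}$, and in condition (iii) your constant correctly retains the factor $L_3$ from \cref{thm:anti-first-order-boudnedness} that the paper's display omits.
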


\begin{proof}
For all \( k \in \NN \), we have that
\begin{equation}\label{eq:prf-global-convergence}
\begin{aligned}
f(\matr{X}_k)-f(\matr{X}_{k+1})  & \stackrel{(a)}{\geq} \gamma \tau_k \langle \grad f(\matr{X}_k), \matr{H}_k \rangle
= \gamma \tau_k \langle \grad f(\matr{X}_{k}), \tilde{\matr{H}}_k \rangle \stackrel{(b)}{\geq} \gamma \tau_k\upsilon \| \grad f(\matr{X}_k) \|^2\\
& \stackrel{(c)}{\geq} \frac{\gamma\upsilon }{\varpi} \| \grad f(\matr{X}_k) \| \| \tau_k\tilde{\matr{H}}_{k} \|
\stackrel{(d)}{\geq} \frac{\gamma\upsilon }{\varpi L_0^{(\delta)}}\| \grad f(\matr{X}_k) \| \| \matr{X}_{k+1} - \matr{X}_k \|,
\end{aligned}
\end{equation}
where \( (a) \) follows from the Armijo condition \eqref{eq:Armijo-condition}, \( (b) \) is by \eqref{eq:H-grad-product}, \( (c) \) is by \eqref{eq:H-grad-norm-equiv} and \( (d) \) follows from \( \| \mathcal{P}_{\mm}(\matr{X}_k - \tau_k \tilde{\matr{H}}_k - \tau_k \hat{\matr{H}}_k) - \matr{X}_k \| \leq L_0^{(\delta)} \tau_k \| \tilde{\matr{H}}_k \| \) by \eqref{eq:first-order-boundedness}. Therefore, the condition (i) of \cref{theorem-SU15} is satisfied.
When \( \grad f(\matr{X}_k) = \matr{0} \), we have \( \tilde{\matr{H}}_k = \matr{0} \) by \eqref{eq:H-grad-norm-equiv}, implying that \( \matr{H}_k = \hat{\matr{H}}_k \). Then it follows from \eqref{eq:first-order-boundedness-normal-general} and \( \tau_k \| \hat{\matr{H}}_{k} \| \leq \varrho_{*} - \delta \) that the condition (ii) of \cref{theorem-SU15} is also satisfied.
Since $\mm$ is compact, the sequence \( \{ \matr{X}_k \}_{k \geq 0} \) has an accumulation point  \( \matr{X}^{*} \). By \cref{thm:Armijo-stepsize-lower-bound-convergence-rate}(ii), \( \matr{X}^{*} \) is a stationary point of \( f \). Then, by \cref{theorem-SU15}, the sequence $ \{ \matr{X}_k \}_{k \geq 0} $ converges to this stationary point \( \matr{X}^{*} \).

For sufficiently large \( k \), since the assumptions of \cref{thm:Armijo-stepsize-lower-bound-convergence-rate} hold, we know there exists \(\tilde{\tau} > 0\) such that \(\tau_k \geq \tilde{\tau}\). It follows from \(\tau_k \| \matr{H}_k \| \leq \hat{\tau}^{(u)} \Delta_{\matr{H}}\), \cref{thm:anti-first-order-boudnedness} and \eqref{eq:H-grad-norm-equiv} that
\[\| \matr{X}_{k+1} - \matr{X}_k \| \geq \frac{\tau_{k} \|  \tilde{\matr{H}}_k \|}{1+\tau_k \| \matr{H}_k \|} \geq \frac{\tilde{\tau}\upsilon \| \grad f(\matr{X}_k) \|}{ 1+ \hat{\tau}^{(u)} \Delta_{\matr{H}}}. \]
Therefore, the condition (iii) of \cref{theorem-SU15} is satisfied, implying that \eqref{eq:KL_convergence_rate} holds.
The proof is complete.
\end{proof}

\begin{remark}\label{rem:literature_algorithms_global_convergence}
For an analytic function $f:\St(r, n)\rightarrow\RR$, the global convergence of RetrLS algorithm with \( \matr{H}_k = \matr{D}_{\rho}(\matr{X}_k) \) was established in \cite[Thm. 3]{liu2019QuadraticOptimizationOrthogonality}. When  \( \matr{H}_k = \grad f(\matr{X}_k) \) and the retraction is constructed using the projection, the global convergence was similarly established when it is applied to the orthogonal approximation problems of symmetric tensors \cite[Thm. 4.3]{sheng2022riemannian}.
In this paper, utilizing the geometric properties of the projection, we establish the global convergence of the TGP-A algorithm in \cref{thm:Armijo-global-convergence}, which is based on a general compact manifold $\mm$ and allows for more choices of the tangent vectors (see \cref{exa:LR_St}). Moreover, our result holds for a more general \(\matr{H}_k\), which is not necessarily a tangent vector at \( \matr{X}_k \).
\end{remark}

\begin{remark}
(i) In comparison to \cref{thm:Armijo-stepsize-lower-bound-convergence-rate}, \cref{thm:Armijo-global-convergence} introduces an additional assumption on \( \matr{H}_k \): \( \tau_k \| \hat{\matr{H}}_{k} \| \leq \varrho_{*} - \delta \) for sufficiently large \( k \). This condition is necessary to ensure that inequality (d) in \eqref{eq:prf-global-convergence} holds. Essentially, it guarantees that as the iterates approach a stationary point, \emph{i.e.}, \( \tilde{\matr{H}}_k \to \matr{0} \), the movement of the iterates also decreases. Therefore, this assumption cannot be removed. A similar requirement also appears in \cref{thm:nonmonotone-global-convergence}.\\ 
(ii)
In \cref{thm:Armijo-global-convergence}, we assume that there exists \(\delta \in (0, \varrho_{*}]\) such that \( \tau_k \| \hat{\matr{H}}_{k}\| \leq \varrho_{*} - \delta \). 
In contrast, \cref{thm:Armijo-stepsize-lower-bound-convergence-rate} does not rely on this assumption: even without it, our proof shows that the stepsize still admits a natural lower bound of \( \rho_{*} / (2 \Delta_{\hat{\matr{H}}}) \). 
This relaxation is crucial, as it allows for large movements in the normal direction in the TGP algorithms, thus generating iterates that can differ significantly from those produced by retraction-based algorithms; see, for instance, the illustration in \cref{fig:exa_QPInhomo}.
\end{remark}

\section{TGP algorithms using the Zhang-Hager type nonmonotone Armijo stepsize}\label{sec:nonmonotone_TGP}

\subsection{TGP-NA algorithm}
In addition to the Armijo stepsize which belongs to the monotone approach, the nonmonotone rules are also widely used, because, according to the authors of \cite{zhang2004nonmonotone,dai2002nonmonotone,toint1996assessment}, ``nonmonotone schemes can improve the likelihood of finding a global optimum; also, they can improve convergence speed in cases where a monotone scheme is forced to creep along the bottom of a narrow curved valley".
In this section, we study \cref{alg:TGP} utilizing the Zhang-Hager type nonmonotone Armijo stepsize \cite{zhang2004nonmonotone}, and call it the \emph{Transformed Gradient Projection with Zhang-Hager Type Nonmonotone Armijo stepsize} (TGP-NA) algorithm. In each iteration, the stepsize \(\tau_k\) is selected by backtracking with parameters $ \gamma, \beta \in (0, 1) $ and initial guess \(\hat{\tau}_k > 0\) as follows:
\begin{equation}\label{eq:nonmonotone-step-size}
\tau_k = \max\{ \hat{\tau}_k \beta^{i}: f(\matr{Z}_k(\hat{\tau}_k\beta^{i})) - c_k \leq \gamma \hat{\tau}_k \beta^{i} \langle \nabla f(\matr{X}_k), \matr{Z}'_k(0)\rangle, i \in \NN \}.
\end{equation}
Here \(c_{k+1} = ( \eta_k q_{k}c_{k} + f(\matr{X}_{k+1})) / q_{k+1}\) is the reference value of line-search, \(q_{k+1}= \eta_k q_{k} + 1\), \(\eta_k \in [0, 1)\), \(q_0 = 1\) and \(c_0 = f(\matr{X}_0)\).
In other words, \(\tau_k\) is the largest one among $ \{ \hat{\tau}_k, \hat{\tau}_k\beta^1, \hat{\tau}_k\beta^2 , \ldots , \hat{\tau}_k\beta^{i}, \ldots  \} $ satisfying the following nonmonotone Armijo-type condition:
\begin{equation}
\label{eq:nonmonotone-Armijo-condition}
f(\matr{Z}_k(\tau_k)) - c_k \leq \gamma \tau_k \langle \nabla f(\matr{X}_k), \matr{Z}_k'(0)  \rangle = - \gamma \tau_k \langle \grad f(\matr{X}_k), \matr{H}_k \rangle.
\end{equation}
It is easy to see that \(c_k\) is a convex combination of \( f(\matr{X}_0), f(\matr{X}_1) , \ldots , f(\matr{X}_k) \) and \( \{ f(\matr{X}_k) \}_{k \geq 0} \) is not necessarily decreasing. Moreover, if \( \eta_k = 0 \) for all \( k \in \NN \), then TGP-NA reduces to TGP-A in \cref{sec:TGP-A}. Similar to TGP-A, we assume that there exist \(\hat{\tau}^{(u)}, \hat{\tau}^{(l)} > 0\) such that \(\hat{\tau}^{(l)} \leq \hat{\tau}_k \leq \hat{\tau}^{(u)}\) for all \( k \in \NN \).
The following lemma can be obtained immediately by mimicking the proof of \cite[Lem. 1.1]{zhang2004nonmonotone}.
\begin{lemma}\label{lem:nonmonotone-g-bound-by-C}
In TGP-NA algorithm, if $ \matr{L}_k $ and $\matr{R}_k$ satisfy \cref{asp:H-tilde-grad-equiv}, then  $ f(\matr{X}_k) \leq c_k $ for all $ k \in \NN $ and the sequence $ \{ c_k \}_{k \geq 0} $ is non-increasing.
\end{lemma}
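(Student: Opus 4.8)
The plan is a straightforward induction on $k$ that mirrors the bookkeeping in the proof of \cite[Lem.~1.1]{zhang2004nonmonotone}, while carrying along the auxiliary invariant $q_k \geq 1$. The base case is immediate, since $c_0 = f(\matr{X}_0)$ and $q_0 = 1$ give both $f(\matr{X}_0) \leq c_0$ and $q_0 \geq 1$. For the inductive step I would assume $f(\matr{X}_k) \leq c_k$ and $q_k \geq 1$, and first establish the one-step bound $f(\matr{X}_{k+1}) \leq c_k$. By the nonmonotone Armijo condition \eqref{eq:nonmonotone-Armijo-condition},
\[
f(\matr{X}_{k+1}) = f(\matr{Z}_k(\tau_k)) \leq c_k - \gamma \tau_k \langle \grad f(\matr{X}_k), \matr{H}_k \rangle,
\]
and since $\matr{L}_k$ and $\matr{R}_k$ satisfy \cref{asp:H-tilde-grad-equiv}, \cref{lem:proper_c_k}(ii) yields $\langle \grad f(\matr{X}_k), \matr{H}_k \rangle \geq \upsilon \| \grad f(\matr{X}_k) \|^2 \geq 0$; combined with $\tau_k > 0$ and $\gamma \in (0,1)$, this gives $f(\matr{X}_{k+1}) \leq c_k$.

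The second ingredient is the convex-combination structure of the reference value. Writing $q_{k+1} = \eta_k q_k + 1$ and $\lambda_k \eqdef \eta_k q_k / q_{k+1}$, the update $c_{k+1} = (\eta_k q_k c_k + f(\matr{X}_{k+1}))/q_{k+1}$ becomes $c_{k+1} = \lambda_k c_k + (1-\lambda_k) f(\matr{X}_{k+1})$, where $\lambda_k \in [0,1)$ because $\eta_k \in [0,1)$ and $q_k \geq 1$; this last fact is exactly why the invariant $q_k \geq 1$ must be dragged through the induction, and it also reestablishes $q_{k+1} = \eta_k q_k + 1 \geq 1$. Substituting $f(\matr{X}_{k+1}) \leq c_k$ into this identity gives $c_{k+1} \leq c_k$, which is the asserted monotonicity of $\{c_k\}_{k\geq 0}$; substituting the same inequality in the form $c_k \geq f(\matr{X}_{k+1})$ gives $c_{k+1} \geq f(\matr{X}_{k+1})$, which closes the induction for $f(\matr{X}_{k+1}) \leq c_{k+1}$.

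I do not expect a genuine obstacle here; the lemma is essentially a transcription of the Zhang--Hager argument. The two points that do require attention are: (i) retaining $q_k \geq 1$ within the induction so that $c_{k+1}$ is literally a convex combination of $c_k$ and $f(\matr{X}_{k+1})$, and (ii) invoking \cref{asp:H-tilde-grad-equiv} through \cref{lem:proper_c_k}(ii) to secure the sign $\langle \grad f(\matr{X}_k), \matr{H}_k \rangle \geq 0$, without which the nonmonotone descent step would only yield a weaker bound rather than $f(\matr{X}_{k+1}) \leq c_k$. Well-definedness of the backtracking stepsize $\tau_k$ is not needed for the statement itself, but if one wishes to record it, it follows exactly as in the TGP-A case, using \cref{cor:general-manifold-uniform-projection-radius} to handle the degenerate situation $\grad f(\matr{X}_k) = \matr{0}$, in which $\matr{H}_k$ is purely normal and $\matr{Z}_k(\tau) = \matr{X}_k$ for all sufficiently small $\tau$.
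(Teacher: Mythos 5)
Your proposal is correct and follows essentially the same route as the paper's proof: both deduce $f(\matr{X}_{k+1})\leq c_k$ from the nonmonotone Armijo condition together with $\langle \grad f(\matr{X}_k),\matr{H}_k\rangle\geq 0$ (via \cref{lem:proper_c_k}(ii)), and then exploit that $c_{k+1}$ is a convex combination of $c_k$ and $f(\matr{X}_{k+1})$ — the paper phrases this through the monotone function $\pi_k(t)=\frac{t c_{k-1}+f(\matr{X}_k)}{t+1}$, while you make the induction and the weights $\lambda_k=\eta_k q_k/q_{k+1}$ explicit, which is only a cosmetic difference.
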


\begin{proof}
By \eqref{eq:H-grad-product} and \eqref{eq:nonmonotone-Armijo-condition}, we see that $ f(\matr{X}_{k+1}) \leq c_{k} $  for all $k \in \NN$.
Define \(\pi_k(t) \eqdef \frac{tc_{k-1} + f(\matr{X}_k)}{t+1}\) for $t \geq 0$. Note that $ \pi_k'(t) = \frac{c_{k-1} - f(\matr{X}_k)}{(t+1)^2}\geq 0$.
The function \(\pi_k(t)\) is non-decreasing and $ f(\matr{X}_k) = \pi_k(0) \leq \pi_k(\eta_{k-1} q_{k-1}) = c_k $.
Then, it follows from the definitions of \(c_k\) and \(q_k\) that
\[ c_{k} = (\eta_{k-1} q_{k-1}c_{k-1} + f(\matr{X}_k)) / q_k \leq (\eta_{k-1} q_{k-1}c_{k-1} + c_{k-1})/q_k = c_{k-1}.  \]
The proof is complete.
\end{proof}

\subsection{Weak convergence and iteration complexity}

By refining the proof of \cref{lem:Armijo-stepsize-general-weak-convergence}, the following result can be similarly derived for the TGP-NA algorithm.
Here, we omit the detailed proof for simplicity.

\begin{theorem}\label{thm:weak_converg-na}
Let \( \mm \subseteq \RR^{n \times r} \) be a submanifold of class \(C^2\) and the cost function $f$ in \eqref{eq:objec_func_g} be continuously differentiable over $\RR^{n \times r}$.
In TGP-NA algorithm, if $\matr{L}_k$ and $\matr{R}_k$ satisfy \cref{asp:H-tilde-grad-equiv}, $\matr{H}_{k}$ satisfies \cref{asp:H-boundedness} and there exists a constant \(\bar{\eta} \in [0, 1)\) such that \(\eta_k \leq \bar{\eta}\) for all \( k \in \NN \), then every accumulation point of \( \{ \matr{X}_k \}_{k \geq 0} \) is a stationary point of $f$. Moreover, we have that $ \lim_{k \to \infty} \| \grad f(\matr{X}_k) \| = 0 $.
\end{theorem}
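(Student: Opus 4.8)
## Proof proposal

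The plan is to adapt the contradiction argument in the proof of \cref{lem:Armijo-stepsize-general-weak-convergence} to the nonmonotone setting, using \cref{lem:nonmonotone-g-bound-by-C} as the key replacement for the monotonicity of $\{f(\matr{X}_k)\}_{k\geq 0}$. First, I would record from \cref{lem:nonmonotone-g-bound-by-C} that $\{c_k\}_{k\geq 0}$ is non-increasing and bounded below (since $f(\matr{X}_k)\geq f^{*}$ and $c_k$ is a convex combination of $f(\matr{X}_0),\dots,f(\matr{X}_k)$), hence $c_k$ converges to some limit $c^{*}$. Subtracting the recursion $c_{k+1}q_{k+1} = \eta_k q_k c_k + f(\matr{X}_{k+1})$ and using $q_{k+1} = \eta_k q_k + 1$, one gets the standard identity
\begin{equation*}
c_k - c_{k+1} = \frac{c_k - f(\matr{X}_{k+1})}{q_{k+1}} = \frac{-(f(\matr{X}_{k+1}) - c_k)}{q_{k+1}} \geq \frac{\gamma \tau_k \langle \grad f(\matr{X}_k), \matr{H}_k \rangle}{q_{k+1}},
\end{equation*}
where the inequality uses the nonmonotone Armijo condition \eqref{eq:nonmonotone-Armijo-condition} and \eqref{eq:differential-Z}. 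Since $\eta_k\leq\bar\eta<1$, the auxiliary sequence satisfies $q_{k+1} = \eta_k q_k + 1 \leq \bar\eta q_k + 1$, so $q_k \leq 1/(1-\bar\eta)$ is uniformly bounded; therefore $c_k - c_{k+1} \geq (1-\bar\eta)\gamma\tau_k\langle \grad f(\matr{X}_k),\matr{H}_k\rangle$. Summing over $k$ and using that $\{c_k\}$ converges gives $\sum_k \tau_k \langle \grad f(\matr{X}_k),\matr{H}_k\rangle < \infty$, hence $\tau_k\langle \grad f(\matr{X}_k),\matr{H}_k\rangle \to 0$.

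Next I would run the contradiction argument exactly as in \cref{lem:Armijo-stepsize-general-weak-convergence}. Suppose $\matr{X}^{*}$ is a non-stationary accumulation point with subsequence $\matr{X}_{k_j}\to\matr{X}^{*}$. By \eqref{eq:assumA-half} (which holds by \cref{lem:proper_c_k}(ii) under \cref{asp:H-tilde-grad-equiv}), $\langle\grad f(\matr{X}_{k_j}),\matr{H}_{k_j}\rangle \geq \upsilon\|\grad f(\matr{X}_{k_j})\|^2 \to \upsilon\|\grad f(\matr{X}^{*})\|^2 > 0$ along a further subsequence, so $\tau_k\langle\grad f(\matr{X}_k),\matr{H}_k\rangle\to 0$ forces $\tau_{k_j}\to 0$. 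Thus eventually $\tau_{k_j}/\beta < \min\{\hat\tau^{(l)}, \varrho_{*}/\Delta_{\matr{H}}\}$, so the trial stepsize $\tau_{k_j}/\beta$ was rejected by the backtracking rule \eqref{eq:nonmonotone-step-size}, meaning
\begin{equation*}
f(\matr{Z}_{k_j}(\tau_{k_j}/\beta)) - c_{k_j} > \gamma (\tau_{k_j}/\beta)\langle \nabla f(\matr{X}_{k_j}), \matr{Z}'_{k_j}(0)\rangle.
\end{equation*}
Since $f(\matr{X}_{k_j}) \leq c_{k_j}$ by \cref{lem:nonmonotone-g-bound-by-C}, we have $f(\matr{Z}_{k_j}(\tau_{k_j}/\beta)) - f(\matr{X}_{k_j}) \geq f(\matr{Z}_{k_j}(\tau_{k_j}/\beta)) - c_{k_j} > \gamma(\tau_{k_j}/\beta)\langle \nabla f(\matr{X}_{k_j}),\matr{Z}'_{k_j}(0)\rangle$. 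Dividing by $\tau_{k_j}/\beta$ and applying the mean value theorem to $\matr{Z}_{k_j}(\tau)$ on $[0,\tau_{k_j}/\beta]$ (valid since $\tau_{k_j}/\beta < \varrho_{*}/\Delta_{\matr{H}}$ guarantees $C^1$ by \cref{cor:general-manifold-uniform-projection-radius}) yields, for some $\bar\tau_{k_j}\in[0,\tau_{k_j}/\beta]$,
\begin{equation*}
\langle \nabla f(\matr{X}_{k_j}), \matr{Z}'_{k_j}(\bar\tau_{k_j})\rangle > \gamma \langle \nabla f(\matr{X}_{k_j}), \matr{Z}'_{k_j}(0)\rangle.
\end{equation*}
Passing to the limit along a subsequence where $\matr{H}_{k_j}\to\matr{H}^{*}$ (possible by \cref{asp:H-boundedness}), using $\bar\tau_{k_j}\to 0$ and continuity of $\mathcal{P}_{\TangM{\cdot}}$, we get $\langle\nabla f(\matr{X}^{*}),\mathcal{P}_{\TangM{\matr{X}^{*}}}(\matr{H}^{*})\rangle \leq \gamma \langle \nabla f(\matr{X}^{*}),\mathcal{P}_{\TangM{\matr{X}^{*}}}(\matr{H}^{*})\rangle$; but the left side equals $\liminf_j\langle\grad f(\matr{X}_{k_j}),\matr{H}_{k_j}\rangle > 0$ and $\gamma\in(0,1)$, a contradiction. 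Hence every accumulation point is stationary, and the claim $\lim_k\|\grad f(\matr{X}_k)\| = 0$ follows by the same compactness argument as at the end of \cref{lem:Armijo-stepsize-general-weak-convergence}.

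The main obstacle — really the only nontrivial point beyond bookkeeping — is establishing that $\tau_k\langle\grad f(\matr{X}_k),\matr{H}_k\rangle\to 0$ from the nonmonotone condition, which is where the uniform bound $q_k\leq 1/(1-\bar\eta)$ (relying on the hypothesis $\eta_k\leq\bar\eta<1$) is essential; without it the telescoping sum could fail to control the gradient terms. Once that is in hand, the rest is a faithful transcription of the monotone proof with $c_{k_j}$ in place of $f(\matr{X}_{k_j})$, justified at the one place it matters by $f(\matr{X}_{k_j})\leq c_{k_j}$. I would also note in passing that the differentiability facts about $\matr{Z}_k(\tau)$ near $0$ and the identity $\matr{Z}'_k(0) = -\tilde{\matr{H}}_k$ are already available from the TGP-A section and carry over verbatim, so no new geometric input is needed.
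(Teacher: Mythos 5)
Your proposal is correct and follows exactly the route the paper intends: it omits the detailed proof, stating that it is obtained by refining the monotone argument of \cref{lem:Armijo-stepsize-general-weak-convergence}, and your refinement uses precisely the ingredients the paper itself deploys later (the identity $c_k - c_{k+1} = (c_k - f(\matr{X}_{k+1}))/q_{k+1}$, the bound $q_{k+1} \leq 1/(1-\bar{\eta})$, and the substitution of $c_{k_j}$ for $f(\matr{X}_{k_j})$ justified by \cref{lem:nonmonotone-g-bound-by-C}) in the proof of \cref{thm:nonmonotone-Armijo-stepsize-lower-bound-convergence-rate}. No gaps; the telescoping step giving $\tau_k\langle \grad f(\matr{X}_k),\matr{H}_k\rangle \to 0$ is the only genuinely new point, and you handle it correctly.
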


Similar to the TGP-A algorithm discussed in \cref{subsec:conver_rate_tgp_a}, we now demonstrate that the stepsize \(\tau_k\) in the TGP-NA algorithm has a positive lower bound, and then derive the complexity result based on this lower bound.

\begin{theorem}\label{thm:nonmonotone-Armijo-stepsize-lower-bound-convergence-rate}
  Let \( \mm \subseteq \RR^{n \times r} \) be a compact submanifold of class \(C^3\) and the cost function $f$ in \eqref{eq:objec_func_g} have a Lipschitz continuous gradient in the convex hull of $\mm$.
In TGP-NA algorithm, if \(\matr{L}_{k}\) and \( \matr{R}_k \) satisfy \cref{asp:H-tilde-grad-equiv}, \(\matr{H}_{k}\) satisfies \cref{asp:H-boundedness}, and there exists a constant \( \bar{\eta} \in [0, 1) \) such that \( \eta_k \leq \bar{\eta} \) for all \( k \in \NN \), then \\
(i) there exists \(\tilde{\tau} > 0\) such that \(\tau_k \geq  \tilde{\tau}\) for all $ k \in \NN$;\\
(ii) $ \lim_{k \to \infty} \|\grad f(\matr{X}_k)\| = 0 $, implying that every accumulation point of \( \{ \matr{X}_k \}_{k \geq 0} \) is a stationary point;\\
(iii) for any $ K \in \NN $, we have that
\[ \min_{0 \leq k \leq K}\| \grad f(\matr{X}_k) \| \leq \sqrt{\frac{f(\matr{X}_{0}) - f^{*}}{\gamma \tilde{\tau} \upsilon (1-\bar{\eta})(K+1)}}. \]
\end{theorem}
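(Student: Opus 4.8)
The plan is to prove the three parts in order, reusing the machinery already developed for the TGP-A algorithm and adding the Zhang--Hager bookkeeping on the reference values $c_k$.

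\textbf{Part (i).} First I would observe that, by \cref{lem:nonmonotone-g-bound-by-C}, $f(\matr{X}_k)\le c_k$ for every $k$, so any trial stepsize $\tau$ satisfying the \emph{monotone} Armijo inequality $f(\matr{Z}_k(\tau))-f(\matr{X}_k)\le-\gamma\tau\langle\grad f(\matr{X}_k),\matr{H}_k\rangle$ automatically satisfies the nonmonotone one \eqref{eq:nonmonotone-Armijo-condition}, since the two inequalities share the same right-hand side while $f(\matr{Z}_k(\tau))-c_k\le f(\matr{Z}_k(\tau))-f(\matr{X}_k)$. Hence the lower bound on $\tau_k$ established in the proof of \cref{thm:Armijo-stepsize-lower-bound-convergence-rate}(i) carries over almost verbatim: setting $\delta_{*}=\varrho_{*}/2$, splitting into the cases $\tau_k>\varrho_{*}/(2\Delta_{\hat{\matr{H}}})$ (trivial) and $\tau_k\le\varrho_{*}/(2\Delta_{\hat{\matr{H}}})$, and in the latter applying \cref{thm:Riemannian-descent-lemma-proj} together with \eqref{eq:H-grad-norm-equiv} and \eqref{eq:H-grad-product}, one obtains $\tau_k\ge\tilde{\tau}$ for all $k$, where $\tilde{\tau}=\min\bigl(\hat{\tau}^{(l)},\,\tfrac{\beta\varrho_{*}}{2\Delta_{\hat{\matr{H}}}},\,\tfrac{\beta(1-\gamma)\upsilon}{\Gamma_1^{(\delta_{*})}\varpi^2+\Gamma_2^{(\delta_{*})}\Delta_{\hat{\matr{H}}}\varpi}\bigr)$.

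\textbf{Parts (ii) and (iii).} With the uniform lower bound $\tilde{\tau}$ in hand, combine \eqref{eq:nonmonotone-Armijo-condition} with \eqref{eq:H-grad-product} to get $f(\matr{X}_{k+1})\le c_k-\gamma\tilde{\tau}\upsilon\|\grad f(\matr{X}_k)\|^2$. Substituting into the update rule $c_{k+1}=(\eta_k q_k c_k+f(\matr{X}_{k+1}))/q_{k+1}$ with $q_{k+1}=\eta_k q_k+1$ gives
\[
c_{k+1}\le c_k-\frac{\gamma\tilde{\tau}\upsilon}{q_{k+1}}\,\|\grad f(\matr{X}_k)\|^2 .
\]
Since $\eta_k\le\bar{\eta}<1$ and $q_0=1$, an immediate induction yields $q_k=\eta_{k-1}q_{k-1}+1\le\bar{\eta}\,q_{k-1}+1\le 1/(1-\bar{\eta})$, hence $1/q_{k+1}\ge 1-\bar{\eta}$. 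Because $\{c_k\}$ is non-increasing (\cref{lem:nonmonotone-g-bound-by-C}) and bounded below by $f^{*}$ (as $c_k\ge f(\matr{X}_k)\ge f^{*}$), it converges; telescoping the displayed inequality over $k=0,\dots,K$ and using $q_{k+1}^{-1}\ge 1-\bar{\eta}$ together with $c_0=f(\matr{X}_0)$ gives
\[
\gamma\tilde{\tau}\upsilon(1-\bar{\eta})\sum_{k=0}^{K}\|\grad f(\matr{X}_k)\|^2\le c_0-c_{K+1}\le f(\matr{X}_0)-f^{*}.
\]
Letting $K\to\infty$ shows $\sum_k\|\grad f(\matr{X}_k)\|^2<\infty$, so $\|\grad f(\matr{X}_k)\|\to 0$; continuity of $\grad f$ then forces every accumulation point of $\{\matr{X}_k\}$ to be a stationary point, proving (ii). For (iii), bound $(K+1)\min_{0\le k\le K}\|\grad f(\matr{X}_k)\|^2$ above by the same sum and rearrange to obtain the stated estimate.

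\textbf{Main obstacle.} The genuinely new ingredient compared with the TGP-A analysis is the Zhang--Hager weighting: one must convert the per-step decrease of $f$ into a decrease of the reference sequence $c_k$ through the recursion for $q_k$, and the complexity constant survives only because the weights $q_k$ stay uniformly bounded, which in turn relies on the hypothesis $\eta_k\le\bar{\eta}<1$. I expect the delicate points to be the verification that the monotone-Armijo stepsize bound persists under the nonmonotone rule (Part (i)) and the telescoping with the $q_{k+1}^{-1}$ factors; everything else is a direct adaptation of the proof of \cref{thm:Armijo-stepsize-lower-bound-convergence-rate}.
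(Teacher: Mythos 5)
Your proposal is correct and follows essentially the same route as the paper: part (i) uses $f(\matr{X}_k)\le c_k$ from \cref{lem:nonmonotone-g-bound-by-C} so the TGP-A stepsize lower bound carries over via \cref{thm:Riemannian-descent-lemma-proj}, and parts (ii)–(iii) combine the nonmonotone condition with the $c_{k+1}$ recursion, the bound $q_{k+1}\le 1/(1-\bar{\eta})$, and a telescoping sum using $f^{*}\le f(\matr{X}_{K+1})\le c_{K+1}$ and $c_0=f(\matr{X}_0)$, exactly as in the paper's argument.
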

\begin{proof}
Note that $ f(\matr{X}_k) \leq c_k $ by \cref{lem:nonmonotone-g-bound-by-C}.
It follows from \cref{thm:Riemannian-descent-lemma-proj} that the nonmonotone Armijo-type condition \eqref{eq:nonmonotone-Armijo-condition} is satisfied if \(\tau\) satisfies \(\tau \leq \rho_{*} / (2\Delta_{\hat{\matr{H}}}) \) and
\[ \tau \langle \grad f(\matr{X}_k), \matr{H}_k \rangle - \tau^2 \left(\Gamma_1^{(\delta_{*})}  \| \tilde{\matr{H}}_k \| ^2 - \Gamma_2^{(\delta_{*})} \| \grad f(\matr{X}_k) \| \| \tilde{\matr{H}}_k \| \| \hat{\matr{H}}_k \|\right) \geq \gamma \tau \langle \grad f(\matr{X}_k), \matr{H}_k \rangle,  \]
where \(\delta_{*} = \rho_{*} / 2 \).
The rest of (i) can be proved in a manner similar to \cref{thm:Armijo-stepsize-lower-bound-convergence-rate}.

By the nonmonotone Armijo-type condition \eqref{eq:nonmonotone-Armijo-condition} and the definition of \(c_{k+1}\), we have
\begin{equation}\label{eq:proof-nonmonotone-convergence-rate-1}
\frac{\gamma \tau_k\left\langle \grad f\left(\matr{X}_k\right), \matr{H}_k\right\rangle}{q_{k+1}} \leq \frac{c_k-f\left(\matr{X}_{k+1}\right)}{q_{k+1}} =c_k-c_{k+1}.
\end{equation}
It follows from the definition of \(q_k\) that \(q_{k+1}\leq  \sum_{i=0}^k \bar{\eta}^i \leq 1 / (1- \bar{\eta})\). Combining the inequality \eqref{eq:proof-nonmonotone-convergence-rate-1}, \(\tau_k \geq \tilde{\tau}\) and \eqref{eq:H-grad-product}, we have
\begin{equation}\label{eq:proof-nonmonotone-convergence-rate-Ck-descent}
c_k - c_{k+1} \geq (1-\bar{\eta}) \gamma \tau_k \langle \grad f(\matr{X}_k), \matr{H}_k \rangle \geq (1-\bar{\eta})  \gamma \tilde{\tau} \upsilon\| \grad f(\matr{X}_k) \|^2.
\end{equation}
For any $ K \in \NN $, summing \eqref{eq:proof-nonmonotone-convergence-rate-Ck-descent} for $0\leq k \leq K $ and using the fact that $ f^{*} \leq f(\matr{X}_{K+1}) \leq c_{K+1} $ and $ f(\matr{X}_0) = c_0 $, we have
\begin{equation}\label{eq:nonmonotone-gx0-gstar}
f(\matr{X}_0) - f^{*} \geq  c_0 - c_{K+1} \geq (1-\bar{\eta})  \gamma \tilde{\tau} \upsilon \sum_{k=0}^{K} \| \grad f(\matr{X}_k) \|^2,
\end{equation}
which implies that  $\lim_{k \to \infty} \|\grad f(\matr{X}_k)\| = 0$ and
\[ \min_{0 \leq k \leq K}\| \grad f(\matr{X}_k) \| \leq \sqrt{\frac{f(\matr{X}_{0}) - f^{*}}{\gamma \tilde{\tau} \upsilon (1-\bar{\eta})(K+1)}}. \]
The proof is complete.
\end{proof}

\begin{remark}
  When $\mm$ is a compact Riemannian manifold, if the derivative of $f$ has a Lipschitz continuous property in \cite[Assumption 1 (2)]{oviedo2022global}, the weak convergence of RetrLS algorithms using Zhang-Hager type nonmonotone Armijo stepsize was established in \cite[Thm. 1]{oviedo2022global}, following an approach similar as in \cite{zhang2004nonmonotone}, which is for the unconstrained optimization algorithms on the Euclidean space.
When the retraction is constructed using the projection,
in \cref{thm:nonmonotone-Armijo-stepsize-lower-bound-convergence-rate}(ii), we prove a more general result which allows for the presence of a normal component in \( \matr{H}_k\), which makes \( \matr{H}_k \) not necessarily tangent to $\mm$ at \( \matr{X}_k \).
\end{remark}

\subsection{Global convergence}\label{subsec:NA-global-conver}

\begin{lemma}\label{lem:nonmonotone-global-convergence}
Let $\mm'\subseteq\RR^m$ be an analytic submanifold and $g:\mm'\rightarrow\RR$ be a real analytic function.
Let $\{\vect{x}_k\}_{k\geq 0}\subseteq\mm'$ be a sequence having at least one accumulation point $\vect{x}_{*}$. Let $\{c_{k}\}_{k\geq 0}\subseteq\RR$ be an non-increasing sequence satisfying \(g(\vect{x}_k)\leq c_k\) for all $ k\geq 0 $. Suppose that\\
(i) there exist positive constants $\kappa, \psi>0$ such that for large enough $k$,
\begin{equation}\label{eq:nonmonotone-global-convergence-sufficient-decrease}
c_k-c_{k+1}\geq \kappa \| \grad g(\vect{x}_k) \|^2 + \psi \| \vect{x}_{k+1} - \vect{x}_k \|^2;
\end{equation}
(ii) \( \sum_{i=0}^{\infty} (c_k - g(\vect{x}_k))^{\theta} < +\infty\), where \(\theta\) is the exponent as shown in \eqref{eq:Lojasiewicz} at $ \vect{x}_{*} $.\\
Then $\lim_{k \to \infty}\vect{x}_k = \vect{x}_{*}$ and \( \vect{x}_{*} \) is a stationary point of \(g\).
\end{lemma}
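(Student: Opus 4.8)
The plan is to mimic the classical Łojasiewicz-based convergence argument (as in \cref{theorem-SU15}), but replacing the monotone cost sequence $\{g(\vect{x}_k)\}$ by the non-increasing reference sequence $\{c_k\}$, and absorbing the discrepancy $c_k - g(\vect{x}_k)$ via hypothesis (ii). First I would show that $c_k$ converges: it is non-increasing and bounded below (since $g$ is continuous on the compact-closure orbit, or at least bounded below near $\vect{x}_{*}$), hence $c_k \downarrow c_{*}$ for some $c_{*}$. By continuity of $g$ and $g(\vect{x}_k) \le c_k$, together with (i) forcing $\|\vect{x}_{k+1}-\vect{x}_k\| \to 0$ and $\|\grad g(\vect{x}_k)\| \to 0$, every accumulation point is a stationary point with the same $g$-value; in particular $g(\vect{x}_{*}) = c_{*}$ and $\grad g(\vect{x}_{*}) = \vect{0}$. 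Then I would invoke \cref{lemma-SU15} to get constants $\varsigma > 0$, $\theta \in [\tfrac12, 1)$ and a radius $\varepsilon > 0$ such that the Łojasiewicz inequality \eqref{eq:Lojasiewicz} holds on $\openball{\vect{x}_{*}}{\varepsilon} \cap \mm'$.

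The core step is the standard "length estimate" telescoping argument. Define $h(t) \eqdef \frac{1}{1-\theta}(t - c_{*})^{1-\theta}$ for $t \ge c_{*}$, a concave increasing function, so that $h(c_k) - h(c_{k+1}) \ge (c_k - c_{*})^{-\theta}(c_k - c_{k+1})$. On the Łojasiewicz neighborhood I would estimate $(c_k - c_{*})^{\theta} \le (c_k - g(\vect{x}_k))^{\theta} + (g(\vect{x}_k) - c_{*})^{\theta} \le (c_k - g(\vect{x}_k))^{\theta} + \varsigma^{\theta}\|\grad g(\vect{x}_k)\|$, using subadditivity of $t \mapsto t^{\theta}$ and \eqref{eq:Lojasiewicz} (with $g(\vect{x}_{*}) = c_{*}$). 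Combining this with (i), which gives both $\|\grad g(\vect{x}_k)\|^2 \le (c_k - c_{k+1})/\kappa$ and $\|\vect{x}_{k+1}-\vect{x}_k\|^2 \le (c_k - c_{k+1})/\psi$, and applying the elementary inequality $ab \le \tfrac{\alpha}{2}a^2 + \tfrac{1}{2\alpha}b^2$, I would derive a bound of the form
\[
\|\vect{x}_{k+1} - \vect{x}_k\| \le C_1\bigl(h(c_k) - h(c_{k+1})\bigr) + C_2\|\vect{x}_{k+1}-\vect{x}_k\| \cdot \text{(small)} + C_3 (c_k - g(\vect{x}_k))^{\theta},
\]
where the "small" factor can be made $\le \tfrac12$ for large $k$ so that term is absorbed into the left side. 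Summing over $k$ from some $k_0$ onward, the first term telescopes (bounded by $h(c_{k_0}) - h(c_{*}) < \infty$) and the last is summable by hypothesis (ii); hence $\sum_k \|\vect{x}_{k+1}-\vect{x}_k\| < \infty$, so $\{\vect{x}_k\}$ is Cauchy and converges — and since $\vect{x}_{*}$ is an accumulation point, the limit must be $\vect{x}_{*}$, which is stationary.

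One technical subtlety I would need to handle carefully is the bootstrapping: the telescoping estimate is only valid while the iterates stay inside $\openball{\vect{x}_{*}}{\varepsilon}$. The standard remedy is to choose $k_0$ large enough that $\vect{x}_{k_0}$ is very close to $\vect{x}_{*}$ and that the tails $\sum_{k \ge k_0}(c_k - g(\vect{x}_k))^{\theta}$ and $h(c_{k_0}) - h(c_{*})$ are small, then argue by induction that the partial sums of $\|\vect{x}_{k+1}-\vect{x}_k\|$ stay below $\varepsilon/2$, so the iterates never leave the ball; since $\vect{x}_{*}$ is an accumulation point, such a $k_0$ with $\|\vect{x}_{k_0} - \vect{x}_{*}\|$ arbitrarily small exists. \textbf{The main obstacle} is precisely organizing this induction cleanly while the estimate carries the extra non-telescoping term $(c_k - g(\vect{x}_k))^{\theta}$ that is absent in the classical monotone proof — one must verify that hypothesis (ii) is exactly the right summability condition to make the argument close, and that the constants $\kappa, \psi$ from (i) propagate correctly through the Young-inequality splitting without circularity. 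If $\grad g(\vect{x}_k) = \vect{0}$ for some $k$ but $\vect{x}_{k+1} \ne \vect{x}_k$, one checks directly from (i) that this still forces $c_k = c_{k+1}$ and $\vect{x}_{k+1} = \vect{x}_k$, so that degenerate case causes no trouble.
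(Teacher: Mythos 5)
Your proposal follows essentially the same route as the paper's proof: convergence of $c_k$ to some $c_*$, the Łojasiewicz inequality at $\vect{x}_*$ combined with the splitting $(c_k-c_*)^{\theta}\le (c_k-g(\vect{x}_k))^{\theta}+|g(\vect{x}_k)-c_*|^{\theta}$, a Young-type rearrangement yielding a per-step bound on $\|\vect{x}_{k+1}-\vect{x}_k\|$ by a telescoping term plus the summable term from hypothesis (ii), and the bootstrapping induction to keep iterates in the Łojasiewicz ball. The only differences are cosmetic (concavity of $t\mapsto t^{1-\theta}$ in place of the mean value theorem, an absorption step in place of the paper's direct square-root estimate, and a slipped constant $\varsigma^{\theta}$ that should be $\varsigma$), so the argument is correct and matches the paper.
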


\begin{proof}
If the sequence $ \{ c_k \}_{k \geq 0} $ has no lower bound, then it follows from $g(\vect{x}_k)\leq c_k$ and the monotony of $\{ c_k \}_{k \geq 0}$ that $ g(\vect{x}_k) \to -\infty $ as $ k \to \infty $, which contradicts the fact that $ \{ \vect{x}_k \}_{k \geq 0} $ has an accumulation point $ \vect{x}_{*} $. Thus, the non-increasing sequence $ \{ c_k \}_{k \geq 0} $ is lower bounded and convergent. Denote $c_{*} \eqdef \lim_{k \to \infty}c_k$. It follows from (ii) that $\lim_{k \to \infty}g(\vect{x}_k) = c_{*}$.

Without loss of generality, we assume that the conditions (i) and (ii) hold for all $ k \geq 0 $.
If there exists \(k_0\) such that \(c_{k_0} = c_{*}\), then \(c_k = c_{*}\) for all $ k\geq k_0 $ since \(\{ c_k \}_{k \geq 0}\) converges to $ c_{*} $ monotonically. It follows from \eqref{eq:nonmonotone-global-convergence-sufficient-decrease} that $ \vect{x}_{k+1} = \vect{x}_k $ for $ k \geq k_0 $, and so $ \{ \vect{x}_k \}_{k \geq 0} $ is convergent.
Now we consider the case where \(c_k > c_{*}\) for all $ k \geq 0 $. For simplicity, we assume that \(c_{*} = 0\). For all $ k \geq 0 $, it follows from the mean value theorem that there exists $ \bar{c}_{k} \in [c_{k+1}, c_k] $ such that
\begin{equation*}
c_k^{1-\theta} - c_{k+1}^{1-\theta} = (1-\theta)\frac{c_k - c_{k+1}}{\bar{c}_{k}^{\theta}} \geq (1-\theta)\frac{c_k-c_{k+1}}{c_{k}^{\theta}}.
\end{equation*}
Combing the above inequality with \eqref{eq:nonmonotone-global-convergence-sufficient-decrease}, we have
\begin{equation}\label{eq:proof-nonmonotone-global-convergence-1}
c_k^{1-\theta} - c_{k+1}^{1-\theta} \geq \left(1-\theta\right)\frac{c_k-c_{k+1}}{c_{k}^{\theta}} \geq \left(1-\theta\right) \frac{\kappa \| \grad g(\vect{x}_k) \|^2 + \psi \| \vect{x}_{k+1} - \vect{x}_k \|^2}{c_k^{\theta}}.
\end{equation}
By \cref{lemma-SU15}, there exists $\varepsilon,\varsigma > 0$ such that for $ \vect{x} \in \mm'\cap\openball{\vect{x}_{*}}{\varepsilon}$,
\begin{equation}\label{eq:proof-nonmonotone-global-convergence-KL-inequality}
|g(\vect{x})|^{\theta} = |g(\vect{x}) - g(\vect{x}_{*})|^{\theta} \leq \varsigma \| \grad g(\vect{x}) \|,
\end{equation}
where $\theta\in [\frac{1}{2}, 1)$.
Note that $ (a+b)^{\theta} \leq a^{\theta} + b^{\theta} $ for $a, b \geq 0$.
Then for $ \vect{x}_k $ satisfying $ \| \vect{x}_k - \vect{x}_{*} \| < \varepsilon $, we have
\[ c_k^{\theta} \leq (c_k - g(\vect{x}_k) + |g(\vect{x}_k)|)^{\theta} \leq (c_k - g(\vect{x}_k))^{\theta} + |g(\vect{x}_k)|^{\theta} \leq (c_k - g(\vect{x}_k))^{\theta} + \varsigma \| \grad g(\vect{x}_k) \|.  \]
Substituting the term \(c_k^{\theta}\) in \eqref{eq:proof-nonmonotone-global-convergence-1} by the right-hand side of the above inequality, we have
\begin{align*}
& (1-\theta) \left(\kappa \| \grad g(\vect{x}_k) \|^2 + \psi \| \vect{x}_{k+1} - \vect{x}_k \|^2 \right)\\
& \leq \left(c_k^{1-\theta} - c_{k+1}^{1-\theta}\right)\left((c_k - g(\vect{x}_k)^{\theta} + \varsigma \| \grad g(\vect{x}_k) \|\right) \\
& = \left(c_k^{1-\theta} - c_{k+1}^{1-\theta}\right)\left(c_{k} - g(\vect{x}_k)\right)^{\theta} + \varsigma \left(c_k^{1-\theta} - c_{k+1}^{1-\theta}\right) \| \grad g(\vect{x}_k) \|\\
& \leq \left(\frac{1}{4} + \frac{\varsigma^2}{4(1-\theta)\kappa}\right)\left(c_k^{1-\theta} - c_{k+1}^{1-\theta}\right)^{2} + (c_k - g(\vect{x}_k))^{2\theta} + (1-\theta)\kappa \| \grad g(\vect{x}_k) \|^2,
\end{align*}
where the last inequality follows from that
\begin{align*}
\left(c_k^{1-\theta} - c_{k+1}^{1-\theta}\right)(c_{k} - g(\vect{x}_k))^{\theta} &\leq \frac{1}{4} \left(c_k^{1-\theta} - c_{k+1}^{1-\theta}\right)^2 + (c_{k} - g(\vect{x}_k))^{2\theta},\\
\varsigma\left(c_k^{1-\theta} - c_{k+1}^{1-\theta}\right) \| \grad g(\vect{x}_k) \| &\leq \frac{\varsigma^2}{4(1-\theta)\kappa}\left(c_k^{1-\theta} - c_{k+1}^{1-\theta}\right)^2 + (1-\theta)\kappa \| \grad g(\vect{x}_k) \|^2.
\end{align*}
Then for $\vect{x}_k$ satisfying $\|\vect{x}_k - \vect{x}_{*} \| < \epsilon$, we have
\begin{align}
\| \vect{x}_{k+1} - \vect{x}_k \|
& \leq \frac{1}{\sqrt{(1-\theta)\psi}} \sqrt{\left(\frac{1}{4} + \frac{\varsigma^2}{4(1-\theta)\kappa}\right)\left(c_k^{1-\theta} - c_{k-1}^{1-\theta}\right)^{2} + \left(c_k - g(\vect{x}_k)\right)^{2\theta}} \notag \\
& \leq \frac{1}{\sqrt{(1-\theta)\psi}} \left(\sqrt{\frac{(1-\theta)\kappa + \varsigma^2}{4(1-\theta)\kappa}} \left(c_k^{1-\theta} - c_{k+1}^{1-\theta}\right) + \left(c_k - g(\vect{x}_k)\right)^{\theta}\right). \label{eq:proof-nonmonotone-global-convergence-x-diff}
\end{align}

Since $ \vect{x}_{*} $ is an accumulation point of $ \{ \vect{x}_k \}_{k \geq 0} $, and $\sum_{k=1}^{\infty} (c_k - g(\vect{x}_k))^{\theta} < +\infty$, there exists \(k_1\) such that
\[ \| \vect{x}_{k_1} - \vect{x}_{*} \| < \frac{\varepsilon}{3},\ \ \sqrt{\frac{(1-\theta)\kappa + \varsigma^2}{4(1-\theta)^2 \kappa\psi}}  \sum_{k=k_1}^{\infty}\left(c_k^{1-\theta} - c_{k+1}^{1-\theta}\right) < \frac{\varepsilon}{3},\ \ \frac{1}{\sqrt{(1-\theta)\psi}} \sum_{k=k_1}^{\infty} (c_k - g(\vect{x}_k))^{\theta}< \frac{\varepsilon}{3}.  \]
It can be shown by induction with the above inequalities and \eqref{eq:proof-nonmonotone-global-convergence-x-diff} that $\| \vect{x}_k - \vect{x}_{*} \| < \varepsilon$ for all $ k \geq k_1 $. By summing \eqref{eq:proof-nonmonotone-global-convergence-x-diff} up for $ k $ from \(k_1 \) to $ \infty $, we have $\sum_{k=k_1}^{\infty}\| \vect{x}_{k+1} - \vect{x}_k \| < +\infty$. Thus, $\{ \vect{x}_k \}_{k \geq 0}$ converges to its accumulation point \(\vect{x}_{*}\). Moreover, it follows from \eqref{eq:nonmonotone-global-convergence-sufficient-decrease} and \(\lim_{k \to \infty}c_k = c_{*}\) that \( \lim_{k \to \infty} \grad g(\vect{x}_k) = \vect{0}\), implying that \( \vect{x}_{*} \) is a stationary point of \( g \).
The proof is complete.
\end{proof}

\begin{theorem}\label{thm:nonmonotone-global-convergence}
Let $ \mm \subseteq \RR^{n\times r}$ be an analytic compact matrix submanifold and $f$ in \eqref{eq:objec_func_g} be an analytic function.
In TGP-NA algorithm, if $\matr{L}_k$ and $\matr{R}_k$ satisfy \cref{asp:H-tilde-grad-equiv}, $\matr{H}_{k}$ satisfies \cref{asp:H-boundedness}, and there exist constants \( \delta \in (0, \varrho_{*}] \) and \( \bar{\eta} \in [0, 1) \) such that $ \tau_k \| \hat{\matr{H}}_k \| \leq \varrho-\delta $ and $\eta_k \leq \min\{  \frac{1}{q_{k}((c_{k} - f(\matr{X}_{k+1}))(k+1)^{4} - 1)}, \bar{\eta} \} $ for sufficiently large \( k \), then the sequence \( \{ \matr{X}_k \}_{k \geq 0} \) converges to a stationary point \( \matr{X}_{\ast} \).
\end{theorem}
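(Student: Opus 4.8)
The plan is to verify the hypotheses of \cref{lem:nonmonotone-global-convergence} with $\mm' = \mm$, $g = f$, $\vect{x}_k = \matr{X}_k$, and $\{c_k\}_{k\geq 0}$ the Zhang--Hager reference values. Since $\mm$ is compact, the sequence $\{\matr{X}_k\}_{k \geq 0}$ has at least one accumulation point, say $\matr{X}_{\ast}$, and \cref{lem:nonmonotone-g-bound-by-C} already guarantees that $\{c_k\}_{k\geq 0}$ is non-increasing with $f(\matr{X}_k) \leq c_k$ for all $k$. Hence it remains only to establish conditions (i) and (ii) of \cref{lem:nonmonotone-global-convergence} for all sufficiently large $k$; once both hold, \cref{lem:nonmonotone-global-convergence} delivers convergence of $\{\matr{X}_k\}_{k\geq 0}$ to $\matr{X}_{\ast}$ together with stationarity of $\matr{X}_{\ast}$, which is exactly the claim.

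For condition (i), I would first invoke \cref{thm:nonmonotone-Armijo-stepsize-lower-bound-convergence-rate}(i) (whose hypotheses hold here) to obtain $\tilde{\tau} > 0$ with $\tau_k \geq \tilde{\tau}$ for all $k$, so that \eqref{eq:proof-nonmonotone-convergence-rate-Ck-descent} gives $c_k - c_{k+1} \geq (1-\bar{\eta})\gamma\tilde{\tau}\upsilon\|\grad f(\matr{X}_k)\|^2$. Next I would bound the displacement: writing $\matr{X}_{k+1} = \mathcal{P}_{\mm}(\matr{X}_k - \tau_k\tilde{\matr{H}}_k - \tau_k\hat{\matr{H}}_k)$ with $-\tau_k\tilde{\matr{H}}_k \in \TangM{\matr{X}_k}$ and $\|\tau_k\hat{\matr{H}}_k\| \leq \varrho_{*} - \delta$, inequality \eqref{eq:first-order-boundedness} yields $\|\matr{X}_{k+1} - \matr{X}_k\| \leq L_0^{(\delta)}\tau_k\|\tilde{\matr{H}}_k\| \leq L_0^{(\delta)}\hat{\tau}^{(u)}\varpi\|\grad f(\matr{X}_k)\|$, using $\tau_k \leq \hat{\tau}^{(u)}$ and \eqref{eq:H-grad-norm-equiv}. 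Combining the two estimates and splitting the $\|\grad f(\matr{X}_k)\|^2$ term into two halves produces constants $\kappa,\psi>0$ with $c_k - c_{k+1} \geq \kappa\|\grad f(\matr{X}_k)\|^2 + \psi\|\matr{X}_{k+1} - \matr{X}_k\|^2$, i.e. condition (i). (The case $\grad f(\matr{X}_k)=\vect{0}$ is automatic, since then $\tilde{\matr{H}}_k=\vect{0}$ by \eqref{eq:H-grad-norm-equiv} and $\matr{X}_{k+1}=\matr{X}_k$ by \eqref{eq:first-order-boundedness-normal-general}.)

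For condition (ii), let $\theta \in [\tfrac12,1)$ be the \L{}ojasiewicz exponent of $f$ at $\matr{X}_{\ast}$ supplied by \cref{lemma-SU15}. From $c_{k+1}q_{k+1} = \eta_k q_k c_k + f(\matr{X}_{k+1})$ and $q_{k+1} = \eta_k q_k + 1$, a direct computation gives the identity $c_{k+1} - f(\matr{X}_{k+1}) = \tfrac{\eta_k q_k}{q_{k+1}}\bigl(c_k - f(\matr{X}_{k+1})\bigr)$, where $c_k - f(\matr{X}_{k+1}) \geq 0$ by the nonmonotone Armijo condition \eqref{eq:nonmonotone-Armijo-condition}. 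The assumed bound $\eta_k \leq \tfrac{1}{q_k((c_k - f(\matr{X}_{k+1}))(k+1)^4 - 1)}$ rearranges to $\eta_k q_k (c_k - f(\matr{X}_{k+1}))(k+1)^4 \leq 1 + \eta_k q_k = q_{k+1}$, whence $c_{k+1} - f(\matr{X}_{k+1}) \leq (k+1)^{-4}$; when the denominator in the bound is nonpositive the same conclusion holds trivially because $\eta_k q_k/q_{k+1} < 1$ and then $c_k - f(\matr{X}_{k+1}) \leq (k+1)^{-4}$ already. Thus $c_k - f(\matr{X}_k) \leq k^{-4}$ for all large $k$, and since $4\theta \geq 2 > 1$ we get $\sum_k (c_k - f(\matr{X}_k))^{\theta} < +\infty$, which is condition (ii). I expect condition (ii) to be the main obstacle: the telescoping identity for $c_{k+1}-f(\matr{X}_{k+1})$ together with the precise form of the hypothesis on $\eta_k$ are exactly what force $c_k - f(\matr{X}_k)$ to decay polynomially fast enough to remain summable after raising to the power $\theta\geq\tfrac12$, whereas condition (i) is a routine combination of the stepsize lower bound and the projection inequalities from \cref{sec:Prop-proj}.
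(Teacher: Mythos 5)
Your proposal is correct and follows essentially the same route as the paper: both verify conditions (i) and (ii) of \cref{lem:nonmonotone-global-convergence} using the stepsize lower bound from \cref{thm:nonmonotone-Armijo-stepsize-lower-bound-convergence-rate}(i) together with \eqref{eq:proof-nonmonotone-convergence-rate-Ck-descent}, \eqref{eq:H-grad-norm-equiv} and \eqref{eq:first-order-boundedness} for (i), and the identity $c_{k+1}-f(\matr{X}_{k+1})=\tfrac{\eta_k q_k}{\eta_k q_k+1}(c_k-f(\matr{X}_{k+1}))$ with the assumed bound on $\eta_k$ to get $c_k-f(\matr{X}_k)\leq k^{-4}$, hence summability of $(c_k-f(\matr{X}_k))^{\theta}$, for (ii). Your explicit treatment of the degenerate case where the denominator in the $\eta_k$ bound is nonpositive is a small but welcome refinement that the paper leaves implicit.
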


\begin{proof}
Since the sequence \( \{ \matr{X}_k \}_{k \geq 0} \subseteq \mm \), it has an accumulation point.
It is sufficient to verify the conditions (i) and (ii) in \cref{lem:nonmonotone-global-convergence}.
For sufficiently large \( k \), it follows from \eqref{eq:H-grad-norm-equiv} and \eqref{eq:first-order-boundedness} that
\[
\|\grad f(\matr{X}_k)\| \geq \frac{1}{\varpi} \| \tilde{\matr{H}}_k \| \geq \frac{1}{\tau_{k}L_0^{(\delta)}\varpi} \| \matr{X}_{k+1} - \matr{X}_{k} \| \geq \frac{1}{\hat{\tau}^{(u)}L_0^{(\delta)}\varpi}\| \matr{X}_{k+1} - \matr{X}_{k} \|.
\]
By our assumption, \( \eta_k \) is uniformly upper bounded.  Substituting it into \eqref{eq:proof-nonmonotone-convergence-rate-Ck-descent}, we have
\begin{equation*}
c_{k+1}-c_{k} \geq (1-\bar{\eta})\gamma\tilde{\tau}\upsilon \| \grad f(\matr{X}_k) \|^2 \geq   \frac{(1-\bar{\eta})\gamma \tilde{\tau}\upsilon}{(\hat{\tau}^{(u)}L_0^{(\delta)}\varpi)^2} \| \matr{X}_{k+1} - \matr{X}_k \|^2,
\end{equation*}
which implies that
\[
c_{k+1}-c_{k} \geq \frac{1}{2} (1-\bar{\eta})\gamma\tilde{\tau}\upsilon \| \grad f(\matr{X}_k) \|^2 + \frac{(1-\bar{\eta})\gamma \tilde{\tau}\upsilon}{2(\hat{\tau}^{(u)}L_0^{(\delta)}\varpi)^2} \| \matr{X}_{k+1} - \matr{X}_k \|^2.
\]
Thus, the condition (i) of \cref{lem:nonmonotone-global-convergence} is satisfied.
Note that \(c_{k+1} - f(\matr{X}_{k+1}) =  \frac{\eta_kq_{k}}{\eta_{k}q_{k}+1}(c_{k}  - f(\matr{X}_{k+1}))\). It follows from the direct computation that \(c_{k+1} - f(\matr{X}_{k+1})\leq 1 / (k+1)^4\) when $\eta_k \leq \frac{1}{q_{k}((c_{k} - f(\matr{X}_{k+1}))(k+1)^{4} - 1)}$. Since \(\theta \geq 1 / 2\) in \cref{lemma-SU15}, we have \( (c_k - f(\matr{X}_k))^{\theta} \leq 1/k^{2} \) for sufficiently large \( k \), implying that the condition (ii) of \cref{lem:nonmonotone-global-convergence} is also satisfied.
Then the proof is complete by \cref{lem:nonmonotone-global-convergence}.
\end{proof}

\begin{remark}
While the Zhang-Hager type nonmonotone Armijo stepsize has been used in the RetrLS and ProjLS algorithms on Riemannian manifold \cite{oviedo2019non,oviedo2022global,oviedo2023worst}, to our knowledge, their global convergence has not yet been studied in the literature\footnote{In \cite{oviedo2019non,oviedo2022global,oviedo2023worst,zhang2004nonmonotone}, the term ``global convergence'' has a different meaning with our paper; it means that every accumulation point of the iterates is a stationary point, referred to as ``weak convergence'' in our paper.}, even for the unconstrained nonconvex problems on the Euclidean space.
In \cref{thm:nonmonotone-global-convergence}, for the first time, we establish the global convergence of ProjLS algorithms using the nonmonotone Armijo stepsize.
It is easy to see that the global convergence of the Euclidean space case can be established similarly as in the proof of \cref{thm:nonmonotone-global-convergence}.
Moreover, as a nonmonotone analogue of \cite[Thm.  2.3]{SU15:pro}, \cref{lem:nonmonotone-global-convergence} we have proved in this paper can also contribute to establishing the global convergence of other nonmonotone algorithms.
\end{remark}

\section{TGP algorithms using a fixed stepsize}\label{sec:fixed_TGP}

In \cref{alg:TGP}, except the two types of stepsizes introduced in \cref{sec:TGP-A,sec:nonmonotone_TGP}, for a fixed positive constant \( \delta \in (0, \varrho_{*}] \), it is also possible to choose a fixed stepsize  \(\tau_{*}\) satisfying \(\tau_{*} \|\hat{\matr{H}}_k\| \leq \varrho_{*} - \delta\) for all \( k \in \NN \) and \(\tau_{*} < \frac{\upsilon}{\Gamma_1^{(\delta)} \varpi^2 + \Gamma_2^{(\delta)} \Delta_{\hat{\matr{H}}} \varpi}\). This can be equivalently expressed as
\[ \tau_{*} \in \left\{
\begin{array}{ll}
\left(0, \min \left\{\frac{\varrho_{*} - \delta}{\Delta_{\hat{\matr{H}}}}, \frac{\upsilon}{\Gamma_1 ^{(\delta)}\varpi^2 + \Gamma_2^{(\delta)} \Delta_{\hat{\matr{H}}} \varpi}\right\}\right) & \text{if } \Delta_{\hat{\matr{H}}} > 0; \\
\left(0 , \frac{\upsilon}{\Gamma_{1}^{(\varrho_{*})}\varpi^2}\right) & \text{if } \Delta_{\hat{\matr{H}}} = 0.
\end{array}
\right.  \]
In this case, we call it the \emph{Transformed Gradient Projection with a fixed stepsize} (TGP-F) algorithm.

\begin{lemma}\label{lem:TGP-F-Armijo-like-condition}
In the TGP-F algorithm, for all $ k\in\NN $, we have
\begin{equation}\label{eq:Riemannian-descent-lemma-proj}
f(\matr{X}_k) - f(\matr{X}_{k+1}) \geq \gamma_{*} \tau_{*}\langle \grad f(\matr{X}_k), \matr{H}_k \rangle,
\end{equation}
where \(\gamma_{*} \eqdef 1 - \tau_{*}(\Gamma_1^{(\delta)}\varpi^2 + \Gamma_2 ^{(\delta)}\Delta_{\hat{\matr{H}}}\varpi) / \upsilon \) satisfying \( \gamma_{*} \in (0, 1) \) by the definition.
\end{lemma}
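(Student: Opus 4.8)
The plan is to apply the descent-type inequality from \cref{thm:Riemannian-descent-lemma-proj} with the tangent component $\vect{v} = -\tau_{*}\tilde{\matr{H}}_k$ and the normal component $\vect{w} = -\tau_{*}\hat{\matr{H}}_k$, and then use \cref{asp:H-tilde-grad-equiv} to convert the resulting bound into a multiple of $\langle \grad f(\matr{X}_k), \matr{H}_k \rangle$. First I would note that the conditions of \cref{thm:Riemannian-descent-lemma-proj} are met: taking $\delta$ to be the fixed constant in the definition of the TGP-F algorithm, we have $\| \vect{w} \| = \tau_{*}\| \hat{\matr{H}}_k \| \leq \varrho_{*} - \delta$ by the choice of $\tau_{*}$. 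Also $\mathcal{P}_{\mm}(\matr{X}_k - \tau_{*}\tilde{\matr{H}}_k - \tau_{*}\hat{\matr{H}}_k) = \mathcal{P}_{\mm}(\matr{Y}_k(\tau_{*})) = \matr{X}_{k+1}$ by the update scheme \eqref{eq:upd_pi_ytau}. Applying \eqref{eq:Riemannian-gradient-descent-general-smooth} with $g = f$ gives
\begin{equation*}
f(\matr{X}_{k+1}) - f(\matr{X}_k) + \tau_{*}\langle \grad f(\matr{X}_k), \tilde{\matr{H}}_k \rangle \leq \tau_{*}^2 \Gamma_1^{(\delta)}\| \tilde{\matr{H}}_k \|^2 + \tau_{*}^2 \Gamma_2^{(\delta)}\| \grad f(\matr{X}_k) \| \| \tilde{\matr{H}}_k \| \| \hat{\matr{H}}_k \|.
\end{equation*}

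Next I would bound the right-hand side using \cref{asp:H-tilde-grad-equiv}. By \eqref{eq:H-grad-norm-equiv}, $\| \tilde{\matr{H}}_k \| \leq \varpi \| \grad f(\matr{X}_k) \|$; combined with $\| \hat{\matr{H}}_k \| \leq \Delta_{\hat{\matr{H}}}$, the right-hand side is at most $\tau_{*}^2 (\Gamma_1^{(\delta)}\varpi^2 + \Gamma_2^{(\delta)}\Delta_{\hat{\matr{H}}}\varpi) \| \grad f(\matr{X}_k) \|^2$. On the left-hand side, I would replace $\langle \grad f(\matr{X}_k), \tilde{\matr{H}}_k \rangle$ with $\langle \grad f(\matr{X}_k), \matr{H}_k \rangle$ using \eqref{eq:H-grad-product}, and then use the lower bound $\upsilon \| \grad f(\matr{X}_k) \|^2 \leq \langle \grad f(\matr{X}_k), \matr{H}_k \rangle$ from \eqref{eq:H-grad-product} to write $\| \grad f(\matr{X}_k) \|^2 \leq \langle \grad f(\matr{X}_k), \matr{H}_k \rangle / \upsilon$. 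Substituting, we obtain
\begin{equation*}
f(\matr{X}_{k+1}) - f(\matr{X}_k) \leq -\tau_{*}\langle \grad f(\matr{X}_k), \matr{H}_k \rangle + \tau_{*}^2 \frac{\Gamma_1^{(\delta)}\varpi^2 + \Gamma_2^{(\delta)}\Delta_{\hat{\matr{H}}}\varpi}{\upsilon} \langle \grad f(\matr{X}_k), \matr{H}_k \rangle,
\end{equation*}
which rearranges to $f(\matr{X}_k) - f(\matr{X}_{k+1}) \geq \gamma_{*}\tau_{*}\langle \grad f(\matr{X}_k), \matr{H}_k \rangle$ with $\gamma_{*} = 1 - \tau_{*}(\Gamma_1^{(\delta)}\varpi^2 + \Gamma_2^{(\delta)}\Delta_{\hat{\matr{H}}}\varpi)/\upsilon$.

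Finally I would verify that $\gamma_{*} \in (0,1)$: positivity follows directly from the upper bound $\tau_{*} < \upsilon / (\Gamma_1^{(\delta)}\varpi^2 + \Gamma_2^{(\delta)}\Delta_{\hat{\matr{H}}}\varpi)$ imposed in the definition of the TGP-F algorithm, while $\gamma_{*} < 1$ holds since $\tau_{*}$, $\Gamma_1^{(\delta)}$, $\varpi$, $\Gamma_2^{(\delta)}$, $\Delta_{\hat{\matr{H}}}$, $\upsilon$ are all nonnegative and $\Gamma_1^{(\delta)}\varpi^2 > 0$. In the degenerate case $\Delta_{\hat{\matr{H}}} = 0$ (i.e. $\hat{\matr{H}}_k = \vect{0}$ for all $k$), one takes $\delta = \varrho_{*}$, so $\vect{w} = \vect{0}$, the $\Gamma_2$ term drops out entirely, and the same computation applies verbatim. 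There is no real obstacle here — the lemma is a direct consequence of \cref{thm:Riemannian-descent-lemma-proj} and \cref{asp:H-tilde-grad-equiv}; the only point requiring a little care is matching the index $\delta$ consistently between the stepsize condition, the constants $\Gamma_1^{(\delta)}, \Gamma_2^{(\delta)}, L_0^{(\delta)}$, and the hypothesis $\|\vect{w}\| \leq \varrho_{*} - \delta$ of \cref{thm:Riemannian-descent-lemma-proj}.
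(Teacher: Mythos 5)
Your proposal is correct and follows essentially the same route as the paper: apply \cref{thm:Riemannian-descent-lemma-proj} with $\vect{v}=-\tau_{*}\tilde{\matr{H}}_k$, $\vect{w}=-\tau_{*}\hat{\matr{H}}_k$ (the stepsize condition guaranteeing $\|\vect{w}\|\leq\varrho_{*}-\delta$), then use \eqref{eq:H-grad-norm-equiv} and \eqref{eq:H-grad-product} to bound the quadratic terms by $(1-\gamma_{*})\langle\grad f(\matr{X}_k),\matr{H}_k\rangle$ and rearrange. The only differences are cosmetic (order of the bounding steps and your explicit remark on the $\Delta_{\hat{\matr{H}}}=0$ case), so no further changes are needed.
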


\begin{proof}
The following calculations are similar to the proof of \cref{thm:Armijo-stepsize-lower-bound-convergence-rate}(i). Combining  \eqref{eq:H-grad-product} and \eqref{eq:Riemannian-descent-lemma-proj}, we have
\begin{align*}
&\tau_{*}\left(\Gamma_1^{(\delta)} \| \tilde{\matr{H}}_k \|^2 + \Gamma_2^{(\delta)} \| \grad f(\matr{X}_k) \| \| \tilde{\matr{H}}_k \| \| \hat{\matr{H}}_k \|\right) \leq \tau_{*}\left(\Gamma_1^{(\delta)} \varpi^2 + \Gamma_2^{(\delta)}\Delta_{\hat{\matr{H}}}\varpi\right) \| \grad f(\matr{X}_k) \|^2  \\
&\leq \tau_{*} \frac{\Gamma_1^{(\delta)} \varpi^2 + \Gamma_2^{(\delta)}\Delta_{\hat{\matr{H}}}\varpi}{\upsilon} \left\langle \grad f(\matr{X}_k), \matr{H}_k \right\rangle
= (1-\gamma_{*}) \langle \grad f(\matr{X}_k), \matr{H}_k \rangle.
\end{align*}
Then, it follows from \cref{thm:Riemannian-descent-lemma-proj} that
\begin{align*}
f(\matr{X}_k) - f(\matr{X}_{k+1}) & \geq \tau_{*}\left(\langle \grad f(\matr{X}_k), \matr{H}_k \rangle - \tau_{*}\left(\Gamma_1^{(\delta)} \| \tilde{\matr{H}}_k \|^2 + \Gamma_2^{(\delta)} \| \grad f(\matr{X}_k) \| \| \tilde{\matr{H}}_k \| \| \hat{\matr{H}}_k \|\right)\right)  \\
& \geq \gamma_{*}\tau_{*}\langle \grad f(\matr{X}_k), \matr{H}_k \rangle.
\end{align*}
The proof is complete.
\end{proof}

Using \cref{lem:TGP-F-Armijo-like-condition} and following the proofs of \cref{thm:Armijo-stepsize-lower-bound-convergence-rate,thm:Armijo-global-convergence}, we can obtain the following convergence results about TGP-F algorithm in a similar manner.

\begin{theorem}\label{thm:TGP-F-weak-convergence-convergence-rate}
  Let \( \mm \subseteq \RR^{n \times r} \) be a compact submanifold of class \(C^3\) and the cost function $f$ in \eqref{eq:objec_func_g} have a Lipschitz continuous gradient in the convex hull of $\mm$.
In TGP-F algorithm, if $\matr{L}_k$ and $\matr{R}_k$ satisfy \cref{asp:H-tilde-grad-equiv}, and $\matr{H}_{k}$ satisfies \cref{asp:H-boundedness}, then \\
(i) \( \lim_{k \to \infty} \| \grad f(\matr{X}_k) \| = 0\), implying that every accumulation point of \( \{ \matr{X}_k \}_{k \geq 0} \) is a stationary point;\\
(ii) for any $ K \in \NN $, we have that
\[ \min_{0 \leq k \leq K}\| \grad f(\matr{X}_k) \| \leq \sqrt{\frac{f(\matr{X}_{0}) - f^{*}}{\gamma_{*}\tau_{*} \upsilon(K+1)}}. \]
\end{theorem}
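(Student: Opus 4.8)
The plan is to mirror the argument used for \cref{thm:Armijo-stepsize-lower-bound-convergence-rate}(ii)--(iii), now with the constant stepsize $\tau_{*}$ playing the role of the uniform lower bound $\tilde{\tau}$ and $\gamma_{*}$ playing the role of $\gamma$. The entire statement rests on the sufficient-decrease inequality already isolated in \cref{lem:TGP-F-Armijo-like-condition}, so the remaining work is a purely telescoping summation argument. Before starting, I would note that the admissible interval for $\tau_{*}$ displayed just before \cref{lem:TGP-F-Armijo-like-condition} is non-empty, so such a fixed stepsize exists; that $\tau_{*}$ has been chosen precisely so that $\tau_{*} \| \hat{\matr{H}}_k \| \leq \varrho_{*} - \delta$ for all $k$ (which is exactly what makes \cref{thm:Riemannian-descent-lemma-proj}, and hence \cref{lem:TGP-F-Armijo-like-condition}, applicable with this $\delta$); and that the two subcases $\Delta_{\hat{\matr{H}}} > 0$ and $\Delta_{\hat{\matr{H}}} = 0$ are handled uniformly once $\delta$ is fixed (in the second subcase one takes $\delta = \varrho_{*}$, which forces $\hat{\matr{H}}_k = \vect{0}$).

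First I would combine \cref{lem:TGP-F-Armijo-like-condition} with the lower bound $\langle \grad f(\matr{X}_k), \matr{H}_k \rangle \geq \upsilon \| \grad f(\matr{X}_k) \|^2$ from \eqref{eq:H-grad-product} in \cref{lem:proper_c_k}(ii), which is available since \cref{asp:H-tilde-grad-equiv} holds. This yields, for every $k \in \NN$,
\[ f(\matr{X}_k) - f(\matr{X}_{k+1}) \geq \gamma_{*} \tau_{*} \langle \grad f(\matr{X}_k), \matr{H}_k \rangle \geq \gamma_{*} \tau_{*} \upsilon \| \grad f(\matr{X}_k) \|^2 ; \]
in particular $\{ f(\matr{X}_k) \}_{k \geq 0}$ is non-increasing. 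Summing this for $k$ from $0$ to $K$ and using $f(\matr{X}_{K+1}) \geq f^{*}$ gives
\[ f(\matr{X}_0) - f^{*} \geq f(\matr{X}_0) - f(\matr{X}_{K+1}) \geq \gamma_{*} \tau_{*} \upsilon \sum_{k=0}^{K} \| \grad f(\matr{X}_k) \|^2 . \]
Letting $K \to \infty$ shows $\sum_{k=0}^{\infty} \| \grad f(\matr{X}_k) \|^2 < +\infty$, hence $\lim_{k \to \infty} \| \grad f(\matr{X}_k) \| = 0$; together with the continuity of $\grad f$ and the compactness of $\mm$, this proves part (i). For part (ii), I would bound the minimum by the average: $(K+1) \min_{0 \leq k \leq K} \| \grad f(\matr{X}_k) \|^2 \leq \sum_{k=0}^{K} \| \grad f(\matr{X}_k) \|^2 \leq (f(\matr{X}_0) - f^{*}) / (\gamma_{*} \tau_{*} \upsilon)$, and then take square roots.

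There is essentially no hard step here: all the difficulty has been pushed into the geometric estimate \cref{thm:Riemannian-descent-lemma-proj} and the descent lemma \cref{lem:TGP-F-Armijo-like-condition}, both already established. The only points demanding care are the bookkeeping mentioned above — verifying that the chosen $\tau_{*}$ indeed makes $\gamma_{*} \in (0,1)$ and keeps the normal displacement within the reach $\varrho_{*} - \delta$ — and the harmless subtlety that $\mathcal{P}_{\mm}$ need not be single-valued for the point $\matr{Y}_k(\tau_{*})$, which is immaterial since \cref{thm:Riemannian-descent-lemma-proj} (and hence \cref{lem:TGP-F-Armijo-like-condition}) holds for an arbitrary projection. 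Accordingly, I would present the proof as a short deduction "following the proofs of \cref{thm:Armijo-stepsize-lower-bound-convergence-rate}" rather than redoing the estimates in full.
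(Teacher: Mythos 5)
Your proposal is correct and follows essentially the same route as the paper, which itself only remarks that the result is obtained ``using \cref{lem:TGP-F-Armijo-like-condition} and following the proofs of \cref{thm:Armijo-stepsize-lower-bound-convergence-rate,thm:Armijo-global-convergence}'': you combine the fixed-stepsize sufficient-decrease inequality with \eqref{eq:H-grad-product} and telescope, with $\tau_{*}$ and $\gamma_{*}$ playing the roles of $\tilde{\tau}$ and $\gamma$. The bookkeeping you flag (admissibility of $\tau_{*}$, $\gamma_{*}\in(0,1)$, and validity of \cref{thm:Riemannian-descent-lemma-proj} for an arbitrary projection) matches the paper's setup, so no further work is needed.
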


\begin{theorem}\label{thm:TGP-F-global-convergnce}
Let $ \mm \subseteq \RR^{n\times r}$ be an analytic compact matrix submanifold and $f$ in \eqref{eq:objec_func_g} be an analytic function.
In the TGP-F algorithm, if $\matr{L}_k$ and $\matr{R}_k$ satisfy \cref{asp:H-tilde-grad-equiv}, $\matr{H}_{k}$ satisfies \cref{asp:H-boundedness}, then the sequence \( \{ \matr{X}_k \}_{k \geq 0} \) converges to a stationary point and the estimation of the convergence speed in \eqref{eq:KL_convergence_rate} holds.
\end{theorem}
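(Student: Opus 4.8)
The plan is to verify the three hypotheses of \cref{theorem-SU15} with $g=f$, $\vect{x}_k=\matr{X}_k$ and $\mm'=\mm$, mirroring the proof of \cref{thm:Armijo-global-convergence}, but replacing the Armijo condition by \cref{lem:TGP-F-Armijo-like-condition} and the stepsize lower bound $\tau_k\geq\tilde{\tau}$ by the constant $\tau_{*}$.

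First I would establish the sufficient-descent condition \cref{theorem-SU15}(i). Starting from \eqref{eq:Riemannian-descent-lemma-proj} and using \eqref{eq:H-grad-product} together with the upper norm equivalence in \eqref{eq:H-grad-norm-equiv}, I would chain
\[
f(\matr{X}_k)-f(\matr{X}_{k+1})\;\geq\;\gamma_{*}\tau_{*}\langle\grad f(\matr{X}_k),\tilde{\matr{H}}_k\rangle\;\geq\;\gamma_{*}\tau_{*}\upsilon\|\grad f(\matr{X}_k)\|^2\;\geq\;\frac{\gamma_{*}\tau_{*}\upsilon}{\varpi}\,\|\grad f(\matr{X}_k)\|\,\|\tilde{\matr{H}}_k\|.
\]
Since the TGP-F stepsize is chosen so that $\tau_{*}\|\hat{\matr{H}}_k\|\leq\varrho_{*}-\delta$ for all $k$, the first-order bound \eqref{eq:first-order-boundedness} of \cref{lem:first-second-order-boundedness} applies with $\vect{v}=-\tau_{*}\tilde{\matr{H}}_k$ and $\vect{w}=-\tau_{*}\hat{\matr{H}}_k$, giving $\|\matr{X}_{k+1}-\matr{X}_k\|\leq L_0^{(\delta)}\tau_{*}\|\tilde{\matr{H}}_k\|$; substituting yields $f(\matr{X}_k)-f(\matr{X}_{k+1})\geq\frac{\gamma_{*}\upsilon}{\varpi L_0^{(\delta)}}\|\grad f(\matr{X}_k)\|\,\|\matr{X}_{k+1}-\matr{X}_k\|$, which is (i) with $\phi=\gamma_{*}\upsilon/(\varpi L_0^{(\delta)})$. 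Next, for condition (ii): if $\grad f(\matr{X}_k)=\matr{0}$ then \eqref{eq:H-grad-norm-equiv} forces $\tilde{\matr{H}}_k=\matr{0}$, so $\matr{H}_k=\hat{\matr{H}}_k\in\NormalM{\matr{X}_k}$ with $\|\tau_{*}\hat{\matr{H}}_k\|\leq\varrho_{*}-\delta<\varrho_{*}$, and \eqref{eq:first-order-boundedness-normal-general} gives $\matr{X}_{k+1}=\mathcal{P}_{\mm}(\matr{X}_k-\tau_{*}\hat{\matr{H}}_k)=\matr{X}_k$. Because $\mm$ is compact the sequence has an accumulation point $\matr{X}_{*}$, which is stationary by \cref{thm:TGP-F-weak-convergence-convergence-rate}(i), so \cref{theorem-SU15} gives $\matr{X}_k\to\matr{X}_{*}$.

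For the convergence-speed estimate \eqref{eq:KL_convergence_rate} I would verify condition (iii) of \cref{theorem-SU15}. Using the lower bound \cref{thm:anti-first-order-boudnedness} with the same $\vect{v},\vect{w}$, together with \cref{asp:H-boundedness} and the lower norm equivalence in \eqref{eq:H-grad-norm-equiv},
\[
\|\matr{X}_{k+1}-\matr{X}_k\|\;\geq\;\frac{\tau_{*}\|\tilde{\matr{H}}_k\|}{1+L_3\tau_{*}\|\matr{H}_k\|}\;\geq\;\frac{\tau_{*}\upsilon}{1+L_3\tau_{*}\Delta_{\matr{H}}}\,\|\grad f(\matr{X}_k)\|,
\]
so condition (iii) holds with $\zeta=\tau_{*}\upsilon/(1+L_3\tau_{*}\Delta_{\matr{H}})$, and \eqref{eq:KL_convergence_rate} follows.

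The proof is essentially an assembly of results already established, and I do not expect a genuine obstacle. The one point requiring care is that the single constant $\delta\in(0,\varrho_{*}]$ fixed in defining the TGP-F stepsize must simultaneously guarantee $\tau_{*}\|\hat{\matr{H}}_k\|\leq\varrho_{*}-\delta$ along the entire trajectory — so that the projection inequalities of \cref{lem:first-second-order-boundedness} and the normal-stability identity \eqref{eq:first-order-boundedness-normal-general} are valid at every iterate — while keeping $\gamma_{*}=1-\tau_{*}(\Gamma_1^{(\delta)}\varpi^2+\Gamma_2^{(\delta)}\Delta_{\hat{\matr{H}}}\varpi)/\upsilon\in(0,1)$; this is precisely why the admissible range for $\tau_{*}$ stated before \cref{lem:TGP-F-Armijo-like-condition} is taken as it is, and it amounts to bookkeeping rather than a new difficulty.
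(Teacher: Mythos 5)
Your proposal is correct and takes essentially the same route as the paper: the paper's proof of \cref{thm:TGP-F-global-convergnce} is exactly the argument of \cref{thm:Armijo-global-convergence} with the Armijo condition replaced by \cref{lem:TGP-F-Armijo-like-condition} and the stepsize lower bound $\tilde{\tau}$ replaced by the fixed $\tau_{*}$, verifying conditions (i)--(iii) of \cref{theorem-SU15} via \eqref{eq:H-grad-norm-equiv}, \eqref{eq:H-grad-product}, \eqref{eq:first-order-boundedness}, \eqref{eq:first-order-boundedness-normal-general} and \cref{thm:anti-first-order-boudnedness}, just as you do. Your closing remark that the constraint $\tau_{*}\|\hat{\matr{H}}_k\|\leq\varrho_{*}-\delta$ is built into the admissible range of $\tau_{*}$ (so no extra hypothesis is needed, unlike in \cref{thm:Armijo-global-convergence}) is precisely the point the paper relies on.
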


\begin{remark}\label{remark:tgp-f-conv}
In the RetrLS and ProjLS algorithms on a compact manifold including
\( \St(r, n) \) and $\Gr(p,n)$ as special cases, the fixed stepsize has been extensively used in the literature \cite{chen2009tensor,gao2018new,yang2019epsilon,li2019polar}, as well as the weak convergence and global convergence of the corresponding algorithms. For example, in \cite{li2019polar}, the convergence analysis of the case where \( \matr{H}_k = \nabla f(\matr{X}_{k}) \) or \( \matr{H}_k = \nabla f(\matr{X}_{k}) + \matr{X}_k/ \tau_{*} \) (corresponds to power method) is derived.
However, different from the fixed stepsize in these works, which uses the descent lemma to establish their convergence properties, in this paper, we use the geometric results in \cref{lem:first-second-order-boundedness} and a key inequality in \cref{thm:Riemannian-descent-lemma-proj} relevant to the tangent space and normal space of the submanifold. As a result, the conditions required for convergence in our framework are also different from those in \cite{chen2009tensor,gao2018new,yang2019epsilon,li2019polar}.
\end{remark}

\section{Numerical experiments}\label{sec:numer_exper}

For the question summarized in \cref{subsec:limitations_problem}, we have provided a \emph{positive} answer in the previous sections, by proposing the TGP algorithmic framework and establishing various convergence properties across three stepsizes in a general setting. 
Compared to the classical EGP and RGD algorithms, the search direction \( \matr{H}_k \) of the general TGP algorithmic framework is further endowed with scaling matrices \( \matr{L}_k \), \( \matr{R}_k \), and an additional normal vector \( \matr{N}_k \). 
This naturally leads to the following question:
%As a result, a natural follow-up question is:

\begin{itemize}
\item[] \emph{Could these components within the framework improve the numerical performance of the TGP algorithms?}
\end{itemize}

In this section, we will further provide a \emph{positive} answer to the above question, emphasizing the importance of the scaling matrices and the additional normal vector.
This demonstrates that introducing them is not merely for generality but can also lead to superior numerical performance in certain special problems.
We will focus on several concrete examples, and introduce new special-case algorithms, TGP-$\ast$-E/R in \eqref{eq:H_TGP_R_E}, TGP-$\ast$-DE/DF in \eqref{eq:TGP-DEF}, and TGP-A-Eigen in \eqref{eq:H-TGP-eigen}, from our TGP framework.
These algorithms distinguish themselves from the classical RGD and EGP algorithms, and we will provide both theoretical insights and numerical results to validate their effectiveness.

\subsection{Effect of normal components with a fixed tangent component}\label{subsec:tgp_diffe_norm}

In this subsection, we conduct numerical experiments to investigate the effect of the normal component of the search direction. Specifically, we compare different search directions that share the same tangent component, but differ in their normal components. Our results will show that, even with the same tangent component, different normal components of \( \matr{H}_k \) can lead to significantly different performance in practice. Consequently, in certain cases, the introduction of the additional component \( \matr{N}_k \) can be critical in enhancing performance.

To ensure a fair comparison with the classical RGD and EGP algorithms, we always set the tangent component of $\matr{H}_k$ as $\grad f(\matr{X}_k)$. The search directions we use are as follows:
\begin{equation}\label{eq:H_TGP_R_E}
  \begin{split}
  \text{TGP-$\ast$-R}:\quad & \matr{H}_k = \grad f(\matr{X}_{k}) + a \matr{X}_{k}\matr{S}_k,  \\
  \text{TGP-$\ast$-E}:\quad & \matr{H}_k = \nabla f(\matr{X}_{k}) + a \matr{X}_{k}\matr{S}_k = \grad f(\matr{X}_{k}) + \mathcal{P}_{\NormalSt{\matr{X}_k}}(\nabla f(\matr{X}_k)) + a\matr{X}_k\matr{S}_k,
  \end{split}
\end{equation}
where $a \in \RR$ is a hyper-parameter and $\matr{S}_k \in \symmm{\RR^{r \times r}}$ is chosen by us manually. 
When $a$ = 0, the above two TGP algorithms reduce to the classical RGD and EGP algorithms, respectively. 
Note that \( \matr{X}_k \matr{S}_k \in \NormalSt{\matr{X}_k}\) by \eqref{def-St-normal-space}.
As indicated in equation \eqref{eq:H_TGP_R_E}, the TGP algorithms with one of the above settings will be referred to as \emph{TGP-$\ast$-R} or \emph{TGP-$\ast$-E}, where the symbol $\ast$ represents the chosen stepsize, including \emph{A} (Armijo stepsize), \emph{NA} (Zhang-Hager type nonmonotone Armijo stepsize) and \emph{F} (fixed stepsize).

%We will choose three types of test problems on the Stiefel manifold. 
We consider three test problems on the Stiefel manifold.
For each problem, we conduct the following two types of experiments~\ref{exp1} and~\ref{exp2}:
\begin{enumerate}[label=Exp \arabic*, leftmargin=*]
\item \label{exp1} \textbf{Effect of normal component:} We study the impact of the normal component of \( \matr{H}_k \) by comparing TGP-A-R/E using different values of $a$. 
We will see that, even with the same tangent component \( \tilde{\matr{H}}_k = \grad f(\matr{X}_k) \), different choices of the normal component, such as \( \hat{\matr{H}}_k = a\matr{X}_k\matr{S}_k \) or \( \hat{\matr{H}}_k = \mathcal{P}_{\NormalSt{\matr{X}_k}}(\nabla f(\matr{X}_k)) + a\matr{X}_k\matr{S}_k \), can lead to significantly different performance in practice. 
In particular, since TGP-A-R/E with $a=0$ reduce to RGD and EGP, the experiments show that their performance can be improved by introducing the additional normal term $a \matr{X}_k \matr{S}_k$. 
This highlights the importance of the additional normal vector in our TGP framework.
\item \label{exp2} \textbf{Comparison with classical Riemannian optimization methods:} We implement
% TGP-A-R, TGP-NA-R, TGP-F-R, TGP-A-E, TGP-NA-E and TGP-F-E
TGP-A/NA/F-E/R
algorithms in the framework of the {\tt manopt}\footnote{This package was downloaded from \url{https://www.manopt.org}.} package~\cite{JMLR:v15:boumal14a}, and compare them with three retraction-based Riemannian optimization methods from {\tt manopt}, including the {\emph{RGD}} (Riemannian gradient descent)\footnote{The algorithm is referred to as the \emph{Steepest Descent} solver in the {\tt manopt} package.}, 
%\emph{BB} (RGD with BB step-size) 
\emph{CG} (Riemannian conjugate gradient) and \emph{BFGS} (Riemannian version of BFGS) algorithms.
It will be seen that TGP algorithms with a suitable choice of $a$ outperform these classical retraction-based algorithms, particularly in terms of the final \emph{solution quality}.
\end{enumerate}

\textbf{Performance metrics.} In each experiment, we randomly generate 500 instances, paired with corresponding initial points generated by projecting a random matrix onto $\St(r, n)$, and apply the relevant algorithms to solve these instances. The generation procedure will be clarified later.
We evaluate the performance of the algorithms using three metrics, including the \emph{average iteration number} (\emph{Niter}), \emph{average CPU time} (\emph{Time}), and \emph{solution quality}.
The solution quality is assessed as follows.
(i) For problems with a known global optimum, we simply report the number of random instances where the algorithm achieves the global solution\footnote{An algorithm is considered to have achieved a global solution if it terminates with an objective value less than $f_{*} + 10^{-4}$, where $f_{*}$ denotes the global minimum of this instance.}, denoted by \emph{NGlobal} to evaluate the solution quality. (ii) For problems with an unknown optimum, we select an algorithm as a baseline for comparison.
We record the number of instances where the algorithm finds a better or worse solution\footnote{In a random instance, if the baseline algorithm stops with an objective value \( f_b \), we consider another algorithm to have found a better solution if its final objective value \( < f_b - 10^{-4} \), and a worse solution if the value \( > f_b+10^{-4} \).} compared to the baseline algorithm, denoted by \emph{NBetter} and \emph{NWorse}, respectively.
We report the overall superior instance number by \emph{NSuper}, defined as $\emph{NSuper}\eqdef\emph{NBetter} - \emph{NWorse}$.

\textbf{Algorithmic details.} For each random instance, we first randomly generate a symmetric matrix \( \matr{S} \in \symmm{\RR^{r \times r}} \) with eigenvalues uniformly distributed in \( [0.5, 1.5] \), and fix \( \matr{S}_k\) in \eqref{eq:H_TGP_R_E} to be \( \matr{S} \) for all TGP variants. To ensure a fair comparison of computation time, we implement all TGP algorithms within the {\tt manopt} framework. 
In {\tt manopt}, both RGD and CG employ an adaptive line-search scheme, which updates the trial stepsize $\hat{\tau}_k$ using information from previous iterations. However, this strategy is not well-suited for TGP algorithms, where the search direction may not be tangent. To address this issue, we adopt a simple update rule: after determining  \(\tau_k\) via backtracking, if no backtracking occurs, i.e. \(\tau_k = \hat{\tau}_k\), we set $ \hat{\tau}_{k+1} = \min\{ 1.1 \cdot \hat{\tau}_k, \hat{\tau}_0 \}$; otherwise, we set $ \hat{\tau}_{k+1} = 0.9 \cdot \hat{\tau}_k$. For TGP-A and TGP-NA, without additional specification, the backtracking parameters are fixed as \(\gamma = \beta = 0.5\), and \(\hat{\tau}_0 = 1\). The maximum number of backtracking steps is set to 10. For TGP-F, the stepsize is specified for each test problem. 
For the retraction-based Riemannian optimization methods from {\tt manopt}, we use the default settings, except that the maximum number of iterations \texttt{maxiter}, the maximum time \texttt{maxtime}, and the gradient norm tolerance \texttt{tolgradnorm} are set to match those of TGP. 
We also disable the default stopping criterion \texttt{minstepsize}, and use the polar decomposition as the retraction.
All algorithms stop when any one of the following stopping criteria is met: (i) the norm of the Riemannian gradient is less than \texttt{tolgradnorm}, set as $10^{-4}$, (ii) the number of iterations exceeds \texttt{maxiter}, set as $10^4$, or (iii) the runtime exceeds \texttt{maxtime}, set as $5$ seconds. In the latter two cases, the algorithm is considered to have failed, and the number of such instances is denoted by \emph{NFail}.
All the computations are done using MATLAB R2025b and the Tensor Toolbox version 3.7 \cite{TTB_Software}. The MATLAB code generating the results in this paper is available upon reasonable request.

\begin{example}\label{exa:qp_inhomo}
% implement details
We consider the following \emph{inhomogeneous quadratic optimization problem with orthogonality constraints}:
\begin{equation}\label{eq:problem-qp-inhomo}
 \min f(\matr{X}) = \frac{1}{2} \tr\left((\matr{X} - \matr{X}^{*})^{\T}\matr{A}(\matr{X} - \matr{X}^{*})\right), \ \matr{X}\in \St(r, n),
\end{equation}
where \( \matr{A} \in \symmm{\RR^{n \times n}} \) is positive semi-definite and \( \matr{X}^{*} \in \St(r, n) \). The global minimum of problem \eqref{eq:problem-qp-inhomo} is clearly \( 0 \). We set \( n = 3\) and \( r = 2 \).
For each random instance, the matrix $\matr{X}^{*}$ in~\eqref{eq:problem-qp-inhomo} is generated by projecting a random matrix onto $\St(r,n)$. 
The matrix $\matr{A}$ is generated in two different ways due to the huge difference between their numerical results.
\begin{enumerate}[label=Case \arabic*, leftmargin=*]
  \item \label{qp_inhomo_set_1} \( \matr{A} \) is generated randomly: $\matr{A} = \matr{B}\matr{B}^{\T}$, where $\matr{B} = \texttt{randn}(n, n)$.
\item \label{qp_inhomo_set_2} \( \matr{A} \) is generated randomly with eigenvalues between \( 9.9 \) and \( 10.1 \): \( \matr{A} = \matr{Q}^{\T}\texttt{diag}(\vect{d})\matr{Q} \), where \( \vect{d} = 9.9 + 0.2 \cdot \texttt{rand}(n, 1)\), and \( \matr{Q} = \mathcal{P}_{\ON{n}}(\texttt{randn}(n, n)) \).
\end{enumerate}
\end{example}
In \ref{exp2}, we set \( a = 1.1 \), \(\gamma = 10^{-4}\), \(\eta_k = 0.1\) for \ref{qp_inhomo_set_1}, and fix the stepsize of TGP-F-R/E to 0.055 to ensure convergence. For~\ref{qp_inhomo_set_2}, we set \( a = 0.7 \), \(\gamma = 0.5\), \(\eta_k = 0.1\), and use the stepsize $0.05$ for TGP-F-R/E. 
All other parameters are kept at their default values.

% experiment 1
The results of \ref{exp1} under both settings of \( \matr{A} \) are shown in \cref{fig:qp_inhomo_exp1}. Each figure shows \emph{Niter} and \emph{NGlobal} of the TGP-A algorithm using \( \matr{H}_k \) as in \eqref{eq:H_TGP_R_E}, with varying values of $a$.
As shown in \cref{fig:qp_inhomo_exp1_rg,fig:qp_inhomo_exp1_eg},  different values of $a$ in \ref{qp_inhomo_set_1} result in significantly different performance of the TGP algorithms, both in terms of \emph{Niter} and \emph{NGlobal}. 
In particular, a small positive value of $a$ improves the performance compared to $a = 0$. 
In contrast, in \ref{qp_inhomo_set_2}, all TGP algorithms reach the global minimum. However, \emph{Niter} is still related to the value of $a$.
Overall, the variation in the value of $a$, which is related to the normal component in \( \matr{H}_k \), significantly affects the practical performance of the TGP algorithms.

% experiment 2

The results of \ref{exp2} are presented in \cref{table:qp_inhomo_exp2}.
In \ref{qp_inhomo_set_1}, where \( \matr{A} \) is randomly generated and typically ill-conditioned, TGP-A/NA-E achieve the highest \emph{NGlobal} (414–418 out of 500) with \emph{Niter} and \emph{Time} comparable to RGD. 
By contrast, BFGS requires fewer iterations (\emph{NIter}) but more computation time (\emph{Time}), and attains fewer global solutions than TGP-A/NA-E.
%By contrast, BFGS needs fewer \emph{Niter} and larger \emph{Time}, as well as attains fewer global solutions than TGP-A/NA-E. 
The Zhang–Hager-type nonmonotone Armijo line search in TGP-NA-E/R variants provides a modest improvement in robustness over their monotone counterparts.
In \ref{qp_inhomo_set_2}, where \( \matr{A} \) has nearly uniform eigenvalues, all algorithms successfully reach the global minimum in all 500 instances. 
%However, TGP-F-E achieves the fewest iterations (\emph{NIter}) and shortest runtime (\emph{Time}), outperforming all Riemannian algorithms in efficiency. 
However, TGP-F-E requires the fewest iterations (\emph{NIter}) and the shortest runtime (\emph{Time}), outperforming all the retraction-based Riemannian algorithms in terms of efficiency. 
These results suggest that the TGP framework, by incorporating the normal component into the search direction, can improve convergence behavior without sacrificing generality.

\begin{figure}[htbp]
  \centering
  \begin{subfigure}{0.42\linewidth}
    \centering
    \includegraphics[width=\textwidth]{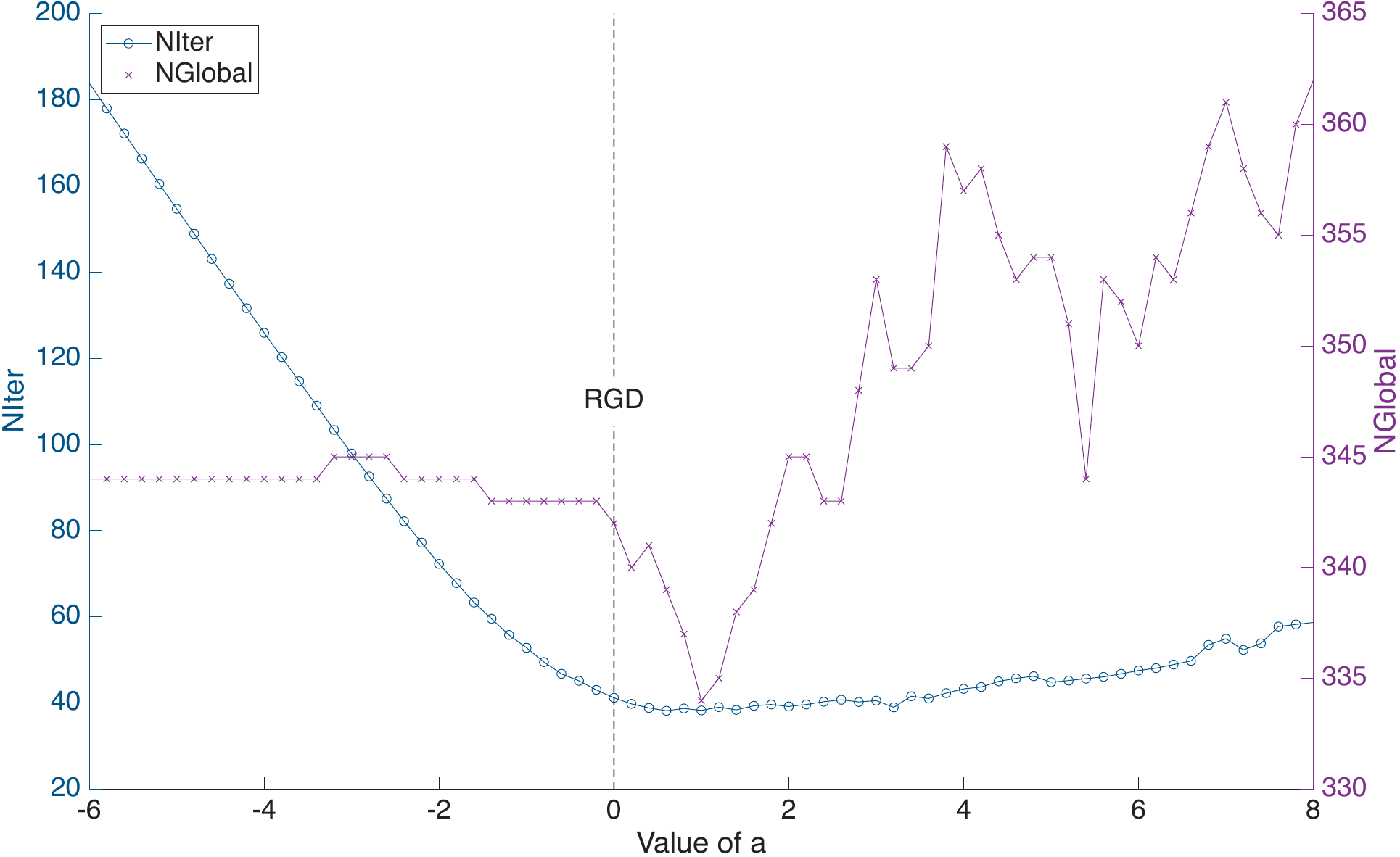}
    \caption{TGP-A-R with varying $a$ in~\ref{qp_inhomo_set_1}}
    \label{fig:qp_inhomo_exp1_rg}
  \end{subfigure}
  \hspace{0.04\textwidth} %\hfill
  \begin{subfigure}{0.42\linewidth}
    \centering
    \includegraphics[width=\textwidth]{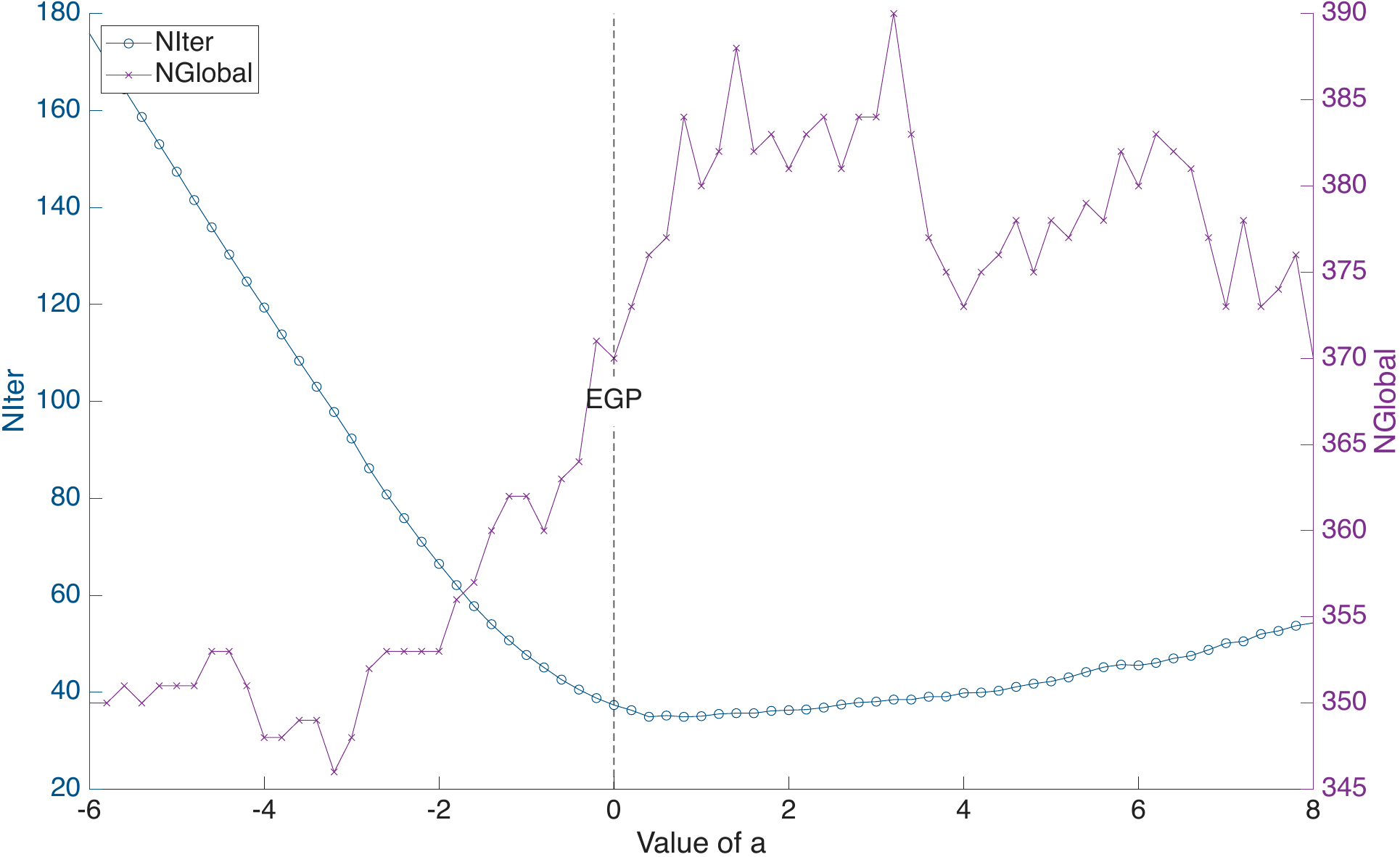}
    \caption{TGP-A-E with varying $a$ in~\ref{qp_inhomo_set_1}}
    \label{fig:qp_inhomo_exp1_eg}
  \end{subfigure}\\
  \begin{subfigure}{0.42\linewidth}
    \centering
    \includegraphics[width=\textwidth]{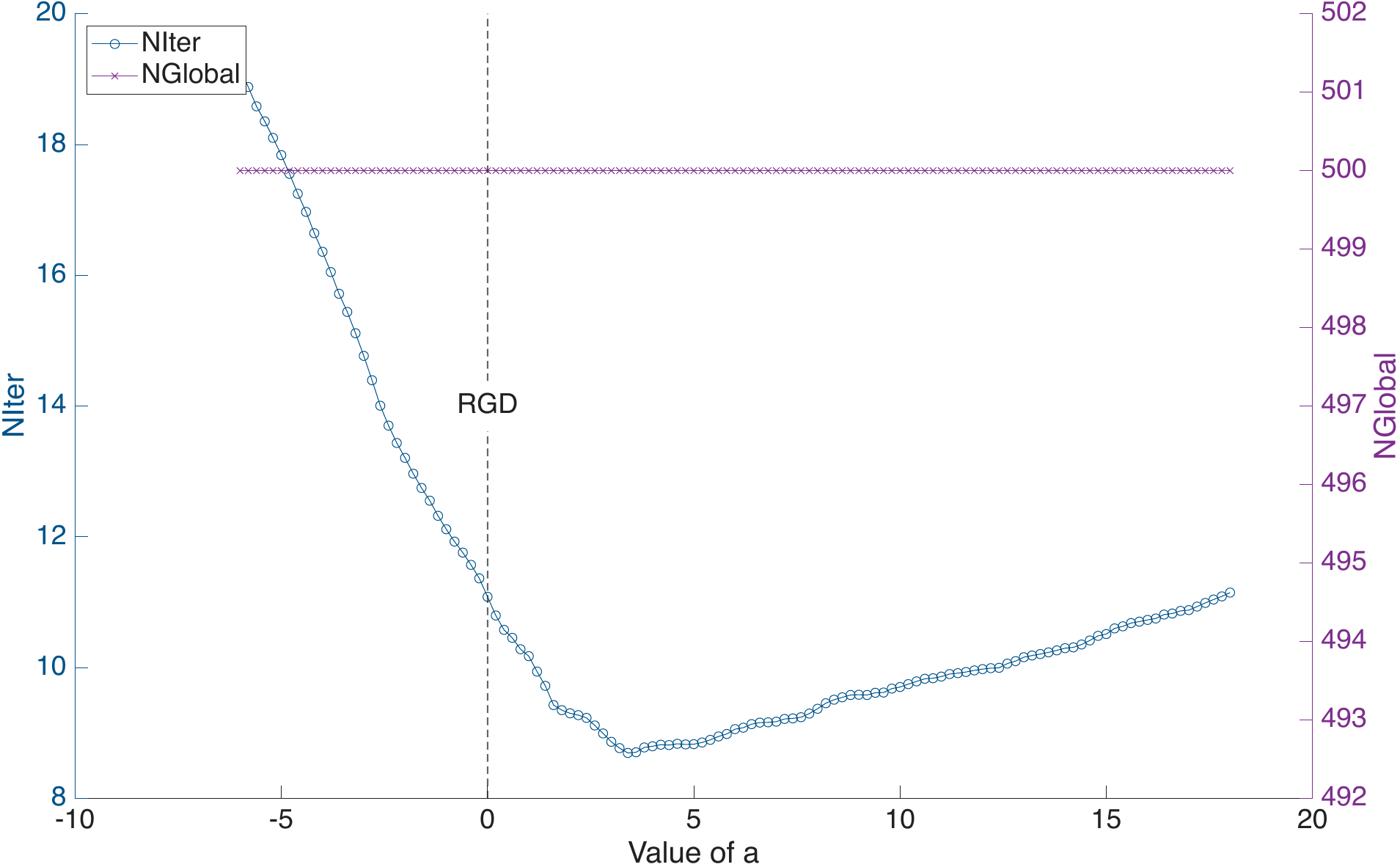}
    \caption{TGP-A-R with varying $a$ in~\ref{qp_inhomo_set_2}}
    \label{fig:qp_inhomo_99_02_3_2_exp1_rg}
  \end{subfigure}
  \hspace{0.04\textwidth} %\hfill
  \begin{subfigure}{0.42\linewidth}
    \centering
    \includegraphics[width=\textwidth]{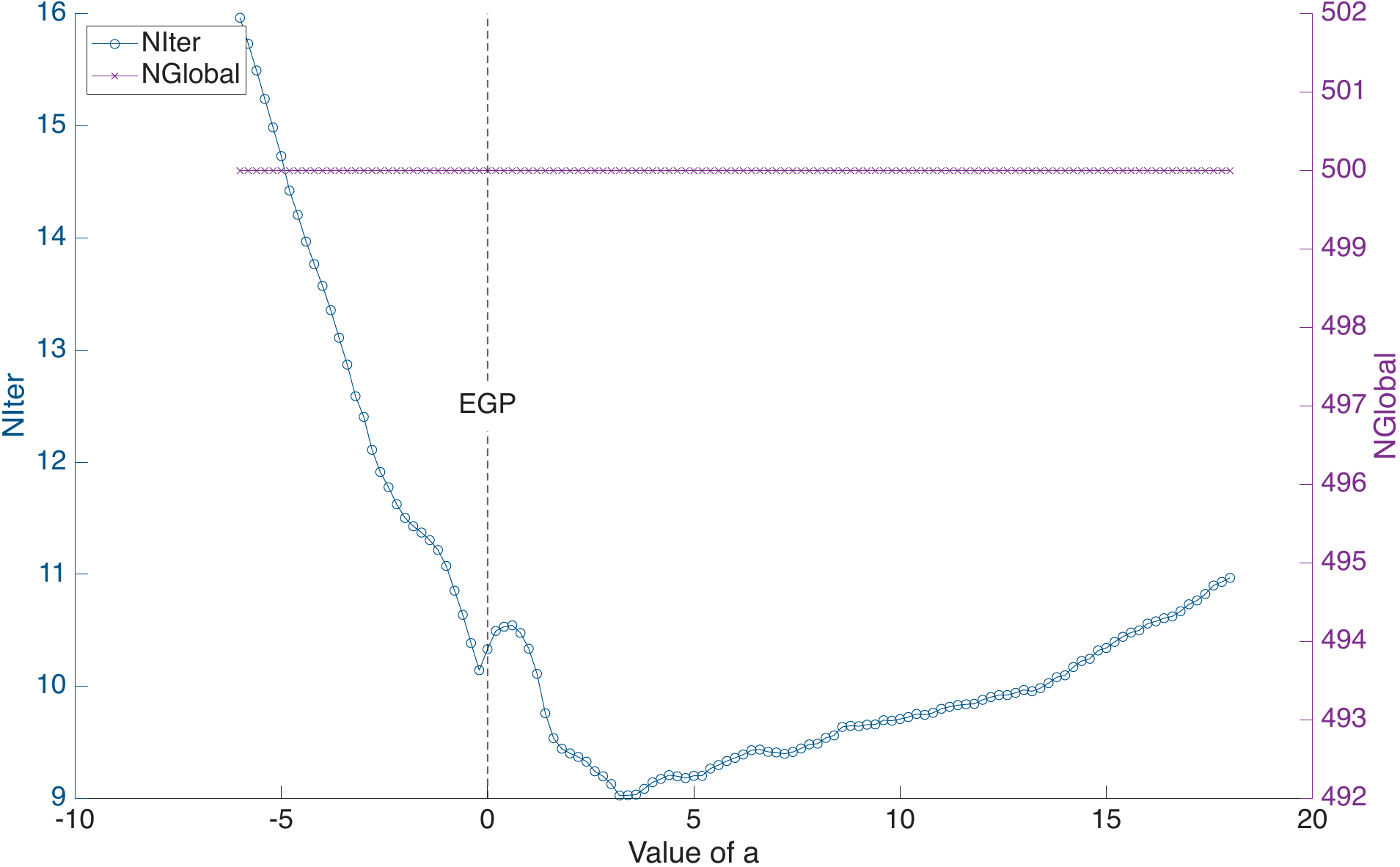}
    \caption{TGP-A-E with varying $a$ in~\ref{qp_inhomo_set_2}}
    \label{fig:qp_inhomo_99_02_3_2_exp1_eg}
  \end{subfigure}
  \caption{Results of \ref{exp1} for \cref{exa:qp_inhomo}.
The blue curves represent the average number of iterations (\emph{Niter}), and the purple curves represent the number of instances that reach the global minimum (\emph{NGlobal}), both as functions of \(a\). 
The black dashed lines correspond to the case \(a = 0\), i.e., RGD or EGP algorithm.}
  \label{fig:qp_inhomo_exp1}
\end{figure}

\begin{table}[htbp]
\centering
\footnotesize
\begin{tabular}{lrrrrrrrr}
\toprule
& \multicolumn{4}{c}{$\matr{A}$ from~\ref{qp_inhomo_set_1}} 
& \multicolumn{4}{c}{$\matr{A}$ from~\ref{qp_inhomo_set_2}} \\
\cmidrule(lr){2-5} \cmidrule(lr){6-9}
Algorithm & NIter & Time (s) & NGlobal & NFail 
          & NIter & Time (s) & NGlobal & NFail \\ 
\midrule
  TGP-A-R  & 43.0     & 0.0135     & 354     & 0  & 10.4    & 0.0038     & 500 & 0 \\
  TGP-NA-R & 44.9     & 0.0138     & 355     & 0  & 14.3    & 0.0049     & 500 & 0 \\
  TGP-F-R  & 206.9    & 0.0671     & 342     & 0  & 7.9     & 0.0020     & 500 & 0 \\
  TGP-A-E  & 38.1     & 0.0115     & 414     & 0  & 10.5    & 0.0038     & 500 & 0 \\
  TGP-NA-E & 39.8     & 0.0119     & \bf 418 & 0  & 12.4    & 0.0042     & 500 & 0 \\
  TGP-F-E  & 203.9    & 0.0651     & 345     & 0  & \bf 5.2 & \bf 0.0012 & 500 & 0 \\
\addlinespace[2pt]
\cmidrule(lr){1-9}
\addlinespace[2pt]
  RGD      & 39.8     &\bf 0.0114     & 346     & 0  & 14.6    & 0.0046     & 500 & 0 \\
%  BB       & 19.0     & \bf 0.0060 & 346     & 0  & 8.3     & 0.0026     & 500 & 0 \\
  CG       & 46.8     & 0.0318     & 340     & 2  & 13.5    & 0.0073     & 500 & 0 \\
  BFGS     & \bf 13.6 & 0.0383     & 341     & 0  & 8.9     & 0.0139     & 500 & 0 \\
\bottomrule
\end{tabular}
\caption{Results of \ref{exp2} for \cref{exa:qp_inhomo}.}
\label{table:qp_inhomo_exp2}
\end{table}

% \begin{table}[htbp]
%   \centering
% \begin{tabular}{c|cccc|cccc}
%   \toprule
%  & \multicolumn{4}{c|}{Generate $\matr{A}$ as in~\ref{qp_inhomo_set_1}}                                                                   & \multicolumn{4}{c}{Generate $\matr{A}$ as in~\ref{qp_inhomo_set_2}}                                                                   \\
%  Algorithm & {NIter} & {Time} & {NGlobal} & NFail & {NIter} & {Time} & {NGlobal} & NFail \\ \midrule
%   TGP-A-R  & {43.0}      & {0.0067}     & {342}     & {0} & {18.2}     & {0.0036}     & {500} & {0} \\
%   TGP-NA-R & {51.0}      & {0.0075}     & {348}     & {0} & {34.5}     & {0.0057}     & {500} & {0} \\
%   TGP-F-R  & {306.7}     & {0.0307}     & {339}     & {0} & {9.6}     & {0.0009}     & {500} & {0} \\
%   TGP-A-E  & {40.3}      & {\bf 0.0055} & {387}     & {0} & {17.0}     & {0.0028}     & {500} & {0} \\
%   TGP-NA-E & {48.8}      & {0.0065}     & {\bf 388} & {0} & {33.8 }     & {0.0051}     & {500} & {0} \\
%   TGP-F-E  & {303.4}     & {0.0265}     & {343}     & {0} & {\bf 6.9 } & {\bf 0.0006} & {500} & {0} \\
%   RGD       & {49.6 }     & {0.0356}     & {346}     & {0} & {17.0}     & {0.0138}     & {500} & {0} \\
%   CG       & {35.0}      & {0.0331}     & {338}     & {2} & {15.5}     & {0.0135}     & {500} & {0} \\
%   BFGS     & {\bf 14.4 } & {0.0361}     & {336}     & {0} & {10.4}     & {0.0175}     & {500} & {0} \\
%   \bottomrule
% \end{tabular}
% \caption{Results of~\ref{exp2} for \cref{exa:qp_inhomo}}
% \label{table:qp_inhomo_exp2}
% \end{table}

\textbf{An intuitive explanation.}
We illustrate why TGP algorithms can find more global solutions in \ref{qp_inhomo_set_1} through a simple example. Consider \( \matr{A} = \diag{5, 2} \), \( \vect{x}^{*} = (0, 1)^\top \) and the initial point \(\vect{x}_0 =  (-\frac{1}{\sqrt{2}}, -\frac{1}{\sqrt{2}})^\top\). 
We apply RGD and TGP-A-R with $a = 2$ and plot their trajectories in \cref{fig:exa_QPInhomo}. 
We observe that RGD converges to a local minimizer \( (0, -1)^\top \), while TGP-A-R successfully reaches the global minimizer \( (0, 1)^\top \).
This difference arises from their different search directions. 
Since the search direction of RGD is a tangent vector, RGD moves on the geodesics of the manifold. 
Therefore, its trajectory is inevitably attracted to the nearby local minimizer. 
In contrast, TGP incorporates a normal vector in its search direction, allowing it to ``jump'' across the manifold and escape from a local minimum. For instance, in the first iteration, the backtracking procedure of TGP yields stepsize \( \tau_0 = 1 \), leading to \( \vect{x}_1 = \mathcal{P}_{\mathbb{S}^1}(\vect{x}_0 - (\grad f(\vect{x}_0) + 2 \vect{x}_0) = \mathcal{P}_{\mathbb{S}^1}(- \vect{x}_0 - \grad f(\vect{x}_0)) \), which is significantly different from \( \vect{x}_1 = \retr_{\vect{x}_0}(-\tau_0 \grad f(\vect{x}_0)) \) generated by RGD.
This example illustrates that the additional normal vector \( 2 \vect{x}_0 \) in \( \matr{H}_0 \) enables the TGP algorithm to explore a broader region and reach a better solution in practice.
\begin{figure}[ht]
  \centering
  \includegraphics[width=0.5\linewidth]{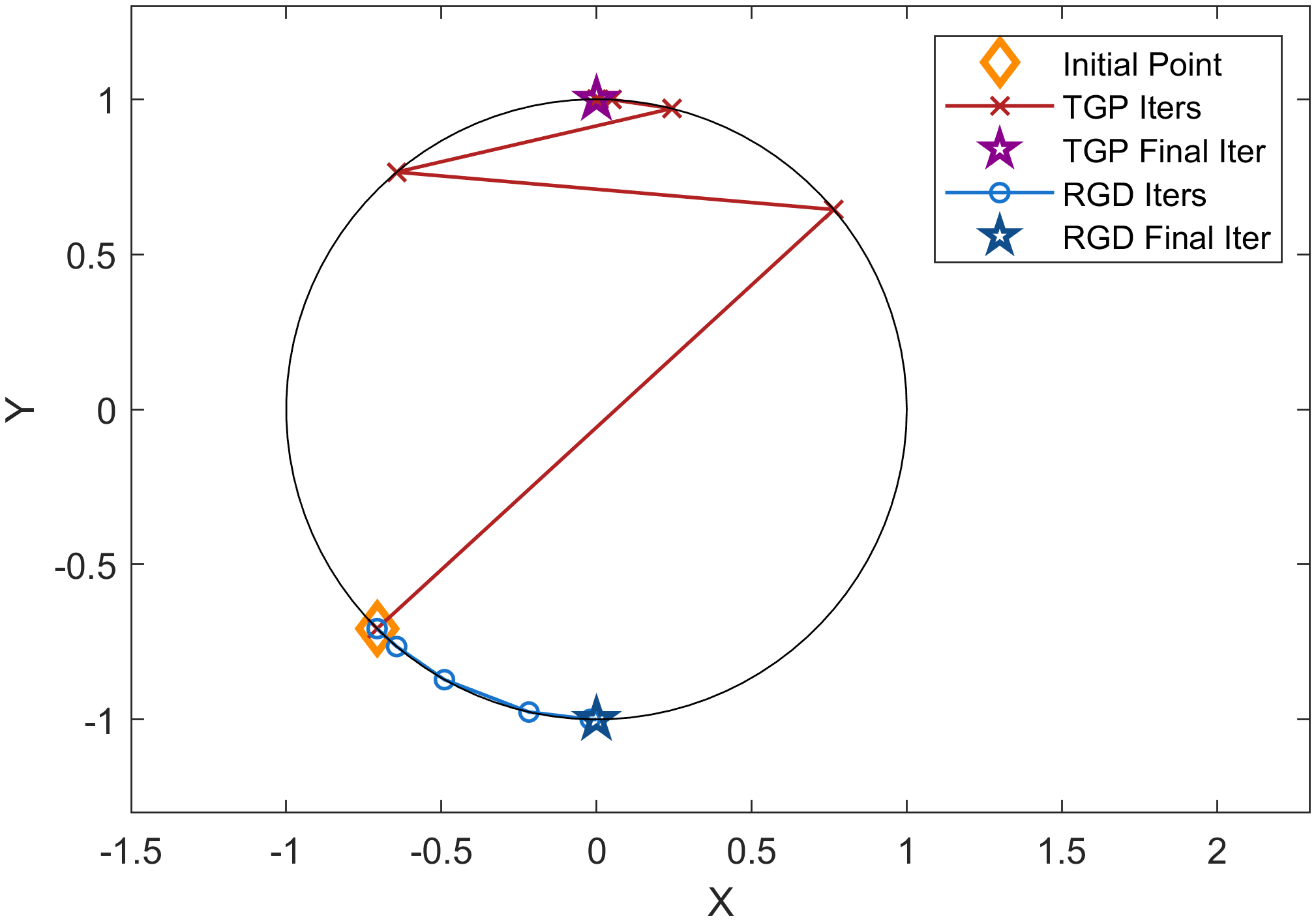}
  \caption{Paths of the iterates generated by RGD and TGP-A-R}\label{fig:exa_QPInhomo}
\end{figure}

\begin{example}\label{exa:jamd2}
We next consider the \emph{jointly approximate symmetric matrix diagonalization} (JAMD-S) problem \cite{LUC2017globally,ULC2019,li2019polar,ding2024projectively} on $\St(r,n)$:
\begin{equation}\label{eq:problem-jamd}
\min\ f(\matr{X}) = -\sum\limits_{\ell=1}^{L}  \left\| \diag{\matr{X}^{\T}\matr{A}^{(\ell)}\matr{X}} \right\|^2,\ \ \matr{X} \in \St(r, n).
\end{equation}
Here \( \matr{A}^{(\ell)} \in \symmm{\RR^{n \times n}} \) for all \( \ell \), and \( \diag{\matr{W}} \eqdef (W_{11}, W_{22} , \ldots , W_{rr})^{\T} \) represents the vector composed of the diagonal elements of \( \matr{W}\in\RR^{r\times r} \).
We set \( n = 3 \), \( r = 2 \), and \( L = 3 \).
Each matrix \( \matr{A}^{(\ell)} \) is generated as \( \matr{A}^{(\ell)} = \matr{Q}^{\T} \matr{D}^{(\ell)}\matr{Q} + \matr{E}^{(\ell)} \), where \( \matr{Q} = \mathcal{P}_{\ON{n}}(\texttt{randn}(n,n)) \), \( \matr{D}^{(\ell)} = \texttt{diag}({\texttt{randn}(n, 1)}) \), and \( \matr{E}^{(\ell)} = 0.02 \cdot \symmo{\texttt{randn}(n, n)} \) represents random noise. 
Due to the existence of the random noise matrices \( \matr{E}^{(\ell)} \), the global minimum is unknown. 
Therefore, we use \emph{NSuper} to evaluate the solution quality instead of \emph{NGlobal}.
%So we use the \emph{NSuper} to evaluate solution quality instead of \emph{NGlobal}. 
In \ref{exp2}, we set $a = 2$ for TGP-$\ast$-R, $a = 10.8$ for TGP-$\ast$-E, \(\eta_k = 0.2\) for TGP-NA-R/E, a fixed stepsize of 0.025 for TGP-F-R and 0.059 for TGP-F-E.

In \ref{exp1}, we study the influence of the normal component \( \hat{\matr{H}}_k \), controlled by the parameter \( a \) in \eqref{eq:H_TGP_R_E}. 
Specifically, in \cref{fig:jamd2_exp1_rg}, the baseline is TGP-A-R with \( a = 0 \), i.e., RGD, and in \cref{fig:jamd2_exp1_eg}, the baseline is TGP-A-E with \( a = 0 \), i.e., EGP. 
As shown, \emph{Niter} and \emph{NSuper} vary significantly with the choice of \( a \), which is consistent with the results in \cref{fig:qp_inhomo_exp1}.

In \ref{exp2}, we use RGD as the baseline to compute \emph{NSuper}.
The average results over 500 random instances are reported in \cref{table:jamd_exp2}. 
Among all methods, TGP-NA-E achieves the best overall solution quality with \emph{NSuper} = 79, and TGP-A-R is the fastest.
\end{example}

\begin{figure}[!htp]
\centering
\begin{subfigure}{0.42\linewidth}
\centering
  \includegraphics[width=\textwidth]{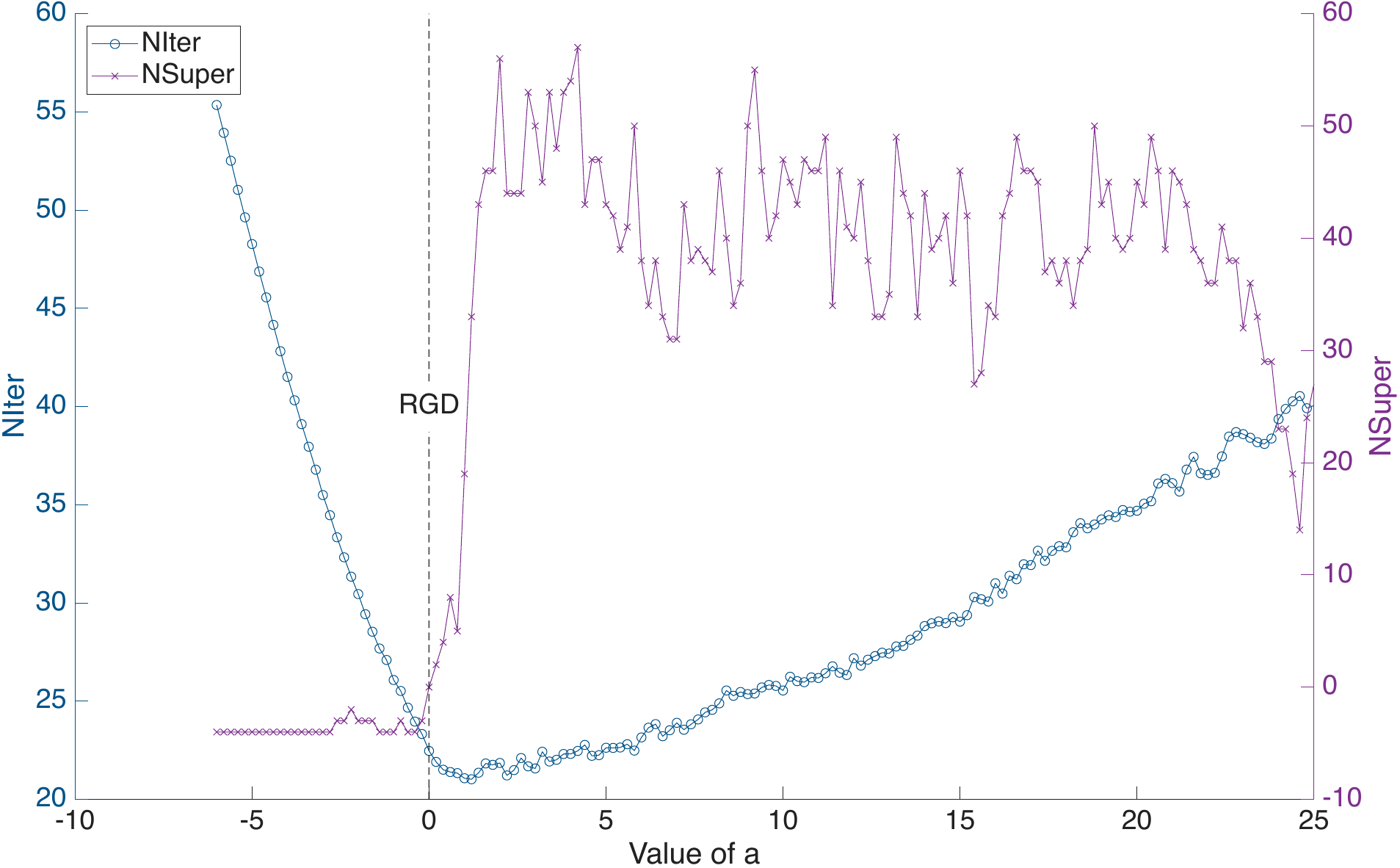}%
  \caption{TGP-A-R with varying $a$. The case $a=0$ serves as the baseline.}
  \label{fig:jamd2_exp1_rg}
\end{subfigure}
  \hspace{0.04\textwidth} % \hfill
\begin{subfigure}{0.42\linewidth}
\centering
  \includegraphics[width=\textwidth]{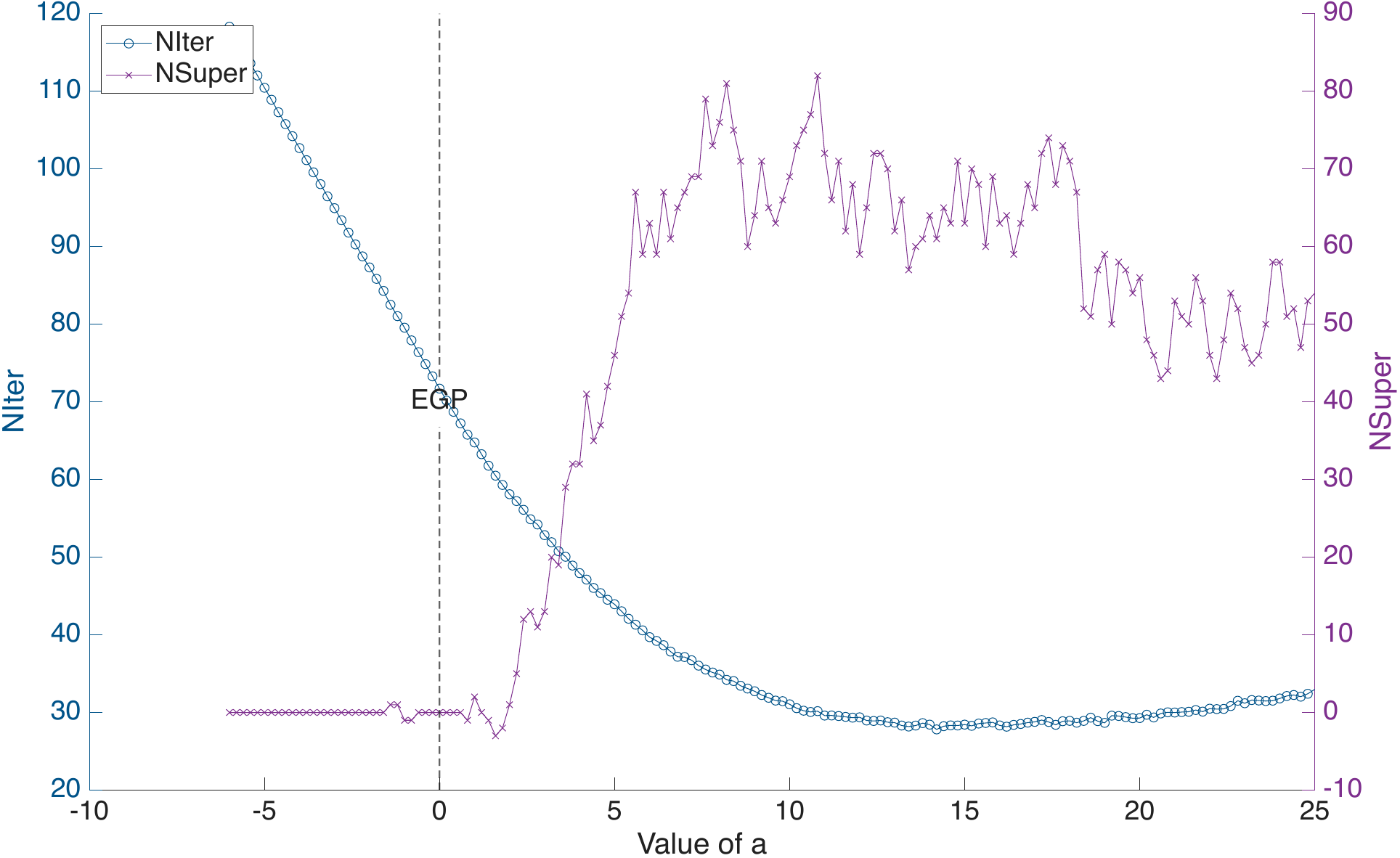}%
  \caption{TGP-A-E with varying $a$. The case $a=0$ serves as the baseline.}
  \label{fig:jamd2_exp1_eg}
\end{subfigure}
\caption{Results of \ref{exp1} for \cref{exa:jamd2}.
The blue curves represent the average number of iterations (\emph{Niter}), and the purple curves represent the solution quality (\emph{NSuper}), both as functions of \(a\). 
The black dashed lines correspond to the case \(a = 0\), i.e., RGD or EGP algorithm.}
\label{fig:jamd_exp1}
\end{figure}

\begin{table}[htb]
\centering
\begin{tabular}{lrrrrrr}
  \toprule
  Algorithm & NIter      & Time (s) &NBetter &NWorse         & NSuper   & NFail \\
  \midrule
  TGP-A-R  & 21.9  & \textbf{0.0076} & 69           & 17          & 52          & 0 \\
  TGP-NA-R & 25.9  & 0.0088 & 72           & 17          & 55          & 0 \\
  TGP-F-R  & 123.0 & 0.0375 & 5            & 15 & -10         & 0 \\
  TGP-A-E  & 30.2  & 0.0101 & 96           & 24          & 72          & 0 \\
  TGP-NA-E & 27.4  & 0.0088 & \textbf{101} & 22          & \textbf{79} & 0 \\
  TGP-F-E  & 45.8  & 0.0128 & 8            & 17          & -9          & 0 \\
\addlinespace[2pt]
\cmidrule(lr){1-7}
\addlinespace[2pt]
  RGD  & 26.4          & 0.0079          & ---  & ---  & ---   & 0 \\
%  BB   & 17.2          & \textbf{0.0056} & 56 & 47 & 9   & 0 \\
  CG   & 86.7          & 0.0597          & 4  & 16 & -12 & 5 \\
  BFGS & \textbf{11.4} & 0.0259          & 13 & \textbf{14} & -1  & 0 \\
  \bottomrule
\end{tabular}
\caption{Results of~\ref{exp2} for \cref{exa:jamd2}. RGD serves as the baseline.}
\label{table:jamd_exp2}
\end{table}

% \begin{table}[htb]
% \centering
% \begin{tabular}{ccccc}
%   \toprule
%   Algorithm & NIter      & Time         & NSuper (than RGD)   & NFail \\ \midrule
%   TGP-A-R  & 28.8       & 0.0070       & 42       & 0 \\
%   TGP-NA-R & 39.4       & 0.0090       & {\bf 44} & 0 \\
%   TGP-F-R  & 194.1      & 0.0262       & -8       & 0 \\
%   TGP-A-E  & 56.6       & 0.0127       & 36       & 0 \\
%   TGP-NA-E & 41.3       & 0.0075       & 39       & 0 \\
%   TGP-F-E  & 51.4       & {\bf 0.0062} & -7       & 0 \\
%   RGD       & 32.2       & 0.0266       & 0        & 0 \\
%   CG       & 29.6       & 0.0311       & -11      & 2 \\
%   BFGS     & {\bf 12.0} & 0.0267       & 1        & 0 \\
%   \bottomrule
% \end{tabular}
% \caption{Results of~\ref{exp2} for \cref{exa:jamd2}}
% \label{table:jamd_exp2}
% \end{table}
% \begin{table}[htb]
% \centering
% \begin{tabular}{ccccc}
%   \toprule
%   Algorithm & NIter      & Time         & NSuper (than RGD)   & NFail \\ \midrule
%   TGP-A-R    & 27.8  & 0.0060     & 42     & 0 \\
%   TGP-NA-R   & 38.4  & 0.0077     & \bf 44 & 0 \\
%   TGP-F-R    & 193.1 & 0.0206     & -8     & 0 \\
%   TGP-A-E    & 55.6  & 0.0109     & 36     & 0 \\
%   TGP-NA-E   & 40.3  & 0.0063     & 39     & 0 \\
%   TGP-F-E    & 50.4  & \bf 0.0048 & -7     & 0 \\
%   RGD & 32.2       & 0.0258     & 0      & 0 \\
%   BB & 19.2       & 0.0162     & 28      & 0 \\
%   CG         & 30.6       & 0.0320     & -11    & 2 \\
%   BFGS       & \bf 12.0   & 0.0280     & 1      & 0 \\
%   \bottomrule
% \end{tabular}
% \caption{Results of~\ref{exp2} for \cref{exa:jamd2}}
% \label{table:jamd_exp2}
% \end{table}

\begin{example}\label{exa:jamd3}
We further consider the \emph{jointly approximate symmetric tensor diagonalization} (JATD-S) problem \cite{LUC2017globally,ULC2019,li2019polar} on \( \St(r, n) \):
\begin{equation}\label{problem:jamd3}
\min\ f(\matr{X}) = -\sum\limits_{\ell=1}^{L} \left\| \diag{\tenss{A}^{(\ell)}\contr{1}\matr{X}^{\T}\contr{2}\matr{X}^{\T}\contr{3}\matr{X}^{\T}} \right\|^2,\ \ \matr{X} \in \St(r, n),
\end{equation}
where \( \tenss{A}^{(\ell)} \in \RR^{n \times n \times n} \) is a third-order symmetric tensor, and \( \diag{\tenss{W}} \eqdef (W_{111}, W_{222} , \ldots , W_{rrr})^{\T} \) denotes the vector composed of the diagonal elements of a tensor \( \tenss{W}\in \RR^{r \times r \times r} \).
We fix \( n = 3, r = 2\) and \(L = 1\).
In each random instance, \( \tenss{A}^{(\ell)} \) is generated as \( \tenss{A}^{(\ell)} = \texttt{symmetrize}(\texttt{tensor}(\texttt{randn}(n, n, n))) \). 
Therefore, the global minimum is also unknown. 
In \ref{exp2}, we set $a =9.6$ for TGP-$\ast$-R and $a = 12.4$ for TGP-$\ast$-E. For TGP-NA-R/E, we fix \(\eta_k = 0.1\).
The stepsizes of TGP-F-R and TGP-F-E are set to 0.01 and 0.035, respectively.
The maximum time \texttt{maxtime} is set to 10 seconds.

The results of \ref{exp1} are reported in \cref{fig:jamd3_exp1}. 
We observe that both \emph{NSuper} and \emph{Niter} vary as $a$ changes. In particular, compared to the case $a=0$, moderately large values of $a$ tend to improve the performance of TGP-A algorithms.
The results of \ref{exp2} are summarized in \cref{table:jamd3_exp2}.
Among all variants, TGP-NA-E consistently finds more superior solutions than the other algorithms, and also achieves the fastest convergence in terms of \emph{Time}.
In particular, TGP-A/NA-E significantly outperform RGD with respect to \emph{Niter}, \emph{Time}, and \emph{NSuper}.
\end{example}

\begin{figure}[!htp]
\centering
\begin{subfigure}{0.42\linewidth}
\centering
  \includegraphics[width=\textwidth]{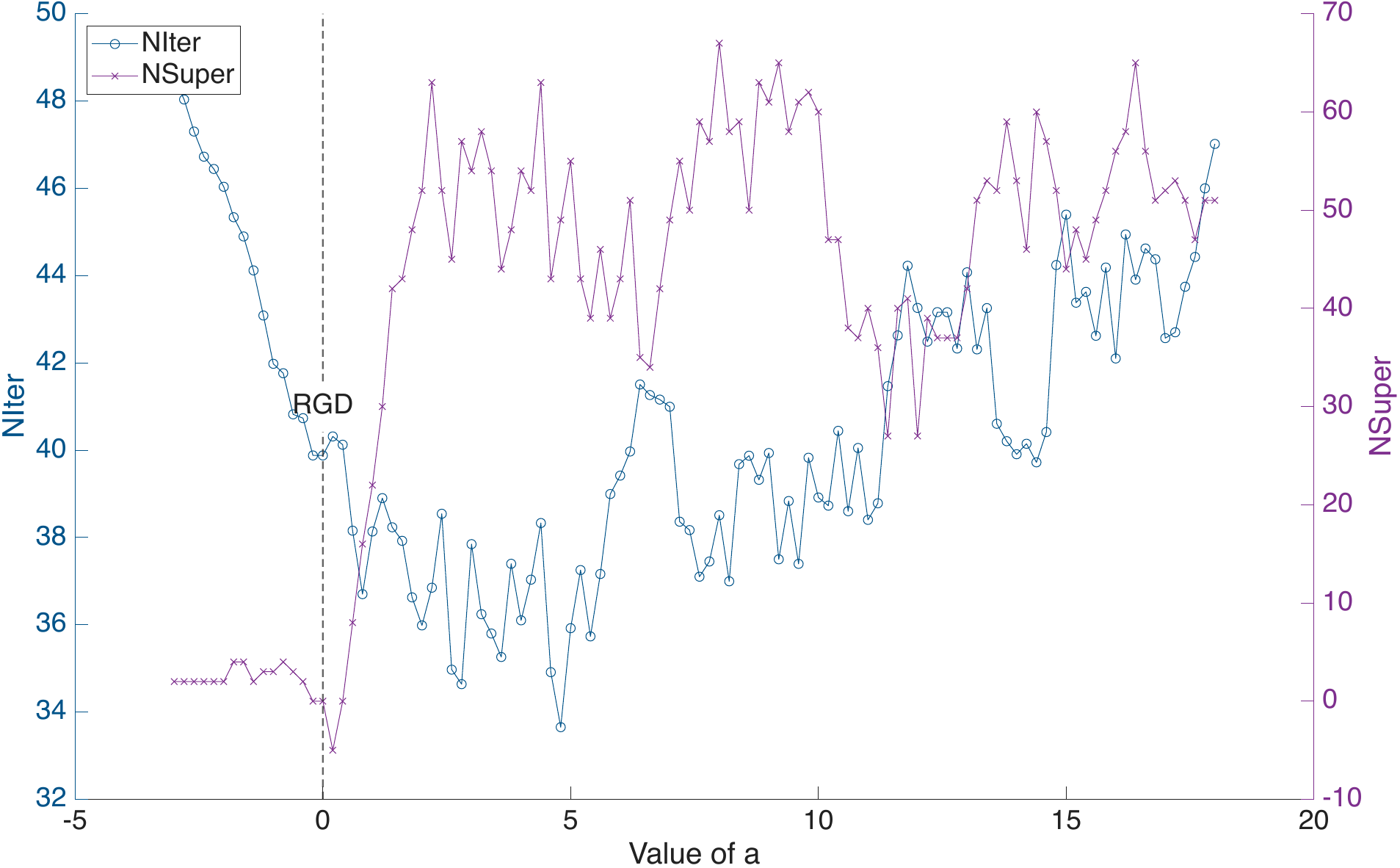}%
  \caption{TGP-A-R with varying $a$. The case $a = 0$ serves as the baseline.}
  \label{fig:jamd3_exp1_rg}
\end{subfigure}
 \hspace{0.04\textwidth} % \hfill
\begin{subfigure}{0.42\linewidth}
\centering
  \includegraphics[width=\textwidth]{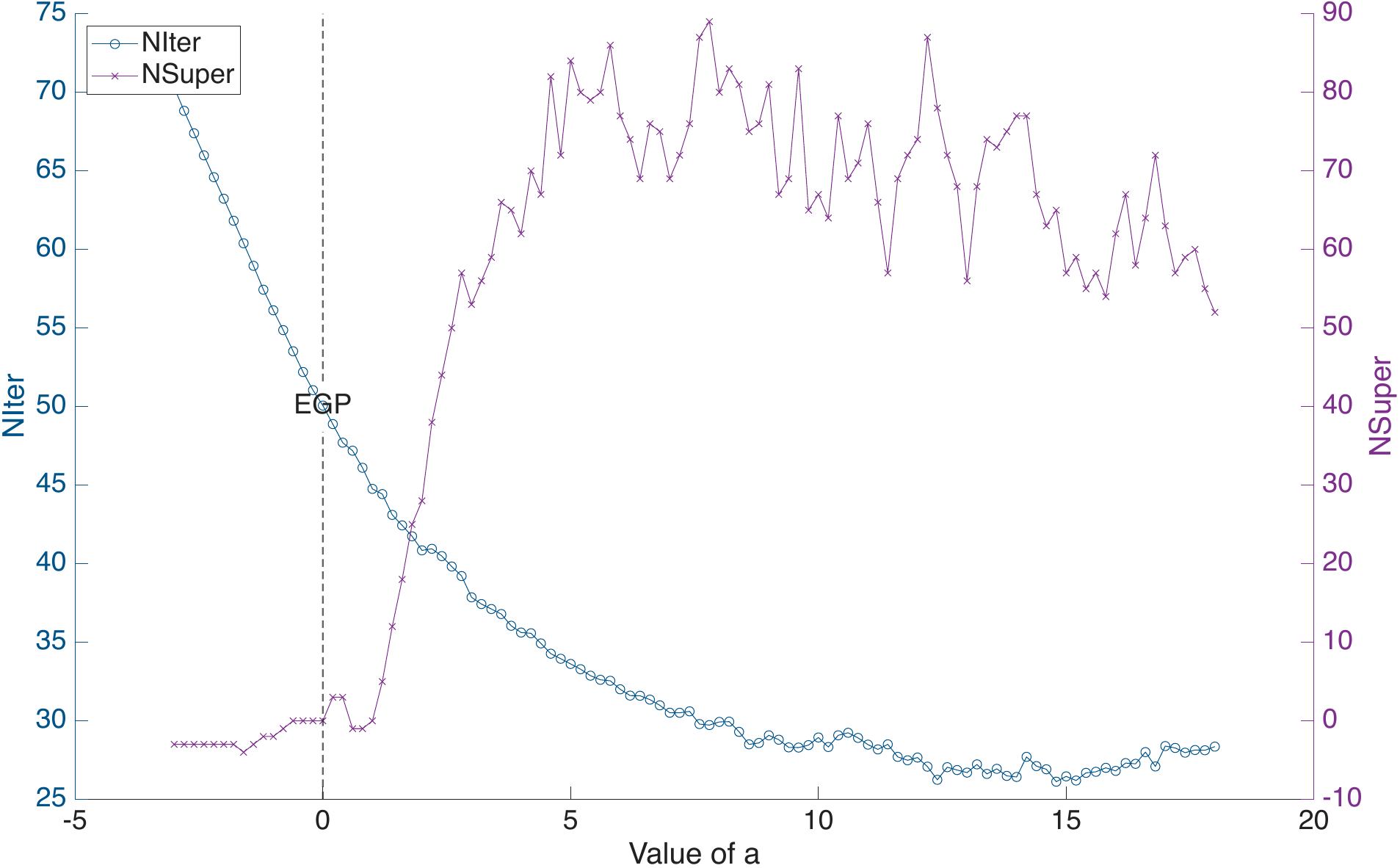}%
  \caption{TGP-A-E with varying $a$. The case $a=0$ serves as the baseline.}
  \label{fig:jamd3_exp1_eg}
\end{subfigure}
\caption{Results of \ref{exp1} for \cref{exa:jamd3}.
The blue curves represent the average number of iterations (\emph{Niter}), and the purple curves represent the solution quality (\emph{NSuper}), both as functions of \(a\). 
The black dashed lines correspond to the case \(a = 0\), i.e., RGD or EGP algorithm.}
\label{fig:jamd3_exp1}
\end{figure}

\begin{table}[htb]
\centering
\begin{tabular}{lrrrrrr}
  \toprule
  Algorithm &NIter&Time (s)& NBetter & NWorse & NSuper & NFail \\ \midrule
  TGP-A-R  & 37.4  & 0.0221 & 85           & 30          & 55          & 0 \\
  TGP-NA-R & 39.8  & 0.0233 & 88           & 31          & 57          & 0 \\
  TGP-F-R  & 284.3 & 0.1466 & 17           & \textbf{20} & -3          & 0 \\
  TGP-A-E  & 26.2  & 0.0159 & 113          & 33          & 80          & 0 \\
  TGP-NA-E & 22.6  & \textbf{0.0131} & \textbf{114} & 32          & \textbf{82} & 0 \\
  TGP-F-E  & 65.0  & 0.0289 & 17           & 23          & -6          & 0 \\
\addlinespace[2pt]
\cmidrule(lr){1-7}
\addlinespace[2pt]
  RGD  & 43.1          & 0.0190          & --- & --- & ---  & 0 \\
%  BB   & 22.8          & \textbf{0.0105} & 79 & 43 & 36  & 0 \\
  CG   & 139.8         & 0.1548          & 11 & 31 & -20 & 7 \\
  BFGS & \textbf{13.1} & 0.0384          & 23 & 22 & 1   & 0 \\
  \bottomrule
\end{tabular}
\caption{Results of \ref{exp2} for \cref{exa:jamd3}. RGD serves as the baseline.}
\label{table:jamd3_exp2}
\end{table}

\textbf{Effect of normal components.} 
From the numerical experiments in \cref{exa:qp_inhomo,exa:jamd2,exa:jamd3}, we observe that the performance of TGP algorithms is strongly affected by the normal component of $\matr{H}_k$, such as the parameter $a$ in \eqref{eq:H_TGP_R_E}.
With a suitable normal component, TGP algorithms outperform the RGD, especially in terms of solution quality.
In particular, for \cref{exa:jamd3}, where the optimization landscape is more intricate than in \cref{exa:qp_inhomo,exa:jamd2}, the superiority of TGP algorithms in the quality of the final solutions becomes more obvious.
Furthermore, as shown in \cref{table:qp_inhomo_exp2,table:jamd_exp2,table:jamd3_exp2}, within the TGP family, the advantage of Zhang–Hager-type nonmonotone Armijo stepsizes over monotone ones becomes increasingly significant as the problem landscape grows more complex.

\subsection{Effect of tangent components with a fixed normal component}\label{subsec:tgp_diffe_scale}

In this subsection, we investigate how the tangent component of the search direction \( \matr{H}_k \) influences the practical performance of the TGP algorithms. 
Specifically, we consider two modified search directions, both based on TGP-$\ast$-E in~\eqref{eq:H_TGP_R_E} but with different tangent components:
\begin{equation}\label{eq:TGP-DEF}
  \begin{split}
  \text{TGP-$\ast$-DE}:\quad & 
  \matr{H}_k = \matr{D}_{\rho}(\matr{X}_k) + 
  \mathcal{P}_{\NormalSt{\matr{X}_k}}(\nabla f(\matr{X}_k)) + 
  a \matr{X}_{k}\matr{S}_k,\\
  \text{TGP-$\ast$-DF}:\quad &
  \matr{H}_k = \matr{L}(\matr{X}_k)\grad f(\matr{X}_k)\matr{R}(\matr{X}_k) + 
  \mathcal{P}_{\NormalSt{\matr{X}_k}}(\nabla f(\matr{X}_k)) + 
  a \matr{X}_{k}\matr{S}_k,
  \end{split}
\end{equation}
where \( a \in \RR \) and \( \matr{S}_k \in \symmm{\RR^{r \times r}} \) are defined as in~\cref{subsec:tgp_diffe_norm}.  
For simplicity, all algorithms in this subsection use the Armijo stepsize (\emph{A}).
In TGP-A-DE, we replace the tangent component \( \grad f(\matr{X}_k) \) of TGP-A-E by a more general form \( \matr{D}_\rho(\matr{X}_k) \), where different values of \( \rho \) yield different tangent directions. 
Note that when \( \rho = 0.25 \), \( \matr{D}_{\rho}(\matr{X}_k) \) reduces to \( \grad f(\matr{X}_k) \), and thus TGP-A-DE degenerates to TGP-A-E. 
We will set an appropriate value of \(\rho\) to improve the performance of TGP-A-E. 
We then investigate whether the performance of TGP-A-DE can be further enhanced by incorporating a more flexible tangent component.
To this end, we consider the generalized form proposed in~\cref{exa:LR_St}, leading to the variant TGP-A-DF, where the tangent component is scaled by two matrices \( \matr{L}(\matr{X}_k) \) and \( \matr{R}(\matr{X}_k) \).  
As discussed in~\cref{rem:LR_St}, \( \matr{D}_{\rho}(\matr{X}_k) \) can be viewed as a special case of \( \matr{L}(\matr{X}_k)\grad f(\matr{X}_k)\matr{R}(\matr{X}_k) \) by setting the scaling matrices as in~\eqref{eq:LR_St} with \( \matr{E} = \matr{I}_r \), \( \mu = 4\rho - 1 \), and \( \matr{F} = \matr{0} \).

We first examine the effect of \(\rho\) in TGP-A-DE. 
We use the same experimental settings as in~\ref{exp1}, but choose $a$ as in~\ref{exp2} for~\cref{exa:jamd2,exa:jamd3}, and vary \(\rho\).
The results in~\cref{fig:TGP-DE} show that varying \( \rho \) affects both the number of iterations (\emph{Niter}) and the number of superior solutions (\emph{NSuper}), even when the same normal component is used. 
This suggests that the tangent component plays a crucial role in determining the efficiency of the TGP algorithms.
  
For TGP-A-DF, we consider a simple case: we retain \( \matr{E} = \matr{I}_r \) and \( \mu = 4\rho - 1 \), where \(\rho\) is chosen as in TGP-A-DE, and modify only \( \matr{F} \) to adjust the tangent scaling.  
For~\cref{exa:jamd2}, we set \( \rho = 0.35 \) and \( F = 0.05 \); 
for~\cref{exa:jamd3}, we set \( \rho = 0.23 \) and \( F = 0.25 \), both with \( a \) chosen as in the previous experiment..  

The results in~\cref{table:TGP-tangent} indicate that incorporating an additional scaling term \( \matr{F} \) can further improve the overall efficiency and stability of the algorithm. 
Although the performance gains depend on the specific choice of \( \matr{F} \) and \( \rho \), these results demonstrate that an appropriately designed tangent component can enhance the behavior of TGP beyond the standard choice \( \grad f(\matr{X}) \).  

\begin{figure}[!htp]
\centering
\begin{subfigure}{0.42\linewidth}
\centering
  \includegraphics[width=\textwidth]{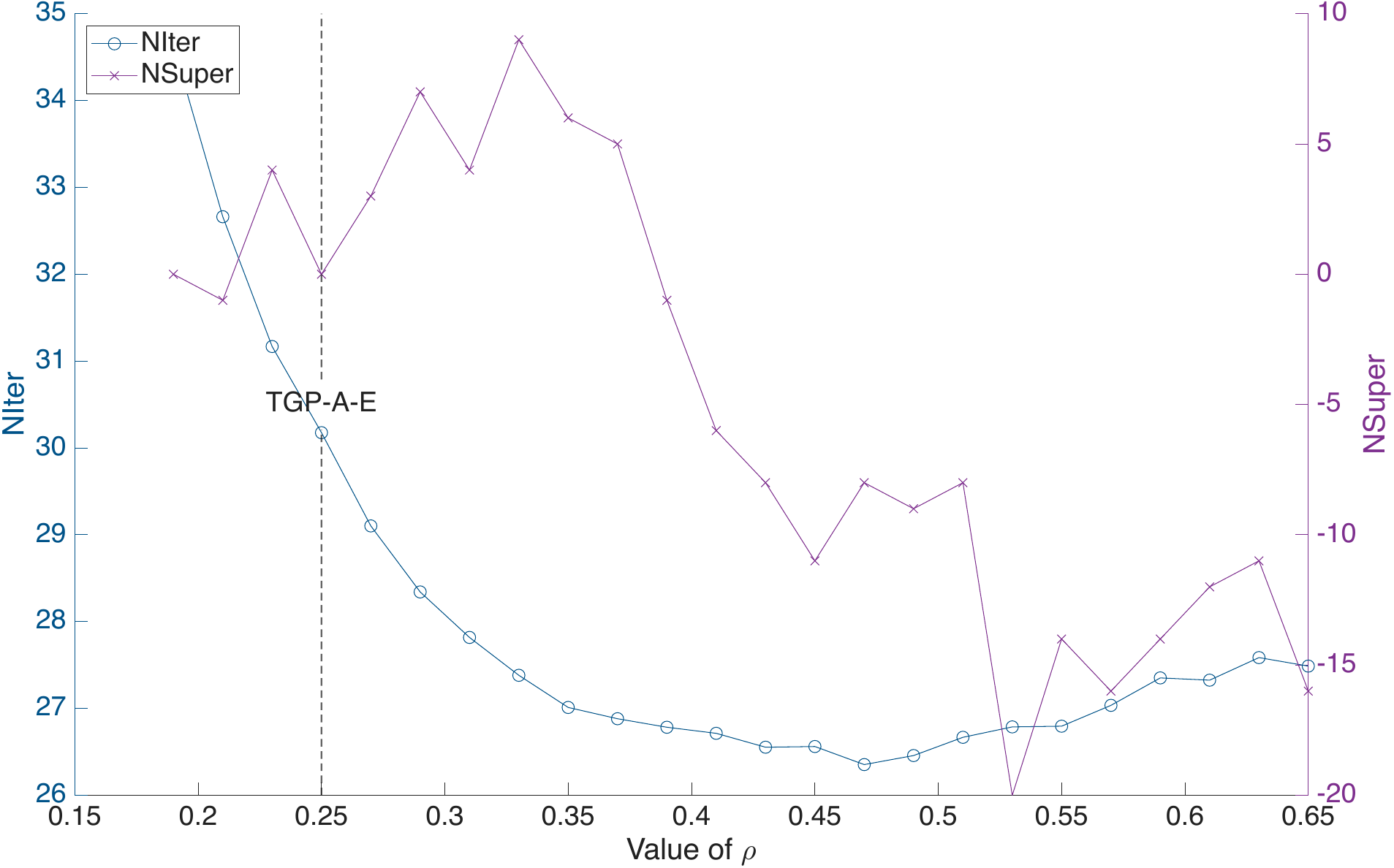}%
  \caption{\cref{exa:jamd2}}
  \label{fig:jamdda_eg}
\end{subfigure}
\hspace{0.04\textwidth} % \hfill
\begin{subfigure}{0.42\linewidth}
\centering
  \includegraphics[width=\textwidth]{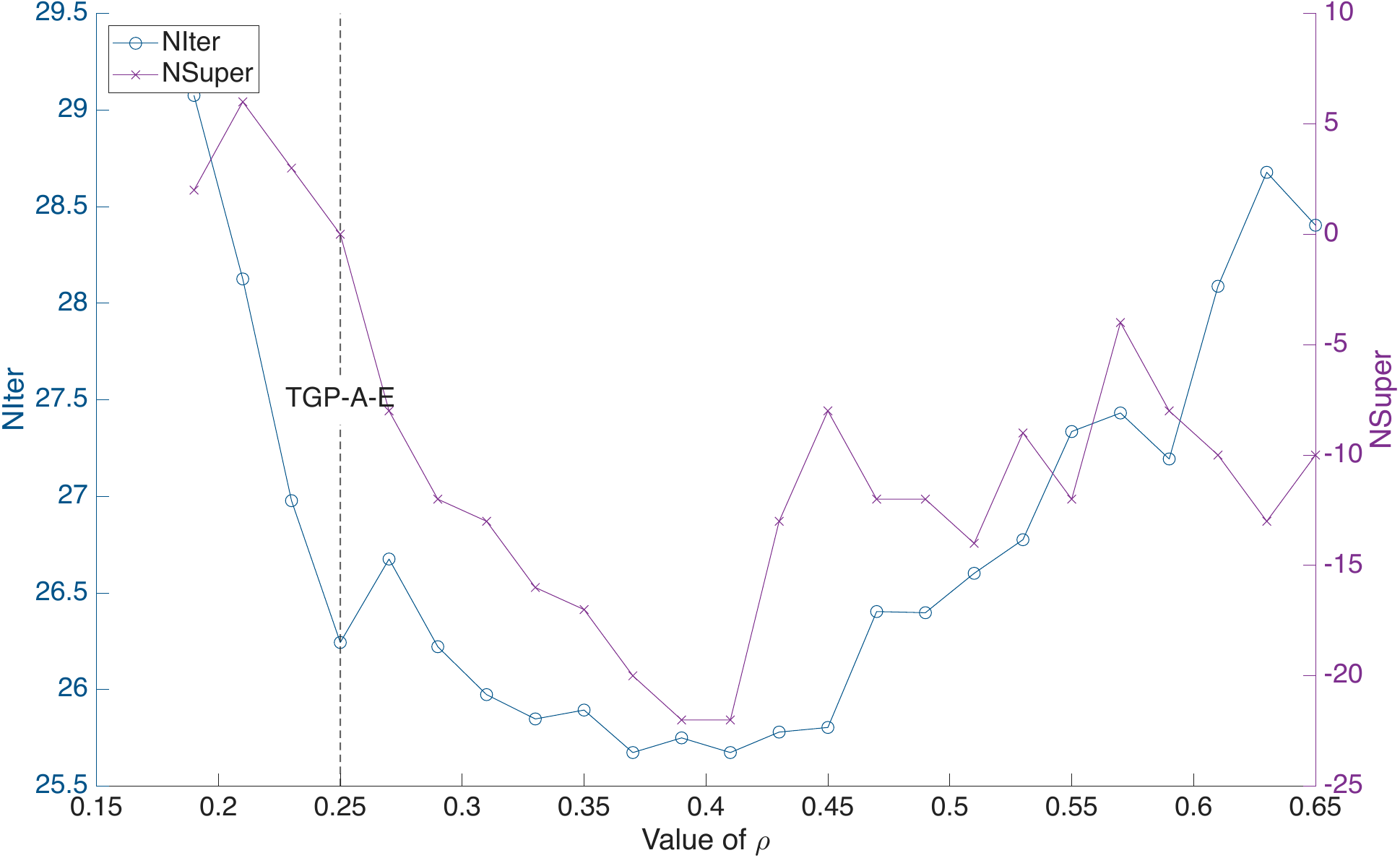}%
  \caption{\cref{exa:jamd3}.}
  \label{fig:jatdda_eg}
\end{subfigure}
\caption{TGP-A-DE with varying \(\rho\).
The blue curves represent the average number of iterations (\emph{Niter}), and the purple curves represent the solution quality (\emph{NSuper}), both as functions of \(a\). 
The black dashed lines correspond to the case \(\rho = 0.25\), i.e., TGP-A-E, which serves as the baseline.}
\label{fig:TGP-DE}
\end{figure}

\begin{table}[htbp]
\centering
\begin{tabular}{lrrrrrrrrrr}
\toprule
& \multicolumn{5}{c}{Problem~\eqref{eq:problem-jamd}} 
& \multicolumn{5}{c}{Problem~\eqref{problem:jamd3}} \\
\cmidrule(lr){2-6} \cmidrule(lr){7-11}
Algorithm & NIter & Time (s) & NBetter & NWorse & NSuper  
          & NIter & Time (s) & NBetter & NWorse & NSuper \\ 
\midrule
  TGP-A-E  & 30.2     & 0.0106     & --- & --- & ---     & \bf 26.2 & \bf 0.0155 & --- & --- & ---      \\
\addlinespace[2pt]
\cmidrule(lr){1-11}
\addlinespace[2pt]
  TGP-A-DE & 27.0     & \bf 0.0095 & 20 & \textbf{14} & 6     & 27.0     & 0.0159   & 13 & 10   & 3      \\
  TGP-A-DF & \bf 26.6 & 0.0096 & \textbf{23} & 15 & \textbf{8} & 27.4     & 0.0163  & \textbf{23} & \textbf{8}    & \textbf{15} \\
\bottomrule
\end{tabular}
\caption{Comparison of TGP with different tangent components for Problems~\eqref{eq:problem-jamd} and~\eqref{problem:jamd3}; TGP-A-E serves as the baseline.}
\label{table:TGP-tangent}
\end{table}

% For all three algorithms, we fix the same additional normal component $\widehat{\matr{H}}_k$, and perform experiments on two benchmark problems: Example~9.2 and Example~9.3. Each algorithm is tested under the same initialization and stepsize strategy.

% The numerical results for Example~9.2 are shown in Figure~\ref{fig:scaling-example9-2} and Table~\ref{tab:scaling-example9-2}. As can be seen, the general transformed direction with nontrivial scaling matrices achieves the fastest convergence in terms of iteration count and reaches the lowest objective value. The $D_\rho$-based direction also outperforms the pure Riemannian gradient in this setting, suggesting that incorporating richer geometric information through scaling can significantly benefit the search process.

% Similar conclusions can be drawn from Example~9.3. As illustrated in Figure~\ref{fig:scaling-example9-3} and Table~\ref{tab:scaling-example9-3}, the general transformed direction again leads to the most efficient descent, both in terms of convergence speed and final accuracy. Interestingly, in this example, the $D_\rho$ direction achieves performance that is very close to the general transformed direction, and both clearly outperform the Riemannian gradient method. This indicates that the benefit of more flexible tangent components becomes more pronounced in problems where the geometry of the manifold interacts nontrivially with the objective function.

\subsection{A specific example: analysis of the eigenvalue problem}\label{subsec:eigenvalue-problem}

In \cref{subsec:tgp_diffe_norm,subsec:tgp_diffe_scale}, we presented several numerical results. In this subsection, we focus on the following eigenvalue problem \eqref{eq:eigenvalue_problem} as a specific example, due to its simplicity, making it easier to study the algorithm's behavior. We provide a theoretical analysis of this case, and complement it with numerical results from a specific instance to illustrate the impact of the scaling matrices and the normal component on the algorithm's performance. 
%These results illustrate how the combination of scaling matrices and normal vectors can influence the convergence and solution quality.

\begin{example}
The \emph{eigenvalue problem} can be formulated as 
\begin{equation}\label{eq:eigenvalue_problem}
  \begin{array}{ll}
    \min_{\vect{x}} & \frac{1}{2}\vect{x}^{\top}\matr{A}\vect{x},  \\
    \mbox{s.t.} & \vect{x} \in \mathbb{S}^{n-1}.
  \end{array}
\end{equation}
\end{example}
Without loss of generality, we assume that $\matr{A}=\diag{\lambda_1 , \lambda_2 \ldots , \lambda_n}$ with \(\lambda_1 \geq \lambda_2 \geq \ldots \geq \lambda_{n-1} > \lambda_n \). It follows that the global minimizers are \( (0 , \ldots , 0, \pm 1)^\top \).
% Then \( \nabla f(\vect{x}) = \matr{A}\vect{x} \) and \( \grad f(\vect{x}) = \nabla f(\vect{x}) - 2 f(\vect{x}) \cdot \vect{x} \).
\subsubsection{New scaling matrices to achieve the global solution}

We now demonstrate that, under specific conditions, the scaling matrices $\matr{L}_k$ and $\matr{R}_k$ introduced in \cref{exa:LR_St} can address certain issues encountered in classical RGD and EGP algorithms when solving problem \eqref{eq:eigenvalue_problem}.
Suppose we begin with an initial point \(\vect{x}_0\) satisfying that the last entry is equal to 0, \emph{i.e.}, \((\vect{x}_0)_n = 0\). Direct computation shows that \((\nabla f(\vect{x}_0))_n = (\matr{D}_{\rho}(\vect{x}_0))_n = 0\). 
Therefore, in both RGD using projection as retraction, \(\vect{x}_{k+1} = \mathcal{P}_{\mathbb{S}^{n-1}}(\vect{x}_k - \tau_k \grad f(\vect{x}_k))\), and EGP, \( \vect{x}_{k+1} = \mathcal{P}_{\mathbb{S}^{n-1}}(\vect{x}_k - \tau_k \nabla f(\vect{x}_k)) \), it is straightforward to verify that \((\vect{x}_k)_n = 0\) holds for all \( k\geq 0 \). 
As a result, both methods fail to reach the global minimum, as the iterates remain confined to the subspace spanned by the eigenvectors corresponding to the remaining eigenvalues.
In fact, for $\vect{x}_k$ satisfying \( (\vect{x}_k)_n = 0  \), it can be verified that if \( \matr{F} = \matr{0} \) in \eqref{eq:LR_St}, then the $n$th entry of the search direction \( \matr{H}_k \) will always remain zero. This leads to a situation where classical algorithms are unable to find the global solution, as no progress is made in that direction. 
However, this issue can be resolved by selecting a suitable nonzero matrix $\matr{F}$, which ensures that  $(\matr{H}_k)_n$ becomes nonzero, even when $(\vect{x}_k)_n = 0$. 
For instance, consider the case where \( \matr{A} = \diag{4, 2, -2} \) and the initial point is \( \vect{x}_0 = (\sqrt{3}/ 2, 1 / 2, 0)^\top \). To address the above issue of RGD and EGP, we introduce a new search direction using a nonzero matrix \( \matr{F} \) from \cref{exa:LR_St} for this special eigenvalue problem.
Utilizing this new direction and the Armijo stepsize rule, we construct a new algorithm called the \emph{TGP-A-Eigen} algorithm, defined as follows: 
% with the Armijo stepsize, and denote it as the TGP-A-Eigen algorithm as follows:
\begin{equation}\label{eq:H-TGP-eigen}
 \text{TGP-A-Eigen:}\quad \matr{H}_k = \matr{L}_k \grad f(\vect{x}_k) \matr{R}_k, \text{ where } \matr{L}_k = \matr{I} + 0.05 \matr{X}_{\perp} \begin{pmatrix}1 & 1\\ 1 & 1\end{pmatrix} \matr{X}_{\perp}^\top, \matr{R}_k = 1. 
\end{equation}
Here \( \matr{L}_k \) and \( \matr{R}_k \) are constructed from \cref{exa:LR_St} with \( \matr{E} = 1 \), \(\matr{F} = 0.05 \cdot \begin{pmatrix}1 & 1\\ 1 & 1\end{pmatrix} \) and \(\mu = 0\). Note that the direction \eqref{eq:H-TGP-eigen} is in the form of \eqref{eq:Trans_RGP} with \( \matr{N}(\matr{X}) = \matr{0} \). 
Paths of RGD with the Armijo stepsize and TGP-A-Eigen with the initial trial stepsize \(\hat{\tau}_k = 0.5\) are shown in \cref{fig:eigen_TGP_Eigen}.
It is clear from \cref{fig:eigen_TGP_Eigen} that TGP-A-Eigen successfully converges to the global minimizer \( (0,0,1)^\top \), while RGD converges to a saddle point \( (0, 1, 0)^\top \).
% TODO: change the label of TGP in the figure.
\begin{figure}[ht]
  \centering
  \includegraphics[width=0.5\linewidth]{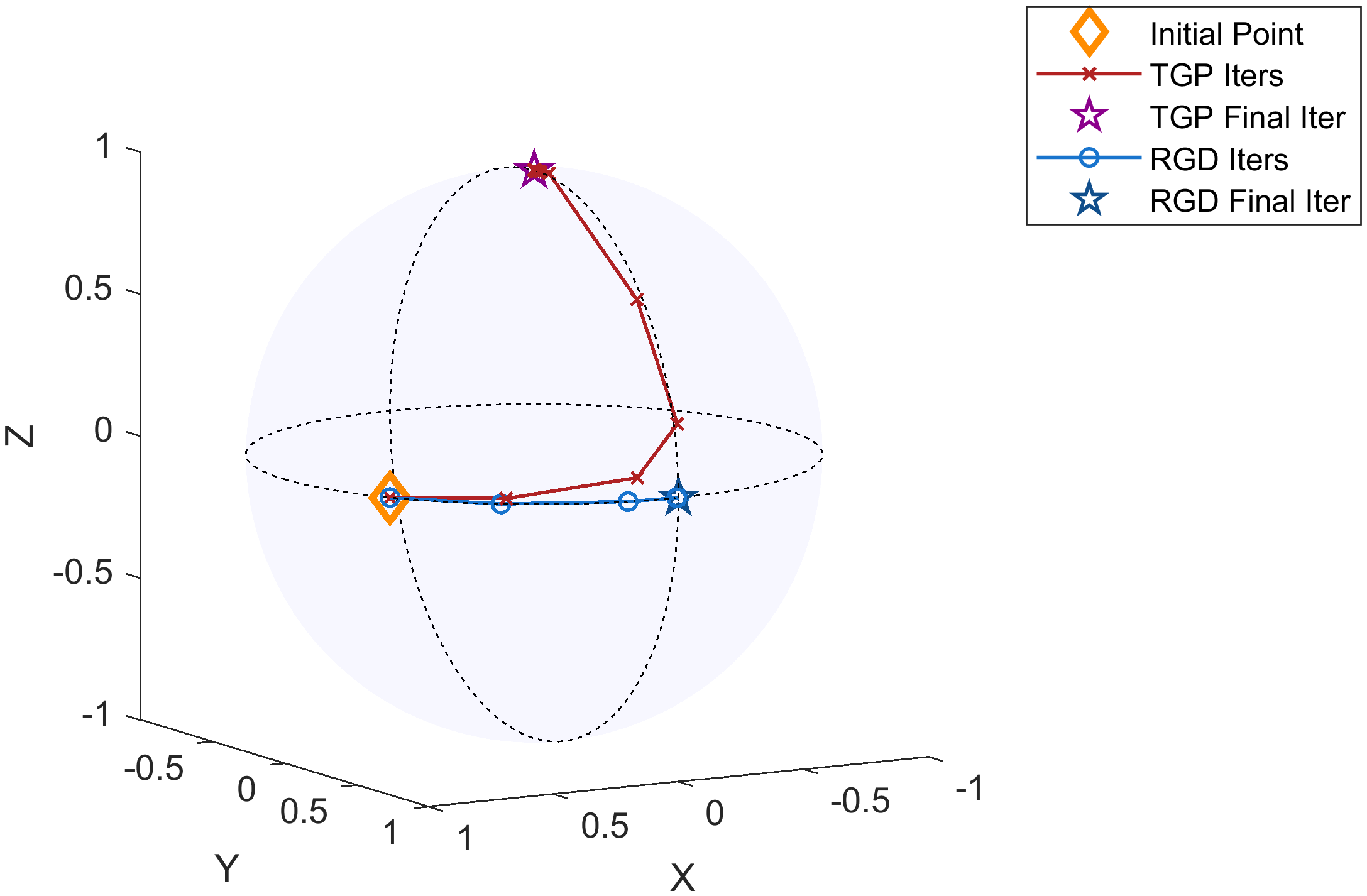}
  \caption{Paths of RGD and TGP-A-Eigen using the new search direction \eqref{eq:H-TGP-eigen}.}\label{fig:eigen_TGP_Eigen}
\end{figure}
% \begin{verbatim}
% ============================================================
% Solve by TGP
% ============================== TGP START ==============================
% Running TGP with Armijo stepsize:
% gamma: 0.5, beta: 0.5, trial_step: 0.5
% unlimited backtracking iter: false, max backtracking iter: 10
% eta: 0.0e+00
% maxiter: 1.0e+02, maxtime: 2, grad tol: 1.0e-04
%  iter  | cost f       | grad f   | stepsize | i_bt| C
% [   0] | +1.750000000 | 8.66e-01 | 0.00e+00 | [0] | +1.750000000
% [   1] | +1.337140823 | 9.50e-01 | 5.00e-01 | [0] | +1.337140823
% [   2] | +1.010791507 | 4.51e-01 | 5.00e-01 | [0] | +1.010791507
% [   3] | +0.871832947 | 9.81e-01 | 5.00e-01 | [0] | +0.871832947
% [   4] | +0.126627909 | 1.99e+00 | 5.00e-01 | [0] | +0.126627909
% [   5] | -0.989667080 | 2.96e-01 | 5.00e-01 | [0] | -0.989667080
% [   6] | -0.997703989 | 1.37e-01 | 1.25e-01 | [2] | -0.997703989
% [   7] | -0.999474605 | 6.52e-02 | 1.25e-01 | [2] | -0.999474605
% [   8] | -0.999879447 | 3.12e-02 | 1.25e-01 | [2] | -0.999879447
% [   9] | -0.999972357 | 1.49e-02 | 1.25e-01 | [2] | -0.999972357
% [  10] | -0.999993666 | 7.14e-03 | 1.25e-01 | [2] | -0.999993666
% [  11] | -0.999998549 | 3.42e-03 | 1.25e-01 | [2] | -0.999998549
% [  12] | -0.999999668 | 1.64e-03 | 1.25e-01 | [2] | -0.999999668
% [  13] | -0.999999924 | 7.83e-04 | 1.25e-01 | [2] | -0.999999924
% [  14] | -0.999999983 | 3.75e-04 | 1.25e-01 | [2] | -0.999999983
% [  15] | -0.999999996 | 1.79e-04 | 1.25e-01 | [2] | -0.999999996
% [  16] | -0.999999999 | 8.58e-05 | 1.25e-01 | [2] | -0.999999999
% ============================== TGP END ==============================
% ============================================================
% Solve by RGD
% ============================== TGP START ==============================
% Running TGP with Armijo stepsize:
% gamma: 0.5, beta: 0.5, trial_step: 0.5
% unlimited backtracking iter: false, max backtracking iter: 10
% eta: 0.0e+00
% maxiter: 1.0e+02, maxtime: 2, grad tol: 1.0e-04
%  iter  | cost f       | grad f   | stepsize | i_bt| C
% [   0] | +1.750000000 | 8.66e-01 | 0.00e+00 | [0] | +1.750000000
% [   1] | +1.355263158 | 9.57e-01 | 5.00e-01 | [0] | +1.355263158
% [   2] | +1.036482158 | 3.75e-01 | 5.00e-01 | [0] | +1.036482158
% [   3] | +1.000046907 | 1.37e-02 | 5.00e-01 | [0] | +1.000046907
% [   4] | +1.000000000 | 6.43e-07 | 5.00e-01 | [0] | +1.000000000
% ============================== TGP END ==============================
% \end{verbatim}
\subsubsection{Effect of the normal vector}
% In the previous case, we demonstrated that the introduction of scaling matrices actually helps the algorithm achieve the global solution.
We now demonstrate that, in several scenarios, the normal component of the search direction \( \matr{H}_k \) can accelerate the convergence of the TGP algorithms through a direct theoretical approach. 
As an illustrative example, we consider the eigenvalue problem \eqref{eq:eigenvalue_problem} and compare the convergence speed of three methods: RGD, EGP, and the Shifted PM. Since the primary difference between these methods lies in their treatment of the normal component, as shown later in \eqref{eq:H-RGD-GP-PM}, the differences in their performance reflect the impact of the normal component.
When framed within the TGP algorithmic framework, the search direction \( \matr{H}_k \) of these methods (denoted as $\matr{H}_k^{(R)}$,  $\matr{H}_k^{(G)}$, and $\matr{H}_k^{(P)}$, respectively) can be expressed as: 
 \begin{equation}\label{eq:H-RGD-GP-PM}
\begin{aligned}
  \matr{H}_k^{(R)} & = \grad f(\vect{x}_k) = ((\lambda_1 - 2f(\vect{x}_k)) (\vect{x}_k)_1 , \ldots , (\lambda_n - 2 f(\vect{x}_k))(\vect{x}_{k})_n),\\
  \matr{H}_k^{(G)} & = \nabla f(\vect{x}_k) = (\lambda_1 (\vect{x}_{k})_1 , \ldots , \lambda_n (\vect{x}_{k})_n),\\
  \matr{H}_k^{(P)} & = \nabla f(\vect{x}_k) + (1-s) \vect{x} = \left(\left(\lambda_1-s +1\right) (\vect{x}_{k})_1 , \ldots , \left(\lambda_n -s  + 1\right) (\vect{x}_{k})_n\right).
\end{aligned}
 \end{equation}
% where the update scheme is given by \( \vect{x}_{k+1} = \mathcal{P}_{\mathbb{S}^{n-1}}(-\matr{A}\vect{x}_{k} + s \vect{x}_{k})\). 
Suppose that we start from \( \vect{x}_0 \) with \(( \vect{x}_0 )_n \neq 0\). 
In this case, the expression
\( \max_{l \in [n-1]} \left\lvert\frac{(\vect{x}_k)_l}{(\vect{x}_k)_n}\right\rvert\) serves as a metric for assessing the convergence progress of the algorithm, while the ratio \(  \max_{l \in [n-1]} \left\lvert\frac{(\vect{x}_{k+1})_l}{(\vect{x}_{k+1})_n} \right\rvert/  \left\lvert\frac{(\vect{x}_k)_l}{(\vect{x}_k)_n}\right\rvert \) can be interpreted as an indicator of convergence speed.
% \begin{align*}
% \vect{x}_k-\tau\nabla f(\vect{x}_k) &= ((1-\tau\lambda_1) (\vect{x}_x)_1 , \ldots , (1-\tau\lambda_n)(\vect{x}_k)_n)^\T \\
% \vect{x}-\tau\grad f(\vect{x}) &= ((1-\tau\lambda_1 + 2\tau  f(\vect{x})) x_1 , \ldots , (1-\tau\lambda_n + 2\tau  f(\vect{x}))x_n)^\T \\
% \end{align*}
Let us compare RGD and EGP, both using a fixed stepsize \(\tau\). For \( l \in [n-1] \), we denote \(b_{k, l}^{(R)}\), \(b_{k, l}^{(G)}\), and \( b_{k, l}^{(P)} \) as \( \left\lvert\frac{(\vect{x}_{k+1})_l}{(\vect{x}_{k+1})_n} \right\rvert/  \left\lvert\frac{(\vect{x}_k)_l}{(\vect{x}_k)_n}\right\rvert \) in RGD, EGP, and Shifted PM, respectively. It follows from direct computations that
\begin{align*}
  b_{k,l}^{(R)}= \left\lvert\frac{1-\tau\lambda_l + 2\tau f(\vect{x}_k)}{1-\tau\lambda_n + 2 \tau f(\vect{x}_k)}\right\rvert,
  \ b_{k,l}^{(G)}= \left\lvert\frac{1-\tau\lambda_l}{1-\tau\lambda_n}\right\rvert,
  \ b_{k,l}^{(P)}= \left\lvert\frac{ s-\lambda_l} { s-\lambda_n}\right\rvert.
\end{align*}
There are also numerous instances where \(b_{k, l}^{(G)} < b_{k, l}^{(R)}\), indicating that EGP converges faster than RGD. This is particularly evident in the following two scenarios: (i) when \( f(\vect{x}_k) > 0 \) and \( 1- \tau \lambda_1 > 0 \); (ii) when \( \lvert f(\vect{x}_k) \rvert \) is extremely large, such that \(b_{k, l}^{(R)} \approx 1\) while \(b_{k, l}^{(G)} < 1\) remains a constant. Moreover, it is clear that when \( \lambda_1 < s < 1 / \tau \), it always holds that \( b_{k, l}^{(P)} < b_{k, l}^{(G)} \).
The results of these three algorithms 
%eigenvalue problem \eqref{eq:eigenvalue_problem} 
for case (i) are illustrated in \cref{fig:eigen_convergence_rate}, where we set \( \matr{A} = \diag{3, 3, 2} \), \( \vect{x}_0 = (\frac{1}{\sqrt{3}}, \frac{1}{\sqrt{3}}, \frac{1}{\sqrt{3}})^\top \), the fixed stepsize \(\tau = 0.2 \), and \( s = 4 \).
\begin{figure}[htbp]
  \centering
  \begin{subfigure}{0.44\linewidth}
    \centering
    \includegraphics[width=\textwidth]{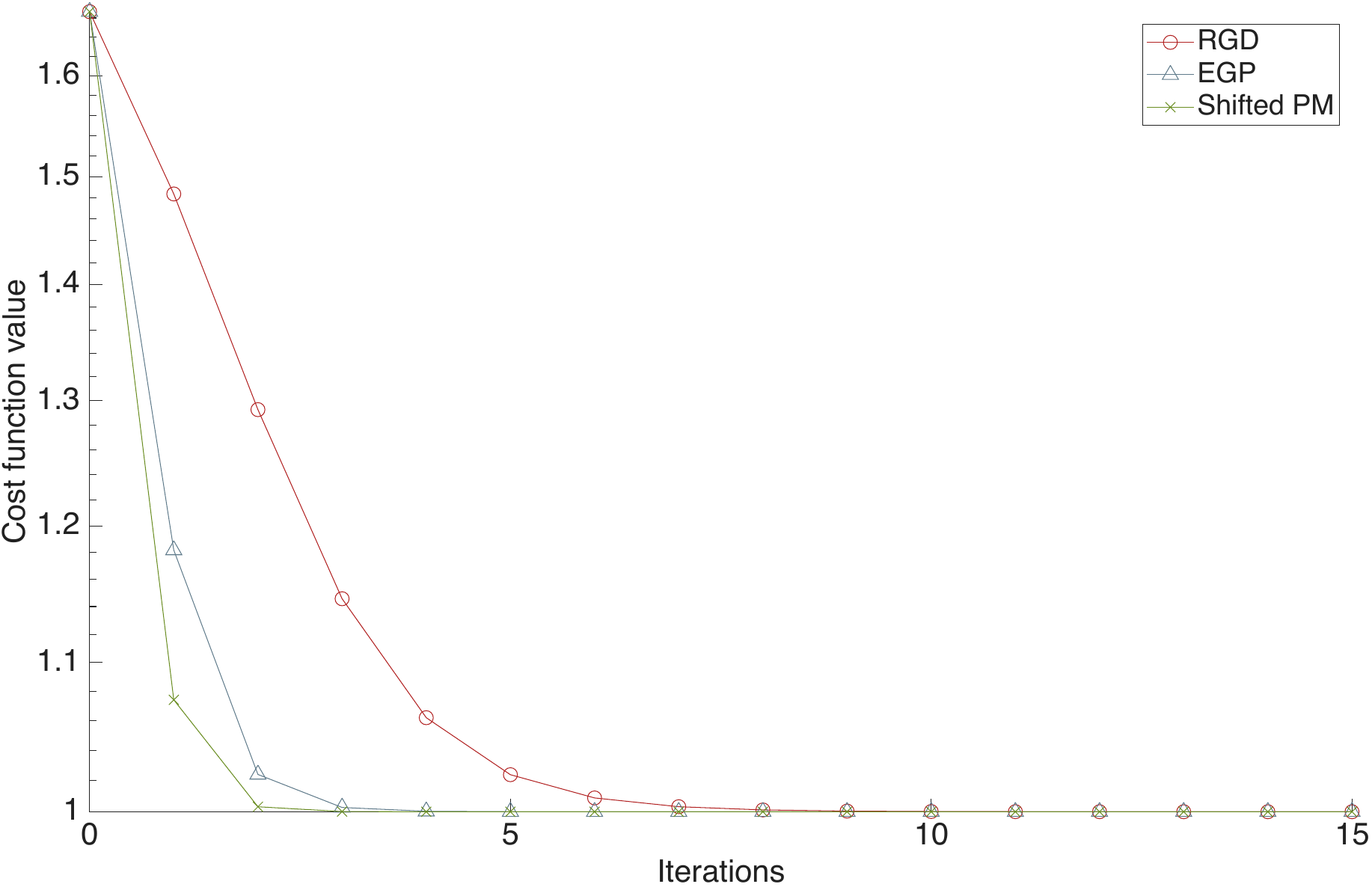}
  \end{subfigure}
 \hspace{0.04\textwidth} % \hfill
  \begin{subfigure}{0.44\linewidth}
    \centering
    \includegraphics[width=\textwidth]{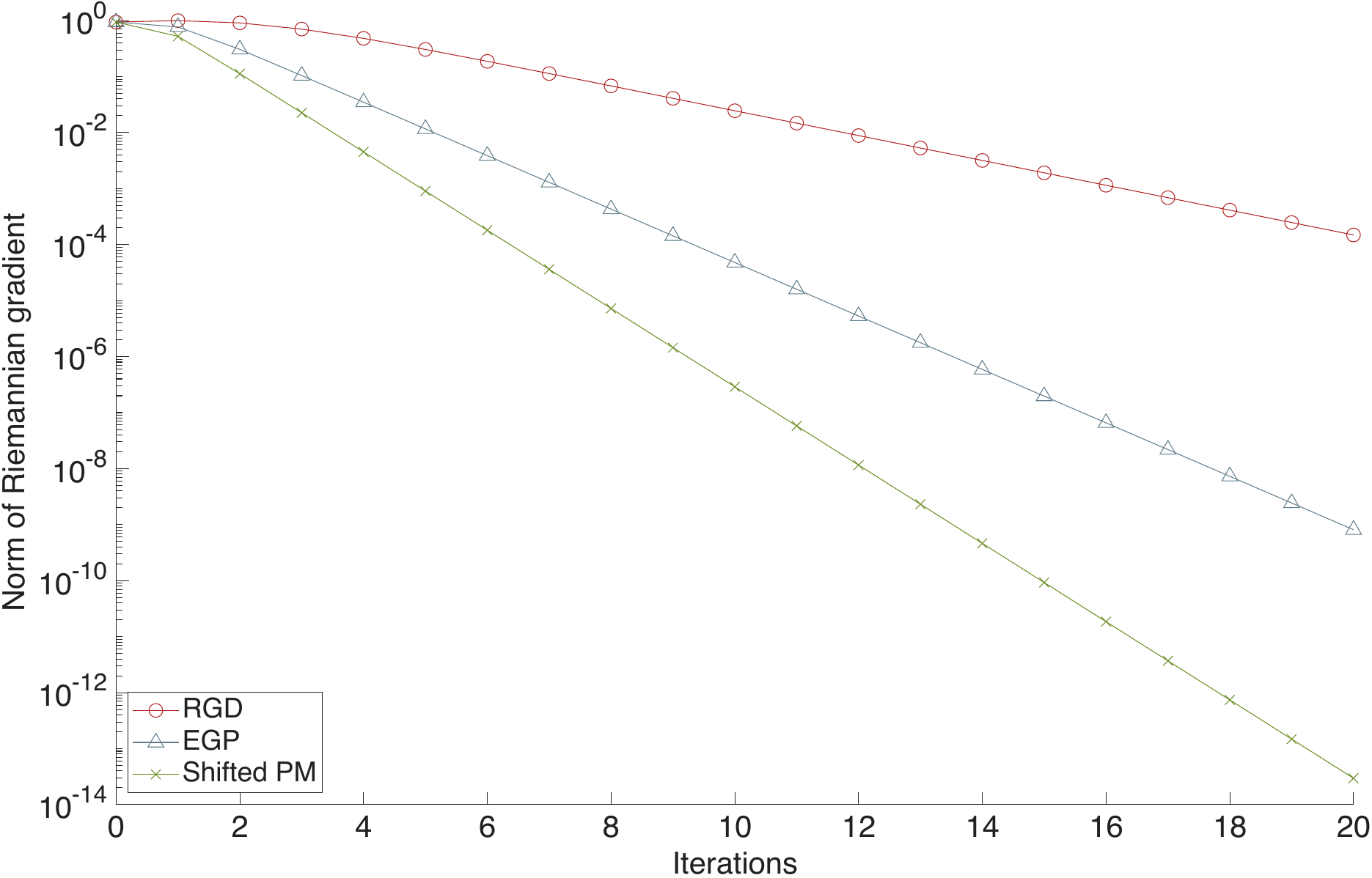}
  \end{subfigure}
  \caption{Comparison of RGD, EGP, and Shifted PM for the eigenvalue problem \eqref{eq:eigenvalue_problem}}\label{fig:eigen_convergence_rate}
\end{figure}

\section{Conclusions}\label{sec:conclu}
In this paper, using the projection onto a compact matrix manifold, we propose a general TGP algorithmic framework to solve the problem \eqref{eq:objec_func_g}. 
Our framework not only covers numerous existing algorithms in the literature, but also introduces several new special cases.
%Notably, this generalization is also different from the classical retraction-based algorithmic framework, as illustrated in \cref{fig:compare-retra-proj-alg}.
For this new general algorithmic framework with various stepsizes, including the Armijo stepsize, the Zhang-Hager type nonmonotone Armijo stepsize, and the fixed stepsize, we establish their weak convergence and iteration complexity results. A key aspect of our analysis is the exploration of the projection onto a compact submanifold, which is crucial for our convergence results and may also be of independent interest. 
Furthermore, by exploiting the \L{}ojasiewicz property, we establish the global convergence of our algorithmic framework and obtain several novel contributions. For example, to our knowledge, the global convergence of the Zhang-Hager type nonmonotone Armijo stepsize has not been established for nonconvex problems in the literature, even in Euclidean spaces. 

We would like to emphasize that, although we choose the tangent component of the search direction $\matr{H}_k$ in the form of a transformed Riemannian gradient in \eqref{eq:c_tilde_hat_transf}, our convergence analysis can be easily extended to more general ProjLS algorithms in \eqref{eq:iter_prjec-h}.
To be more specific, the weak convergence and iteration complexity can be established if the tangent component of the search direction $\matr{H}_k$ is scale-equivalent to and directionally aligned with the Riemannian gradient, \emph{i.e.}, if $\matr{\tilde{H}}_k$ satisfies \eqref{eq:H-grad-norm-equiv} and \eqref{eq:H-grad-product}. Moreover, global convergence under \L{}ojasiewicz property holds if the normal component of $\matr{H}_k$ is not too large, \emph{i.e.}, there exists \(\delta > 0\) such that $\tau_k \| \matr{\hat{H}}_k\| < \varrho_{*} - \delta$.

We conclude this paper by discussing several meaningful future research directions concerning the proposed TGP algorithmic framework.
\begin{itemize} 
\item \emph{Designing effective search direction components in the TGP framework}: As shown in \cref{sec:numer_exper}, the choice of scaling matrices $\matr{L}(\matr{X}_k)$, $\matr{R}(\matr{X}_k)$, and the additional normal vector $\matr{N}(\matr{X}_k)$ significantly influences the practical performance of TGP algorithms across different cost functions. Designing these components more effectively, either to improve convergence speed or to achieve better solution quality, is an important direction for future research. Although theoretical analysis in the general case is challenging, obtaining meaningful results for specific cost functions remains a valuable and feasible goal.  
\item \emph{Developing more advanced stepsize strategies within the TGP framework, \emph{e.g.}, the BB-type stepsize} \cite{iannazzo2018riemannian}: 
%In \cref{sec:numer_exper}, we compared the RGD algorithm with the BB stepsize, a well-known stepsize often used to accelerate the convergence of first-order optimization methods due to its strong empirical performance.
Due to space limitations, we did not develop and analyze more advanced stepsize strategies in the TGP framework, such as BB-type rules.
In fact, this is a promising yet challenging direction for future research, with the potential to enhance the practical performance and theoretical understanding of TGP algorithms.
\item \emph{Exploring second-order information in the TGP framework}: While the proposed TGP algorithmic framework primarily relies on first-order information through the Riemannian gradient and scaling matrices, a promising direction for future research is to incorporate partial or full second-order information. For example, designing TGP-type search directions based on approximations of the Riemannian Hessian, or employing quasi-Newton updates that are compatible with projection steps, may further accelerate convergence and improve solution accuracy, particularly for ill-conditioned problems. Theoretical analysis in this setting, especially concerning weak and global convergence, remains largely unexplored and could lead to a new line of research.
\end{itemize}

\section{Declarations}

%\textbf{Funding}: This work was supported by the GuangDong Basic and Applied Basic Research Foundation (Grant number: 2021A1515010232). \\
%\\
\textbf{Competing interests}: The authors have no competing interests to declare that are relevant to the content of this article.

\bibliographystyle{siamplain}
\bibliography{ReferenceTensor}

\end{document}